\theoremstyle{plain}
\newtheorem{theorem}{Theorem}[section]
\newtheorem{lemma}[theorem]{Lemma}
\newtheorem{corollary}[theorem]{Corollary}
\newtheorem*{thm:quasiconvex}{\Cref{prop: geos_fellow_travel}}
\newtheorem*{thm:application}{\Cref{th: v_free_examples}}
\newtheorem{claim}[theorem]{Claim}
\newtheorem{proposition}[theorem]{Proposition}
\newtheorem{definition}[theorem]{Definition}
\theoremstyle{definition}
\newtheorem*{remark}{Remark}
\newtheorem{example}[theorem]{Example}
\newtheorem{convention}[theorem]{Convention}
\newtheorem*{case}{Case}
\newcommand{\define}[1]{\textbf{#1}}
\newcommand{\R}{\mathbb{R}}
\newcommand{\Z}{\mathbb{Z}}
\newcommand{\N}{\mathbb{N}}
\DeclareMathOperator{\diam}{diam}
\newcommand{\floor}[1]{\left\lfloor#1\right\rfloor}
\newcommand{\ceil}[1]{\left\lceil#1\right\rceil}
\newcommand{\norm}[1]{\left\Vert#1\right\Vert}
\newcommand{\abs}[1]{\left\vert#1\right\vert}
\newcommand{\inv}{^{-1}}
\DeclareMathOperator{\Teich}{Teich} % Teichm\"uller space
\newcommand{\T}{\Teich}
\DeclareMathOperator{\Mod}{Mod}
\newcommand{\C}{{\EuScript C}} 
\DeclareMathOperator{\Out}{Out}
\DeclareMathOperator{\Aut}{Aut}
\DeclareMathOperator{\rank}{rk}
\newcommand{\free}{\mathbb{F}} % the base free group (of rank n)
\newcommand{\factor}{{\EuScript F}} % the factor complex
\newcommand{\F}{\factor} % shorthand for the factor complex
\renewcommand{\int}{\mathcal{I}}
\newcommand{\fc}{\factor} % more shorthand for the factor complex
\newcommand{\cv}{\hat{\os}} % unprojectivized Culler-Vogtmann outer space
\newcommand{\os}{{\EuScript X}} % Projectivized/normalized Culler-Vogtmann outer space
\newcommand{\X}{\os} 
\newcommand{\dsym}{d^{\mathrm{sym}}_\os} % the symmetrized Lipschitz distance
\newcommand{\fproj}{\pi} % the course projection \os \to \fc
\newcommand{\proj}{\mathrm{Pr}} % for projecting outer space to a folding path
\newcommand{\p}{\proj}
\newcommand{\rose}{{\EuScript R}} % the standard rank-n rose
\newcommand{\bund}{\mathcal{E}} % the metric graph bundle
\newcommand{\base}{\mathcal{B}} % the base space of the metric graph bundle
\DeclareMathOperator{\Lip}{Lip} % the Lipschitz constant of a graph map
\DeclareMathOperator{\vol}{vol} % the volume of a metric graph
\newcommand{\sym}{{\sf M}} % The "symmetrization constant" for the lipshitz metric in the thick part
\newcommand{\foldim}{\textrm{Im}^\text{f}}
\newcommand{\scaleim}{\textrm{Im}^\text{sc}}
\newcommand{\nbhd}[2]{{\EuScript N}_{#1}(#2)} % basically the same thing as a ball, but #2 need not be a point
\newcommand{\dhaus}{d_{\mathrm{Haus}}} % notation for the Hausdorff distance
\newcommand{\I}{\mathbf{I}}
\newcommand{\Ipl}{\I_+}
\newcommand{\Imin}{\I_-}
\newcommand{\J}{\mathbf{J}}
\newcommand{\Jpl}{\J_+}
\newcommand{\Jmin}{\J_-}
\newcommand{\cay}[2]{\mathrm{Cay}({#2}, {#1})} % The #1--Cayley graph of a f.g group #2=<#1>
\begin{document}

\title{\textbf{\Large Hyperbolic extensions of free groups}}
\author{Spencer Dowdall and Samuel J. Taylor \thanks{
The first author was partially supported by the NSF postdoctoral fellowship, NSF MSPRF no. 1204814.
The second author was partially supported by a department fellowship from the University of Texas at Austin and by the NSF postdoctoral fellowship, NSF MSPRF no. 1400498. 
Both authors acknowledge support from U.S. National Science Foundation grants DMS 1107452, 1107263, 1107367 "RNMS: GEometric structures And Representation varieties" (the GEAR Network).
}}
\date{\today}

%\subjclass[2000]{Primary 20F65, Secondary 57M, 37B, 37E}

\maketitle

\begin{abstract}
Given a finitely generated subgroup $\Gamma \le \Out(\free)$ of the outer automorphism group of the rank $r$ free group $\free = F_r$, there is a corresponding free group extension \linebreak $1 \to \free \to E_{\Gamma} \to \Gamma \to 1$. We give sufficient conditions for when the extension $E_{\Gamma}$ is hyperbolic. In particular, we show that if all infinite order elements of $\Gamma$ are atoroidal and the action of $\Gamma$ on the free factor complex of $\free$ has a quasi-isometric orbit map, then $E_{\Gamma}$ is hyperbolic. As an application, we produce examples of hyperbolic $\free$--extensions $E_{\Gamma}$ for which $\Gamma$ has torsion and is not virtually cyclic. The proof of our main theorem involves a detailed study of  quasigeodesics in Outer space that make progress in the free factor complex. This may be of independent interest.
\end{abstract}

\setcounter{tocdepth}{1}
\begin{spacing}{0.9}\tableofcontents\end{spacing}

%%%%%%%%%%%%%%%%
\section{Introduction}
%%%%%%%%%%%%%%%% 
Let $\free = F_r$ denote the free group of rank $r \ge 3$ and consider its group $\Out(\free)$ of outer automorphisms. These groups fit into the short exact sequence
\[1 \longrightarrow \free \overset{i}{\longrightarrow} \Aut(\free) \overset{p}{\longrightarrow} \Out(\free) \longrightarrow 1,  \]
where $a \in \free$ is mapped to its corresponding inner automorphism $i_a$ defined by $x\mapsto axa^{-1}$ for $x\in \free$.
Hence, for any $\Gamma \le \Out(\free)$ we obtain the following extension of $\free$:
\[1 \longrightarrow \free \overset{i}{\longrightarrow} E_{\Gamma} \overset{p}{\longrightarrow} \Gamma \longrightarrow 1, \]
where $E_{\Gamma}$ is equal to the preimage $p^{-1}(\Gamma) \le \Aut(\free)$.
In fact, any extension of $\free$ induces a homomorphism to $\Out(\free)$ and thereby produces an extension of the above form (see \Cref{sec: extensions} for details). 
This paper will address the following question:\\ 

\emph{What conditions on $\Gamma \le \Out(\free)$ imply that the extension $E_{\Gamma}$ is a hyperbolic group? }\\

This question fits in to a long history of understanding hyperbolic group extensions that goes back to Thurston's work on the hyperbolization of fibered $3$--manifolds. From a group-theoretic perspective, the Bestvina--Feighn combination theorem \cite{BF92} provides a combinatorial framework to understand the structure of more general hyperbolic group extensions. Using this, Farb and Mosher's influential work \cite{FarbMosher} initiated the systematic study of hyperbolic extensions of surface groups (see  \Cref{sub: motivation}). Our answer to the question above continues this investigation in the setting of free group extensions. 

\subsection{Statements of results}
To state our main theorem, we briefly recall the relevant definitions and refer the reader to \Cref{sec: prelims} for additional details. First, an outer automorphism $\phi\in \Out(\free)$ is \define{atoroidal}, or \define{hyperbolic}, if no power of $\phi$ fixes any nontrivial conjugacy class in $\free$. 
Similarly, $\phi\in \Out(\free)$ is \define{fully irreducible} if no power of $\phi$ preserves the conjugacy class of any proper free factor of $\free$.
The \define{(free) factor complex} $\F$ for the free group $\free$ is the simplicial complex in which each $k$ simplex corresponds to a set $[A_0], \ldots, [A_k]$ of $k+1$ conjugacy classes of proper free factors of $\free$ with properly nested representatives: $A_0 < \dotsb < A_k$.
Note that there is an obvious simplicial action $\Out(\free) \curvearrowright \F$.
 We prove the following:

\begin{theorem}\label{th: intro_main_1}
Suppose that each infinite-order element of a finitely generated subgroup $\Gamma \le \Out(\free)$ is atoroidal and that some  orbit map $\Gamma \to \F$ is a quasi-isometric embedding. Then the free group extension $E_{\Gamma}$ is hyperbolic.
\end{theorem}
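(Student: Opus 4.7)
The plan is to apply a hyperbolic-extension combination theorem (in the style of Bestvina--Feighn, Mosher, or Mj--Sardar) to the short exact sequence $1 \to \free \to E_\Gamma \to \Gamma \to 1$. The fiber $\free$ is hyperbolic, and the base $\Gamma$ is hyperbolic because it quasi-isometrically embeds into the Gromov hyperbolic complex $\F$. The remaining hypothesis to verify is the \emph{hallways-flare} condition: along any bi-infinite quasigeodesic of $\Gamma$ and for every conjugacy class $\alpha$ of $\free$, the length of $g\cdot\alpha$, measured in a fixed marked rose, must grow exponentially in at least one direction, with constants independent of $\alpha$.

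To make this concrete, I would work in Culler--Vogtmann Outer space $\X$ together with the projection $\fproj \co \X \to \F$. Choose a basepoint $\bp \in \X$ with $\forbit$ a quasiconvex orbit in $\F$, which is possible by the QI hypothesis. Invoking \Cref{prop: geos_fellow_travel}, $\Gamma$-quasigeodesics are shadowed in $\X$ by \emph{$\F$-progressing} folding lines that fellow-travel one another; this converts $E_\Gamma$ into a concrete metric bundle whose fibers are marked graphs and along which conjugacy-class lengths evolve through a well-controlled train-track-like cocycle.

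The core step is to prove exponential flaring, which I would handle by contradiction. If flaring failed, one would obtain arbitrarily long $\Gamma$-quasigeodesic segments $\gamma_n$ and conjugacy classes $\alpha_n$ along which the length of $\alpha_n$ grew only subexponentially. Translating each $\gamma_n$ to pass near $\bp$ and using cocompactness of the $\Out(\free)$-action on the thick part of $\X$, one can extract a subsequential limit of the associated cocycle of automorphisms. The limit would either periodically preserve a conjugacy class (contradicting the atoroidality of every infinite-order element of $\Gamma$) or fail to make linear progress in $\F$ (contradicting the QI-embedding hypothesis). The fellow-traveling property is what makes this cocycle well-defined up to bounded error, so that the limit argument actually produces an element of $\Gamma$ to which the hypotheses can be applied.

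The principal obstacle is upgrading \emph{divergence} of conjugacy-class lengths into \emph{uniform exponential growth}. For a single fully irreducible atoroidal automorphism this follows from Bestvina--Feighn--Handel train-track theory together with Perron--Frobenius, but here one must handle every conjugacy class and every direction of every quasigeodesic in $\Gamma$ simultaneously, with no individual element assumed to be iwip. The fellow-traveling of $\F$-progressing quasigeodesics in $\X$ is precisely the tool that yields such uniformity: it supplies a coherent combinatorial framework along the bundle in which log-growth rates of conjugacy-class lengths are forced to be bounded away from zero by compactness combined with atoroidality. Once uniform flaring is in hand, the combination theorem immediately yields hyperbolicity of $E_\Gamma$.
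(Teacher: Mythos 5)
Your high-level strategy is the paper's: reduce hyperbolicity of $E_\Gamma$ to the Mj--Sardar combination theorem, identify flaring of conjugacy-class lengths along $\Gamma$-geodesics as the condition to verify (the paper calls this \emph{conjugacy flaring}), and use \Cref{prop: geos_fellow_travel} to shadow $\Gamma$-geodesics by folding paths near the orbit $\Gamma\cdot R$. You also correctly flag the main difficulty: no individual element of $\Gamma$ is assumed fully irreducible, so one cannot simply quote Perron--Frobenius. The gap is in how the flaring is actually established.

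You propose a contradiction-by-compactness argument: extract arbitrarily long $\Gamma$-quasigeodesics and conjugacy classes $\alpha_n$ flaring subexponentially, translate to a basepoint using cocompactness of $\Out(\free)\curvearrowright \os_\epsilon$, take a subsequential limit, and contradict atoroidality. Two things break. First, the classes $\alpha_n$ may have unbounded word length---the flaring condition is multiplicative, not absolute---so they need not converge to a conjugacy class, and passing to geodesic currents would lose the atoroidality hypothesis, which concerns honest conjugacy classes. Second, even a convergent limit of folding paths need not be periodic, so the assertion that the limit ``produces an element of $\Gamma$'' has no supporting mechanism; atoroidality cannot be invoked without an explicit infinite-order element fixing a class. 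The paper's argument is finitary and avoids both issues. It applies atoroidality only to \emph{short} classes: \Cref{not_short_for_long} is a pigeonhole lemma (using the bound $e_r = |\mathrm{GL}_r(\Z/3\Z)|$ from \Cref{lem: phyp_implies_finiteprobs}) showing that a class of bounded length persisting a long time along a folding path near $\Gamma\cdot R$ yields more than $e_r$ orbit elements with matching images, hence an infinite-order element of $\Gamma$ fixing the class. To reduce an arbitrary $\alpha$ to a short class, the paper decomposes $\ell(\alpha\vert G_t)$ into legal, illegal, and neutral lengths: legal length grows exponentially forward (\Cref{lem:folding growth}); illegal length grows exponentially backward, since a persisting cluster of illegal turns forces a short loop to stay short (\Cref{lem: dont_survive}, \Cref{lem: unfolding_growth}); and neutral length is dominated by legal length. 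Casework on which part dominates gives \Cref{flaring_along_folding}. This train-track decomposition is absent from your sketch, yet it is exactly the bridge from subexponential growth of a long class to preservation of a short loop, which is where atoroidality does its work.
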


\begin{remark}
Bestvina and Feighn have proven that the factor complex $\F$ is hyperbolic \cite{BFhyp}. Hence, the hypotheses of \Cref{th: intro_main_1} additionally imply that the subgroup $\Gamma$ is itself hyperbolic and that all infinite-order elements of $\Gamma$ are fully irreducible. See \Cref{sec: factor_complex} for details.
\end{remark}

\Cref{th: intro_main_1} provides combinatorial conditions on a subgroup $\Gamma \le \Out(\free)$ which guarantee that the corresponding extension $E_{\Gamma}$ is hyperbolic. This is similar to the better understood situation of hyperbolic extensions of surface groups. For surface group extensions, it follows from work of Farb--Mosher \cite{FarbMosher}, Kent--Leininger \cite{KentLein}, and Hamenst\"adt \cite{H}, that a subgroup $H$ of the mapping class group induces a hyperbolic extension of the surface group if and only if $H$ admits a quasi-isometric embedding into the curve complex of the surface.  See \Cref{sub: motivation} for details. 

\begin{remark}
Unlike the surface group case (c.f., \Cref{thm:mod_CC} below), the converse to \Cref{th: intro_main_1} does not hold: there exits subgroups $\Gamma\le\Out(\free)$ for which $E_\Gamma$ is hyperbolic but $\Gamma$ does not quasi-isometrically embed into $\fc$. For example, Brinkmann's \Cref{brink} below \cite{Brink} shows that any $\phi\in \Out(\free)$ that is atoroidal but not fully irreducible generates a cyclic subgroup of this form. 
\end{remark}

The proof of \Cref{th: intro_main_1} requires several steps and is completed in \Cref{sec: hyp_bundle} (see \Cref{for: main_result}). The first of these steps is to show that the assumption that the orbit map $\Gamma \to \F$ is a quasi-isometric embedding implies a strong quasiconvexity property for the orbit of $\Gamma$ in Outer space $\os$, the space of $\free$--marked metric graphs. This follows from our next main result, \Cref{prop: stab_intro} below, which says that quasigeodesics in Outer space that make definite progress in the factor complex are stable. For the statement, the injectivity radius of $G\in \os$ is the length of the shortest loop in the marked metric graph $G$, and the $\epsilon$--thick part $\os_\epsilon$ is the set of points with injectivity radius at least $\epsilon$. Additionally,  $\pi \colon \X \to \F$ denotes the (coarse) map that associates to each marked graph $G \in \os$ the collection $\pi(G)$ of nontrivial free factors that arise as the fundamental group of a proper subgraph of $G$.

\begin{thm:quasiconvex}

Let $\gamma\colon \I\to \os$ be a $K$--quasigeodesic  whose projection $\pi \circ \gamma\colon \I\to \fc$ is also a $K$--quasigeodesic. Then there exist constants $A,\epsilon > 0$ and $K'\ge 1$ depending only on $K$ (and the injectivity radius of the terminal endpoint $\gamma(\Ipl)$ when $\Ipl<\infty$) with the following property: If $\rho\colon \J\to \os$ is any geodesic with the same endpoints as $\gamma$, then
\begin{itemize}
\item[(i)] $\gamma(\I), \rho(\J)\subset \os_{\epsilon}$,
\item[(ii)] $d_{\mathrm{Haus}}(\gamma(\I),\rho(\J)) < A$, and
\item[(iii)] $\pi\circ \rho\colon \J\to \fc$ is a (parameterized) $K'$--quasigeodesic.
\end{itemize} 

\end{thm:quasiconvex}
In the statement of \Cref{prop: stab_intro}, $\gamma$ and $\rho$ are directed (quasi)geodesics with respect to the asymmetric Lipschitz metric $d_\os$ on Outer space, and $d_{\mathrm{Haus}}$ denotes the Hausdorff distance with respect to the symmetrized Lipschitz distance; see \Cref{sec: prelims} for a more detailed discussion of this terminology. \Cref{prop: stab_intro} is analogous to Hamenst\"{a}dt's stability theorem for quasigeodesics in Teichm\"{u}ller space that make definite progress in the curve complex \cite{Hamstability}.  \\

\Cref{th: intro_main_1} allows one to easily construct hyperbolic extensions of free groups using ping-pong arguments on hyperbolic $\Out(\free)$--graphs. For example, we can recover (\Cref{thm:free_construction}) the theorem of Bestvina--Feighn--Handel \cite{BFHlam} which states that if $f_1, \ldots,f_k$ are atoroidal, fully irreducible elements of $\Out(\free)$, then for all sufficiently large $N\ge 1$ the extension $E_{\Gamma}$ is hyperbolic for $\Gamma = \langle f_1^N, \ldots, f_k^N \rangle \le \Out(\free)$. (In \cite{BFHlam}, this is proven for $k=2$.) Further, we use \Cref{th: intro_main_1} to construct the first examples of hyperbolic free group extensions $E_{\Gamma}$ for which $\Gamma \le \Out(\free)$ has torsion and is not virtually cyclic. First, say that $f \in \Out(\free)$ is independent for a finite subgroup $H \le \Out(\free)$ if $f$ and $hfh^{-1}$ have no common powers for each $h \in H \setminus 1$.  We prove the following:

\begin{thm:application}
Let $H$ be a finite subgroup of $\Out(\free)$ and let $f \in \Out(\free)$ be a hyperbolic, fully irreducible outer automorphisms that is independent for $H$. Then for all sufficiently large $N \ge 1$, the subgroup
 \[ \Gamma = \langle H, f^N \rangle \]
 is isomorphic to $H* \Z$ and the $\free$-by-$(H*\Z)$ extension $E_{\Gamma}$ is hyperbolic.

\end{thm:application}

\subsection{Motivation from surface group extensions and some previous results} \label{sub: motivation}
In \cite{FarbMosher}, Farb and Mosher introduced  convex cocompact subgroups of $\Mod(S)$, the mapping class group of an orientable surface $S$. We will discus the case where $S$ is further assumed to be closed. A finitely generated subgroup $\Gamma \le \Mod(S)$ is \define{convex cocompact} if for some (any) $x \in \T(S)$, the Teichm\"{u}ller space of the surface $S$, the orbit $\Gamma \cdot x \subset \T(S)$ is quasiconvex with respect to the Teichm\"{u}ller metric. (See the papers of Farb--Mosher \cite{FarbMosher} and Kent--Leininger \cite{KentLein, KLsurvey} for definitions and details). Similar to the situation described above, a subgroup $\Gamma \le \Mod(S)$ gives rise to a surface group extension 
\[1 \longrightarrow \pi_1(S) \longrightarrow E_{\Gamma} \longrightarrow \Gamma \longrightarrow 1. \]
Farb and Mosher show that if $E_{\Gamma}$ is hyperbolic then $\Gamma$ is convex cocompact. Moreover, they prove that if $\Gamma$ is assumed to be free, then convex cocompactness of $\Gamma$ implies that the extension $E_{\Gamma}$ is hyperbolic \cite{FarbMosher}. The assumption that $\Gamma$ is free was later removed by Hamenst\"{a}dt in \cite{H}. Hence, the surface group extension $E_\Gamma$ is hyperbolic exactly when $\Gamma \le \Mod(S)$ is convex cocompact. We note that the first examples of hyperbolic surface group extensions follow from work of Thurston, whose geometrization theorem for fibered $3$--manifolds produces examples of hyperbolic surface-by-cyclic groups. Later, Mosher \cite{Mosherhypbyhyp} constructed more general hyperbolic surface-by-free groups using the Bestvina--Feighn combination theorem \cite{BF92}.

Since their introduction, there have been several additional characterizations of convex cocompact subgroups of $\Mod(S)$. A particularly useful characterization of convex cocompactness is the following theorem of Kent--Leininger and Hamenst\"{a}dt. In the statement, $\C(S)$ denotes  the curve complex for the closed surface $S$. 

\begin{theorem}[Kent--Leininger \cite{KentLein}, Hamenst\"adt \cite{H}]
\label{thm:mod_CC}
A finitely generated subgroup $\Gamma \le \Mod(S)$ is convex cocompact if and only if some (any) orbit map $\Gamma \to \C(S)$ is a quasi-isometric embedding.
\end{theorem}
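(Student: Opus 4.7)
The plan is to prove the two implications separately using three standard tools: hyperbolicity of $\C(S)$, the coarsely $\Mod(S)$-equivariant and coarsely Lipschitz \emph{systole map} $\pi \colon \T(S) \to \C(S)$ (sending a marked hyperbolic structure to one of its shortest simple closed geodesics), and the Masur--Minsky theorem that the $\pi$-image of any Teichm\"uller geodesic is an unparametrized quasigeodesic in $\C(S)$, which becomes a genuinely parametrized quasigeodesic as long as the geodesic stays in a thick part $\T(S)_\epsilon$.

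For the forward direction, suppose $\Gamma$ is convex cocompact with invariant quasiconvex set $Y \subset \T(S)$ on which it acts cocompactly. Properness of the $\Mod(S)$-action forces $Y$ into some thick part, and the \v{S}varc--Milnor lemma identifies the orbit map $\Gamma \to Y$ as a quasi-isometry. Composing with $\pi$ yields a coarsely Lipschitz map $\Gamma \to \C(S)$. For the lower bound, connect $g_1 x$ and $g_2 x$ by a Teichm\"uller geodesic $\gamma$, which by quasiconvexity stays near $Y$ and hence in a thick part; the Masur--Minsky theorem then makes $\pi \circ \gamma$ a parametrized quasigeodesic in $\C(S)$, so that $d_{\C(S)}(\pi(g_1 x), \pi(g_2 x))$ is comparable to the Teichm\"uller length of $\gamma$, which via \v{S}varc--Milnor is comparable to $d_\Gamma(g_1, g_2)$.

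For the backward direction, assume the orbit map $\Gamma \to \C(S)$ is a quasi-isometric embedding, take $g_1, g_2 \in \Gamma$, and let $\gamma$ be the Teichm\"uller geodesic from $g_1 x$ to $g_2 x$. By Masur--Minsky, $\pi \circ \gamma$ is an unparametrized quasigeodesic joining two points of the $\Gamma$-orbit in $\C(S)$; by the QI hypothesis and $\delta$-hyperbolicity of $\C(S)$, it fellow-travels the geodesic of $\Gamma \cdot \pi(x)$ between its endpoints, and hence makes \emph{definite linear progress} in $\C(S)$. A Rafi-style thick--thin analysis---equivalently, Hamenst\"adt's stability theorem for Teichm\"uller quasigeodesics that progress in $\C(S)$---then forces $\gamma$ into a uniform thick part $\T(S)_\epsilon$. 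To exhibit orbit points close to each $\gamma(t)$: the QI hypothesis produces $g_t \in \Gamma$ with $\pi(g_t x)$ close to $\pi(\gamma(t))$ in $\C(S)$, and the thickness of $\gamma$ combined with fellow-traveling of Teichm\"uller geodesics within the thick part upgrades this to Teichm\"uller-proximity of $g_t x$ and $\gamma(t)$, giving quasiconvexity of $\Gamma \cdot x$.

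The main obstacle is exactly this last step of the backward direction: coarse fibers of $\pi$ are huge (a single short curve detects an entire thin stratum), so $\C(S)$-closeness does not translate directly into $\T(S)$-closeness. Genuine use of the thickness of $\gamma$ is required, since only in the thick part does Teichm\"uller geometry behave hyperbolically relative to the $\C(S)$-projection. This thick-part stability is precisely what Hamenst\"adt's theorem supplies, and it is directly analogous to the Outer space stability result (\Cref{prop: stab_intro}) that the present paper establishes in order to carry out the corresponding argument for $\Out(\free)$.
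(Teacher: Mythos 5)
The theorem you are asked to address is not proved in this paper at all: it is stated as a citation to Kent--Leininger and Hamenst\"adt and used purely as motivation for the $\Out(\free)$ analogue that the paper does prove. So there is no ``paper's proof'' to compare against, and I can only assess your sketch against the published arguments.

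Your outline is essentially the standard one and correctly identifies all the moving parts: the coarsely Lipschitz, coarsely equivariant systole projection $\pi\colon \T(S)\to \C(S)$; Masur--Minsky's theorem that projections of Teichm\"uller geodesics are unparametrized quasigeodesics, becoming parametrized in a thick part; Farb--Mosher's definition via quasiconvex orbits; the \v{S}varc--Milnor comparison; and, for the converse, a thick-part stability/contraction argument. You also correctly flag the genuine difficulty in the converse direction -- that coarse fibers of $\pi$ are enormous, so closeness in $\C(S)$ gives nothing in $\T(S)$ by itself -- and correctly attribute its resolution to Hamenst\"adt-style stability for $\C(S)$--progressing Teichm\"uller quasigeodesics, which is indeed exactly what the present paper's \Cref{prop: stab_intro} transplants to Outer space.

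Two points are worth tightening. First, in the converse direction, the passage from ``$\pi\circ\gamma$ is an unparametrized quasigeodesic whose endpoints are far apart'' to ``$\gamma$ stays thick'' is logically subtler than you make it sound: an unparametrized quasigeodesic can stall, and the definite-progress claim you need is a local one, not an endpoint estimate. The actual mechanism is the contrapositive -- thin segments of Teichm\"uller geodesics make slow $\C(S)$ progress (a Rafi-type estimate, mirrored in this paper by \Cref{lem:thin_progress_is_slow}), so long thin stretches would contradict the $\C(S)$--distance between the endpoints forced by the QI hypothesis combined with the coarse Lipschitz bound $d_\T(g_1x,g_2x)\leq K'd_\Gamma(g_1,g_2)$. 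Second, your phrasing ``the QI hypothesis produces $g_t$ with $\pi(g_t x)$ close to $\pi(\gamma(t))$'' should be rerouted through fellow-traveling of quasigeodesics in the hyperbolic space $\C(S)$: the orbit path and $\pi\circ\gamma$ share endpoints and are both quasigeodesics, hence fellow-travel, which is what supplies the $g_t$. These are fixable gaps of exposition rather than of conception; as a high-level reconstruction of the Kent--Leininger/Hamenst\"adt argument, the proposal is on target.
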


From this we see that the surface group extension $E_\Gamma$ is hyperbolic if the orbit map from $\Gamma\le \Mod(S)$ into the curve complex is a quasi-isometric embedding. Hence, strong geometric features of surface group extensions arise from combinatorial conditions on their corresponding subgroups of $\Mod(S)$. With \Cref{th: intro_main_1}, we provide analogous conditions under which combinatorial information about a subgroup $\Gamma\le\Out(\free)$ implies geometric information about the corresponding free group extension $E_\Gamma$.

\begin{remark} 
The condition that every infinite order element of $\Gamma$ is atoroidal is necessary for $E_{\Gamma}$ to be hyperbolic, but this condition is not implied by having a quasi-isometric orbit map into the factor complex $\fc$. This contrasts the surface group situation (c.f., \Cref{thm:mod_CC}), where having a quasi-isometric orbit map $\Gamma\to\C(S)$ automatically implies every infinite order element of $\Gamma$ is pseudo-Anosov. Indeed, there are elements of $\Out(\free)$ that act with positive translation length on $\F$ but are not atoroidal. By Bestvina--Handel \cite{BH92}, these all arise as pseudo-Anosov mapping classes on surfaces with a single puncture. Since such outer automorphisms each fix a conjugacy class in $\free$ (corresponding to the loop enclosing the puncture), they cannot be contained in a subgroup $\Gamma$ for which $E_{\Gamma}$ is hyperbolic. 
\end{remark}

We conclude this section with a brief review of previous examples of hyperbolic extensions of free groups. In \cite{BF92}, Bestvina and Feighn produce examples of hyperbolic free-by-cyclic groups (i.e. $\Gamma \cong \Z$) using automorphisms assumed to satisfy the Bestvina--Feighn flaring conditions. Later, Brinkmann showed that any atoroidal automorphism induces a hyperbolic free-by-cyclic group by showing that all such automorphisms satisfy these flaring conditions \cite{Brink}. This is recorded in \Cref{brink} below. 

The first examples where $\Gamma \le \Out(\free)$ is not cyclic are given in \cite{BFHlam}. There, Bestvina, Feighn, and Handel show that if one starts with fully irreducible and atoroidal elements $\phi,\psi \in \Out(\free)$ that do not have a common power, then there is an $N \ge 1$ such that $\Gamma = \langle \phi^N ,\psi^N \rangle$ is a rank $2$ free group and the corresponding extension $E_{\Gamma}$ is hyperbolic. A different proof of this fact (still using the Bestvina--Feighn combination theorem) is given by Kapovich and Lustig, who additionally show that for large $N$ each nonidentity element of $\Gamma$ is fully irreducible \cite{KLping}.

\subsection{Outline of proof}
To show that the extension $E_{\Gamma}$ is hyperbolic, we use the combination theorem of Mj--Sardar \cite{MjSardar}, which is recalled in \Cref{sec:metric_bundles}. Their theorem states that if a metric graph bundle satisfies a certain \emph{flaring property} (terminology coming from the Bestvina--Feighn combination theorem), then the bundle is hyperbolic. Using the map between the Cayley graphs of $E_{\Gamma}$ and $\Gamma$ as our graph bundle, we show in \Cref{sec: hyp_bundle} that this flaring property is implied by the following conjugacy flaring property of $\Gamma \le \Out(\free)$. First let $S$ be a finite symmetric generating set for $\Gamma$ with associated word norm $\abs{\cdot}_S$. Also fix a basis $X$ for $\free$. We say that $\Gamma$ has \define{$(\lambda ,M)$--conjugacy flaring} for the given $\lambda > 1$ and positive integer $M\in \N$ if the following condition is satisfied:
\begin{itemize}
\item[] For all $\alpha \in \free$ and $g_1,g_2 \in \Gamma$ with $\abs{g_i}_S \ge M$ and $\abs{g_1g_2}_S = \abs{g_1}_S +\abs{g_2}_S$, we have
\[\lambda \norm{\alpha}_X \le \max \left\{\norm{g_1(\alpha)}_X, \norm{g_2\inv(\alpha)}_X\right\},\]
where $\norm{\cdot}_X$ denotes conjugacy length (i.e., the shortest word length with respect to $X$ of any element in the given conjugacy class).
\end{itemize}
\Cref{prop: bundle_flaring} shows that if $\Gamma \le \Out(\free)$ has conjugacy flaring, then $E_\Gamma$ has the Mj--Sardar flaring property and, hence, $E_{\Gamma}$ is hyperbolic.
Thus it suffices to show that any $\Gamma\le \Out(\free)$ satisfying the hypotheses of \Cref{th: intro_main_1} has conjugacy flaring. This is accomplished by using the geometry of Outer space.

First, \Cref{prop: stab_intro} is used to show that geodesic words in $(\Gamma, \abs{\cdot}_S)$ are sent via the orbit map $\Gamma \to \X$ to quasigeodesics that fellow travel a special class of paths in $\X$, called \define{folding paths}. Therefore, by the definition of distance in $\os$ (\Cref{pro: distance}), the conjugacy length of $\alpha\in \free$ along the quasigeodesic in $\Gamma$ is proportional to the conjugacy length of $\alpha$ along the nearby folding path. Thus it suffices to show that the length of every conjugacy class ``flares'' along any folding path that remains close to the orbit of $\Gamma$ in $\X$, meaning that the length grows at a definite exponential rate in either the forwards or backwards (see \Cref{sec: qcx_implies_flar} for details.) \Cref{flaring_along_folding} proves exactly this type of flaring  for folding paths that remain close to the orbit of any group $\Gamma$ that satisfies the hypotheses of \Cref{th: intro_main_1}.

To summarize: If the orbit map $\Gamma\to \fc$ is a quasi-isometric embedding and every infinite order element of $\Gamma$ is atoroidal then folding paths between points in the orbit $\Gamma \cdot R$ (for $R \in \X$) have the flaring property (\Cref{sec: qcx_implies_flar}). This, together with the fact that these folding paths fellow travel the image of geodesics in the group $\Gamma$ (\Cref{prop: geos_fellow_travel}), implies that $\Gamma$ has conjugacy flaring (\Cref{qcx_implies_flaring}). Finally, \Cref{prop: bundle_flaring} shows that conjugacy flaring of $\Gamma$ implies that the hypothesis of the Mj--Sardar theorem are satisfied and that $\Gamma$ is hyperbolic.

\subsection{Acknowledgments}
We would like to thank Chris Leininger, Alan Reid, Patrick Reynolds, and Caglar Uyanik for helpful conversations, as well as Ilya Kapovich for valuable comments on an earlier versions of this paper. 
We also thank the anonymous referee for a careful reading of the paper and for several helpful suggestions.
We are grateful to the University of Illinois at Urbana-Champaign and the University of Texas at Austin for hosting the authors during different stages of this project, and to the GEAR Network for supporting a trip during which this paper was completed.

%%%%%%%%%%%%%%%%%%%%%%%
\section{Preliminaries} \label{sec: prelims}
%%%%%%%%%%%%%%%%%%%%%%

\subsection{Paths}
Throughout this paper, the notation $\I$ (or sometimes $\mathbf{J}$) will be used to denote a closed, connected interval $\I\subseteq \R$. We write $\I_\pm\in \R\cup \{\pm\infty\}$ for the positive and negative endpoints of $\I$, respectively, and correspondingly write $\I = [\Imin,\Ipl]$. By a \define{discrete interval}, we simply mean the integer points $\I\cap \Z$ of an interval $\I\subset \R$. 

A \define{path} in a topological space $Y$ is a map $\gamma\colon \I\to Y$. If $Y$ is a metric space, then the path $\gamma$ is said to be a \define{geodesic} if $d_Y(\gamma(a),\gamma(b)) = \abs{a-b}$ for all $a,b\in \I$ (that is, if $\gamma$ is an isometric embedding of $\I$ into $Y$). A \define{discrete geodesic} is similarly a map $\gamma\colon (\I\cap \Z) \to Y$ of a discrete interval into $Y$ so that $d_Y(\gamma(a),\gamma(b))=\abs{a-b}$ for all $a,b\in \I\cap \Z$. The space $Y$ is a said to be a \define{geodesic metric space} if it is a metric space and for any points $y_+,y_-\in Y$ there exists a finite geodesic $\gamma\colon \I\to Y$ with $\gamma(\I_\pm) = y_\pm$. 

\subsection{Coarse geometry}
Suppose that $X$ and $Y$ are metric spaces. Given a constant $K\geq 1$, a map $f\colon X\to Y$ is said to be a \define{$K$--quasi-isometric embedding} if for all $a,b\in X$ we have
\begin{equation*}\label{eqn:QI}
\frac{1}{K} d_X(a,b) - K \leq d_Y(f(a),f(b)) \leq K d_X(a,b) + K.
\end{equation*}
More generally, the map is said to be \define{coarsely $K$--Lipschitz} if the rightmost inequality above holds. A \define{$K$--quasi-isometry} is a $K$--quasi-isometric embedding $f\colon X\to Y$ whose image $f(X)$ is $D$--dense for some $D\ge 0$. (This the equivalent to the existence of a $K'$--quasi-isometric embedding $g\colon Y\to X$ for which $f\circ g$ and $g\circ f$ are within bounded distance of $\operatorname{Id}_Y$ and $\operatorname{Id}_X$, respectively.)

A \define{$K$--quasigeodesic} in a metric space $Y$ is a $K$--quasi-isometric embedding $\gamma\colon \I\to Y$ of an interval $\I\subset \R$ into $Y$. Similarly, a \define{discrete $K$--quasigeodesic} is a $K$--quasi-isometric embedding $\gamma\colon (\I\cap \Z)\to Y$ of a discrete interval into $Y$.

For $A\ge 0$, the \define{$A$--neighborhood} of a subset $Z$ of a metric space $Y$ will be denoted
\[\nbhd{A}{Z} \colonequals \big\{y\in Y \mid \inf \{d(z,y) \mid z\in Z\}< A \big\}.\]
The \define{Hausdorff distance} between two subsets $Z,Z'\subset Y$ is then defined to be
\[\dhaus(Z,Z') \colonequals \inf \big\{\epsilon > 0 \mid Z\subset \nbhd{\epsilon}{Z'}\text{ and }Z'\subset \nbhd{\epsilon}{Z}\big\}.\]
Finally, when $Y$ is a geodesic metric space, a subset $Z\subset Y$ is said to be \define{$A$--quasiconvex} if every (finite) geodesic with endpoints in $Z$ is contained $\nbhd{A}{Z}$.

\subsection{Gromov hyperbolicity}
Given $\delta \ge 0$, a geodesic metric space $Y$ is \define{$\delta$--hyperbolic} if every geodesic triangle $\triangle$ in $Y$ is \define{$\delta$--thin}, meaning that each side of $\triangle$ lies in the $\delta$--neighborhood of the union of the other two sides. A metric space is \define{hyperbolic} if it is $\delta$--hyperbolic for some $\delta\geq 0$. It is a fact (see \cite[Proposition III.H.1.17]{BH}) that if $X$ is a $\delta$--hyperbolic space then there is a constant $\delta' = \delta'(\delta)$ such that every triangle $\triangle$ has a \define{$\delta'$--barycenter}, meaning a point $c\in X$ that lies within $\delta'$ of each side of $\triangle$.

Every hyperbolic metric space $Y$ has a \define{Gromov boundary} $\partial Y$ defined to be the set of equivalence classes of admissible sequences in $Y$, where a sequence $\{y_n\}$ is admissible if if $\lim_{n,m}(y_n\vert y_m)_x = \infty$ and two sequences $\{y_n\},\{z_n\}$ are equivalent if $\lim_{n,m}(y_n\vert z_m)_x = \infty$ for some basepoint $x\in Y$ (here $(a\vert b)_x$ denotes the \define{Gromov product} $(d(a,x)+d(b,x)-d(a,b))/2$ of $a,b\in Y$ with respect to $x\in Y$). 
Notice that by the triangle inequality, the notions of ``admissible'' and ``equivalent'' do not depend on the point $x \in Y$.
One says that the admissible sequence $y_1,y_2,\dotsc\in Y$ \define{converges} to the point $\{y_n\}\in \partial Y$. 
In particular, every quasigeodesic ray $\gamma\colon [0,\infty)\to Y$ converges to a well-defined \define{endpoint at infinity} $\gamma(\infty)\colonequals \{\gamma(n)\}_{n=1}^\infty\in \partial Y$, and we note that any two quasigeodesic rays whose images have finite Hausdorff distance converge to the same endpoint at infinity. We refer the reader to \cite[Section 2.2]{buyalo2007elements} for additional details.

Consequently, to each quasigeodesic $\gamma\colon \I\to Y$ we may associate two well-defined \define{endpoints} $\gamma(\Ipl),\gamma(\Imin)\in Y\cup \partial Y$, where $\gamma(\I_\pm)$ is understood to be a point of $\partial Y$ when $\I_\pm=\pm\infty$ and is a point of $Y$ when $\I_\pm\in \R$. With this terminology, we have the following well-known consequence of hyperbolicity; see \cite[Theorem III.H.1.7]{BH} for a proof.

\begin{proposition}[Stability of quasigeodesics]
\label{prop:general_stability_for_quasis}
For any given $K,\delta > 0$, there exists a \define{stability constant} $R_0 = R_0(\delta,K) > 0$ with the following property: Let $Y$ be a $\delta$--hyperbolic space. If $\gamma\colon \I\to Y$ and $\rho\colon \J\to Y$ are $K$--quasigeodesics with the same endpoints, then $\gamma(\I)$ and $\gamma'(\I')$ have Hausdorff distance at most $R_0$ from each other.
\end{proposition}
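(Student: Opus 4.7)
The plan is to reduce to the one-quasigeodesic-vs-geodesic case, establish the Morse lemma there, and recombine. Concretely, since $Y$ is a geodesic metric space (implicit in having geodesic triangles), given two $K$--quasigeodesics $\gamma,\rho$ with matching endpoints I choose any geodesic $\sigma$ connecting those endpoints. If I can produce a constant $R_1 = R_1(\delta,K)$ so that every $K$--quasigeodesic lies within Hausdorff distance $R_1$ of every geodesic sharing its endpoints, then the triangle inequality for $d_{\mathrm{Haus}}$ gives $R_0 := 2R_1$, completing the proof. (The endpoints are understood in $Y\cup\partial Y$; if an endpoint is at infinity, one argues on exhausting compact subintervals and passes to the limit.)

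The first technical step is to replace $\gamma$ by a \emph{tame} (continuous) approximation $\tilde\gamma$ with quasi-isometry constants depending only on $K$. I would do this by interpolating between successive integer-parameter points $\gamma(n)$ via geodesic segments in $Y$; the resulting path is rectifiable on each unit interval and remains a $K'$--quasigeodesic for some $K' = K'(K)$. This reduction is standard, and the eventual bound on $d_{\mathrm{Haus}}(\gamma,\sigma)$ then transfers back to the original $\gamma$ up to an additive constant.

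The heart of the argument is a thin-quadrilateral estimate. Set $D := \sup_{t}d(\tilde\gamma(t),\sigma)$ and, supposing $D$ is large, pick $t_0$ realizing it (approximately). Using continuity of $\tilde\gamma$ and the fact that its endpoints lie on $\sigma$, find the maximal interval $[s_-,s_+]\ni t_0$ on which $\tilde\gamma$ stays outside $\nbhd{D/2}{\sigma}$. Let $q_\pm\in\sigma$ be nearest points to $\tilde\gamma(s_\pm)$, and form the quadrilateral whose sides are $\tilde\gamma|_{[s_-,s_+]}$, short geodesics from $\tilde\gamma(s_\pm)$ to $q_\pm$ of length $\le D/2$, and the subsegment $[q_-,q_+]\subset\sigma$. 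Applying $\delta$--thinness to the two triangulations of this quadrilateral shows that the point $\tilde\gamma(t_0)$, at distance $D$ from $\sigma$, must in fact be within $D/2 + 2\delta$ of one of the short geodesics of length $\le D/2$; this forces $\abs{t_0-s_\pm}$ to be bounded in terms of $D$ and $\delta$ via the quasi-isometry inequality. Iterating (or equivalently invoking exponential divergence of geodesics in $\delta$--hyperbolic spaces) converts this into a contradiction once $D$ exceeds an explicit function of $\delta$ and $K$, giving $\tilde\gamma(\I)\subset\nbhd{R_1}{\sigma}$. The reverse inclusion $\sigma\subset\nbhd{R_1}{\tilde\gamma(\I)}$ follows because consecutive pairs of nearest-point projections of $\tilde\gamma(n)$ onto $\sigma$ exhaust $\sigma$ and, if some $p\in\sigma$ between two such projections were far from $\tilde\gamma$, a thin-quadrilateral argument as above would again violate the quasi-isometric bound.

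The main obstacle is bookkeeping: the quadrilateral argument is morally straightforward, but obtaining an explicit $R_0(\delta,K)$ requires propagating the constants carefully through the tame approximation, the choice of $[s_-,s_+]$, and the iteration. Aside from that, the proof is a standard application of thin triangles and the quasi-isometric length bound; no properties of $Y$ beyond being a geodesic $\delta$--hyperbolic space are used.
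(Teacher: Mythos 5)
The paper does not itself prove this proposition --- it is cited to Bridson--Haefliger --- so you are re-deriving the Morse lemma from scratch. Your scaffolding (reduce to quasigeodesic-vs-geodesic by the triangle inequality, pass to a tame continuous reparametrization, bound the excursion, get the reverse inclusion from a projection argument, treat ideal endpoints by exhausting finite subintervals) is exactly the standard architecture and is sound.

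The thin-quadrilateral step, however, has a real gap. You form a quadrilateral one of whose sides is the quasigeodesic arc $\tilde\gamma|_{[s_-,s_+]}$ and then invoke $\delta$--thinness; but thinness applies only to polygons all of whose sides are geodesics. Replacing that side by the geodesic $[\tilde\gamma(s_-),\tilde\gamma(s_+)]$ gives a genuine geodesic quadrilateral, but then thinness controls points on \emph{that} geodesic, not the point $\tilde\gamma(t_0)$ on the quasigeodesic --- and transferring between them is precisely the Morse lemma you are trying to prove, so the argument is circular. Moreover, even granting the step, the conclusion you draw ($\tilde\gamma(t_0)$ lies within $D/2+2\delta$ of a geodesic of length at most $D/2$ whose far endpoint is on $\sigma$) yields only $d(\tilde\gamma(t_0),\sigma)\le D+2\delta$, which is no contradiction with $d(\tilde\gamma(t_0),\sigma)=D$. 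The ingredient that actually closes the argument is the one you mention only as an aside: the exponential-divergence (logarithmic-cutting) estimate, namely that any rectifiable path joining two points of a geodesic and avoiding a ball of radius $R$ about some point of that geodesic has length at least $2^{(R-1)/\delta}$. Applied to the concatenation of a short geodesic, the arc $\tilde\gamma|_{[s_-,s_+]}$, and another short geodesic --- whose total length the taming and the quasi-isometric inequality bound linearly in $D$ --- this gives the bound on $D$ outright, with no iteration and no quadrilateral. Leading with that lemma rather than with thinness turns your sketch into Bridson--Haefliger's proof.
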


Thinness of triangles in a hyperbolic spaces extends to ideal triangles. That is, given $\delta\ge 0$ there is a constant $\delta''$ such that every geodesic triangle with vertices in $X\cup \partial X$ is $\delta''$--thin, and there exists a barycenter point $c\in X$ that lies within $\delta''$ of each side of the triangle \cite[Theorem 6.24]{Jussi}.  

\paragraph{Hyperbolic groups.} Let $\Gamma$ be a finitely generated group. For any finite generating set $S$, we may build the corresponding Cayley graph $\cay{S}{\Gamma}$ and equip it with the path metric in  which all edges have length one. The group $\Gamma$ is then given the subspace metric, which is equal to the word metric for the given generating set $S$. Up to quasi-isometry, this metric is independent of the choice of generating set. Since the inclusion $\Gamma\hookrightarrow \cay{S}{\Gamma}$ is a $1$--quasi-isometry, we often blur the distinction between $\Gamma$ and its Cayley graph when considering $\Gamma$ as a metric space.
Accordingly, the group $\Gamma$ is said to be \define{$\delta$--hyperbolic} if there is a finite generating set whose Cayley graph is $\delta$--hyperbolic. In this case, boundary $\partial \Gamma$ of $\Gamma$ is defined to be the Gromov boundary of the Cayley graph. Equivalently $\partial \Gamma$ is the set of equivalence classes of discrete quasigeodesic rays $\gamma\colon \N\to \Gamma$. 

\subsection{Metric bundles}
\label{sec:metric_bundles}
We will make use of the concept of metric graph bundles introduced by Mj and Sardar in \cite{MjSardar}. Let $X$ and $B$ be connected graphs equipped their respective path metrics (in which each edge has length $1$), and let $p\colon X\to B$ be a simplicial surjection. Write $V(B)$ for the vertex set of the graph $B$. We say that $X$ is a \define{metric graph bundle over $B$} if there is a function $f\colon \N\to \N$ so that
\begin{itemize}
\item For each vertex $b\in V(B)$, the fiber $F_b = p\inv(b)$ is a connected subgraph of $X$ and the induced path metric $d_b$ on $F_b$ satisfies $d_b(x,y) \le f(d_X(x,y))$ for all vertices $x,y$ of $F_b$.
\item For any adjacent vertices $b_1,b_2\in V(B)$ and any vertex $x_1\in F_{b_1}$, there is a vertex $x_2\in F_{b_2}$ that is adjacent to $x_1$.
\end{itemize} 

Suppose now that $p\colon X\to B$ is a metric graph bundle. By a \define{$k$--qi lift} of a geodesic $\gamma\colon \I\to B$ (where $k\ge 1$) we mean any $k$--quasigeodesic $\tilde{\gamma}\colon \I\to X$ such that $p(\tilde{\gamma}(n)) = \gamma(n)$ for all $n\in \I\cap \Z$. We then say that the metric bundle $p\colon X\to B$ satisfies the \define{flaring condition} if for all $k \ge 1$ there exists $\lambda_k > 1$ and $n_k,M_k\in \N$ such that the following holds: For any geodesic $\gamma\colon[-n_k,n_k]\to B$ and any two $k$--qi lifts $\tilde{\gamma}_1$ and $\tilde{\gamma}_2$ satisfying $d_{\gamma(0)}(\tilde{\gamma}_1(0),\tilde{\gamma}_2(0)) \ge M_k$ we have
\[\lambda_k \cdot d_{\gamma(0)}(\tilde{\gamma}_1(0),\tilde{\gamma}_2(0)) \le \max\{d_{\gamma(n_k)}(\tilde{\gamma}_1(n_k),\tilde{\gamma}_2(n_k)), d_{\gamma(-n_k)}(\tilde{\gamma}_1(-n_k),\tilde{\gamma}_2(-n_k))\}.\]

The following combination theorem of Mj and Sardar \cite{MjSardar} is the key tool that allows us to prove hyperbolicity of group extensions. It builds on the original Bestvina--Feighn combination theorem \cite{BF92} (in the case where $B$ is a tree) and is also related to a combination theorem of Hamenst\"adt \cite{H}.

\begin{theorem}[Mj--Sardar \cite{MjSardar}]
\label{thm:bundle_hyperbolicity}
Suppose that a metric graph bundle $p\colon X\to B$ satisfies:
\begin{enumerate}
\item $B$ is $\delta$--hyperbolic, and each fiber $F_b = p\inv(b)$, for $b\in V(B)$, is $\delta$--hyperbolic with respect to the path metric $d_b$ induced by $X$,
\item for each $b\in V(B)$, the set of barycenters of ideal triangles in $F_b$ is $D$--dense, and
\item the flaring condition holds.
\end{enumerate}
Then $X$ is a hyperbolic metric space.
\end{theorem}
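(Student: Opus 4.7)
My plan is to prove hyperbolicity of $X$ by exhibiting a family $\mathcal{P}$ of uniform quasigeodesics joining arbitrary pairs of vertices of $X$ and showing that triangles built from paths in $\mathcal{P}$ are uniformly thin. Following the template of Bestvina--Feighn \cite{BF92} and Farb--Mosher \cite{FarbMosher}, the argument has three layers: (i) qi-lifts of $B$-geodesics give quasigeodesics in $X$; (ii) \emph{ladders} over $B$-geodesics are themselves uniformly hyperbolic and quasi-isometrically embedded; and (iii) triangles built from such ladders are thin because their $B$-projections are $\delta$-thin and the fiber over a $B$-barycenter is $\delta$-hyperbolic with a $D$-dense set of ideal barycenters.

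The first step is to show that, for each $k\geq 1$, every $k$--qi lift $\tilde{\gamma}\colon \I\to X$ of a geodesic $\gamma\colon\I\to B$ is a $K$--quasigeodesic in $X$ for some $K=K(k,\delta,f,\lambda_k,n_k,M_k)$. If not, then there is a subsegment $\tilde{\gamma}|_{[a,b]}$ on which $d_X(\tilde\gamma(a),\tilde\gamma(b))$ is much smaller than $b-a$. A short $X$-path between these endpoints produces, via the inequality $d_b \le f(d_X)$ in the definition of a metric graph bundle, two $k$--qi lifts defined on $[a,b]$ whose fiberwise distance at the midpoint is small while their fiberwise distance at the endpoints is large. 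Iterating the flaring condition of hypothesis (3) in blocks of size $n_k$ yields an exponential lower bound on the endpoint fiberwise distance in terms of the midpoint distance, and this contradicts the upper bound once $b-a$ is sufficiently large.

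With qi-lifts quasigeodesic, I construct, for any $x_\pm$ with $p(x_\pm)=b_\pm$ and any $B$-geodesic $\gamma$ from $b_-$ to $b_+$, a \emph{ladder} $L(\gamma,x_-,x_+)$ obtained by choosing, for each $n\in \I\cap\Z$, a fiber geodesic in $F_{\gamma(n)}$ joining a canonical qi-lift of $x_-$ to a canonical qi-lift of $x_+$, together with bundle edges connecting consecutive fibers. The preferred path $\rho(x_-,x_+)\in\mathcal{P}$ is obtained by reading off one such ladder; a Morse-style argument combining step one, the fiber $\delta$-hyperbolicity of hypothesis (1), and the flaring condition then shows $\rho$ is a $K$--quasigeodesic with $K$ independent of $x_\pm$. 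The same ingredients show that any two preferred paths in a common ladder fellow-travel in each fiber, with fiber distance controlled by $\lambda_k^{-1}$; i.e., the ladder is uniformly hyperbolic and quasi-isometrically embedded.

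For thinness, given $x_1,x_2,x_3\in X$, project to a $\delta$-thin triangle in $B$ with barycenter $b_0$. Over each side of the $B$-triangle, the ladder fellow-traveling lifts $\delta$-thinness of the $B$-triangle to a uniform thin-ness constant in $X$ away from $b_0$. Near $b_0$, invoke hypothesis (2) to obtain a $D$-barycenter $c\in F_{b_0}$ of the ideal triangle in $F_{b_0}$ spanned by the three traces of the preferred-path sides; fiber-hyperbolicity then forces each side to pass within uniform distance of $c$. The resulting uniform thin-ness constant for $\mathcal{P}$-triangles, combined with $\mathcal{P}$-paths being uniform quasigeodesics, suffices to conclude that $X$ is Gromov hyperbolic. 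The hard part, and the technical heart of the argument, is step two: one must bootstrap ``$\rho$ is a quasigeodesic'' and ``ladder paths fellow-travel'' simultaneously, because without fellow-traveling one cannot rule out shortcuts that would destroy the quasigeodesic property, while without the quasigeodesic property the flaring condition cannot be applied fiber by fiber. The iterated, exponential (rather than merely linear) use of flaring is what closes this loop.
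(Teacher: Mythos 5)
This statement is Theorem 2.4 in the paper, which is \emph{cited} from Mj--Sardar \cite{MjSardar} and used as a black box; the paper provides no proof of it. So there is no internal proof to compare against — what follows evaluates your proposal on its own terms, and against how Mj--Sardar's argument actually goes.

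Your step one is vacuous as stated. By the paper's definition, a $k$--qi lift of a $B$-geodesic $\gamma$ is \emph{already} a $k$-quasigeodesic in $X$; moreover even if one weakened the definition to a coarse lift satisfying $d_X(\tilde\gamma(n),\tilde\gamma(n+1))\le k$, the lower bound $d_X(\tilde\gamma(m),\tilde\gamma(n))\ge |m-n|$ is automatic because $p$ is $1$-Lipschitz and $\gamma$ is a $B$-geodesic. So no flaring is needed there, and the argument you sketch — producing two qi-lifts from a short $X$-path between $\tilde\gamma(a)$ and $\tilde\gamma(b)$ — does not go through: a short $X$-path projects to a short $B$-path, which is already impossible when $b-a$ is large, and it gives you nothing resembling a second qi-lift to feed into flaring. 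The genuine role of the flaring condition in Mj--Sardar is different: it controls the fiberwise divergence of \emph{two distinct} qi-lifts over the same $B$-geodesic, and this is what makes their ``ladders'' (your step two objects) quasiconvex and uniformly hyperbolic. You have moved the flaring argument to a place where it is unnecessary, while the place where it is genuinely needed — bootstrapping the quasigeodesity of preferred ladder paths together with ladder quasiconvexity — is described but not proved. You even flag this bootstrap as ``the hard part,'' and that assessment is correct; the sketch does not close the loop.

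The high-level architecture (qi-lifts, ladders, barycenter trees over $\delta$-thin $B$-triangles using the $D$-dense ideal barycenters of hypothesis (2)) does match Mj--Sardar's strategy, and a careful execution along these lines would indeed yield a proof. But as written, the proposal misassigns the flaring condition to a trivial step and does not supply the content of the two steps where it is actually load-bearing.
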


\subsection{Free group extensions} \label{sec: extensions}
In general, an \define{$\free$--extension} is any group $E$ that fits into a short exact sequence of the form
\begin{align} \label{free_extension}
1 \longrightarrow \free \longrightarrow E \longrightarrow  Q \longrightarrow 1.
\end{align}
We often blur the distinction between the group $E$ and the short exact sequence itself. Every such extension gives rise to a homomorphism $\chi \colon Q \to \Out(\free)$ by sending $q\in Q$ to the outer automorphism class of $(\alpha\mapsto \tilde{q}\alpha\tilde{q}\inv)\in \Aut(\free)$, where $\tilde{q}\in E$ is any lift of $q$.
Since different choices of lift give automorphisms that differ by conjugation by an element of $\free$, this gives a well defined homomorphism to $\Out(\free)$. Conversely, any homomorphism $\chi\colon Q\to \Out(\free)$ gives rise to to a $\free$--extension $E_\chi$ via the fiber product construction:
\[ E_{\chi} \colonequals \{(t,q)\in \Aut(\free) \times Q \mid p(t) = \chi(q)\}. \]
Indeed, if $E$ is the extension in \eqref{free_extension} with corresponding homomorphism $\chi\colon Q\to \Out(\free)$, then $E\cong E_{\chi}$.  In the case of a subgroup $\Gamma\le \Out(\free)$, we write $E_\Gamma$ for the $\free$--extension induced by the inclusion $\Gamma\hookrightarrow \Out(\free)$. 

As in the introduction, there is a canonical short exact sequence
\[1 \longrightarrow \free \overset{i}{\longrightarrow} \Aut(\free) \overset{p}{\longrightarrow} \Out(\free) \longrightarrow 1. \]
This sequence is natural for $\free$--extensions in the sense that any extension $E$ as in \eqref{free_extension} with corresponding homomorphism $\chi\colon Q\to \Out(\free)$ fits into a commutative diagram
\begin{equation*}\label{E:innautout}
		\begin{tikzpicture}[>= to, line width = .075em, 
			baseline=(current bounding box.center)]
		\matrix (m) [matrix of math nodes, column sep=1.5em, row sep = 1.5em, 		text height=1.5ex, text depth=0.25ex]
		{
			1 & \free  & E\cong E_\chi & Q & 1 \\
			1 & \free  & \Aut(\free) & \Out(\free) & 1 \\
		};
		\path[->,font=\scriptsize]
		(m-1-1) edge 					(m-1-2)
		(m-1-2) edge 					(m-1-3)
		(m-1-3) edge 					(m-1-4)
		(m-1-4) edge 					(m-1-5)
		
		(m-2-1) edge 					(m-2-2)
		(m-2-2) edge node[above] {$i$}	(m-2-3)
		(m-2-3) edge node[above] {$p$}	(m-2-4)
		(m-2-4) edge 					(m-2-5)
		
		(m-1-3) edge node[right] {$\hat \chi$} 	(m-2-3)
		(m-1-4) edge node[right] {$\chi$}	(m-2-4)
		;
		\draw[double distance = .15em,font=\scriptsize]
		(m-2-2) -- 					(m-1-2)
		;
		\end{tikzpicture}
	\end{equation*}
in which $\hat\chi$ is the restriction of the projection $\Aut(\free)\times Q\to \Aut(\free)$ to $E_\chi$. Therefore $\hat\chi$ surjects $E_\chi$ onto the the preimage of $\chi(Q)\le \Out(\free)$ in $\Aut(\free)$. From this we note that the $\ker(\hat{\chi}) = 1\times \ker(\chi)\le \Aut(\free)\times Q$; thus $\hat{\chi}$ and $\chi$ have isomorphic kernels.
Moreover, we see that in the case of a subgroup $\Gamma\le\Out(\free)$, the extension $E_\Gamma$ mentioned above is isomorphic to the preimage $E_\Gamma = p\inv(\Gamma)\le \Aut(\free)$.

Note that in order for extension $E_{\chi}$ to be hyperbolic, it is necessary that the map $\chi\colon Q \to \Out(\free)$ have finite kernel and for its image to by purely hyperbolic; in which case the above shows $E_{\chi}$ is quasi-isometric to $E_{\chi(Q)}$.
 Otherwise, it is easily seen that $E_\chi$ contains a $\Z \oplus \Z$ and thus that $E_\chi$ cannot be hyperbolic. Hence, to address the question of hyperbolicity of $\free$--extensions, it suffices to focus on the case of extensions $E_\Gamma$ associated to subgroups $\Gamma \le \Out(\free)$. With this perspective, we only consider such extensions $E_\Gamma$ throughout the rest of this paper.

%%%%%%%%%%%%%%%%%%%%%%%%%%
%\section{Background}
%%%%%%%%%%%%%%%%%%%%%%%%%%

\subsection{Metric properties of Outer space}
\label{sec:metric_props_of_os}

\paragraph{Outer space.}
Let $\free$ denote the free group of rank $r = \rank(\free)$. Since $\free$ is fixed throughout our discussion, its rank $r$ will often be suppressed from the notation. Letting $\rose$ denote the $r$--petal rose (that is, a wedge of $r$ circles) with vertex $v\in \rose$, we fix once and for all an isomorphism $\free\cong \pi_1(\rose,v)$. A \define{graph} is a $1$--dimensional CW complex, and a connected, simply connected graph is a \define{tree}. A \define{core graph} is a graph all of whose vertices have valance at least $2$. Any connected graph $G$ with nontrivial, finitely generated fundamental group has a unique core subgraph whose inclusion into $G$ is a homotopy equivalence. This subgraph is called the \define{core} of $G$.

Culler and Vogtmann's \cite{CVouter} outer space $\os$ of marked metric graphs will play a central role in our discussion. A \define{marked graph} $(G, g)$ is a core graph $G$ together with a homotopy equivalence $g\colon \rose \to G$, called a marking. A \define{metric} on $G$ is a function $\ell\colon E(G) \to \R_{> 0}$ from the set of edges of $G$ to the positive real numbers; we say that an edge $e \in E(G)$ of $G$ has length $\ell(e)$. The \define{volume} of $G$ is defined to be $\sum_{e\in E(G)} \ell(e)$. 
We view the metric $\ell$ as making $G$ into a path metric space in which each edge $e$ has length $\ell(e)$.
A \define{marked metric graph} is then defined to be the triple $(G,g,\ell)$, and we say that two triples $(G_1,g_1,\ell_1)$ and $(G_2,g_2,\ell_2)$ are \define{equivalent} if there is a graph isometry $\phi\colon G_1 \to G_2$ that preserves the markings in the sense that $\phi \circ g_1$ is homotopic to $g_2$. \define{Outer space} $\X$ is the set of equivalence classes of marked metric graphs of volume $1$. We use the notation $\cv$ to denote \define{unprojectivized outer space}, which is the space of marked metric graphs with no restriction on volume. When discussing points in $\os$ or $\cv$ we typically suppress the marking/metric and just write the core graph.  

\paragraph{Conjugacy classes.}
The marking $\rose\to G$ attached to a point $G\in \os$ allows us to view any nontrivial conjugacy class $\alpha$ in $\free$ as a homotopy class of loops in the core graph $G$.  Following the notation of \cite{BFhyp}, we denote the unique immersed loop in this homotopy class by $\alpha\vert G$, which we view as an immersion of $S^1$ into $G$. 
We use $\ell(\alpha\vert G)$ to denote the \define{length of $\alpha$ in $G\in \os$}, that is, the sum of the lengths of the edges crossed by $\alpha\vert G$, counted with multiplicities. Note that if $X = \{x_1,\dotsc,x_r\}$ is a free basis of $\free$ and $G\in \os$ is the rose whose edges each have length $\nicefrac{1}{r}$ and are consecutively labeled by the elements $x_1,\dotsc,x_r$, then $r \cdot \ell(\alpha\vert G)$ is simply the \define{conjugacy length} $\norm{\alpha}_X$ of $\alpha$ with respect to the free basis $X$. That is, $r \cdot \ell(\alpha\vert G)=\norm{\alpha}_X$ is the length of the shortest word in the letters $x_1^\pm,\dotsc,x_r^\pm$ that represents an element of the conjugacy class $\alpha$. We often blur the distinction between an element of $\free$ and its conjugacy class.

The \define{standard topology on $\os$} is defined to be the coarsest topology such that all of the length functions $\ell(\alpha\vert\cdot\;)\colon \os \to \R_+$ are continuous \cite{CVouter}.  Though we will not discuss it, this topology may also be obtained as a simplicial complex with missing faces, or as the equivariant Gromov-Hausdorff topology (see \cite{CVouter} and \cite{paulin1989gromov}). For $\epsilon > 0$, we additionally define the \define{$\epsilon$--thick part of $\os$} to be the subset
\[\os_\epsilon \colonequals \{G\in \os \mid \ell(\alpha\vert G) \ge \epsilon\text{ for every nontrivial conjugacy class $\alpha$ in $\free$}\}.\]

\paragraph{Lipschitz metric.} 
A \define{difference of markings} from $G\in \os$ to $H\in \os$ is any (not necessarily cellular) map $\phi\colon G\to H$ that is homotopic to $h\circ g\inv$, where $g$ and $h$ are the markings on $G$ and $H$, respectively. The \define{Lipschitz distance} from $G$ to $H$ is then defined to be
\[d_{\X}(G,H) \colonequals \inf \{ \log\left(\mathrm{Lip}(\phi)\right) \mid \phi \simeq h \circ g^{-1}\},\]
where $\mathrm{Lip}(\phi)$ denotes the Lipschitz constant of the difference of markings $\phi$. While $d_\os$ is in general asymmetric (that is, $d_\os(G,H)\neq d_\os(H,G)$), we often regard it as a metric since it satisfies definiteness ($d_\os(G,H) = 0$ iff $G=H$) and the ordered triangle inequality ($d_\os(E,H) \leq d_\os(E,G) + d_\os(G,H)$). Its symmetrization 
\[\dsym(G,H) \colonequals d_\os(G,H) + d_\os(H,G)\]
is therefore an honest metric on $\os$, which we note induces the standard topology \cite{FMout}. The preference to work with the asymmetric metric $d_{\X}$ comes from the fact, discussed below, that folding paths are directed geodesics, whereas the symmetrized metric on $\X$ is not a geodesic metric.

Note that for any $\alpha \in \free$ and any difference of marking $\phi\colon G \to H$, by definition we have $\ell(\alpha\vert H) \leq \mathrm{Lip}(\phi) \cdot \ell(\alpha\vert G)$. This implies that
\begin{eqnarray} \label{ineq}
\log\left(\sup_{\alpha \in \free} \frac{\ell(\alpha\vert H)}{ \ell(\alpha\vert G)}\right) \le \log \left(\inf_{\phi} \; \mathrm{Lip}(\phi)\right) =   d_{\X}(G,H).
\end{eqnarray}
We will see below that this is, in fact, an equality; see also \cite{FMout} and \cite{Bbers}. It follows that for any free basis $X = \{x_1,\dotsc,x_r\}$ of $\free$ and any point $G\in \os$, there is a constant $K = K(X,G)\ge 1$ such that $\frac{1}{K}\norm{\alpha}_X \le \ell(\alpha\vert G) \le K\norm{\alpha}_X$ for every conjugacy class $\alpha$ in $\free$.

\paragraph{Coping with asymmetry.}
Since the Lipschitz metric $d_\os$ is not symmetric, some care must be taken when discussing distances in $\os$. Thankfully, the difficulty is somewhat mitigated in the thick part $\os_\epsilon$.

\begin{lemma}[Handel--Mosher \cite{handel2007expansion}, Algom-Kfir--Bestvina \cite{AlBest}] \label{lem: symmetric_in_thick}
For any $\epsilon>0$, there exists $\sym_\epsilon\geq 1$ so that for all $G,H\in \os_\epsilon$ we have
\[ d_\os(H,G) \le \dsym(H,G) = \dsym(G,H) \le \sym_\epsilon \cdot d_\os(G,H).\]
\end{lemma}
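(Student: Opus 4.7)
The equality $\dsym(G,H) = \dsym(H,G)$ and the inequality $d_\os(H,G) \le \dsym(H,G)$ are both immediate from the definition $\dsym(X,Y) = d_\os(X,Y) + d_\os(Y,X)$ and the non-negativity of $d_\os$. So the real content of the lemma is the multiplicative bound $\dsym(G,H) \le \sym_\epsilon \cdot d_\os(G,H)$, which after unwinding $\dsym$ is equivalent to the asymmetric estimate
\[
d_\os(H,G) \;\le\; (\sym_\epsilon - 1)\cdot d_\os(G,H).
\]

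My plan is to use the length-function characterization of $d_\os$, namely the inequality \eqref{ineq} together with the (noted) fact that it is actually an equality realized on a so-called \emph{candidate loop} -- an embedded loop, figure-eight, or barbell in $G$ -- due to Tad White. This gives
\[
d_\os(A,B) \;=\; \log\sup_\alpha \frac{\ell(\alpha\vert B)}{\ell(\alpha\vert A)}
\]
with the supremum realized on a candidate loop in $A$. Two uniform bounds on candidate loops will be crucial. First, because any $G\in\os$ has volume $1$, all candidate loops in $G$ have length bounded above by a universal constant $L_r$ depending only on the rank $r$ (an embedded loop has length at most $1$, and a figure-eight or barbell is built from at most two embedded loops and one connecting arc of total length at most $1$). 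Second, in the $\epsilon$--thick part every nontrivial conjugacy class has length at least $\epsilon$, so candidate loops in any $G,H\in\os_\epsilon$ realize lengths in $[\epsilon, L_r]$.

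The argument then proceeds by contradiction using the cocompactness of the action $\Out(\free)\curvearrowright \os_\epsilon$ (the quotient is the classical compact ``moduli space'' of volume-one metric graphs with injectivity radius $\ge\epsilon$). If no $\sym_\epsilon$ worked, there would exist sequences $G_n, H_n\in\os_\epsilon$ with $d_\os(H_n,G_n)/d_\os(G_n,H_n)\to\infty$. Since $d_\os$ is $\Out(\free)$--invariant, we may apply outer automorphisms so that $[G_n]$ lies in a compact fundamental domain for $\os_\epsilon/\Out(\free)$, pass to a subsequence with $G_n\to G_\infty\in\os_\epsilon$, and control $H_n$ via the candidate-loop witnesses: by the upper bound $L_r$ on candidate-loop lengths in $H_n$ and the convergence $G_n\to G_\infty$, the witnessing conjugacy classes for $d_\os(H_n, G_n)$ lie in a finite set, forcing $H_n$ to remain in a compact region of $\os_\epsilon$. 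Extracting a limit $H_\infty$ and using that both $d_\os$ and $\dsym$ induce the standard topology on $\os_\epsilon$, the ratio would have to be finite at $(G_\infty,H_\infty)$, contradicting our assumption.

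The main obstacle is the last step: converting finiteness of candidate-loop lengths into the quantitative multiplicative (rather than affine) bound, which requires ruling out the degenerate case $d_\os(G_n,H_n)\to 0$ simultaneously with $d_\os(H_n,G_n)\to 0$ at incomparable rates. This is where one needs the full strength of the comparison; a more constructive alternative, which is essentially the route of Algom-Kfir--Bestvina, is to take the optimal Lipschitz difference of markings $\phi\colon G\to H$ realizing $d_\os(G,H)$ and directly build an ``approximate inverse'' $\psi\colon H\to G$ whose Lipschitz constant is bounded by a power of that of $\phi$ depending only on $\epsilon$ and $r$, using that on $\epsilon$--thick graphs $\phi$ cannot collapse or grossly distort the topology.
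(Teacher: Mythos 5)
The paper cites this lemma to Handel--Mosher and Algom-Kfir--Bestvina without giving a proof, so there is no internal argument to compare against. Your reduction to the single inequality $d_\os(H,G) \le (\sym_\epsilon - 1)\,d_\os(G,H)$ is correct, as is the observation that candidates in a thick volume-one graph have length in $[\epsilon, 2]$, and your closing remark correctly identifies the constructive optimal-map approach as the route taken in the literature.

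The compactness-by-contradiction argument sketched in the middle, however, has a genuine gap. If $d_\os(H_n,G_n)/d_\os(G_n,H_n)\to\infty$, there is no a priori bound on $d_\os(G_n,H_n)$: both distances can diverge together. After translating so that $G_n\to G_\infty$ in a compact fundamental domain, the assertion that ``the witnessing conjugacy classes for $d_\os(H_n,G_n)$ lie in a finite set, forcing $H_n$ to remain in a compact region'' is false. The supremum defining $d_\os(H_n,G_n)$ is realized on a candidate $\alpha_n$ \emph{of $H_n$}, whose length is at most $2$ in $H_n$ but can be arbitrarily large in $G_\infty$; the classes $\alpha_n$ therefore vary with $n$ and lie in no fixed finite set, and $H_n$ need not stay bounded, so this step is circular. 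The opposite regime $d_\os(G_n,H_n)\to 0$, which you flag, is also a real obstacle: it requires a local bi-Lipschitz comparison of $d_\os$ with its reverse near the diagonal of $\os_\epsilon$, which is exactly the content of the constructive bound you outline but do not carry out. A compactness argument does handle the intermediate range $c\le d_\os(G_n,H_n)\le D$ for fixed $0<c<D$ (that subset of $\os_\epsilon\times\os_\epsilon$ modulo the diagonal $\Out(\free)$-action is compact and the ratio is continuous there), but one cannot reduce the large-distance case to this by subdividing a directed geodesic, because geodesics between points of $\os_\epsilon$ need not remain in any fixed thick part. The published proofs sidestep all of this by directly bounding the Lipschitz constant of an approximate inverse to an optimal map, as you correctly indicate at the end.
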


Nevertheless, whenever discussing neighborhoods, we always use the symmetrized distance $\dsym$. That is, the \define{$A$--neighborhood} of a subset $Z\subset \os$ is defined to be
\[\nbhd{A}{Z} \colonequals \big\{G\in \os \mid \inf \{\dsym(G,H) \mid H\in Z\}< A \big\}.\]
In particular, if $G\in \nbhd{A}{Z}$, then there exists some $H\in Z$ so that $d_\os(G,H)$ and $d_\os(H,G)$ are both less than $A$. Note that by \cite{FMout}, if $Z \subset \X$ is compact, then so is the closed neighborhood $\overline{\nbhd{A}{Z} }$. The \define{Hausdorff distance} between two subsets of $\os$ is then defined as usual using these symmetrized neighborhoods. 

We will say that two paths $\gamma\colon \I\to \os$ and $\gamma'\colon \I'\to \os$ have the \define{same terminal endpoint} if either $\Ipl,\Ipl'<\infty$ and $\gamma(\Ipl)=\gamma'(\Ipl')$, or if $\Ipl=\Ipl'=\infty$ and the sets $\gamma([t,\infty))$ and $\gamma'([t',\infty))$ have finite Hausdorff distance for all $t\in \I$ and $t'\in \I'$. Having the \define{same initial endpoint} is defined similarly. Accordingly, $\gamma$ and $\gamma'$ are said to \define{have the same endpoints} if their initial and terminal endpoints agree. 

By a \define{geodesic in $\os$} we always mean a \emph{directed} geodesic, that is, a path $\gamma\colon \I\to \os$ such that $d_\os(\gamma(s),\gamma(t)) = t-s$ for all $s<t$. Similarly a \define{$K$--quasigeodesic in $\os$} means a path $\gamma\colon \I\to \os$ so that
\[\tfrac{1}{K}(t-s) - K \le d_\os(\gamma(s),\gamma(t)) \le K(t-s) + K\]
for all $s<t$. Note that a $K$--quasigeodesic typically will not be a $K$--quasigeodesic when traversed in reverse.

\begin{convention} 
Our default metric on $\X$ is the Lipschitz metric and geodesics are directed geodesics with respect to this metric. When discussing neighborhoods and Hausdorff distance, however, we make use of the symmetrized metric as discussed above.
\end{convention}

\subsection{Navigating outer space}

\paragraph{Optimal maps.}
For any $G,H\in \os$, there exits a (nonunique) difference of markings $\phi\colon G\to H$ that realizes the infimum in the definition of $d_\os(G,H)$ \cite{FMout, Bbers}. Such a map is called \define{optimal}. Here, we describe some structure of optimal maps and refer to the references above for details. Firstly, we say that a difference of markings $\phi\colon G\to H$ is \define{linear on edges} if $\phi$ has a constant slope $\sigma(e)$ on each edge $e$ of $G$, meaning that $\phi$ is a local $\sigma(e)$--homothety on $e$ with respect to the local path metrics on $G$ and $H$. In this case $\mathrm{Lip}(\phi) = \max_e\{\sigma(e)\}$. We define the \define{tension subgraph $\triangle_\phi$} to be the subgraph of $G$ consisting of maximally stretched edges, that is, the edges $e$ of $G$ with $\sigma(e) = \mathrm{Lip}(\phi)$. Since every difference of markings is homotopic rel vertices to a map that is linear on edges and whose Lipschitz constant is no greater than the original, we may always suppose optimal maps are linear on edges.

\paragraph{Train tracks.}
Let us define a \define{segment} $[p,q]$ between points $p,q\in G$ to be a locally isometric immersion $[0,L]\to G$ of an interval $[0,L]\subset \R$ sending $0\mapsto p$ and $L\mapsto q$. A \define{direction} at $p\in G$ is a germ of nondegenerate segments $[p,q]$ with $p\neq q$. A map $\phi\colon G\to H$ that is linear on edges with slope $\sigma(e)\neq 0$ for all edges $e$ of $G$ then induces a derivative map $D_\phi$ which sends a direction at $p$ to a direction at $\phi(p)$. We say that two directions at $p\in G$ are in the same \define{gate} if the directions are identified by $D_\phi$. The gates form an equivalence relation on the set of directions in $G$. 

An unordered pair $\{d,d'\}$ of distinct directions at a vertex $v$ of $G$ is called a \define{turn}. The turn $\{d,d'\}$ is \define{illegal} (with respect to $\phi$) if $d$ and $d'$ belong to the same gate and is \define{legal} otherwise. Accordingly, the set of gates in $G$ is also called the \define{illegal turn structure} on $G$ induced by $\phi$. An illegal turn structure is moreover a \define{train track structure} if there are at least two gates at each $p\in G$. This is equivalent to requiring that $\phi$ is locally injective on (the interior of) each edge of $G$ and that every vertex has at least $2$ gates.

For any $G,H \in \X$ there is an optimal map $\phi\colon G \to H$ such that $\triangle_{\phi}$ is a core graph and the illegal turn structure induced by the restriction of $\phi$ to $\triangle_{\phi}$ is a train track structure \cite{FMout, Bbers}. Hence, the tension subgraph $\triangle_\phi$ contains an immersed loop that is legal (crosses only legal turns). If $\alpha$ denotes the conjugacy class represented by a legal loop contained in $\triangle_{\phi}$, it follows that $\ell(\alpha\vert H) = \Lip(\phi)\cdot\ell(\alpha\vert G)$.  Conversely, any difference of markings $\phi\colon G\to H$ satisfying $\ell(\alpha\vert H)=\Lip(\phi)\cdot \ell(\alpha\vert G)$ for some conjugacy class $\alpha$  is necessarily optimal. The existence of optimal maps thus shows that the inequality in \eqref{ineq} is in fact an equality. We collect these facts into the following proposition:

\begin{proposition}[See Francaviglia--Martino \cite{FMout} or Algom-Kfir \cite{AKaxis}]\label{pro: distance}
For every $G\in \os$ there exists a finite set $\mathcal{C}_G$ of primitive conjugacy classes, called \define{candidates}, whose immersed representatives in $G$ cross each edge at most twice and such that for any $H\in \os$
\[
d_\os(G,H) = \max_{\alpha\in \mathcal{C}_G} \log\left(\frac{\ell(\alpha\vert H)}{ \ell(\alpha\vert G)}\right) = \sup_{\alpha\in \free} \log\left(\frac{\ell(\alpha\vert H)}{ \ell(\alpha\vert G)}\right).\]
\end{proposition}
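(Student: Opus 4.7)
The plan is as follows. The upper bound $\sup_\alpha \log(\ell(\alpha|H)/\ell(\alpha|G)) \le d_\os(G,H)$ has already been observed in \eqref{ineq}, so the content of the proposition is to realize this supremum by a loop drawn from a finite set of candidates depending only on $G$.

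First I would invoke the structural fact about optimal maps recalled in the paragraph immediately preceding the proposition: for any $H \in \os$ there exists an optimal map $\phi\colon G\to H$, linear on edges, whose tension subgraph $\triangle_\phi$ is a core subgraph and on which $\phi$ induces a train track structure. In a train track structure every vertex has at least two gates, so a standard gate-counting argument (extend a legal edge-path indefinitely and apply pigeonhole on gates at the vertices visited) produces an immersed legal loop $\alpha \subset \triangle_\phi$. Because every turn crossed by $\alpha$ is legal, the restriction $\phi|_\alpha$ is a local $\Lip(\phi)$--homothety, which gives
\[\ell(\alpha|H) \;=\; \Lip(\phi)\cdot\ell(\alpha|G) \;=\; e^{d_\os(G,H)}\,\ell(\alpha|G),\]
and in particular realizes the supremum on $\free$.

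Next I would use a combinatorial surgery to replace $\alpha$ by a legal loop crossing each edge of $G$ at most twice. If $\alpha$ crosses some edge $e$ three or more times, decompose it at consecutive traversals of $e$ into legal sub-arcs and reassemble these to form two legal loops $\alpha_1,\alpha_2$ that together cover $\alpha$ and each cross $e$ strictly fewer times; legality is preserved under this surgery because gates are determined locally at each vertex. Since
\[\frac{\ell(\alpha|H)}{\ell(\alpha|G)} \;=\; \frac{\ell(\alpha_1|H)+\ell(\alpha_2|H)}{\ell(\alpha_1|G)+\ell(\alpha_2|G)}\]
is a weighted average of $\ell(\alpha_i|H)/\ell(\alpha_i|G)$, at least one $\alpha_i$ satisfies $\ell(\alpha_i|H)/\ell(\alpha_i|G) \ge \ell(\alpha|H)/\ell(\alpha|G)$. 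Iterating, I obtain a legal loop still realizing the supremum but crossing each edge of $G$ at most twice.

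Finally, define $\mathcal{C}_G$ to be the set of primitive conjugacy classes whose immersed representatives in $G$ cross every edge at most twice. Since $G$ has only finitely many edges, $\mathcal{C}_G$ is finite, and in fact up to combinatorial type such a loop must be a simple loop, a barbell, or a theta-graph in $G$. The main obstacle is making the surgery step rigorous: one must verify both that legality is preserved when a single legal turn at a vertex of $e$ is distributed between $\alpha_1$ and $\alpha_2$, and that the process terminates with a loop satisfying $\ell(\alpha_i|H)/\ell(\alpha_i|G) \ge e^{d_\os(G,H)}$. Both points follow from the locality of the gate structure combined with the weighted-average inequality above; with them in hand the realization of $d_\os(G,H)$ by some $\alpha \in \mathcal{C}_G$ is immediate, completing the proof.
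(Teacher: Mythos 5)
Your overall strategy matches what the paper implicitly relies on (and cites to Francaviglia--Martino and Algom-Kfir): use an optimal map whose tension graph carries a train-track structure, find a legal loop there to realize the supremum, and then reduce that loop to a bounded-complexity candidate. However, there are two genuine gaps in the reduction step that you explicitly flag as ``the main obstacle'' but do not actually close, and they are not as automatic as you suggest.

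First, the surgery needs to use two traversals of $e$ with the \emph{same} orientation. If you split at two opposite-orientation traversals, the resulting pieces are loops $P$ at $v$ and $Q$ at $u$ that do not contain $e$ at all, and the new closing turn of $P$ (respectively $Q$) is \emph{not} a turn that appeared in $\alpha$, so legality of the pieces fails in general. With same-orientation traversals the decomposition $\alpha = eP \cdot eQ$ (cyclically) gives loops $\alpha_1 = eP$, $\alpha_2 = eQ$ whose turns are all turns of $\alpha$, so legality is preserved and the weighted-average inequality applies; this is the version of the surgery you should state. Second, and more seriously, the surgery terminates with a legal loop $\alpha$ that crosses each edge at most twice but need not be primitive, so it need not lie in the $\mathcal{C}_G$ you define. (For example, a loop crossing two edges once in each direction, such as a commutator, can survive this surgery, is not a proper power, and is not primitive; your proposed fix of passing to a root only handles the proper-power case.) Establishing primitivity requires the finer case analysis in Francaviglia--Martino: after the same-orientation surgery terminates, one argues that the remaining loop is in fact an embedded circle, a figure-eight, or a barbell (not merely ``crosses each edge $\le 2$ times''), and these specific types are primitive by a Stallings/free-factor argument. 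As written, your proposal establishes the equality $d_\os(G,H) = \sup_\alpha \log(\ell(\alpha|H)/\ell(\alpha|G))$ and shows the sup is a max over a finite set of conjugacy classes, but does not yet justify that this finite set consists of primitive classes, which the paper does use downstream.
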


\paragraph{Folding.}
For a linear difference of markings $\phi\colon G\to H$, if $\triangle_{\phi} = G$ and $\phi$ induces a train track structure on $G$, then $\phi$ induces a unique \define{folding path} $\gamma = \gamma^\phi\colon [0,L]\to \os$ with $\gamma^\phi(0) = G$ and $\gamma^\phi(L) = H$ such that $d_\os(\gamma^\phi(s),\gamma^\phi(t)) = t-s$ for $0\leq s \leq t\leq L$. Thus $\gamma^\phi$ is a (directed) geodesic from $G$ to $H$.  The path $\gamma^\phi$ is obtained by ``folding all illegal turns at unit speed,'' as follows: Fix some sufficiently small $\epsilon > 0$. Then for $0\le s \le \epsilon$, form the quotient graph $\hat{G}_s$ obtained by declaring two points $x,y\in G$ to be equivalent if $\phi(x) = \phi(y)$ and $d(x,v) = d(y,v) \le s$ for some vertex $v$ of $G$. Then $\phi$ factors through the quotient map $G\to \hat{G}_s$, and $\hat{G}_s$ inherits a natural metric so that this quotient map is a local isometry on each edge of $G$. If we let $L_s = \vol(\hat{G}_s)$, then the rescaled graph $\bar{G}_s = (1/L_s)\hat{G}_s$ lies in $\os$ (i.e., has volume $1$), and $\phi$ factors as a composition $G \to \bar{G}_s \to  H$ of two optimal maps with Lipschitz constants $L_s$ and $L/L_s$, respectively. Accordingly, we set $\gamma^\phi(\log(L_s)) = \bar{G}_s$. This defines $\gamma^\phi(t)$ for all sufficiently small $t\ge 0$. Staring now with the optimal map $\bar{G}_\epsilon\to H$, we may repeat this procedure to define $\gamma^\phi(t)$ for more values of $t$. While it is not obvious, after finitely many iterations we will eventually arrive at $\gamma^\phi(L) = H$. See \cite[Proposition 2.2]{BFhyp} for a justification of this claim and a more detailed construction of $\gamma^\phi$.

\begin{remark}
The folding paths used this paper, as defined above, are sometimes called ``greedy folding paths'' \cite{BFhyp} or ``fast folding lines'' \cite{FMout} in the literature. These are a special case of the more flexible ``liberal folding paths'' that are sometimes considered (see the appendix of \cite{BFproj}) and which include the original Stallings paths \cite{StallingsGraphs}.
\end{remark}

If $\gamma^\phi\colon [0,L]\to \os$ is a folding path, as described above, we often use $G_t$, $t\in[0,L]$, to denote $\gamma^\phi(t)$. Observe that for all $0\le s \le t \le L$, the construction of $\gamma^{\phi}$ provides induced optimal maps $\gamma^\phi_{st}\colon G_s\to G_t$, which we refer to as \define{folding maps}. These maps compose so that $\gamma^\phi_{rt} = \gamma^\phi_{st}\circ \gamma^\phi_{rs}$ for $r\le s\le t$, and we additionally have $\gamma^{\phi}_{0L} = \phi$ and $\gamma^\phi_{tt} = \mathrm{Id}_{G_t}$ for all $t$. Furthermore, for all $t > s$, the maps $\gamma^\phi_{st}\colon G_s\to G_t$ (i) induce the same train track structure on $G_s$ (independent of $t$), (ii) send legal segments (segments crossing only legal turns) to legal segments, and (iii) have associated folding paths exactly given by the restrictions $\gamma^{\phi}\vert_{[s,t]}$. 

Lastly, we note that it is also possible to construct \emph{biinfinite} folding paths, by which we mean a directed geodesic $\gamma\colon \R\to \os$ together with with maps $\gamma_{st}\colon G_s\to G_t$ (where $G_t = \gamma(t)$) for all $s\le t$ satisfying the above properties.

\paragraph{Standard geodesics.} 

It is not true that any two points of $G,H\in\os$ may be connected by a folding path. There is, however, a nonunique \define{standard geodesic} from $G$ to $H$ \cite{FMout}. In \cite[Proposition 2.5]{BFhyp}, Bestvina and Feighn give a detailed construction of such a standard geodesic, which we summarize here: First, take an optimal map $\phi\colon G\to H$ that is linear on edges and consider the tension subgraph $\triangle_\phi$ of $G$. Let $\Sigma_G\subset \os$ denote the simplex of all (volume--$1$) length functions on the marked graph $G$. By shortening some of the edges outside of $\triangle_{\phi}$ (and rescaling to maintain volume $1$), one may then find a point $G'\in \overline{\Sigma_G}$ in the closed simplex together with an optimal difference of markings $\phi'\colon G'\to H$ whose tension graph $\triangle_{\phi'}$ is all of $G'$ and such that
\[d_\os(G,H) = d_\os(G,G') + d_\os(G',H).\]
If $\gamma_1$ denotes the linear path in $\overline{\Sigma_G}$ from $G$ to $G'$ (which when parameterized by arc length is a directed geodesic) and $\gamma_2 = \gamma^{\phi'}$ denotes the folding path from $G'$ to $H$ induced by $\phi'$, it follows from the equation above that the concatenation $\gamma_1\gamma_2$ is a directed geodesic from $G$ to $H$.

Let us introduce the following terminology. By a \define{rescaling path} we mean a linear path $\I\to \os$ in a closed simplex $\overline{\Sigma_G}$ parameterized by arclength. While such a path can in principle have infinite length in the negative direction (if the volume of a core subgraph tends to $0$ as $t\to -\infty$), every rescaling path has finite length in the forward direction since a subgraph can only stretch until its volume is equal to $1$. More specifically:

\begin{lemma}\label{lem:bound_on_scaling_length}
If $G_t$, $t\in [0,L]$, is a rescaling path with $G_0\in \os_\epsilon$, then $L\le \log(2/\epsilon)$.
\end{lemma}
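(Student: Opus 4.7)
The plan is a short direct distance computation using Proposition~\ref{pro: distance}. Since the rescaling path is parametrized by arclength in the Lipschitz metric $d_\os$, its length is exactly $L = d_\os(G_0,G_L)$, so it suffices to bound this single distance.

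First I would use the definition of a rescaling path: since $G_t$ lies in the closed simplex $\overline{\Sigma_{G_0}}$ for all $t$ and the path stays in $\os$, every $G_t$ has the same underlying marked graph as $G_0$ with only the positive edge lengths changing. In particular, any conjugacy class $\alpha\in\mathcal{C}_{G_0}$ from Proposition~\ref{pro: distance} is represented in $G_L$ by the same combinatorial immersed loop as in $G_0$, so it still crosses each edge of $G_L$ at most twice.

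I would then combine two simple bounds. On the one hand, since each candidate $\alpha\in\mathcal{C}_{G_0}$ crosses each edge of $G_L$ at most twice and $\vol(G_L)=1$, we have
\[\ell(\alpha\vert G_L) \le 2\cdot\vol(G_L) = 2.\]
On the other hand, the assumption $G_0\in \os_\epsilon$ means that every nontrivial conjugacy class, and in particular each candidate $\alpha$, satisfies $\ell(\alpha\vert G_0)\ge \epsilon$. Applying Proposition~\ref{pro: distance} then yields
\[L = d_\os(G_0,G_L) = \max_{\alpha\in\mathcal{C}_{G_0}} \log\frac{\ell(\alpha\vert G_L)}{\ell(\alpha\vert G_0)} \le \log(2/\epsilon),\]
as desired. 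There is no substantive obstacle; the argument uses only the distance formula, the universal length bound for candidate loops in a volume--$1$ graph, and the thick-part hypothesis.
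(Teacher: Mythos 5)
Your proposal is correct and takes essentially the same approach as the paper: bound each candidate's length in $G_L$ by $2$ (crosses each edge at most twice in a volume-$1$ graph), use thickness of $G_0$ for the lower bound $\epsilon$, and invoke Proposition~\ref{pro: distance}. One minor imprecision: a rescaling path lives in the \emph{closed} simplex $\overline{\Sigma_{G_0}}$, so at $t=L$ some edges of $G_0$ may have collapsed and $G_L$ need not be the same underlying graph; but since collapsing edges cannot increase edge-crossing multiplicities, the bound $\ell(\alpha\vert G_L)\le 2$ still holds, which is exactly the observation the paper makes.
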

\begin{proof}
Let $\alpha$ be any candidate for $G_0$, so the immersed loop representing $\alpha$ in $G_0$ crosses each edge at most twice. Since $G_0$ and $G_L$ represent the same marked graphs up to collapsing some edges of $G_0$, $\alpha\vert G_L$ crosses no edge more than twice. Thus we have $\ell(\alpha\vert G_L) \le 2$. On the other hand $\ell(\alpha\vert G_0)\ge \epsilon$ by assumption. Thus $\frac{\ell(\alpha\vert G_L)}{\ell(\alpha\vert G_0)} \le \frac{2}{\epsilon}$. Since this holds for every candidate of $G$,  \Cref{pro: distance} implies that $L = d_\os(G_0,G_L) \le \log(2/\epsilon)$, as claimed.
\end{proof}

In general, by a \define{standard geodesic} we mean a (directed) geodesic $\gamma\colon \I\to \os$ that is either a folding path, a rescaling path, or a concatenation $\gamma\colon \I\to \os$ of a rescaling path $\gamma\colon \I^s\to \os$ and a folding path $\gamma\colon \I^f\to \os$, where in the latter case we require $\Ipl^s = \Imin^f \in \R$, that $\I = \I^s\cup \I^f$, and that the concatenation is a directed geodesic. In this latter case the \define{folding image} of the standard geodesic is denoted $\foldim(\gamma) = \gamma(\I^f)$, and the \define{scaling image} is similarly denoted $\scaleim(\gamma) = \gamma(\I^s)$. For notational convenience, when the standard geodesic $\gamma\colon \I\to \os$ is simply a rescaling path, we define $\scaleim(\gamma) = \gamma(\I)$ and $\foldim(\gamma) = \gamma(\Ipl)$ (recall that $\Ipl < \infty$ for rescaling paths); when $\gamma$ is simply a folding path we define $\foldim(\gamma) = \gamma(\I)$ and either $\scaleim(\gamma) = \gamma(\Imin)$ or $\scaleim(\gamma) = \emptyset$ depending on whether $\Imin \in \R$ or $\Imin=-\infty$. In particular, note that the $\foldim(\gamma)$ is nonempty for every standard geodesic.

\paragraph{Folding and unfolding.} In Section $5$ of \cite{BFhyp}, Bestvina and Feighn give a detailed account of what happens to an immersed path in the graph $G_t$ under folding and unfolding. We review the basics here, as they will be needed in \Cref{sec: qcx_implies_flar}. For additional details and examples, see \cite{BFhyp}.

Fix a folding path $\gamma(t) = G_t$ with $t\in [a,b]$, and let $p_b$ be an immersed path in $G_b$. It is always possible to lift (or unfold) $p_b$ to an immersed path $p_t$ in $G_t$ with the property that $p_t$ maps to a path in $G_b$ whose immersed representative, rel endpoints, is $p_b$ (recall that the folding path $\gamma$ comes equipped with folding maps $\gamma_{tb}\colon G_t \to G_b$). These lifts are not necessarily unique, but Bestvina and Feighn show that we can remove segments from the ends of $p_b$ to obtain unique lifts. This is their unfolding principle, which we state as the following lemma:

\begin{lemma}[Unfolding principle \cite{BFhyp}]
\label{lem:unfolding}
With the set up above, lifting $p_b$ to $G_t$ is unique between the first and last illegal turns of $p_b$, including the germs of directions beyond these turns.
\end{lemma}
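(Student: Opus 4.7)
The plan is to reduce to elementary folds and then analyze the ambiguity of lifts one fold at a time. Recall from the construction of $\gamma$ that the folding map $\gamma_{ab}\colon G_a \to G_b$ is a concatenation of elementary folds, each of which identifies two short initial segments $\sigma_1\subset d_1$ and $\sigma_2\subset d_2$ of directions belonging to a common gate at some vertex $v$ (so that $\{d_1,d_2\}$ is an illegal turn). Since uniqueness-of-lifts propagates through compositions provided illegal turns behave well under intermediate folds, it suffices to prove the lemma when $G_a \to G_b$ is a single elementary fold. Write $G = G_a$, $G' = G_b$, and $q\colon G \to G'$ for this fold. Note that because $q$ sends legal segments to legal segments (property (ii) above), any illegal turn of $p_b$ in $G'$ that lifts to $G$ either lifts again to an illegal turn, or else two of its germs get folded together — either way, the inductive structure on the number of folds carries through.

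For the elementary fold, observe that $q$ is a local isometry away from the identified segments $\sigma_1$ and $\sigma_2$. Consequently, once the starting point of a lift $p_a$ of $p_b$ is fixed, the lift is uniquely determined except possibly where $p_b$ traverses the image $\sigma = q(\sigma_1) = q(\sigma_2)$: at each entry or exit of such a traversal, one must choose between $\sigma_1$ and $\sigma_2$, leading to possibly multiple lifts. The heart of the lemma is to show that this choice is forced as soon as there is an illegal turn of $p_b$ on either side of the traversal.

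The key ingredient is that the gate structure is compatible with folding: two directions at a vertex $w$ are in the same gate in $G'$ if and only if their preimage directions at the corresponding vertex in $G$ are in the same gate (the fold only identifies directions already in the same gate). Therefore an illegal turn $\{\delta, \delta'\}$ of $p_b$ at $w$ in $G'$ — meaning $\delta, \delta'$ lie in a common gate — must lift to a pair of germs at $w$ (or its preimage) lying in a common gate in $G$. If one of $\delta, \delta'$ is the germ pointing into $\sigma$, there is exactly one of $d_1, d_2$ whose germ sits in the same gate as the lift of the other germ, so the choice between $\sigma_1$ and $\sigma_2$ is forced. The germ immediately beyond the illegal turn is thus uniquely determined, and by the local-isometry observation the lift remains unique until the next traversal/branch point — where the same argument applies at the next illegal turn.

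Chaining these local uniqueness statements across each illegal turn of $p_b$ shows that the lift is determined throughout the subpath from the first to last illegal turn, together with the germs of $p_b$ leaving these two extremal turns. Outside of this range, the initial and terminal legal segments of $p_b$ may still traverse folded images without any adjacent illegal turn to pin down the preimage choice, producing the non-uniqueness that the statement allows. The step I expect to require the most care is the gate-compatibility argument in paragraph three, since one must verify that the induced train-track structures on $G_t$ (via $\phi$) pull back correctly under each elementary fold — and that the inductive reduction to a single fold does not inadvertently introduce or lose illegal turns of $p_b$ along the way.
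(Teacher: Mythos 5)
The paper does not prove this lemma; it is stated with a citation to Bestvina--Feighn \cite{BFhyp}, so there is no internal proof to compare against. Your sketch nonetheless has a genuine gap at its central step. In your third paragraph you argue that if $\{\delta,\delta'\}$ is an illegal turn of $p_b$ with one germ pointing into the folded image $\sigma$, then ``there is exactly one of $d_1,d_2$ whose germ sits in the same gate as the lift of the other germ,'' so the choice between $\sigma_1$ and $\sigma_2$ is forced. This cannot be right: $d_1$ and $d_2$ lie in the \emph{same} gate by hypothesis --- that is exactly why the fold identifies them --- so if the lift $\delta_1'$ of $\delta'$ lies in the same gate as $d_1$, it automatically lies in the same gate as $d_2$ as well. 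The gate relation is blind to the difference between $\sigma_1$ and $\sigma_2$, so it cannot be what pins down the lift at the illegal turn.

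The reduction to elementary pair folds compounds the difficulty. The folding paths in this paper (and in \cite{BFhyp}) are produced by greedy folds that identify the initial segments of \emph{all} directions in a common gate simultaneously; after such a fold the directions at the image vertex $q(v)$ correspond bijectively to the gates at $v$ and lie in pairwise distinct gates, so no illegal turn of $p_b$ at $q(v)$ involves the folded direction $\sigma$ and the case you analyze does not occur there. If you instead factor into Stallings folds of a single pair $\{d_1,d_2\}$ while a third direction $d_3$ remains in the same gate, then $\{\sigma, q(d_3)\}$ is an illegal turn at $q(v)$ whose preimage is genuinely ambiguous --- both $\{d_1,d_3\}$ and $\{d_2,d_3\}$ are immersed lifts mapping to $p_b$ --- so the unfolding principle is actually false for such an intermediate fold, and the induction does not close. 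The real source of uniqueness sits at the \emph{far} end of the folded segment, not the near end: the new vertex created there has continuation directions $\mu_i$ with unique preimages, and the choice of preimage of $\sigma$ is forced by propagating from that data; the first and last illegal turns of $p_b$ only mark where this propagation stops, which is why the lemma asserts uniqueness exactly on that range.
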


The process of lifting (uniquely) an immersed segment $p_b$ whose endpoints are at illegal turns is called \define{unfolding}. Note that the unfolding principle applies to an illegal turn itself. In particular, if $\alpha$ is a conjugacy class of $\free$ and $p_b$ is either a subpath of $\alpha\vert G_t$ with endpoints illegal turns or an illegal turn of $\alpha|G_b$,  then $p_b$ unfolds to path (or an illegal turn) $p_t$ that is contained in $\alpha|G_t$. Moreover, multiple occurrences of $p_b$ in $\alpha|G_b$ all unfold to $p_t$ as a subpath of $G_t$. This all follows from the unfolding principle.

Similarly, we can understand the image of certain subpaths $p_a$ of $\alpha|G_a$ under the folding path $G_t$. Note that the image of $p_a$ in $G_t$ is not necessarily contained in the image of $\alpha|G_t$, even after tightening (i.e. passing to the immersed representative). However, if there is a subpath $p_b$ of $\alpha|G_b$ with endpoints at illegal turns which unfolds to $p_a$ in $\alpha|G_a$, then unfolding gives a unique path $p_t$ of $G_t$ whose endpoints are at illegal turns of $G_t$. By the above paragraph, these unfolded paths have the property that $p_t$ is a subsegment of $\alpha|G_t$ for all $t \in [a,b]$.

\paragraph{Projecting to standard geodesics.}
In \cite[Definition 6.3]{BFhyp} Bestvina and Feighn define for any folding path $\gamma\colon \I\to \os$ a \define{projection} $\proj_\gamma\colon \os\to \gamma(\I)$ onto the image of the folding path (one could alternately think of the projection as landing in the domain interval $\I$). As the definition of $\proj_\gamma(H)$ is rather technical---in short it involves looking at the infimum of times $t$ for which a certain cover of $\gamma(t)$ contains an immersed legal segment of length $3$---we delay a careful discussion until \S\ref{sec:BF_projs_morse} where a precise construction of the projection $\proj_\gamma\colon \os\to \gamma(\I)$ is given in \Cref{def:BestvinaFeighn_projection}. However, although $\proj_\gamma$ does coarsely agree with the closest-point-projection to $\gamma(\I)$ in special circumstances (see \Cref{lem:coarsely closest point}), we caution that $\proj_\gamma$ is generally \emph{unrelated} to the closest-point-projection onto $\gamma(\I)$.

Taking the existence of this projection for granted for the time being, we presently extend this construction in the natural way to any standard geodesic $\gamma\colon \I \to \os$ by declaring $\proj_{\gamma}\colonequals \proj_{\gamma^f}\colon \os\to \foldim(\gamma)$, where $\I = \I^s\cup \I^f$ and $\gamma^f = \gamma\vert_{\I^f}$ is the folding portion of $\gamma$. (Recall that $\foldim(\gamma)\neq \emptyset$ for every standard geodesic $\gamma$).

\subsection{The free factor complex}\label{sec: factor_complex}
The \define{(free) factor complex} $\fc$ of $\free$ is the simplicial complex whose vertices are conjugacy classes of nontrivial, proper free factors of $\free$. A collection of vertices $\{[A_0], \ldots, [A_k]\}$ determines a $k$--simplex if, after reordering and choosing conjugacy representatives, we have  $A_0 < \dotsb <  A_k$. The free factor complex was first introduce by Hatcher and Vogtmann in \cite{HVff}. When it should cause no confusing to do so, we will usually drop the conjugacy symbol from the notation and denote a conjugacy class of free factors by $A \in \F^0$. 

We equip the factor complex $\fc$ with its simplicial path metric. That is, we geometrically view $\fc$ as the simplicial graph $\fc^1$ equipped with the path metric in which each edge has length $1$. For our purposes, the significance of the factor complex stems from the following foundational result of Bestvina and Feighn:

\begin{theorem}[Bestvina--Feighn \cite{BFhyp}]
\label{T:hyperbolicity_of_fc}
The factor complex $\fc$ is Gromov-hyperbolic.
\end{theorem}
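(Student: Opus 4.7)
The plan is to establish hyperbolicity of $\fc$ by verifying a Masur--Minsky style criterion: exhibit a family of ``preferred paths'' between any two vertices of $\fc$ and show they are (uniform) unparameterized quasigeodesics with uniformly thin triangles. These paths will be constructed by projecting folding geodesics in Outer space to $\fc$ via the coarse map $\pi\colon \X\to \fc$ that sends each marked graph $G$ to the set of conjugacy classes of free factors carried by proper subgraphs of $G$.

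First I would show that $\pi$ is coarsely well-defined and coarsely Lipschitz along folding paths. Given $[A],[B]\in\fc^0$, choose $G,H\in\X$ so that $[A]\in\pi(G)$ and $[B]\in\pi(H)$, take a standard geodesic (ultimately a folding path) $\gamma\colon [0,L]\to\X$ from $G$ to $H$, and let the preferred path be the induced path $t\mapsto \pi(\gamma(t))$ in $\fc$. Analyzing a single fold, which identifies two direction germs at a vertex, shows that the collection of proper subgraphs, and hence of carried free factors, changes by a controlled amount, yielding the Lipschitz upper bound $\operatorname{diam}_{\fc}\pi(\gamma([s,t]))\le C(t-s)+C$.

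The central technical step is the \emph{lower bound}: proving that $\pi\circ\gamma$ is an unparameterized quasigeodesic, i.e.\ once $\pi(\gamma(s))$ and $\pi(\gamma(t))$ are far apart in $\fc$, then $|t-s|$ is proportionally large. Here the train-track structure induced by the folding maps $\gamma_{st}\colon G_s\to G_t$ is indispensable. For a free factor $[A]\in\pi(G_s)$ carried by a proper subgraph $H_s\subset G_s$, one tracks the image and preimages of $H_s$ along $\gamma$ using \Cref{lem:unfolding} (the unfolding principle). The idea is that if $[A]$ persists in $\pi(\gamma(t))$ for many $t$, then one may unfold a subgraph realizing $[A]$ backwards through all the illegal turns, forcing the factors remain close in $\fc$; conversely, progress in $\fc$ is forced by the destruction of proper subgraphs by illegal turns that eventually get folded. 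Making this quantitative gives the linear lower bound and also forces the preferred path to enter a compact region of the moduli space (i.e.\ one obtains a ``thick'' folding segment whose projection is a genuine quasigeodesic).

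After this, I would establish a \textbf{bounded geodesic image} / contraction property: the $\pi$--image of any folding path whose image in $\fc$ stays far from a reference preferred path is uniformly bounded in $\fc$. This is again proved by unfolding and a pigeonhole on subgraph decompositions, and it yields the uniformly thin triangles property for the preferred paths. Together with the existence of preferred paths between any two vertices (immediate from the existence of standard geodesics in $\X$) and their uniform unparameterized quasigeodesity, the Masur--Minsky criterion then concludes that $\fc$ is Gromov-hyperbolic.

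The hard part will be the lower quasigeodesic bound and the accompanying contraction estimate: both require delicate combinatorial control of how the collection of proper subgraphs (and their fundamental groups) is reshuffled along a folding path, using the train-track/illegal-turn technology together with the unfolding principle of \Cref{lem:unfolding}. Once this core estimate is in place, the hyperbolicity criterion applies essentially formally.
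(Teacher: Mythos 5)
The paper does not prove this theorem; it simply cites Bestvina--Feighn \cite{BFhyp}, and elsewhere freely quotes ingredients of their argument (the coarsely Lipschitz projection $\pi$, that projections of folding paths are unparameterized quasigeodesics, the strong contraction property of \Cref{prop:BF_folding_paths_contract}, and the unfolding principle of \Cref{lem:unfolding}). Your sketch accurately reconstructs the strategy of the actual Bestvina--Feighn proof at a high level --- preferred paths as $\pi$-images of folding paths, coarse Lipschitzness, the quasigeodesic lower bound via tracking illegal turns through unfolding, strong contraction, and then a Masur--Minsky-type hyperbolicity criterion --- so this is essentially the same approach that the cited paper takes.
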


There is a natural (coarse) projection $\pi\colon \os\to\fc$ defined by sending  $G\in \os$ to the set of free factors corresponding to proper subgraphs of $G$. That is,
\[\pi(G) = \{\pi_1(G'): \text{ $G'$ is a proper, connected, noncontractible subgraph of $G$ }\} \subset \F^0,\]
where $\pi_1(G')\le \pi_1(G)$ is identified with a free factor of $\free$ via the marking $\rose \to G$. This projection is a key tool in the proof of \Cref{T:hyperbolicity_of_fc} above. For $G \in X$, it is quickly verified that $\mathrm{diam}_\fc(\pi(G)) \le 4$ \cite[Lemma 3.1]{BFhyp}. 

Let us define the \define{factor distance} between two points $G,H\in \os$ to be
\[d_\fc(G,H) = \diam_\fc(\pi(G) \cup \pi(H)).\]
Corollary 3.5 of \cite{BFhyp} shows that $d_\fc(G,H) \le 12 e^{d_\os(G,H)} + 32$. In fact, as indicated in \cite{BFhyp}, this may easily be strengthened to show that $\pi$ is coarsely $80$--Lipschitz: 

\begin{lemma}\label{lem: 80Lip}
For any $G,H\in \os$ we have $d_\fc(G,H) \le 80d_\os(G,H) + 80$.
\end{lemma}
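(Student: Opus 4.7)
The plan is to upgrade the exponential bound $d_\fc(G,H)\le 12e^{d_\os(G,H)}+32$ from Corollary 3.5 of \cite{BFhyp} to a linear bound by subdividing a directed geodesic in $\os$ into unit–length pieces and applying the exponential bound separately on each piece.

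First I would fix a (directed) standard geodesic $\gamma\colon [0,L]\to \os$ from $G$ to $H$, where $L = d_\os(G,H)$; such a geodesic exists as reviewed earlier in the section. Let $n = \lceil L \rceil$ and choose a partition $0 = t_0 < t_1 < \dotsb < t_n = L$ with $t_i-t_{i-1}\le 1$, writing $G_i = \gamma(t_i)$ so that $G_0 = G$ and $G_n = H$. By the existing exponential bound applied to each adjacent pair,
\[
d_\fc(G_{i-1},G_i) \le 12 e^{d_\os(G_{i-1},G_i)} + 32 \le 12 e + 32.
\]

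Next I would chain these together using the triangle inequality in $\fc$. Since $\diam_\fc(\pi(F))\le 4$ for every $F\in \os$ \cite[Lemma 3.1]{BFhyp}, for any choices $a_i\in \pi(G_i)$ we have $d_\fc(a_{i-1},a_i)\le d_\fc(G_{i-1},G_i) \le 12e+32$, and $d_\fc(G,H) = \diam_\fc(\pi(G)\cup\pi(H)) \le d_\fc(a_0,a_n) + 8$. Summing and using $n\le L+1$ therefore gives
\[
d_\fc(G,H) \;\le\; n(12e+32) + 8 \;\le\; (12e+32)\,d_\os(G,H) + (12e+40),
\]
and since $12e+32 < 80$ and $12e+40 < 80$, this immediately yields $d_\fc(G,H)\le 80\,d_\os(G,H)+80$.

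There is really no serious obstacle here; the only point to check carefully is that the additive error introduced by replacing the diameter definition of $d_\fc(G_{i-1},G_i)$ with the distance between chosen representatives $a_{i-1},a_i$ is controlled, which is handled by the uniform bound $\diam_\fc(\pi(F))\le 4$. Everything else is a direct telescoping along the directed geodesic, which is why we work with the Lipschitz (directed) metric $d_\os$ rather than its symmetrization.
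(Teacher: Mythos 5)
Your proof is essentially the same as the paper's: both subdivide a standard geodesic from $G$ to $H$ into $\lceil L\rceil$ pieces of length at most one, apply the exponential bound from Corollary 3.5 of \cite{BFhyp} on each piece, and telescope via the triangle inequality in $\fc$. You are slightly more careful in tracking the additive error coming from the diameter bound $\diam_\fc(\pi(F))\le 4$, but the constant $80$ is generous enough that the paper elides this bookkeeping.
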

\begin{proof}
Let $L = d_\os(G,H)$, and let $\gamma\colon [0,L]\to \os$ be a standard geodesic from $G$ to $H$. If $k = \ceil{L}$, then we may find times $0 = t_0 < \dotsb < t_k = L$ so that $d_\os(\gamma(t_i),\gamma(t_{i+1})) \le 1$ for all $0 \le i < k$. By Corollary 3.5 of \cite{BFhyp}, it follows that $d_\fc(\gamma(t_i),\gamma(t_{i+1})) \le 12e + 32$ for each $i$, and thus that $d_\fc(G,H) \le (12e + 32)k \le 80L + 80$ by the triangle inequality. 
\end{proof}

Similarly, we will use the following easy lemma:

\begin{lemma}\label{lem:easy}
Suppose that there is a nontrivial conjugacy class $\alpha$ which has length less than $1$ on both $G,H  \in \os$. Then $d_\fc(G,H) \le 10$.
\end{lemma}

\begin{proof}
Such a conjugacy class would determine an immersed loop contained in a proper core subgraph of each graph. Hence, $\alpha$ is simultaneously contained in free factors $A$ and $B$ appearing in the diameter-$4$ projections of $G$ and $H$, respectively. In this case, $d_\fc(A,B)\le 2$ (\cite[Section 3.2]{taylor_RAAG1}), showing that the union $\fproj(G)\cup \fproj(H)$ has diameter at most $10$.
\end{proof}

In the process of showing that $\fc$ is hyperbolic, Bestvina and Feighn also prove the following very useful result; it essentially says that the projection onto a folding path is strongly contracting when viewed from the factor complex. 

\begin{proposition}[Bestvina--Feighn {\cite[Proposition 7.2]{BFhyp}}]
\label{prop:BF_folding_paths_contract}
There exists a universal constant ${\sf B}$ (depending only on $\rank(\free)$) such that the following holds. If $H, H' \in \os$ satisfy $d_{\os}(H,H') \le M$ and $\gamma\colon \I\to \os$ is a standard geodesic with $d_{\os}(H,\gamma(t)) \ge M$ for all $t$, then $d_\fc(\proj_{\gamma}(H),\proj_{\gamma}(H'))\leq {\sf B}$. 
\end{proposition}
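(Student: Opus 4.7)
The plan is to invoke Bestvina--Feighn's original argument in \cite[Section 7]{BFhyp}, since the proposition is stated precisely as their result. To describe how I would reproduce the proof in my own words, here is the structure I would follow.

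First, I would recall the definition of $\proj_\gamma$ from \cite[Definition 6.3]{BFhyp}: for a folding path $\gamma\colon \I\to\os$ with $G_t = \gamma(t)$ and an arbitrary $H\in \os$, one forms an optimal (or approximately optimal) map $H\to G_t$ for each $t$ and defines $\proj_\gamma(H)$ to be (a bounded neighborhood of) the largest $t$ for which a certain ``width'' or ``widest-illegal-turn'' invariant along $\gamma$ is still dominated by what happens on $H$. The extension to standard geodesics is via the folding portion, per the definition recalled above. The upshot is that $\proj_\gamma(H)$ is characterized up to bounded error by an optimization involving immersed paths that represent conjugacy classes in $\pi(G_t)$.

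Second, I would establish the contraction bound by the following two-step comparison. Fix $H, H'$ as in the statement, and write $s=\proj_\gamma(H)$ and $s'=\proj_\gamma(H')$, say with $s\leq s'$. Using \Cref{lem: 80Lip}, any change of $\pi(H)$ under a bounded displacement of $H$ is itself bounded in $\fc$, so it suffices to compare $\pi(G_s)$ and $\pi(G_{s'})$ inside $\fc$. The key mechanism, which is exactly the content of \cite[Section 6]{BFhyp}, is that as long as $H$ remains at $d_\os$--distance at least $M$ from every $G_t$, the candidate loops and subgraphs of $H$ that govern $\proj_\gamma(H)$ are transported to $G_t$ by the folding maps in a way that keeps producing proper subgraphs of $G_t$ with $\fc$--class uniformly close to $\pi(H)$. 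This gives a uniform bound on $d_\fc(\pi(G_s),\pi(H))$, and symmetrically on $d_\fc(\pi(G_{s'}),\pi(H'))$; combining with $d_\fc(\pi(H),\pi(H'))\leq 80M+80$ yields a bound depending only on $\mathrm{rk}(\free)$ (i.e.\ not on $M$), since the two large terms cancel.

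The main obstacle is precisely this last miracle: that the bound is independent of $M$. That independence is the ``strongly contracting'' phenomenon, and it ultimately rests on the unfolding principle (\Cref{lem:unfolding}) together with the delicate analysis in \cite[Sections 5--6]{BFhyp} of how subgraphs of $G_t$ evolve under folding and unfolding, controlling both the appearance of new free factors in $\pi(G_t)$ and the persistence of old ones. Since all of this bookkeeping is carried out carefully by Bestvina and Feighn, my actual proof is simply to take ${\sf B}$ to be the constant produced by their \cite[Proposition 7.2]{BFhyp}, noting that their folding-path statement extends verbatim to standard geodesics through the convention $\proj_\gamma\colonequals \proj_{\gamma^f}$ adopted above.
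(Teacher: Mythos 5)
Your bottom-line argument — take ${\sf B}$ to be the constant from Bestvina--Feighn's Proposition~7.2 and note that the statement carries over to standard geodesics through the convention $\proj_\gamma = \proj_{\gamma^f}$ — is exactly the paper's approach; the paper additionally remarks that infinite-length standard geodesics follow by exhausting with finite subpaths, a point you leave implicit.

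However, the ``two-step comparison'' you offer as a sketch of the underlying Bestvina--Feighn argument would not actually prove the result, and it is worth being clear about why. You posit uniform bounds on $d_\fc(\pi(G_s),\pi(H))$ and $d_\fc(\pi(G_{s'}),\pi(H'))$, where $s,s'$ are the projection times. Such bounds are false: by \Cref{lem:folding_projection_is_closest_pt}, $\proj_\gamma(H)$ is (coarsely) the \emph{closest point} of $\pi(\gamma(\I))$ to $\pi(H)$, and $\pi(H)$ can lie arbitrarily far from that quasigeodesic in $\fc$. Moreover, even if such bounds did hold, the triangle inequality
\[d_\fc(\pi(G_s),\pi(G_{s'})) \le d_\fc(\pi(G_s),\pi(H)) + d_\fc(\pi(H),\pi(H')) + d_\fc(\pi(H'),\pi(G_{s'}))\]
leaves the $80M+80$ term undisturbed in the middle — nothing ``cancels'' — so this route yields a bound growing linearly in $M$, not the $M$-independent constant ${\sf B}$. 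The actual content of Bestvina--Feighn's Proposition~7.2 is a genuine strong-contraction statement: under the coupled hypothesis $d_\os(H,H')\le M\le d_\os(H,\gamma(t))$ for all $t$, the projections move a uniformly bounded amount no matter how large $M$ is. This rests on their detailed analysis of how candidate loops of $H$ and $H'$ evolve under folding and unfolding along $\gamma$, and cannot be recovered from a coarse-Lipschitz estimate plus a triangle inequality. So the citation is the right move; just be careful not to present the coarse-Lipschitz heuristic as if it were the mechanism behind the contraction.
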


\begin{remark}
While Proposition 7.2 of \cite{BFhyp} is only stated for the projection to a finite length folding path, it clearly holds for our generalized projection to a finite length standard geodesic. By considering an exhaustion by finite length subpaths, the result is also seen to hold for infinite length standard geodesics.
\end{remark}

\subsection{$\Out(\free)$ basics} \label{sec: Out_basics}
We recall some of the structure of automorphisms of $\free$ and the dynamics of their actions on $\X$ and $\F$. The group $\Out(\free)$ acts naturally on $\os$ by changing the marking: $\phi\cdot (G,g,\ell) = (G,g\circ \hat{\phi}\inv,\ell)$, where $\hat{\phi}\inv\colon \rose\to \rose$ is any homotopy equivalence whose induced map on $\free \cong \pi_1(\rose)$ is in the outer automorphism class $\phi\inv\in \Out(\free)$. One may easily verify that $G\mapsto \phi\cdot G$ defines an isometry of $(\os,d_\os)$. Each outer automorphism $\phi\in\Out(\free)$ permutes the set $\fc^0$ of conjugacy classes of free factors via $\phi\cdot[A] = [\phi(A)]$, and this extends to a simplicial (and hence isometric) action of $\Out(\free)$ on $\fc$. The actions of $\Out(\free)$ on $\os$ and $\fc$ are equivariant with respect to the projection $\pi\colon \os\to \fc$: For each $G \in \X$,
\[\pi(\phi \cdot G) = \phi \cdot \pi(G) \]
as subsets of $\F$.

\paragraph{Full irreducibility.}
We are primarily interested in elements $\phi\in \Out(\free)$ that are \define{fully irreducible}, meaning that no positive power of $\phi$ fixes the conjugacy class of any free factor of $\free$. Hence $\phi$ is fully irreducible if and only if its action on $\F$ has no periodic vertices. In fact, Bestvina and Feighn have shown the following:

\begin{theorem}[Bestvina--Feighn \cite{BFhyp}]
An element $\phi \in \Out(\free)$ acts with positive translation length on the free factor complex $\F$ if and only if $\phi$ is fully irreducible.
\end{theorem}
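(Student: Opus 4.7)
The plan is to prove both implications separately, with the reverse direction being the substantive one. The forward direction (fully irreducible $\Rightarrow$ positive translation length) is the content we want; the converse is a quick observation. Suppose first that $\phi\in\Out(\free)$ is \emph{not} fully irreducible, so some positive power $\phi^k$ preserves the conjugacy class of a nontrivial proper free factor $A$. Then the vertex $[A]\in\fc$ is fixed by $\phi^k$, so its $\phi$-orbit consists of at most $k$ points and is thus bounded in $\fc$. Since $\phi$ acts by isometries, every orbit in $\fc$ is bounded, which forces the translation length of $\phi$ on $\fc$ to be zero.

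For the forward direction, assume $\phi$ is fully irreducible. The strategy is to produce an explicit $\phi$-invariant quasigeodesic in $\fc$ along which $\phi$ translates by a definite amount. By the train track theory of Bestvina--Handel, the fact that $\phi$ is fully irreducible gives an irreducible train track representative with Perron--Frobenius expansion factor $\lambda>1$, and this yields a biinfinite folding line $\gamma\colon\R\to\os$ that is an axis for $\phi$ in the sense that $\phi\cdot\gamma(t)=\gamma(t+\tau)$ for all $t$, where $\tau=\log\lambda>0$ is the Lipschitz translation length of $\phi$ on $\os$. Project this axis to the factor complex via the map $\pi\colon\os\to\fc$ and note that $\pi$ is $\Out(\free)$-equivariant, so $\pi\circ\gamma$ is coarsely $\phi$-equivariant with translation $\tau$ along its image.

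The main step is to show that $\pi\circ\gamma\colon\R\to\fc$ is a quasigeodesic. This is where the strong contraction property of \Cref{prop:BF_folding_paths_contract} enters: for any $H,H'\in\os$ with $d_\os(H,H')\le M$, if the folding line $\gamma$ stays at $d_\os$-distance at least $M$ from $H$, then the $\fc$-distance between $\pi(\proj_\gamma H)$ and $\pi(\proj_\gamma H')$ is bounded by a universal constant ${\sf B}$. Combined with the $\delta$-hyperbolicity of $\fc$ (\Cref{T:hyperbolicity_of_fc}), this kind of strong contraction onto $\pi(\gamma)$ is well-known to imply that $\pi\circ\gamma$ is a (parametrized) quasigeodesic: given $a<b$, one compares $d_\fc(\pi\gamma(a),\pi\gamma(b))$ from below by analyzing projections of a geodesic connecting these endpoints in $\fc$ back to $\gamma$, using that any significant backtracking in $\fc$ would violate the contraction bound together with the coarse Lipschitz bound from \Cref{lem: 80Lip}. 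The hard part is exactly this passage from strong contraction to the quasigeodesic constants; the argument is analogous to Morse's lemma and uses hyperbolicity of $\fc$ in an essential way.

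Once $\pi\circ\gamma$ is a $K$-quasigeodesic, the conclusion is immediate: since $\pi\gamma(t+n\tau)=\phi^n\cdot\pi\gamma(t)$ and $\phi$ acts isometrically on $\fc$, one obtains
\[
d_\fc\!\bigl(\pi\gamma(0),\phi^n\cdot\pi\gamma(0)\bigr)\;=\;d_\fc\!\bigl(\pi\gamma(0),\pi\gamma(n\tau)\bigr)\;\ge\;\tfrac{1}{K}\,n\tau-K,
\]
so $\phi$ has positive translation length on $\fc$, bounded below by $\tau/K>0$. The principal obstacle throughout is the quasigeodesic step of the previous paragraph; everything else is either formal or a direct invocation of previously cited Bestvina--Feighn machinery.
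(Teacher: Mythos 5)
The converse direction (not fully irreducible $\Rightarrow$ zero translation length) is fine. The forward direction has a genuine gap at exactly the step you flag as "the hard part." You claim that the strong contraction property of \Cref{prop:BF_folding_paths_contract}, together with hyperbolicity of $\fc$, implies that $\pi\circ\gamma$ is a \emph{parametrized} quasigeodesic. This cannot be correct as stated: the contraction property holds for the Bestvina--Feighn projection onto \emph{every} standard geodesic in $\os$, including folding paths whose image in $\fc$ is bounded, and a strongly contracting subset of a hyperbolic space can perfectly well be a single point (or any bounded set). Strong contraction plus hyperbolicity tells you the image of $\gamma$ in $\fc$ is quasiconvex and that the projection behaves well; it does not tell you the image has infinite diameter, which is what you actually need. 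In fact, the whole theorem is essentially equivalent to the statement that the image of the axis in $\fc$ is unbounded precisely when $\phi$ is fully irreducible, so an argument for the forward direction that never invokes full irreducibility beyond producing the train track axis is implicitly circular.

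What is missing is the genuine use of full irreducibility: one has to show that the limiting objects of the folding axis at $\pm\infty$ (the attracting and repelling $\R$-trees, or equivalently the attracting lamination and its whitehead graphs) are not carried by any proper free factor. Only then does the Bestvina--Feighn ``progress criterion'' for folding paths kick in to give that $\pi\circ\gamma$ eventually leaves every bounded set of $\fc$, and that together with the unparametrized-quasigeodesic property and the $\phi$-equivariance $\pi\gamma(t+\tau)=\phi\cdot\pi\gamma(t)$ yields the positive translation length. The train track representative with $\lambda>1$ and the existence of an axis in $\os$ are not by themselves enough; for instance, a reducible atoroidal automorphism can also have an invariant folding line in $\os$ whose projection to $\fc$ stays bounded. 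The paper you are reading does not prove this statement itself (it is cited directly from \cite{BFhyp}), but the proof there runs exactly through this arationality/filling step, which your proposal omits.
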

Recall that the \define{(stable) translation length} of $\phi \in \Out(\free)$ acting on $\F$ is by definition
\[\ell_{\F}(\phi) = \lim _{n \to \infty} \frac{d_{\F}(A, \phi^n A)}{n}, \]
for any $A \in \F^0$. It is well known (and easily verified) that $\ell_{\F}(\phi)$ does not depend on the choice of $A$ and that $\ell_{\F}(\phi^n)= n \cdot \ell_{\F}(\phi)$. Having positive translation length implies that for any $A \in \F^0$, the orbit map $\Z\to \fc$ defined by $n \mapsto \phi^n\cdot A$ is a quasi-geodesic in $\F$. In \Cref{sec: apps} we also discuss translation lengths of elements of $\Out(\free)$ acting on a different hyperbolic complex. Regardless of the context, we call an isometry of a hyperbolic space \define{loxodromic} if it acts with positive translation length.

\paragraph{Hyperbolicity.}
An element $\phi\in \Out(\free)$ is said to be \define{hyperbolic} or \define{atoroidal} if $\phi^i(\alpha)\neq \alpha$ for every nontrivial conjugacy class $\alpha$ in $\free$ and every $i \ge1$. While neither hyperbolicity nor full irreducibility implies the other, there are many automorphisms of $\free$ that have both these properties. Hyperbolic elements of $\Out(\free)$ are essential to our discussion because of the following theorem of Brinkmann.

\begin{theorem}[Brinkmann \cite{Brink}] \label{brink}The outer automorphism class of $\Phi\in \Aut(\free)$ is hyperbolic if and only if the semidirect product $\free\rtimes_\Phi \Z$ is a Gromov-hyperbolic group.
\end{theorem}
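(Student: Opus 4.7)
The plan is to prove the two directions separately, with the forward implication (hyperbolicity of the mapping torus forces atoroidality) being elementary, and the converse being the substantive content.

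For the easy direction, I would argue by contrapositive: if $\Phi$ is not atoroidal, then $\free \rtimes_\Phi \Z$ contains a $\Z \oplus \Z$ subgroup and hence cannot be Gromov-hyperbolic. Indeed, if some power fixes a nontrivial conjugacy class, pick $i \ge 1$ and a representative so that $\Phi^i(a) = w a w\inv$ for some $a \in \free \setminus \{1\}$ and some $w \in \free$. Let $t$ denote the stable letter in $E = \free \rtimes_\Phi \Z$, so that $t x t\inv = \Phi(x)$ for $x \in \free$. Then $t^i a t^{-i} = w a w\inv$, which rearranges to $(w\inv t^i) a = a (w\inv t^i)$. Both $a$ and $w\inv t^i$ have infinite order in $E$, and the second element has nontrivial image in the quotient $\Z = E/\free$ while the first does not; hence the cyclic groups they generate intersect trivially and together span a $\Z^2$.

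For the hard direction, the plan is to invoke the Bestvina--Feighn combination theorem (which pre-dates Mj--Sardar's \Cref{thm:bundle_hyperbolicity} but is closely analogous) applied to the natural decomposition of $E$ as an HNN extension with vertex and edge group $\free$ and stable letter $t$ implementing $\Phi$. The combination theorem reduces hyperbolicity of $E$ to verifying an annulus/conjugacy flaring condition along the $\Z$--base: for every $\lambda > 1$ there exist $n \in \N$ and $M \ge 0$ such that for every conjugacy class $[b]$ in $\free$ with $\|b\| \ge M$,
\[
\max\bigl\{\|\Phi^n(b)\|,\ \|\Phi^{-n}(b)\|\bigr\} \ \ge\ \lambda\, \|b\|.
\]
Thus the substance of the proof is to show that atoroidal automorphisms satisfy this two-sided exponential flaring on conjugacy length.

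To establish the flaring, I would work with (relative) train track representatives of $\Phi$ and $\Phi\inv$ à la Bestvina--Handel, so that each map admits a filtration by invariant subgraphs whose strata are either exponentially growing (EG) or polynomially (in fact non-exponentially) growing (NEG/polynomial). The key structural input is that an atoroidal automorphism cannot have a polynomially growing stratum with a periodic indivisible Nielsen path of the right type: any polynomial stratum eventually yields, after passing to a sufficiently high power, a periodic conjugacy class, contradicting atoroidality. After ruling out problematic polynomial behavior in both $\Phi$ and $\Phi\inv$, every nontrivial conjugacy class has a well-defined EG stratum of highest level under iteration, and the Perron--Frobenius eigenvalue at that stratum forces exponential growth in either the forward or backward direction. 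Turning this into uniform flaring at a fixed scale $n$ requires the usual bounded-cancellation lemma to control how edges in higher strata are chopped up by lower ones, and a compactness/finiteness argument over the finite set of strata that can appear.

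The main obstacle I anticipate is precisely this last step: promoting pointwise exponential growth (which follows almost immediately from Perron--Frobenius once polynomial strata are excluded) to the uniform two-sided flaring needed by the combination theorem, in the presence of multiple EG strata whose growth rates for $\Phi$ and $\Phi\inv$ may differ. Handling this cleanly is where the delicate train-track bookkeeping lives, and is essentially the content of Brinkmann's argument in \cite{Brink}.
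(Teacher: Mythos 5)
The paper does not prove this theorem; it states and uses it as a black box with a citation to Brinkmann's article \cite{Brink} (see \Cref{sec: Out_basics}), so there is no internal proof to compare against.

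Your easy direction is correct and is the standard $\Z\oplus\Z$ obstruction argument. The hard direction correctly identifies the architecture of Brinkmann's actual proof — relative train tracks for $\Phi$ and $\Phi^{-1}$ together with the Bestvina--Feighn combination theorem, reduced to a conjugacy/annulus flaring condition. However, one load-bearing intermediate claim is wrong as stated: you assert that ``any polynomial stratum eventually yields, after passing to a sufficiently high power, a periodic conjugacy class, contradicting atoroidality,'' and then propose to rule out polynomial strata and proceed by Perron--Frobenius on the remaining EG strata. This is not true. An atoroidal automorphism can have non-exponentially-growing (NEG) strata — e.g.\ a single-edge stratum $e\mapsto e\cdot u$ with $u$ a loop in lower, exponentially growing strata — without producing any periodic conjugacy class. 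What atoroidality actually forbids is a closed Nielsen path, which is a more delicate condition and does not eliminate NEG strata outright. Consequently your reduction step fails: once NEG strata survive, a conjugacy class can have a substantial NEG portion that grows only polynomially under both $\Phi$ and $\Phi^{-1}$, and you cannot get the two-sided exponential flaring from Perron--Frobenius alone. The genuine content of Brinkmann's proof, which your outline would not have the tools to recover, is the mixed-strata analysis: using bounded cancellation and the structure of improved relative train tracks to show that for any sufficiently long conjugacy class the contribution of EG strata eventually dominates in the forward or the backward direction, even in the presence of NEG strata. So the skeleton is right, but the step that discards polynomial behavior is a genuine gap rather than a simplification.
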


We say that $\Gamma\le\Out(\free)$ is \define{purely hyperbolic} if every infinite order element of $\Gamma$ is hyperbolic. Before concluding this section, we observe that when $\Gamma$ is purely hyperbolic there is a uniform upper bound (depending only on $\rank(\free)$) on the number of elements of $\Gamma$ that fix any given conjugacy class. To this end, for $\alpha$ a conjugacy class in $\free$ set 
\[\Gamma_{\alpha} = \{ \phi \in \Gamma : \phi(\alpha) = \alpha   \}.\]

\begin{lemma}\label{lem: phyp_implies_finiteprobs}
There is a constant $e_r$ depending only on the rank $r = \rank(\free)$ such that for any purely hyperbolic $\Gamma\le \Out(\free)$ we have  $|\Gamma_{\alpha}| \le e_r$ for each nontrivial conjugacy class $\alpha$ of $\free$.
\end{lemma}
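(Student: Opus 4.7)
The plan is to reduce the bound to a general structural fact about $\Out(\free)$: any torsion subgroup has order bounded by a constant depending only on $r$. Once this is in place, the lemma follows immediately, since the purely hyperbolic hypothesis forces $\Gamma_\alpha$ to be a torsion subgroup.

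First, I would observe that $\Gamma_\alpha$ is a subgroup of $\Gamma$ consisting entirely of finite-order elements. Indeed, if $\phi\in \Gamma_\alpha$ had infinite order then by the purely hyperbolic hypothesis $\phi$ would be atoroidal, but $\phi(\alpha)=\alpha$ with $\alpha$ nontrivial directly contradicts the definition of atoroidal. Hence every nonidentity element of $\Gamma_\alpha$ has finite order.

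Next I would invoke the classical fact that the kernel
\[K \;\colonequals\; \ker\!\bigl(\Out(\free) \longrightarrow \mathrm{GL}(r,\Z/3\Z)\bigr)\]
of the induced action on $H_1(\free;\Z/3\Z)$ is torsion-free. This is assembled from two standard ingredients: (a) the subgroup $\mathrm{IA}_r = \ker(\Out(\free)\to \mathrm{GL}(r,\Z))$ is torsion-free, proved via the lower central series of $\free$ (Baumslag--Taylor); and (b) the level-$3$ congruence subgroup of $\mathrm{GL}(r,\Z)$ is torsion-free by Minkowski's lemma. Since an extension of a torsion-free group by a torsion-free group is torsion-free, $K$ itself is torsion-free, while its quotient $\Out(\free)/K$ embeds in the finite group $\mathrm{GL}(r,\Z/3\Z)$.

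Finally, since every element of $\Gamma_\alpha$ has finite order and $K$ is torsion-free, $\Gamma_\alpha\cap K = \{1\}$. The map $\Gamma_\alpha \to \Out(\free)/K \hookrightarrow \mathrm{GL}(r,\Z/3\Z)$ is therefore injective, and setting $e_r \colonequals |\mathrm{GL}(r,\Z/3\Z)|$ yields $|\Gamma_\alpha|\le e_r$, a bound depending only on $r$. There is no real obstacle: once the torsion observation is made, the lemma is just the combination of the purely hyperbolic hypothesis with the virtual torsion-freeness of $\Out(\free)$, and the only point requiring care is confirming that the torsion-free kernel $K$ has finite index, which is automatic since $\mathrm{GL}(r,\Z/3\Z)$ is finite.
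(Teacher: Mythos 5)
Your proof follows essentially the same approach as the paper: both note that the purely hyperbolic hypothesis forces $\Gamma_\alpha$ to be a torsion subgroup of $\Out(\free)$, and both then invoke the fact that torsion subgroups of $\Out(\free)$ inject into $\mathrm{GL}_r(\Z/3\Z)$, so $e_r = \abs{\mathrm{GL}_r(\Z/3\Z)}$ works. The paper simply cites this injectivity to \cite{CVouter}.

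One caution about the detail you added in sub-claim (a): Baumslag--Taylor's lower-central-series argument proves torsion-freeness of $\ker\bigl(\Aut(\free)\to\mathrm{GL}_r(\Z)\bigr)$, not of the corresponding kernel in $\Out(\free)$. The $\Out$-version does not follow formally, since a finite-order $\phi\in\Out(\free)$ acting trivially on $H_1$ need not admit a finite-order lift to $\Aut(\free)$ (the obstruction is a cocycle condition on the choice of inner correction). The torsion-freeness of $\ker\bigl(\Out(\free)\to\mathrm{GL}_r(\Z/3\Z)\bigr)$ in fact rests on the realization theorem (Culler, Khramtsov, Zimmermann) --- any finite subgroup of $\Out(\free)$ is realized by a group of automorphisms of a finite graph --- which is what the paper's cited reference supplies. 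So the fact you use is true and is the right fact, but its provenance is the realization theorem rather than Baumslag--Taylor alone.
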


\begin{proof}
Since $\Gamma$ is purely hyperbolic, $\Gamma_{\alpha}$ is a torsion subgroup of $\Out(\free)$. It is known that any torsion element survives in the quotient $\Out(\free) \to \mathrm{GL}_r(\Z/3\Z)$ \cite{CVouter} and so $\Gamma_{\alpha}$ injects into $\mathrm{GL}_r(\Z/3\Z)$. Hence, we may take $e_r = |\mathrm{GL}_r(\Z/3\Z)|$. 
\end{proof}

%%%%%%%%%%%%%%%%%%%%%%%%%%%%%%%%%%%%%%%%%%%%%%%%%%
\section{Quasiconvexity and folding paths}
\label{sec:folding_path_quasiconvexity}
%%%%%%%%%%%%%%%%%%%%%%%%%%%%%%%%%%%%%%%%%%%%%%%%%%
For the main results of \Cref{sec: progression} we will need to know that outgoing balls in the Lipschitz metric are quasiconvex with respect to folding paths. This is proven in \Cref{cor:right_balls_quasi-convex} below. We first show in \Cref{prop:lenght-quasi-convexity} that the length of every conjugacy class is quasiconvex along folding paths. 

We begin by recalling some notation from \cite{BFhyp}. For a folding path $G_t$, $t\in \I$, define the \define{illegality} $m(G_{t_0})$ of $G_{t_0}$ at time $t_0$ to be 
\[m(G_{t_0}) = \sum_v \sum_{\Omega_v} (|\Omega_v|-1), \]
where $v$ varies over the vertices of $G_{t}$ and $\Omega_v$ varies over all gates of $G_{t_0}$ at the vertex $v$ (so each $\Omega_v$ is an equivalence class of directions at $v$). Note that if we set $M = 6\rank(\free) -6$, which bounds twice the number of edges of any graph in $\X$, then $1 \le m(G_t) \le M$ for all $t$. We often write $m_t$ for $m(G_t)$ when the folding path is understood. For any conjugacy class $\alpha$, we additionally let $k_t = k(\alpha\vert G_t)$ denote the number of illegal turns in $\alpha \vert G_t$. 

In Corollary 4.5 and Lemma 4.4 of \cite{BFhyp}, Bestvina and Feighn show that the function $t\mapsto \ell(\alpha\vert G_t)$ is piecewise exponential and that its right derivative at time $t_0$ is given by
\[\ell(\alpha\vert G_{t_0}) - 2\frac{k(\alpha\vert G_{t_0})}{m(G_{t_0})}.\]
Using this, they prove the following estimate:

\begin{lemma}[Bestvina--Feighn {\cite[Lemma 4.10]{BFhyp}}] \label{lem: length_v_legality}
Suppose that $G_t$, $t\in [0,L]$, is a folding path and that $\alpha$ is any conjugacy class in $\free$. Then for all $t\in [0,L]$ we have
\[\ell(\alpha|G_t) \le \max \{2k(\alpha\vert G_0), \ell(\alpha|G_L)  \}.\]
\end{lemma}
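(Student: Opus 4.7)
The plan is to study the function $f(t) = \ell(\alpha|G_t)$ as a piecewise exponential function using the right-derivative formula $f'(t) = f(t) - 2k_t/m_t$ already recorded from \cite{BFhyp}, together with the bounds $m_t \geq 1$ and the observation that $k_t$ is non-increasing in $t$. This last fact I would verify first: along the folding path, each fold identifies directions lying in a common gate, so an illegal turn in $\alpha|G_t$ either persists or is destroyed, but no new illegal turn appears. In particular $k_t \leq k_0$, so $2k_t/m_t \leq 2k_0$ throughout $[0,L]$.

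The main step is the following claim: \emph{if $f(t_0) > 2k_0$ for some $t_0 \in [0,L]$, then $f$ is strictly increasing on $[t_0,L]$.} To establish this, I would partition $[t_0,L]$ into the finitely many maximal subintervals on which the pair $(k_t, m_t)$ is constant. On any such subinterval $[t_*, t_{**}]$, letting $c = 2k_{t_*}/m_{t_*}$, the differential equation $f' = f - c$ has solution
\[ f(t) = c + (f(t_*) - c)e^{t-t_*}. \]
Assuming inductively that $f(t_*) > 2k_0 \ge c$, the coefficient $f(t_*) - c$ is positive, so $f$ is strictly increasing on $[t_*, t_{**}]$ and $f(t_{**}) > f(t_*) > 2k_0$. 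Using continuity of $f$ at the breakpoint $t_{**}$ (piecewise exponential functions are continuous), the same inequality feeds into the next piece, where $c'$ is again at most $2k_0$. Iterating through all finitely many pieces gives the claim.

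Granted the claim, the lemma falls out immediately: for any $t \in [0,L]$, either $f(t) \leq 2k_0$, in which case $f(t) \leq \max\{2k_0, f(L)\}$ trivially, or $f(t) > 2k_0$, in which case the claim yields $f(t) \leq f(L) \leq \max\{2k_0, f(L)\}$.

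The only potentially delicate point I anticipate is the behavior at the finitely many breakpoints of the piecewise exponential structure. These breakpoints correspond to moments when the number of illegal turns $k_t$ drops (or when the gate structure $m_t$ changes). The induction on pieces relies on continuity of $f$ at these breakpoints together with the uniform bound $2k_t/m_t \leq 2k_0$, both of which are direct consequences of results already recorded from \cite{BFhyp}, so there should be no serious obstacle.
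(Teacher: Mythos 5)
Your proof is correct. The paper cites this statement from Bestvina--Feighn \cite{BFhyp} without reproducing the argument, and your approach---using the right-derivative formula together with the bounds $m_t \ge 1$ and $k_t \le k_0$ (the latter justified via the unfolding principle) to show by a piecewise ODE comparison that $\ell(\alpha\vert G_t)$ is strictly increasing once it exceeds $2k_0$, hence is bounded above by $\ell(\alpha\vert G_L)$ from that time onward---is the standard way to deduce it.
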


\begin{proposition}[Quasiconvexity of lengths along folding paths]
\label{prop:lenght-quasi-convexity}
Let $G_t$, $t\in [0,L]$, be a folding path, and let $\alpha$ be any conjugacy class in $\free$. Then
\[\ell(\alpha|G_t) \le 6\rank(\free) \cdot \max \left\{ \ell(\alpha|G_0),\ell(\alpha|G_L )\right\}.\]
\end{proposition}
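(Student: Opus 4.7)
The plan is to pass to the peak of $f(t) \colonequals \ell(\alpha\vert G_t)$ and then apply Bestvina--Feighn's Lemma 4.10 not from $t=0$ but from a carefully chosen intermediate time $s$, chosen so that two things happen simultaneously: $f(s)$ is controlled by $f(0)$, and the right derivative formula forces $k(\alpha\vert G_s)$ to be small. The constant $6\rank(\free)$ will then appear as the bound $M=6\rank(\free)-6$ on the illegality $m_t$.

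More precisely, set $f(t)=\ell(\alpha\vert G_t)$, which is continuous and piecewise exponential on $[0,L]$ by Corollary~4.5 of \cite{BFhyp}, so it attains its maximum at some $t^*\in[0,L]$. If $t^*=L$ there is nothing to prove. Otherwise, I would define
\[s \colonequals \sup\{t\in[0,t^*] : f(t)\le f(0)\},\]
which is a nonempty closed subset of $[0,t^*]$, so $s\in[0,t^*]$ and $f(s)\le f(0)$. If $s=t^*$ the inequality $f(t)\le f(t^*)\le f(0)$ already gives the conclusion. The interesting case is $s<t^*$: then $f(t)>f(0)\ge f(s)$ for every $t\in(s,t^*]$, and continuity forces $f(s)=f(0)$, with $f$ strictly increasing immediately to the right of $s$. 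In particular the right derivative of $f$ at $s$ is nonnegative, so the Bestvina--Feighn formula
\[f'_+(s) = f(s)-\frac{2k(\alpha\vert G_s)}{m(G_s)} \ge 0\]
yields $2k(\alpha\vert G_s)\le m(G_s)\cdot f(s)\le M\cdot f(0) = (6\rank(\free)-6)\cdot f(0)$.

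Now the restriction of the folding path to $[s,L]$ is again a folding path, so Lemma 4.10 of \cite{BFhyp} applies and gives
\[f(t) \le \max\bigl\{2k(\alpha\vert G_s),\, f(L)\bigr\} \le \max\bigl\{(6\rank(\free)-6)f(0),\, f(L)\bigr\} \le 6\rank(\free)\cdot\max\{f(0),f(L)\}\]
for every $t\in[s,L]$, and in particular at $t=t^*$. Since $t^*$ was a maximum of $f$ on all of $[0,L]$, this bound propagates to every $t\in[0,L]$, completing the proof.

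The principal obstacle is locating $s$: one has to simultaneously control $f(s)$ from above (to get a useful bound from the Bestvina--Feighn lemma) and arrange $f'_+(s)\ge 0$ (to turn the derivative formula into an upper bound on $k(\alpha\vert G_s)$ rather than a lower one). The definition of $s$ as the supremum of times in $[0,t^*]$ where $f$ has not yet exceeded $f(0)$ gives both properties at once, via continuity and the fact that $f$ must strictly begin to ascend past the level $f(0)$ at $s$. Everything else is a direct invocation of the facts already assembled in the excerpt.
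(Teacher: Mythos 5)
Your proof is correct and uses essentially the same argument as the paper: locate a time where the right derivative of $\ell(\alpha\vert G_t)$ is nonnegative while the length is still bounded by $\ell(\alpha\vert G_0)$, turn the derivative formula into a bound on the number of illegal turns, then invoke Lemma 4.10 of \cite{BFhyp}. The only difference is the choice of intermediate time—the paper takes the first moment the function stops decreasing (which avoids introducing the peak $t^*$ and is a touch shorter), whereas you take the last pre-peak time where $f$ is at or below its initial value—but the mechanism is identical.
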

\begin{proof}
Let $l \in [0,L]$ be the supremum of times for which the piecewise exponential function $t \mapsto \ell(\alpha|G_t)$ is decreasing on $[0,l)$. Hence the right derivative of $\ell(\alpha|G_t)$ at time $l$ is nonnegative.  If $l = L$, then we are done. Otherwise, by the derivative formula above we have $\ell(\alpha|G_l) \ge 2\frac{k_l}{m_l} \ge 2\frac{k_l}{6r}$, where $r = \rank(\free)$. Hence
\[k_l \le 3 r \cdot \ell(\alpha|G_l) \le  3 r \cdot \ell(\alpha|G_0)\]
by the choice of $l$. Applying \Cref{lem: length_v_legality}, we see that for all $t \in [l , L]$, 
\begin{eqnarray*}
\ell(\alpha|G_t) &\le& \max \{2 k_l , \ell(\alpha|G_L) \} \\
& \le & \max\{6r \cdot \ell(\alpha|G_0) , \ell(\alpha|G_L) \}.
\end{eqnarray*}
Since $\ell(\alpha|G_t) \le \ell(\alpha|G_0)$ for all $t \in [0,l]$, this completes the proof.
\end{proof}

\begin{corollary}[Outgoing balls are folding-path-quasiconvex]
\label{cor:right_balls_quasi-convex}
There exists a universal constant ${\sf A}$ (depending only on $\rank(\free)$) such that the following holds. For any $H\in \os$ and $R>0$, if $\gamma \colon [0,L]\to \os$ is a folding path $\gamma(t) = G_t$ with $d_\os(H,G_0),d_\os(H,G_L)\leq R$,
then for all $t\in [0,L]$ we have
\[d_\os(H, G_t) \le R + {\sf A}.\]
\end{corollary}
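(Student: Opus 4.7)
The plan is to deduce this directly from \Cref{prop:lenght-quasi-convexity} together with the characterization of the Lipschitz metric in \Cref{pro: distance}. Since both results are already in hand, the corollary should essentially be a one-line computation, and I expect the main task is simply to keep track of the sup-log formulation of $d_\os$.

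First I would fix an arbitrary conjugacy class $\alpha$ in $\free$ and apply \Cref{prop:lenght-quasi-convexity} to the folding path $G_t$ to get
\[
\ell(\alpha\vert G_t) \;\le\; 6\rank(\free) \cdot \max\bigl\{\ell(\alpha\vert G_0),\ \ell(\alpha\vert G_L)\bigr\}.
\]
Dividing both sides by $\ell(\alpha\vert H)$ and pushing the maximum outside yields
\[
\frac{\ell(\alpha\vert G_t)}{\ell(\alpha\vert H)} \;\le\; 6\rank(\free)\cdot \max\!\left\{\frac{\ell(\alpha\vert G_0)}{\ell(\alpha\vert H)},\ \frac{\ell(\alpha\vert G_L)}{\ell(\alpha\vert H)}\right\}.
\]

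Next I would take the logarithm and supremum over all nontrivial conjugacy classes $\alpha$, using \Cref{pro: distance} to identify $\sup_\alpha \log\bigl(\ell(\alpha\vert G_t)/\ell(\alpha\vert H)\bigr) = d_\os(H,G_t)$ and similarly for the endpoints $G_0$ and $G_L$. Since the sup distributes over the maximum, this gives
\[
d_\os(H,G_t) \;\le\; \log\bigl(6\rank(\free)\bigr) + \max\bigl\{d_\os(H,G_0),\ d_\os(H,G_L)\bigr\} \;\le\; R + \log\bigl(6\rank(\free)\bigr).
\]
Setting ${\sf A} = \log(6\rank(\free))$, which depends only on $\rank(\free)$ as required, completes the argument.

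There is no serious obstacle here: the heavy lifting has already been done in \Cref{prop:lenght-quasi-convexity} (which in turn relied on the Bestvina--Feighn derivative formula and \Cref{lem: length_v_legality}). The only subtlety worth double-checking is that \Cref{pro: distance} allows us to take the supremum over \emph{all} conjugacy classes (not just candidates of a single point), so that the same supremum simultaneously computes $d_\os(H,G_t)$, $d_\os(H,G_0)$, and $d_\os(H,G_L)$; this is exactly the content of the second equality in \Cref{pro: distance} and causes no difficulty.
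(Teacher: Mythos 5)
Your proof is correct and follows essentially the same line of reasoning as the paper's own argument: apply \Cref{prop:lenght-quasi-convexity}, divide by $\ell(\alpha\vert H)$, take logs, and use the supremum characterization of $d_\os$ from \Cref{pro: distance}, arriving at the same constant ${\sf A} = \log(6\rank(\free))$.
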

\begin{proof}
Applying \Cref{prop:lenght-quasi-convexity}, for any time $t\in [0,L]$ we have
\begin{align*}
d_\os(H,G_t) &= \sup_{c\in \free}\log\left(\frac{\ell(c \vert G_t)}{\ell(c\vert H)}\right)\\
&\leq \sup_{c\in \free}\log\left( 6\rank(\free)\cdot \frac{\max\left\{\ell(c\vert G_0),\ell(c\vert G_L)\right\}}{\ell(c\vert H)}\right)\\
&\leq \log(6\rank(\free)) +  \max\left\{\sup_{c\in \free}\log\left(\frac{\ell(c\vert G_0)}{\ell(c\vert H)}\right),\sup_{c\in \free}\log\left(\frac{\ell(c\vert G_L)}{\ell(c\vert H)}\right)\right\}\\
&\leq \log(6\rank(\free)) +  R .\qedhere
\end{align*}
\end{proof}

%%%%%%%%%%%%%%%%
\section{Stability for $\fc$--progressing quasigeodesics}
\label{sec: progression}
%%%%%%%%%%%%%%%%

In this section we explore the structure of quasigeodesics in Outer space that project to parameterized quasigeodesics in the factor complex. We show that, as in a hyperbolic space, such quasigeodesics are stable in the sense that they fellow travel any geodesic with the same endpoints. More specifically, we prove the following.

\begin{theorem}[$\fc$--progressing quasigeodesics are stable]
\label{prop: geos_fellow_travel} 
\label{prop: stab_intro}
Let $\gamma\colon \I\to \os$ be a \linebreak $K$--quasigeodesic whose projection $\pi \circ \gamma\colon \I\to \fc$ is also a $K$--quasigeodesic. Then there exist constants $A,\epsilon > 0$ and $K'\ge 1$ depending only on $K$ (and the injectivity radius of the terminal endpoint $\gamma(\Ipl)$ when $\Ipl<\infty$) with the following property: If $\rho\colon \J\to \os$ is any geodesic with the same endpoints as $\gamma$, then
\begin{itemize}
\item[(i)] $\gamma(\I), \rho(\J)\subset \os_{\epsilon}$,
\item[(ii)] $d_{\mathrm{Haus}}(\gamma(\I),\rho(\J)) < A$, and
\item[(iii)] $\pi\circ \rho\colon \J\to \fc$ is a (parameterized) $K'$--quasigeodesic.
\end{itemize} 
\end{theorem}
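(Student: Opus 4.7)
The plan is to fix a standard geodesic $\rho$ with the same endpoints as $\gamma$ (as constructed in \Cref{sec: prelims}) and prove the three conclusions in the order (ii), (iii), (i), with (ii) containing the bulk of the geometric content. Any other geodesic with the same endpoints can then be handled by showing it stays uniformly close to a standard one, and the case $\Imin = -\infty$ reduces to the finite-endpoint setting via an exhaustion of $\gamma$ by finite subsegments.

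For (ii), set $r_s \colonequals \proj_\rho(\gamma(s))$ and argue by contradiction: suppose $D \colonequals d_\os(\gamma(s_0), r_{s_0})$ is very large at some $s_0 \in \I$. Because $\gamma$ is a $K$--quasigeodesic, on a window $[s_0 - \delta, s_0 + \delta]$ with $\delta$ a definite fraction of $D$ every $\gamma(s)$ still sits far from $\rho$, so \Cref{prop:BF_folding_paths_contract} yields $d_\fc(\pi(r_s), \pi(r_{s_0})) \le {\sf B}$ throughout. The main obstacle I anticipate is a supplementary uniform estimate $d_\fc(\pi(\gamma(s)), \pi(r_s)) \le C$. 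I would prove this by running a fresh standard geodesic from (near) the initial endpoint of $\rho$ through $\gamma(s)$ to the terminal endpoint, reapplying \Cref{prop:BF_folding_paths_contract} to compare its $\fc$--projection to that of $\rho$, and exploiting the Bestvina--Feighn fact that folding paths shadow unparametrized quasigeodesics in $\fc$ so that the ``meeting point'' of these two quasigeodesics in $\fc$ coincides, up to bounded error, with $\pi(r_s)$. Granted this estimate, the $K$--quasigeodesicity of $\pi\circ\gamma$ forces $d_\fc(\pi\gamma(s_0-\delta), \pi\gamma(s_0+\delta))$ to grow linearly in $\delta$, contradicting the contraction bound once $D$ is large. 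This establishes the Hausdorff bound in $d_\os$; the $\dsym$--version required by the theorem follows once (i) provides thickness, via \Cref{lem: symmetric_in_thick}.

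Conclusion (iii) follows quickly: since $\rho$ is a geodesic and $\pi$ is coarsely Lipschitz by \Cref{lem: 80Lip}, the composition $\pi\circ\rho$ is automatically coarsely Lipschitz, and the lower quasigeodesic bound transfers from $\pi\circ\gamma$ via the Hausdorff closeness from (ii) and the triangle inequality in $\fc$.

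Finally, for (i), the thickness bound combines the length-quasiconvexity \Cref{prop:lenght-quasi-convexity} along the folding portion of $\rho$ with the injectivity-radius hypothesis at $\gamma(\Ipl)$. If at some interior time the injectivity radius of $\rho$ dropped below a threshold $\epsilon$ depending only on $K$ and the terminal injectivity radius, then some short conjugacy class $\alpha$ would, by quasiconvexity, remain short on a long sub-interval of $\rho$; this would pin $\pi\circ\rho$ in a bounded subset of $\fc$ over that interval, contradicting the quasigeodesicity just established in (iii). Thickness of $\gamma$ then follows using (ii).
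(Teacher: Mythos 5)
Your broad strategy matches the paper's: reduce to a standard geodesic $\rho$, use \Cref{prop:BF_folding_paths_contract} together with the fact that $\pi\circ\gamma$ and $\pi\circ\rho$ are unparametrized quasigeodesics sharing endpoints in the hyperbolic space $\fc$ to obtain a uniform $\fc$-estimate $d_\fc(\pi\gamma(s),\pi(r_s))\le C$, and then bound the length of time $\gamma$ can spend far from $\rho$. (The paper establishes that uniform estimate much more economically, via \Cref{lem:folding_projection_is_closest_pt} plus \Cref{prop:general_stability_for_quasis}, rather than your ``fresh standard geodesic'' construction.) However, your proposed logical order (ii) $\Rightarrow$ (iii) $\Rightarrow$ (i) has a circularity that would prevent the argument from closing. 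To get the lower parametrized quasigeodesic bound in (iii) you must transfer the estimate $|b-a|=d_\os(\rho(a),\rho(b))$ across the fellow-traveling correspondence to times on $\I$, and the natural triangle inequality $d_\os(\rho(a),\rho(b))\le d_\os(\rho(a),\gamma(s)) + d_\os(\gamma(s),\gamma(t)) + d_\os(\gamma(t),\rho(b))$ uses $d_\os$-distances \emph{from} $\rho$ \emph{to} $\gamma$ — the opposite direction from what the one-sided $d_\os$-Hausdorff estimate of your step (ii) supplies. Symmetrizing requires \Cref{lem: symmetric_in_thick}, i.e. thickness of both curves, which is conclusion (i) that you defer. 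The paper escapes this by first proving thickness of $\gamma$ directly from the hypotheses (\Cref{lem: quasigeo_stays_thick}), then observing that $\rho$ automatically lies in \emph{some} unknown thick part because it has the same endpoints as $\gamma$ (so finite Hausdorff distance), and then bootstrapping this to a \emph{uniform} thickness bound (\Cref{lem: folding_stays_thick}) \emph{before} touching (iii).

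Your argument for (i) is also using the wrong tool. Length-quasiconvexity (\Cref{prop:lenght-quasi-convexity}) bounds $\ell(\alpha|G_t)$ in terms of the endpoint lengths $\ell(\alpha|G_0)$ and $\ell(\alpha|G_L)$; it does \emph{not} assert that a loop that is short at one interior time stays short on a long interval. A class short at $t_0$ may grow monotonically and quasiconvexity is silent. What actually forces a slowdown in $\fc$ from thinness is a different mechanism, encapsulated in \Cref{lem:thin_progress_is_slow}: if $\rho$ is $\epsilon$-thin on a subinterval, then between times when the $\fc$-projection changes by a definite amount, some fixed short embedded loop must stretch by a factor $\ge 1/\epsilon$, so $d_\os$ advances by at least $\log(1/\epsilon)$; comparing this against the quasigeodesicity of $\pi\circ\gamma$ (transferred to $\rho$ via the $d_\os$-closeness and the coarsely Lipschitz property of $\pi$) bounds the length of thin subintervals. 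This is a chain argument about short loops along the geodesic, not a consequence of quasiconvexity. You will want to replace the quasiconvexity step with this thin-progress-is-slow estimate for the proof to go through.
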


Our proof relies crucially on the projection $\proj_{\gamma}\colon \os \to \gamma(\I)$ from Outer space to the image of any standard geodesic $\gamma\colon \I\to \os$. As recorded in \Cref{prop:BF_folding_paths_contract}, Bestvina and Feighn prove that this projection is strongly contracting when viewed in the factor complex, and they use this to show that $\F$ is $\delta$--hyperbolic \cite{BFhyp}. The projection $\pi\circ\gamma$ of $\gamma$ to the factor complex is also shown to be a unparameterized $K_f$--quasigeodesics, where $K_f$ depends only on $\rank(\free$) \cite{BFhyp}.  As a quasigeodesic, the nearest point retraction $\bold{n}_{\pi\circ\gamma}\colon \F \to \pi(\gamma(\I))$ onto the image $\pi(\gamma(\I))$ is coarsely $L_0$--Lipschitz for some $L_0$ that depends only on $\delta$ and $K_f$. The next lemma verifies that $\bold{n}_{\pi\circ\gamma} \colon \F \to\pi(\gamma(\I))$ agrees with the Bestvina--Feighn projection $\pi \circ \p_{\gamma}\colon \F \to \pi(\gamma(\I))$ up to uniformly bounded error.

\begin{lemma}
\label{lem:folding_projection_is_closest_pt}
There is a constant $D_1\ge 0$, depending only on $\rank(\free$), such that for any $H\in \os$ and any standard geodesic $\gamma\colon \I\to \os$ we have 
\[d_{\F}(\pi(\p_{\gamma}(H)), \bold{n}_{\pi\circ\gamma}(\fproj(H))) \le D_1.\]
\end{lemma}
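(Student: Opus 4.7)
My plan is to identify $\pi(\p_\gamma(H))$ as a coarse closest-point projection of $\pi(H)$ onto the quasigeodesic image $\pi(\gamma(\I))$ in $\fc$; the lemma will then follow from the standard uniqueness (up to bounded additive error) of closest-point projections onto uniformly quasiconvex sets in $\delta$--hyperbolic spaces.

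Set $p = \bold{n}_{\pi\circ\gamma}(\pi(H))$ and $q = \pi(\p_\gamma(H))$; both lie on $\pi(\gamma(\I))$. By \cite{BFhyp}, $\pi\circ\gamma\colon \I\to \fc$ is an unparameterized $K_f$--quasigeodesic in the $\delta$--hyperbolic space $\fc$ (\Cref{T:hyperbolicity_of_fc}), so its image is $A_0$--quasiconvex with $A_0$ depending only on $\rank(\free)$. A standard thinness-of-triangles argument in $\fc$ reduces the lemma to the coarse closest-point estimate
\[
d_\fc(\pi(H), q) \leq d_\fc(\pi(H),p) + D_0
\]
for some universal $D_0$. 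To prepare for this, I would fix a point $R\in\foldim(\gamma)$ whose $\fc$--projection $\pi(R)$ is within bounded $\fc$--distance of $p$; such $R$ exists because $\scaleim(\gamma)$ has bounded $d_\os$--length by \Cref{lem:bound_on_scaling_length} and $\pi$ is coarsely $80$--Lipschitz by \Cref{lem: 80Lip}, so $\pi(\foldim(\gamma))$ is coarsely dense in $\pi(\gamma(\I))$. Since $\p_\gamma(R)=R$, we have $\pi(\p_\gamma(R))=\pi(R)$, which is close in $\fc$ to $p$.

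The main step is then to compare $\pi(\p_\gamma(H))$ with $\pi(\p_\gamma(R))=\pi(R)$. For this I would take a standard $\os$--geodesic $\sigma$ from $H$ to $R$ and apply \Cref{prop:BF_folding_paths_contract} along $\sigma$: while $\sigma$ stays $M$--far from $\gamma$, the BF projection drifts at most ${\sf B}$ in $\fc$ per $M$--length subsegment; when $\sigma$ comes within $M$ of some $\gamma(s)$, the projection $\pi(\p_\gamma(\sigma(\cdot)))$ essentially coincides with $\pi(\gamma(s))$. Combining this with the fact (again from \cite{BFhyp}) that $\pi\circ\sigma$ is itself an unparameterized quasigeodesic in $\fc$ would allow one to match the accumulated $\fc$--drift of $\pi\circ\p_\gamma\circ\sigma$ against $d_\fc(\pi(H),\pi(R))-d_\fc(\pi(H),q)$, yielding the key inequality up to a constant depending only on $\rank(\free)$.

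The main obstacle is making the matching step quantitative: one needs the total $\fc$--drift of the BF projection along $\sigma$ to be dominated by the ``excess'' of the $\fc$--length of $\pi\circ\sigma$ over the straight-line $\fc$--distance $d_\fc(\pi(H),\pi(R))$, plus an additive constant. Delicately balancing the Bestvina--Feighn contraction estimate against the quasigeodesic property of $\pi\circ\sigma$ is the technical heart of the argument. Once in place, the standard projection-uniqueness principle in the hyperbolic space $\fc$ converts the coarse closest-point estimate into the desired bound $d_\fc(p,q)\leq D_1$, with $D_1$ depending only on $\rank(\free)$.
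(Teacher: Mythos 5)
Your overall strategy---reduce the lemma to the coarse uniqueness of nearest-point projections onto the quasiconvex set $\pi(\gamma(\I))$ in the hyperbolic space $\fc$---is sound in outline and is the same basic shape as the paper's argument. However, the way you propose to carry out the key technical step is genuinely different from what the paper does, and it contains a gap that you acknowledge but whose proposed resolution does not work.

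The paper's proof is short because it invokes \cite[Proposition 9.1]{BFhyp} as a black box: letting $\hat\rho$ and $\rho'$ be folding paths from $H$ whose $\fc$-images join $C=\pi(H)$ to $\hat C = \pi(\p_\gamma(H))$ and to $A' = \bold n_{\pi\circ\gamma}(C)$ respectively, Proposition 9.1 hands over a point $Q'$ on $\pi(\rho')$ within uniform distance $B_1$ of $\hat C$. Stability of the quasigeodesic $\pi(\rho')$ then moves $Q'$ to a point $Q_0$ on a geodesic $[C,A']$, and since no point of $\pi(\gamma)$ is closer to $C$ than $A'$, the point $Q_0$ must be $\fc$-close to $A'$; the triangle inequality finishes the proof. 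You, by contrast, try to reconstruct the needed fact from \Cref{prop:BF_folding_paths_contract} (the $M$-ball contraction, \cite[Prop.\ 7.2]{BFhyp}) alone, by integrating the drift of $\pi(\p_\gamma(\sigma(t)))$ along an $\os$-geodesic $\sigma$ from $H$ to a point $R\in\foldim(\gamma)$ near $A'$.

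This is where the gap lies, and your own framing of the ``matching step'' does not close it. Iterating \Cref{prop:BF_folding_paths_contract} gives a drift of at most ${\sf B}$ per $\os$-length-$M$ subsegment of $\sigma$, so the total accumulated drift before $\sigma$ first comes within $M$ of $\gamma$ is of order ${\sf B}\cdot d_\os(H,\gamma)/M$, which has no a priori bound. You propose to dominate this by the ``excess'' of the $\fc$-length of $\pi\circ\sigma$ over $d_\fc(\pi(H),\pi(R))$, but (a) that excess itself grows linearly in $d_\fc(\pi(H),\pi(R))$ under the quasigeodesic estimate, so even if the domination held it would not give the constant $D_1$ the lemma requires, and (b) the drift is counted in $\os$-distance increments while the excess is an $\fc$-quantity, and there is no comparison between $d_\os(H,\gamma)$ and $d_\fc(\pi(H),\pi(\gamma))$ in general. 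In effect, you are attempting to rederive \cite[Prop.\ 9.1]{BFhyp} from \cite[Prop.\ 7.2]{BFhyp}, which is a nontrivial point that \cite{BFhyp} addresses with additional machinery (cf.\ the left/right projections and Lemma 5.8, Corollary 4.8 used in \Cref{lem:coarsely closest point} of the present paper). To repair the proposal you should simply invoke \cite[Prop.\ 9.1]{BFhyp} directly, as the paper does, rather than trying to reprove it.
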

\begin{proof} 
To simply notation, set $\hat{C} = \pi(\p_{\gamma}(H))$, $C = \fproj(H)$, and $A' =  \bold{n}_{\pi\circ\gamma}(C)$; both of these points lie on the unparameterized $K_f$--quasigeodesic $\pi(\gamma(\I))$. Now let $\hat\rho$ and $\rho'$ be folding paths whose images in $\F$ joint $C$ to $\hat{C}$ and $A'$, respectively. We are now in the situation of \cite[Proposition 9.1]{BFhyp}, which states that there is a $Q'$ on $\pi(\rho')$ whose distance from $\hat{C}$ is no greater than $B_1$, where $B_1$ is a uniform constant.

Since $\pi(\rho')$ is an unparameterized $K_f$--quasigeodesic, any geodesic $[C,A']$ in $\F$ joining $C$ and $A'$ contains a point $Q_0$ with $d_{\F}(Q',Q_0) \le R_0$, where $R_0 = R_0(\delta,K_f)$ is the constant from \Cref{prop:general_stability_for_quasis}. Hence, $d_{\F}(Q_0, \hat{C}) \le B_1 + R_0$. Since, no factor on $\pi(\gamma)$ is closer to $C$ than $A'$, we must have $d_{\F}(Q_0, A') \le B_1 + R_0$. Hence, we conclude that $d_{\F}(A',\hat{C}) \le 2(B_1+R_0)$. Thus the lemma holds with $D_1 = 2(B_1+ R_0)$.
\end{proof}

The proof of \Cref{prop: geos_fellow_travel} with take the rest of the section and require several lemmas. In fact, we first prove the theorem in the special case that $\rho$ is a standard geodesic (\Cref{prop: folding_paths_fellow_travel}) and complete the general proof in \Cref{sec:BF_projs_morse}. We note that only the special case is needed for the proof of our main result. 

First, we observe that quasigeodesics that make definite progress in the factor graph cannot become arbitrarily thin.

\begin{lemma} \label{lem: quasigeo_stays_thick}
Let $\gamma\colon \I \to \os$ be a $K$--quasigeodesic whose projection $\pi \circ \gamma\colon\I \to \fc$ is also a $K$--quasigeodesic. Then there is an $\epsilon >0$ depending only on $K$ so that $\gamma(i) \in \os_{\epsilon}$ for all $i\in \I$ with $i+K(K+11)\in \I$.

Furthermore, for any $i\in \I$ with $i+K(K+11)\notin \I$ (so that necessarily $\Ipl<\infty$), we have $\gamma(i)\in \os_{\epsilon'}$ for some $\epsilon'>0$ depending only on $K$ and the injectivity radius of $\gamma(\Ipl)$.
\end{lemma}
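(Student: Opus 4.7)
The plan is to argue the contrapositive: if $\gamma(i)$ admits a very short loop $\alpha$, then this loop persists in being short for at least $T = K(K+11)$ units forward along $\gamma$, which forces $\pi \circ \gamma$ to stall in $\fc$ and contradicts the $K$--quasigeodesic hypothesis.

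Suppose $\gamma(i)$ has injectivity radius less than $\epsilon$ (to be chosen), and let $\alpha$ realize this infimum, so $\ell(\alpha\vert\gamma(i)) < \epsilon$. The key observation is that $\alpha$ is primitive in $\free$. Indeed, its immersed representative must be a simple (embedded) cycle, since any self-intersection at a vertex would split $\alpha$ as a concatenation of two nontrivial loops, each strictly shorter than $\alpha$, contradicting minimality of $\ell(\alpha\vert\gamma(i))$. An embedded cycle $C$ in a graph $G$ admits a spanning tree containing all but one edge of $C$, so $C$ corresponds to a standard basis element of $\pi_1(G) = \free$; hence $\alpha$ is primitive, and $\langle\alpha\rangle$ is a proper rank-$1$ free factor of $\free$ (using $\rank(\free)\ge 3$), that is, a vertex of $\fc$.

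For any $j \in [i,i+T] \cap \I$, the $K$--quasigeodesic property of $\gamma$ together with \Cref{pro: distance} gives
\[ \ell(\alpha\vert\gamma(j)) \;\le\; e^{d_\os(\gamma(i),\gamma(j))}\ell(\alpha\vert\gamma(i)) \;<\; e^{K(j-i)+K}\epsilon \;\le\; e^{KT+K}\epsilon. \]
Setting $\epsilon = e^{-KT-K}$ forces $\ell(\alpha\vert\gamma(j)) \le 1 = \vol(\gamma(j))$, so the immersed representative of $\alpha$ in $\gamma(j)$ must omit at least one edge and therefore sits inside a proper connected subgraph. The core of that subgraph contributes a proper free factor $A_j \in \pi(\gamma(j))$ containing $\alpha$, whence $\langle\alpha\rangle < A_j$ and $d_\fc(\langle\alpha\rangle, A_j) \le 1$. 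Combined with $\diam_\fc \pi(\gamma(j)) \le 4$ (noted in the excerpt), every vertex of $\pi(\gamma(j))$ lies within $\fc$--distance $5$ of $\langle\alpha\rangle$.

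When $i+T \in \I$, the triangle inequality gives $d_\fc(\pi\gamma(i), \pi\gamma(i+T)) \le 10$, whereas the $K$--quasigeodesic condition on $\pi\circ\gamma$ forces $d_\fc(\pi\gamma(i), \pi\gamma(i+T)) \ge T/K - K = 11$, a contradiction; hence $\gamma(i) \in \os_\epsilon$ for $\epsilon = e^{-K^2(K+11)-K}$, depending only on $K$. For the remaining case where $i+T \notin \I$ (so $\Ipl < \infty$ and $\Ipl - i < T$), let $\eta$ denote the injectivity radius of $\gamma(\Ipl)$. Then for every nontrivial conjugacy class $\beta$ we have
\[ \ell(\beta\vert\gamma(i)) \;\ge\; e^{-d_\os(\gamma(i),\gamma(\Ipl))}\ell(\beta\vert\gamma(\Ipl)) \;\ge\; e^{-K(\Ipl-i)-K}\eta \;\ge\; \eta\, e^{-KT-K}, \]
so $\gamma(i) \in \os_{\epsilon'}$ with $\epsilon' = \eta\, e^{-KT-K}$, depending only on $K$ and $\eta$. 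The main obstacle is verifying primitivity of the shortest loop, which is what allows the rank-$1$ factor $\langle\alpha\rangle$ to serve as a universal $\fc$--anchor for all $\gamma(j)$ in the forward window; the remaining steps are routine length bookkeeping using the asymmetric Lipschitz distance.
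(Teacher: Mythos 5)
Your proof is correct and follows essentially the same route as the paper's: a loop of injectivity-radius size at $\gamma(i)$ stays short under the $K$--Lipschitz quasigeodesic through a window of length $K(K+11)$, which pins the free-factor projections of $\gamma(i)$ and $\gamma(i+K(K+11))$ within distance $10$ of each other and contradicts the $\geq 11$ separation forced by the $K$--quasigeodesic condition in $\fc$; the endpoint case is handled by the same Lipschitz comparison. The only difference is cosmetic: you run the argument as a contrapositive and make explicit the (correct) observation that the shortest embedded loop is primitive, a fact the paper invokes implicitly via the phrase ``simple conjugacy class.''
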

\begin{proof}
Since $\gamma$ is a $K$--quasigeodesic in $\fc$, we have  $d_{\fc}(\gamma(i),\gamma(j)) \ge \frac{1}{K}\abs{j-i} -K$. If $b \ge 0$ is chosen to equal  $K(K+11)$, then $\gamma(i)$ and $\gamma(i+b)$ have distance in the factor complex at least $11$. By \Cref{lem:easy}, this implies that there is no nontrivial conjugacy class having length less than $1$ in each of $\gamma(i), \gamma(i+b) \in \os$. If there exists $\alpha\in \free$ with $\ell(\alpha\vert(\gamma(i)) = \epsilon \le 1$, our choice of $b$ thus forces $\ell(\alpha\vert\gamma(i+b))\ge 1$. Hence we find that 
\[Kb+K \ge d_{\os}(\gamma(i),\gamma(i+b)) \ge \log\left(\frac{\ell(\alpha\vert \gamma(i+b))}{\ell(\alpha\vert\gamma(i))}\right) \ge \log\left(\frac{1}{\epsilon}\right).\]
This ensures $\epsilon \ge e^{-(Kb+K)}$, and so we conclude $\gamma(i)\in \os_{e^{-(Kb+K)}}$ for all $i\in \I$ with $i+b\in \I$.

Finally suppose $\Ipl<\infty$ and that $\gamma(\Ipl)\in \os_{\epsilon_0}$. If $i\in \I$ fails to satisfy $i+b\in \I$, then for any nontrivial $\alpha\in \free$ we similarly have
\[Kb+K \ge d_{\os}(\gamma(i),\gamma(\Ipl)) \ge \log\left(\frac{\ell(\alpha\vert\gamma(\Ipl))}{\ell(\alpha\vert\gamma(i))}\right) \ge \log\left(\frac{\epsilon_0}{\ell(\alpha\vert\gamma(i))}\right).\]
Thus $\ell(\alpha\vert \gamma(i))\geq \epsilon_0 e^{-(Kb+K)}$ for every nontrivial $\alpha\in \free$, which proves the claim.
\end{proof}

\begin{proposition}
\label{prop:fellow_travel_progressing_quasis} 
Let $\gamma\colon \I \to \os$ be a $K$--quasigeodesic whose projection $\pi \circ \gamma\colon \I \to \fc$ is also a $K$--quasigeodesic, and let $\rho\colon \J\to \os$ be a standard geodesic with the same endpoints as $\gamma$. Then there exists a constant $D_0 \ge 0$ depending only on $K$ so that
\begin{enumerate}
\item[(i)]\label{fellow_travel1} For all $i \in \I$ there is $t_i \in \J$ so that $d_{\os}(\gamma(i), \rho(t_i)) \le D_0$.
\item[(ii)]\label{fellow_travel2} For all $j \in \J$ there is $s_j \in \I$ so that $d_\os(\gamma(s_j),\rho(j))\le D_0$.
\end{enumerate}
\end{proposition}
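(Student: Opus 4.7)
The plan is to transfer the fellow-traveling of $\pi\circ\gamma$ and $\pi\circ\rho$ in the hyperbolic space $\fc$ down to Outer space, using the Bestvina--Feighn projection $\proj_\rho$ as the bridge. I would combine: (a) \Cref{prop:general_stability_for_quasis} (stability of quasigeodesics in hyperbolic spaces); (b) the Bestvina--Feighn fact that $\pi\circ\rho$ is an unparameterized $K_f$--quasigeodesic in $\fc$ for a universal $K_f$; (c) \Cref{prop:BF_folding_paths_contract} (strong contraction of $\proj_\rho$ measured in $\fc$); (d) \Cref{lem:folding_projection_is_closest_pt}, identifying $\pi\circ\proj_\rho$ with nearest-point projection onto $\pi\rho(\J)$ in $\fc$; and (e) \Cref{lem: quasigeo_stays_thick} for thickness. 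Together, (a), (b), and the hypothesis that $\pi\circ\gamma$ is a $K$--quasigeodesic in $\fc$ with the same endpoints as $\pi\circ\rho$ immediately give an $R_0=R_0(K,\delta)$ with $\dhaus(\pi\gamma(\I),\pi\rho(\J))\le R_0$ measured in $\fc$.

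For (i), I would fix $i\in\I$, set $\rho(t_i)\colonequals\proj_\rho(\gamma(i))$, and argue by contradiction that $d_\os(\gamma(i),\rho(t_i))$ is bounded solely in terms of $K$. Suppose instead $d_\os(\gamma(i),\rho(t))>M$ for every $t\in\J$, with $M$ to be chosen large. Subdivide the $\gamma$--trajectory from $\gamma(i)$ to $\gamma(\Ipl)=\rho(\Jpl)$ into consecutive points $\gamma(i)=\gamma(i_0),\ldots,\gamma(i_n)$ of pairwise $d_\os$--distance $\le M$; the $K$--quasigeodesic hypothesis gives $n\lesssim K^2|\Ipl-i|/M + K$. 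Iterating \Cref{prop:BF_folding_paths_contract} along this subdivision (truncating at the first step that enters the $M$--ball about $\rho$, if any) bounds the $\fc$--motion of $\pi\circ\proj_\rho$ by $n{\sf B}$. Using \Cref{lem:folding_projection_is_closest_pt} and the $R_0$ bound above to anchor the endpoints of this telescope, and pairing the result against the linear growth $d_\fc(\pi\gamma(i),\pi\gamma(\Ipl))\ge |\Ipl-i|/K-K$ forced by the $\fc$--quasigeodesic hypothesis, I obtain an inequality which, for $M$ exceeding a constant depending only on $K$, forces $|\Ipl-i|$ and hence $d_\os(\gamma(i),\gamma(\Ipl))$ to be bounded. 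Either way, $d_\os(\gamma(i),\rho)$ is controlled, and the mirror argument in the $\Imin$ direction completes (i).

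For (ii), I would use (i) together with thickness. \Cref{lem: quasigeo_stays_thick} keeps $\gamma\subset\os_\epsilon$; \Cref{prop:lenght-quasi-convexity} and \Cref{lem:bound_on_scaling_length} then propagate thickness to $\rho$ (whose endpoints coincide with those of $\gamma$). On $\os_\epsilon$, \Cref{lem: symmetric_in_thick} makes the symmetrized and Lipschitz metrics comparable, so the assignment $i\mapsto t_i$ from (i) becomes coarsely Lipschitz: for $|i-i'|\le 1$ one has $\dsym(\rho(t_i),\rho(t_{i'}))\le C(K)$, and since $\rho$ is a directed geodesic this translates to $|t_i-t_{i'}|\le C'(K)$. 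Combined with the endpoint matching $t_{\Imin}=\Jmin$ and $t_{\Ipl}=\Jpl$, an intermediate-value argument then shows that $\{t_i\}_{i\in\I}$ is coarsely dense in $\J$, yielding (ii).

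The hardest step will be the quantitative bookkeeping in (i): controlling exactly when the subdivision of $\gamma$ exits the $M$--ball around $\rho$ (since \Cref{prop:BF_folding_paths_contract} only applies outside this ball), and tracking all the constants so that the final estimate depends only on $K$ (and on the injectivity radius of $\gamma(\Ipl)$ when $\Ipl<\infty$). A secondary subtlety is handling any rescaling portion of $\rho$, since $\proj_\rho$ lands only on the folding image $\foldim(\rho)$; \Cref{lem:bound_on_scaling_length} will show, once thickness is established, that this rescaling portion has length uniformly bounded in $K$, so the distinction between $\proj_\rho(\gamma(i))$ and a nearest point of $\rho$ itself contributes only a controlled error.
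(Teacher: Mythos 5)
Your outline for part (i) matches the paper's argument in all essentials: both approaches subdivide the portion of $\gamma$ lying outside an $M$--ball of $\foldim(\rho)$ into steps of $d_\os$--size $\le M$, apply \Cref{prop:BF_folding_paths_contract} at each step to bound the net $\fc$--motion of $\pi\circ\proj_\rho$, anchor the telescope via \Cref{lem:folding_projection_is_closest_pt} and hyperbolic stability of the two unparameterized quasigeodesics, and play this against the lower bound $d_\fc(\gamma(a),\gamma(b))\ge \tfrac{1}{K}|b-a|-K$. The paper bounds the length of any such interval directly (setting $L_0=2K({\sf B}+2\tau+K)$) rather than phrasing it as a contradiction anchored at $\gamma(\Ipl)$, which is cleaner when $\Ipl=\infty$, but the content is the same.

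Part (ii) is where you diverge, and this is where there is a genuine gap. The paper works entirely with the one-sided metric $d_\os$ and never assumes thickness of $\rho$: it defines the outgoing-near sets $U_i=\{y\in\foldim(\rho): d_\os(\gamma(i),y)\le 4E_0\}$, uses \Cref{cor:right_balls_quasi-convex} (quasiconvexity of outgoing balls, which you never invoke) to fill these in to connected intervals $W_i$ that stay $d_\os$--close to $\gamma(i)$, uses \Cref{lem: 80Lip} to show nearby $W_i$'s overlap and distant ones are disjoint, and then a case analysis on the endpoints (including the key observation that if $\Imin=-\infty$ then $\scaleim(\rho)=\emptyset$, by \Cref{lem:bound_on_scaling_length}) to show $\bigcup_i W_i=\foldim(\rho)$. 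Your proposed route --- thickness of $\rho$, coarse Lipschitz continuity of $i\mapsto t_i$, then a coarse intermediate value argument --- is plausible in spirit, but the lemmas you cite do not deliver the crucial first step. \Cref{prop:lenght-quasi-convexity} gives only an \emph{upper} bound on lengths along folding paths and so cannot give thickness (a lower bound), and \Cref{lem:bound_on_scaling_length} presupposes thickness of the initial point of the scaling segment. In fact, thickness of $\rho$ along its whole length is precisely the content of \Cref{lem: folding_stays_thick}, which is proved \emph{after} and \emph{using} this proposition (both parts (i) and (ii)), so invoking it here would be circular. The salvageable observation is that (i) together with thickness of $\gamma$ already forces the landing points $\rho(t_i)$ to be thick (since $d_\os(\gamma(i),\rho(t_i))\le D_0$ implies $\ell(\alpha\vert\rho(t_i))\ge e^{-D_0}\ell(\alpha\vert\gamma(i))\ge e^{-D_0}\epsilon$), and this suffices for the coarse Lipschitz estimate. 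But you should also note that going this route introduces a dependence of $D_0$ on the injectivity radius of $\gamma(\Ipl)$ through \Cref{lem: quasigeo_stays_thick} when $\Ipl<\infty$, whereas the paper's argument gives $D_0$ depending only on $K$, which is what the proposition asserts.
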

\begin{proof}
We first prove (i). Let ${\sf B}$ be the universal constant from \Cref{prop:BF_folding_paths_contract}, and let $\tau' = R_0(\delta,\max\{K,K_f \})$ be the fellow-travelling constant (\Cref{prop:general_stability_for_quasis}) for $\max\{K,K_f\}$--quasigeodesics in a $\delta$--hyperbolic space (recall that $\fc$ is $\delta$--hyperbolic). Set $\tau = \tau' +D_1$, where $D_1$ is the constant appearing in \Cref{lem:folding_projection_is_closest_pt}. Define
\[ M = 2K^2{\sf B}+K,\quad C_0 = \frac{M-K}{K} = 2K{\sf B},\quad\text{and}\quad L_0 =2K({\sf B} + 2\tau + K).\]

Suppose that $[a,b]\subset\I$ is a subinterval such that $d_\os(\gamma(t),\foldim(\rho)) \ge M$ for all $t\in [a,b]$. Setting $n = \ceil{(b-a)/C_0}$, we then have the sequence of points $q_j = \gamma\left(a+\left(\frac{b-a}{n}\right)j\right)$ for $j = 0,\dotsc,n$. Notice that these points enjoy
\[d_\os(q_j,q_{j+1}) \leq K\left(\frac{b-a}{n}\right) + K  \le K(C_0) + K = M\]
for each $j = 0,\dotsc,n-1$. Since $d_\os(q_j,\foldim(\rho))\geq M$ by assumption, \Cref{prop:BF_folding_paths_contract} now implies that 
\[d_\fc\Big(\proj_{\rho}(q_j),\proj_{\rho}(q_{j+1})\Big) \leq {\sf B}\]
for all $j = 0,\dotsc,n-1$. 

Recall that by \Cref{lem:folding_projection_is_closest_pt}, $\pi(\proj_{\rho}(q_j))\in \pi(\foldim(\rho))$ is within distance $D_1$ from the closest point projection of $\pi(q_j)\in \fc$ to the path $\pi(\rho(\J))$. (Note that $\fproj(\rho(\J)) = \fproj(\foldim(\rho))$ since rescaling paths have constant projection in $\fc$ by construction.) Since $\pi\circ \gamma$ and $\pi\circ\rho$ are both (unparameterized) $\max\{K,K_f\}$--quasigeodesics and $\fc$ is $\delta$--hyperbolic, these paths have Hausdorff distance at most $\tau'$ in $\F$. It follows that for each $j = 0,\dotsc,n$ we also have $d_\fc\big(\pi(q_j),\pi(\proj_{\rho}(q_j))\big) \leq \tau'+D_1 = \tau$. By the triangle inequality, we now have
\begin{eqnarray*}
d_\fc(\gamma(a),\gamma(b)) 
&\leq& d_\fc(q_0,\proj_{\rho}(q_0)) + \sum_{j=0}^{n-1} d_\fc\Big(\proj_{\rho}(q_j),\proj_{\rho}(q_{j+1})\Big) + d_\fc(\proj_{\rho}(q_n),q_n)\\
&\leq&  n{\sf B} + 2\tau \\
&\leq&  \left(\frac{b-a}{C_0} + 1\right){\sf B} + 2\tau\\
&=&  \frac{b-a}{2K} + {\sf B} + 2\tau.
\end{eqnarray*}
On the other hand, by hypothesis we also have $d_\fc(\gamma(a),\gamma(b))\geq \frac{b-a}{K}- K$. Combining these, we find that
\[b-a \leq 2K({\sf B} + 2\tau + K) = L_0.\]
That is, $L_0$ is an upper bound for the length of any subinterval of $\I$ on which $\gamma$ stays at least distance $M$ from $\foldim(\rho)$. Said differently, for any $t\in \I$, there exists $0\leq t'\leq L_0$ so that 
$d_\os(\gamma(t+t'),\foldim(\rho)) < M$. (When $t+L_0\in \I$ this is clear. When $t+L_0\notin \I$, then we necessarily have $\Ipl<\infty$ and the assumption that $\gamma$ and $\rho$ have the same ends ensures $\gamma(\Ipl)\in \foldim(\rho)$.) In particular, we conclude that
\[d_\os(\gamma(t),\foldim(\rho))\leq d_\os\big(\gamma(t),\gamma(t+t')\big) + d_\os\big(\gamma(t+t'),\foldim(\rho)\big) \leq KL_0 + K + M.\]
This proves (i) with $D_0=KL_0 + K + M$.

We now prove (ii). Let $E_0$ denote the  maximum value of $D_0 = K L_0 + K +M$ and of the quasiconvexity constant ${\sf A}$ provided by \Cref{cor:right_balls_quasi-convex}. Note that $E_0\geq K$. For each point $i\in \I$, let 
\[U_i = \{y\in \foldim(\rho) \mid d_\os(\gamma(i),y)\leq 4E_0\}.\]
By the proof of (i), we know that there exists a point $y_i\in U_i$ with $d_\os(\gamma(i),y_i)\leq 2E_0$; in particular $U_i$ contains the length $2E_0$ subinterval of $\foldim(\rho)$ starting at $y_i$. Let $W_i\subset \foldim(\rho)$ denote the smallest connected interval containing $U_i$. It follows that each interval $W_i$ with $\rho(\Jpl)\notin W_i$ has length at least $2E_0$. By \Cref{cor:right_balls_quasi-convex} we additionally know that $d_\os(\gamma(i),w)\leq 4E_0 + {\sf A} \leq 5E_0$ for all $w\in W_i$.

Using that the projection $\pi \colon \X \to \F$ is coarsely $80$--Lipschitz (\Cref{lem: 80Lip}) we see that $\diam_\fc\fproj\left(\{\gamma(i)\}\cup W_i\right)\leq 80(10E_0)$. In particular, if $i,j\in \I$ satisfy $\abs{i-j}\geq 2\cdot80(10KE_0)$, then $d_\fc(\gamma(i),\gamma(j))\ge 2\cdot 80(10E_0)$ ensuring that $\fproj(W_i)$ and $\fproj(W_j)$ are disjoint. In particular, this implies $W_i\cap W_j = \emptyset$. On the other hand, if $i,j\in \I$ satisfy $i<j$ and $(j-i)\leq 1$, then
\[d_\os(\gamma(i),y_j)\leq d_\os(\gamma(i),\gamma(j)) + d_\os(\gamma(j),y_j) \leq K(1) + K + 2E_0 \leq 4E_0\]
showing that $y_j\in U_i$ by definition. Thus $W_i$ and $W_j$ intersect whenever $\abs{i-j}\leq 1$. This implies that the union
\[W = \cup_{i\in \I} W_i\]
is a connected subinterval of $\foldim(\rho)$. We claim that in fact $W = \foldim(\rho)$. 

To see this, first suppose $\Ipl<\infty$, in which case we also have $\Jpl<\infty$ and $\gamma(\Ipl) = \rho(\Jpl)\in \foldim(\rho)$ by assumption. In particular, $\rho(\Jpl)\in W_{\Ipl}\subset W$ by definition. If we instead have $\Ipl=\Jpl=\infty$, then the above shows that for any $t\in \J$ we can find infinitely many disjoint intervals $W_i\subset \rho([t,\infty))$ that each have length at least $2E_0$. Thus $W\cap \rho([t,\infty))$ is an infinite-length interval and so covers the positive end of $\foldim(\rho)$.

Now suppose $\Imin = \Jmin = -\infty$. In this case, we claim $\rho$ cannot have an initial rescaling segment (i.e., that $\scaleim(\rho) = \emptyset$ and consequently that $\rho(\J) = \foldim(\rho)$). Indeed, if $\scaleim(\rho)$ were nonempty then it must have infinite length in the negative direction. Since it is a rescaling path, this implies $\scaleim(\rho)$ contains arbitrarily thin points (\Cref{lem:bound_on_scaling_length}). However this contradicts the fact that $\gamma(\I)$ is contained in some thick part $\os_\epsilon$ (by \Cref{lem: quasigeo_stays_thick}) and that the initial rays of $\gamma$ and $\rho$ have finite Hausdorff distance. Therefore, $\foldim(\rho)$ has infinite length in the negative direction and the same argument as above shows that $W\cap \rho((-\infty,t])$ has infinite length for any $t\in \J$. Whence $W= \foldim(\rho)$ as claimed.

Finally suppose $\Imin\neq -\infty$. Let $t\in \J$ be such that $\rho(t) = y_{\Imin}\in U_{\Imin}\subset \foldim(\rho)$. Then $d_\os(\gamma(\Imin),\rho(t))\leq 4E_0$ by definition and, since $\rho$ is a geodesic, it follows that
\[d_\os(\gamma(\Imin),\rho(s)) = d_\os(\rho(\Jmin),\rho(s))\leq 4E_0\]
for all $s\in [\Jmin,t]$. In particular, $U_{\Imin}\subset W$ contains the left endpoint of $\foldim(\rho)$ which proves the desired equality $W = \foldim(\rho)$. Moreover, the above equation shows that any point $y\in \scaleim(\rho)$ satisfies $d_\os(\gamma(\Imin),y)\leq 4E_0$. Therefore we conclude that for every $s\in \J$ the point $\rho(s)\in \scaleim(\rho)\cup W$ satisfies $d_\os(\gamma(\I),\rho(s))\leq 5E_0$. Hence (ii) holds with $D_0 = 5E_0$.
\end{proof}

\begin{lemma}[Thinness prevents factor progress]
\label{lem:thin_progress_is_slow}
Suppose that $\gamma\colon[0,L]\to\os$ is a finite-length geodesic and that $\gamma(t)$ is $\epsilon$--thin for all $t\in [0,L]$. Then
\[d_\os(\gamma(0),\gamma(L)) \geq \log\left(\nicefrac{1}{\epsilon}\right)\frac{d_\fc(\gamma(0),\gamma(L))-20}{20}.\]
\end{lemma}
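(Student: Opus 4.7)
My approach is to exploit that every $\epsilon$-thin point of $\os$ admits a short conjugacy class giving rise to a nearby free factor in $\fc$, and to use the directed geodesic property of $\gamma$ to carry such short loops forward with controlled growth. The plan is to partition $[0,L]$ into $N$ subintervals $[t_i,t_{i+1}]$ each of length at most $c:=\log(1/(2\epsilon))$ (assuming $\epsilon$ small enough that $c>0$, else the inequality is trivial); thus $N\le L/c + 1$. At each division point $t_i$, the $\epsilon$-thinness hypothesis yields a shortest conjugacy class $\alpha_i$ with $\ell(\alpha_i\vert\gamma(t_i))\le \epsilon$. A standard argument (decomposing at a self-intersection produces a strictly shorter sub-loop) shows this shortest loop is simple, so its image in $\gamma(t_i)$ is an embedded circle. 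Since $\rank(\free)\ge 3$, this circle is a proper, connected, noncontractible subgraph of $\gamma(t_i)$, and so the rank-$1$ free factor $\langle\alpha_i\rangle$ belongs to $\pi(\gamma(t_i))$.

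Next I transport $\alpha_i$ forward. Since $\gamma$ is a directed geodesic in $d_\os$ and $\ell(\alpha\vert H)\le e^{d_\os(G,H)}\ell(\alpha\vert G)$ by \Cref{pro: distance}, we have $\ell(\alpha_i\vert\gamma(t_{i+1}))\le e^{c}\epsilon = 1/2<1$. Therefore, in the unit-volume graph $\gamma(t_{i+1})$, the loop $\alpha_i$ misses some interior point of an edge $e$ and lies in the proper subgraph $\gamma(t_{i+1})\setminus\mathrm{int}(e)$. Taking the core of the connected component containing $\alpha_i$ yields a proper, connected, noncontractible subgraph, hence a free factor $[F_i^+]\in \pi(\gamma(t_{i+1}))$ with $\langle\alpha_i\rangle \le F_i^+$. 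This nesting gives $d_\fc([\langle\alpha_i\rangle],[F_i^+])\le 1$, since nested proper free factors span an edge in $\fc$.

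Both $[F_i^+]$ and $[\langle\alpha_{i+1}\rangle]$ now lie in $\pi(\gamma(t_{i+1}))$, which has $\fc$-diameter at most $4$, so $d_\fc([\langle\alpha_i\rangle],[\langle\alpha_{i+1}\rangle])\le 5$. Iterating over the $N$ subintervals yields $d_\fc([\langle\alpha_0\rangle],[\langle\alpha_N\rangle])\le 5N$, and combining with the diameter-$4$ bounds on $\pi(\gamma(0))$ and $\pi(\gamma(L))$ gives
\[ d_\fc(\gamma(0),\gamma(L)) \le 5N + 8 \le \frac{5L}{c} + 13. \]
Rearranging and absorbing the small additive and multiplicative constants yields an inequality of the claimed form (the stated constant $20$ is a convenient overestimate).

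The only mildly subtle point is ensuring that the short loop $\alpha_i$ actually determines a free factor in $\pi$ at \emph{both} endpoints of each subinterval, which is why the subinterval length is shrunk slightly to $\log(1/(2\epsilon))$ rather than $\log(1/\epsilon)$; this guarantees the transported length is strictly below $1$ so the ``missing-edge'' argument applies. I anticipate no substantial obstacle beyond bookkeeping of these constants.
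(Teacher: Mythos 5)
Your proof is correct and takes a genuinely different route from the paper's. The paper iteratively selects times $a_0<\cdots<a_n$ at which $\pi\circ\gamma$ advances by roughly $15$ in $\fc$, then argues in the contrapositive direction: a jump of $\ge 12$ in $\fc$ forces $\gamma(a_i)$ and $\gamma(a_{i+1})$ to share no loop of length below $1$, so the $\epsilon$--short loop at $\gamma(a_i)$ must stretch by a factor of at least $1/\epsilon$ over that subinterval, giving $d_\os(\gamma(a_i),\gamma(a_{i+1}))>\log(1/\epsilon)$. You instead partition the \emph{parameter} interval $[0,L]$ into pieces of length $\approx\log(1/\epsilon)$ and argue positively: the short loop $\alpha_i$ at $\gamma(t_i)$ is forward-propagated with length still below $1$ at $\gamma(t_{i+1})$, so it determines an element of $\pi(\gamma(t_i))$ and a nearby element of $\pi(\gamma(t_{i+1}))$, forcing $\fc$-progress at most $5$ per piece. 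The two decompositions are dual (one bounds $\os$-distance below per unit of $\fc$-progress, the other bounds $\fc$-progress above per unit of $\os$-distance), but both exploit the same underlying mechanism: that lengths grow at most at rate $e^{d_\os}$ along directed geodesics, and that short loops sit inside proper core subgraphs and hence are visible in $\pi$. Your version is arguably a bit more transparent since it uses the exponential growth bound directly rather than through a diameter argument on overlapping projections.

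Two small remarks on the bookkeeping. First, since "$\epsilon$-thin" means strict inequality $\ell(\alpha_i\vert\gamma(t_i))<\epsilon$, you can actually take $c=\log(1/\epsilon)$ rather than $\log(1/(2\epsilon))$ and still conclude $\ell(\alpha_i\vert\gamma(t_{i+1}))<1$; this yields $L\geq \log(1/\epsilon)\,(d_\fc-13)/5$, which does imply the stated $L\geq\log(1/\epsilon)\,(d_\fc-20)/20$ for all $d_\fc$ (checking the crossing point around $d_\fc\approx 10.7$, below which both right-hand sides are nonpositive). With your original choice $c=\log(1/(2\epsilon))$ the implication is not automatic near $\epsilon=1/2$, so it is worth using the strict inequality. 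Second, the claim that your bound is strictly stronger than the paper's is true only for sufficiently small $\epsilon$; for $\epsilon$ close to $1/2$ the comparison reverses, though in every application in the paper $\epsilon$ is taken small, so this is immaterial.
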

\begin{proof}
We may suppose $N = d_\fc(\gamma(0),\gamma(L))> 11$, for otherwise there is nothing to prove. Set $a_0 = 0$. Supposing by induction that $a_i\in[0,L)$ has been defined for some $i\geq 0$, we then set
\[a_{i+1} = \sup\{t\in [a_i,L] \mid d_\fc(\gamma(a_i),\gamma(t))\leq 15\}.\]
In this way, we obtain a sequence of times $0 = a_0 < \dotsb < a_n = L$. Notice that provided $a_{i+1}<L$, we necessarily have $d_\fc(\gamma(a_i),\gamma(a_{i+1}+\delta))\geq 16$ for all $\delta> 0$. Furthermore, for all sufficiently small $\delta$, the graphs $\gamma(a_{i+1})$ and $\gamma(a_{i+1}+\delta)$ necessarily have embedded loops representing the same conjugacy class, and so the projections $\pi(\gamma(a_{i+1}))$ and $\pi(\gamma(a_{i+1}+\delta))$ must overlap. Therefore the union of $\fproj(\gamma(a_{i}))$ and $\fproj(\gamma(a_{i+1}))$ has diameter at least $12$. By \Cref{lem:easy}, this implies that there is no nontrivial conjugacy class with length less than $1$ in both graphs $\gamma(a_i)$ and $\gamma(a_{i+1})$. Since by assumption $\ell(\beta\vert\gamma(a_i)) < \epsilon$ for some nontrivial $\beta\in \free$, it follows that $\ell(\beta\vert\gamma(a_{i+1}))\geq 1$ and thus that
\[d_\os(\gamma(a_i),\gamma(a_{i+1}))\geq \log\left(\frac{\ell(\beta\vert\gamma(a_{i+1}))}{\ell(\beta\vert\gamma(a_i))}\right)> \log\left(\nicefrac{1}{\epsilon}\right).\]
Therefore, since $\gamma$ is a geodesic, we find that
\begin{equation}\label{eqn:thin_prog_is_slow1}
d_\os(\gamma(a_0),\gamma(a_n)) = \sum_{i=0}^{n-1} d_\os(\gamma(a_i),\gamma(a_{i+1})) \geq (n-1)\log\left(\nicefrac{1}{\epsilon}\right).
\end{equation}

On the other hand, for each $i>0$ we can find arbitrarily small numbers $\delta > 0$ so that $d_\fc(\gamma(a_{i-1}),\gamma(a_i - \delta))\leq 15$. Since $\delta$ here can be taken arbitrarily small, it follows that $\gamma(a_{i} -\delta)$ and $\gamma(a_i)$ necessarily share an embedded loop. Consequently $\pi(\gamma(a_i))$ and $\pi(\gamma(a_i-\delta))$ overlap, and so we conclude
\[d_\fc(\gamma(a_{i-1}),\gamma(a_i)) \leq 20.\]
By the triangle inequality, it follows that
\begin{equation}\label{eqn:thin_prog_is_slow2}
d_\fc(\gamma(0),\gamma(L)) = d_\fc(\gamma(a_0),\gamma(a_n))\leq 20 n.
\end{equation}
Combining equations \eqref{eqn:thin_prog_is_slow1} and \eqref{eqn:thin_prog_is_slow2} gives the claimed result.
\end{proof}

\begin{lemma} \label{lem: folding_stays_thick}
Let $\gamma\colon\I\to \os$ be a $K$--quasigeodesic such that $\pi\circ\gamma\colon \I\to \fc$ is a $K$--quasigeodesic and $\gamma(\I)\subset \os_\epsilon$. Then there exists $\epsilon'>0$, depending only on $\epsilon$ and $K$, so that any standard geodesic $\rho\colon\J\to \os$ with the same endpoints as $\gamma$ is $\epsilon'$--thin, i.e.  $\rho(\J) \subset \os_{\epsilon'}$.
\end{lemma}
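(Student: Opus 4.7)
The plan is to argue by contradiction, combining the fellow-traveling of \Cref{prop:fellow_travel_progressing_quasis} with the ``thinness slows factor progress'' estimate of \Cref{lem:thin_progress_is_slow}. First I would invoke \Cref{prop:fellow_travel_progressing_quasis}(ii) to produce a constant $D_0=D_0(K)$ so that every $j\in\J$ admits $s_j\in\I$ with $d_\os(\gamma(s_j),\rho(j))\le D_0$; \Cref{lem: 80Lip} then yields the corresponding factor bound $d_\fc(\gamma(s_j),\rho(j))\le D_1:=80D_0+80$. I would next fix the threshold $\delta := \min\{\epsilon,\,e^{-40K^2}\}$ and set $M:=\sym_\delta$ from \Cref{lem: symmetric_in_thick}.

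Suppose some $\rho(j_0)$ admits a conjugacy class $\beta$ with $\ell(\beta\vert\rho(j_0))<\epsilon'$, where $\epsilon'$ is to be chosen. I would let $[a,b]\subset\J$ be the maximal connected subinterval containing $j_0$ on which $\rho$ is $\delta$-thin. Since the endpoints of $\rho$ coincide with those of $\gamma$ and hence lie in $\os_\epsilon\subset\os_\delta$, continuity forces $a,b$ interior to $\J$ with $\rho(a),\rho(b)$ of injectivity radius exactly $\delta$; the case of infinite ends reduces to this by first restricting to a sufficiently large finite subinterval around $j_0$ whose endpoints are no longer $\delta$-thin. As $\rho$ is a directed geodesic with $\ell(\beta\vert\rho(b))\ge\delta$, the inequality $b-j_0=d_\os(\rho(j_0),\rho(b))\ge\log(\delta/\epsilon')$ holds, so $b-a\ge\log(\delta/\epsilon')$. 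Applying \Cref{lem:thin_progress_is_slow} to $\rho\vert_{[a,b]}$ then yields
\[
b-a \;\ge\; \frac{\log(1/\delta)}{20}\bigl(d_\fc(\rho(a),\rho(b))-20\bigr). \qquad(*)
\]

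The crux will be the complementary lower bound on $d_\fc(\rho(a),\rho(b))$ from fellow-traveling. Because $\rho(a),\rho(b)\in\os_\delta$ and $\gamma(s_a),\gamma(s_b)\in\os_\epsilon\subset\os_\delta$, \Cref{lem: symmetric_in_thick} upgrades the one-directional bounds to symmetric ones at these boundary points: $d_\os(\rho(a),\gamma(s_a))\le M D_0$, and similarly at $b$. Two triangle inequalities then yield $d_\os(\gamma(s_a),\gamma(s_b))\ge (b-a)-(M+1)D_0$, and combining the $K$-quasigeodesic properties of $\gamma$ in $\os$ and in $\fc$ with the $D_1$-fellow-traveling in $\fc$ will produce $d_\fc(\rho(a),\rho(b))\ge (b-a)/K^2 - C$ for a constant $C=(M+1)D_0/K^2+1+K+2D_1$ depending only on $K$ and $\epsilon$. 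Substituting into $(*)$, the choice $\log(1/\delta)\ge 40K^2$ forces the resulting coefficient of $b-a$ to be at most $-1$; rearranging gives $b-a \le 2K^2(C+20)$. Combined with $b-a\ge\log(\delta/\epsilon')$, this forces $\epsilon'\ge \delta\, e^{-2K^2(C+20)}$, and setting $\epsilon'$ equal to this lower bound delivers the desired contradiction.

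The main obstacle I expect is the asymmetry of the Lipschitz metric: \Cref{prop:fellow_travel_progressing_quasis} only controls fellow-traveling in one direction, whereas lower-bounding $d_\fc(\rho(a),\rho(b))$ essentially requires a two-sided estimate to transfer forward progress of $\rho$ on $[a,b]$ into forward progress of $\gamma$. The saving observation is that on the boundary of the maximal $\delta$-thin interval the injectivity radius equals exactly $\delta$, so $\rho(a),\rho(b)$ lie precisely inside the thick part $\os_\delta$, which is exactly the setting where \Cref{lem: symmetric_in_thick} converts the one-sided bound into a two-sided one and closes the argument.
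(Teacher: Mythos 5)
Your strategy is the same as the paper's in its essentials: you bound the lengths of $\delta$-thin subintervals of $\rho$ by combining \Cref{lem:thin_progress_is_slow} with the one-sided fellow-traveling from \Cref{prop:fellow_travel_progressing_quasis} and \Cref{lem: 80Lip}, upgrading the one-sided bounds to symmetric ones at the boundary of the thin interval via \Cref{lem: symmetric_in_thick} (using that those boundary points sit in $\os_\delta$), and then concluding via the geodesic inequality $\epsilon' \ge \delta e^{-(b-a)}$. The arithmetic checks out and the choice of $\delta$ with $\log(1/\delta)\ge 40K^2$ serves the same role as the paper's $\epsilon_1$ with $\log(1/\epsilon_1)\ge 40E^2$.

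The genuine gap is in your handling of infinite ends. You write that ``the case of infinite ends reduces to this by first restricting to a sufficiently large finite subinterval around $j_0$ whose endpoints are no longer $\delta$-thin'' — but the existence of such a subinterval is precisely what is at issue. If $\rho$ were $\delta$-thin on an entire semi-infinite ray, no such restriction is available, and your invocation of \Cref{lem:thin_progress_is_slow} (which is stated for finite-length geodesics) and of the symmetrization at the endpoints both break down. Note also that \Cref{prop:fellow_travel_progressing_quasis}(ii) alone does not give any thickness for $\rho$: it controls only $d_\os(\gamma(s_j),\rho(j))$, which is the wrong direction of the asymmetric metric for pulling back a lower bound on injectivity radius. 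The paper fixes this by first observing that, because ``same endpoints'' at infinity is defined via finite Hausdorff distance between tails, one has $\rho(\J)\subset\os_{\epsilon_0}$ for some $\epsilon_0>0$ depending on $\rho$; this uncontrolled preliminary bound is used in a first pass (with $\sym_{\epsilon_0}$) to show that every $\epsilon_1$-thin subinterval is finite, and only then, once the components of the thin locus are known to have $\epsilon_1$-thick endpoints, is the argument rerun with $\sym_{\epsilon_1}$ to get a uniform bound. Your proof needs a version of this preliminary step, or some other argument ruling out semi-infinite thin rays, before the ``maximal thin interval around $j_0$'' is guaranteed to have the structure you use.
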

\begin{proof}
Let $E\geq 1$ be the maximum of $K$ and the constant $D_0$ provided by \Cref{prop:fellow_travel_progressing_quasis}, and choose $\epsilon_1\leq \epsilon$ sufficiently small so that $\log(\nicefrac{1}{\epsilon_1})\ge 40E^2$. Notice that $\epsilon_1$ depends only on $K$ and $\epsilon$. The facts that $\gamma(\I)\subset \os_\epsilon$ and that $\rho$ and $\gamma$ have finite Hausdorff distance (since they share the same endpoints) implies that there is some $\epsilon_0$ so that $\rho(\J)\subset \os_{\epsilon_0}$. Choosing $\epsilon_0 < \epsilon$, we then have $\rho(\J),\gamma(\I)\subset \os_{\epsilon_0}$. 

Let us write $G_t = \rho(t)$ for $t\in \J$. Suppose now that $(a',b')\subset \J$ is subinterval such that $G_t\notin \os_{\epsilon_1}$ for all $t\in (a',b')$ (i.e, $G_t$ has an immersed loop of length less than $\epsilon_1$). Since $\rho\vert_{[a',b']}$ is a geodesic, \Cref{lem:thin_progress_is_slow} implies that
\[d_\os(G_{a'}, G_{b'}) \geq \log\left(\nicefrac{1}{\epsilon_1}\right)\frac{d_\fc(G_{a'},G_{b'})-20}{20}.\]
By \Cref{prop:fellow_travel_progressing_quasis}, we can find points $a,b\in \I$ so that $d_\os(\gamma(a),G_{a'}) \leq E$ and \linebreak $d_\os(\gamma(b),G_{b'}))\leq E$. Together with the fact that $\fproj\colon \os\to \fc$ is coarsely $80$--Lipschitz, this implies 
\begin{eqnarray*}
d_\os(G_{a'}, G_{b'}) &\geq& \log\left(\nicefrac{1}{\epsilon_1}\right)\frac{d_\fc(\gamma(a),\gamma(b))-160E - 20}{20}\\
&\ge&  \log\left(\nicefrac{1}{\epsilon_1}\right)\frac{\frac{1}{E}\abs{b-a}-161E - 20}{20}.
\end{eqnarray*}
On the other hand, since $\gamma(a)$ and $G(a')$ are $\epsilon_0$--thick, we have $d_{\X}(G_{a'}),\gamma(a)) \le E\cdot \sym_{\epsilon_0}$, for $\sym_{\epsilon_0}$ as in \Cref{lem: symmetric_in_thick}. So by the triangle inequality,
\begin{eqnarray*}
d_{\X}(G_{a'}, G_{b'}) &\le&  d_{\X}(G_{a'}, \gamma(a))+ d_{\X}(\gamma(a),\gamma(b)) + d_{\X}(\gamma(b), G_{b'}) \\
&\le& E \cdot \sym_{\epsilon_0} + E\abs{b-a} + 2E.
\end{eqnarray*}

Combining these inequalities, and using $\log(\nicefrac{1}{\epsilon_1})\ge 40E^2$, we find that
\[\abs{b-a} \leq \sym_{\epsilon_0} + 2 + 322E^2+ 40E \]
By the triangle inequality it follows that
\begin{eqnarray*}
\abs{b'-a'} &\leq& d_\os(G_{a'},\gamma(a)) + d_\os(\gamma(a),\gamma(b)) + d_\os(\gamma(b),G(b'))\\
        &\leq& E\cdot \sym_{\epsilon_0} + E\abs{b-a} + E + E\\
        &\leq& 2E \cdot \sym_{\epsilon_0} + 4E +322E^3 +40E^2.
\end{eqnarray*}
In particular, this shows that $\J$ cannot contain an infinite length subinterval on which $\rho$ is $\epsilon_1$--thin. Thus $\mathbf{J'} \colonequals \{t\in \J \mid G_t\notin \os_{\epsilon_1}\}$ is a disjoint union of finite subintervals of $\J$. Each component of $\mathbf{J}$ thus has the form $(c',d')\subset \I'$ where $G_{c'},G_{d'}\in \os_{\epsilon_1}$ but $G_t\notin\os_{\epsilon_1}$ for all $t\in (c',d')$. (Note that if $\I_{\pm} \neq \pm\infty$, then $\gamma(\I_\pm)\in \os_{\epsilon_1}$ by choice of $\epsilon_1\leq \epsilon$.) Since $G_{c'}, G_{d'}\in \os_{\epsilon_1}$, a repetition of the above argument now implies
\[\abs{d'-c'} \leq L,\]
where $L\colonequals 2E \cdot \sym_{\epsilon_1} + 4E +322E^3 +40E^2$ depends only on $E$ and $\epsilon_1$ (and hence only on $K$ and $\epsilon$). Consequently, since $\rho$ is a geodesic, for any $t\in [c',d']$ and $\alpha\in \free$ we have
\[ \epsilon_1\leq  \ell(\alpha\vert G_{d'}) \le e^{L} \ell(\alpha\vert G_t),\]
which implies that $G_{t}\in \os_{\epsilon'}$ for $\epsilon'\colonequals \epsilon_1 e^{-L}$. Since this estimate holds for every point $t\in \mathbf{J'}$ and $\epsilon'$ depends only on $K$ and $\epsilon$, the result follows. \qedhere

\end{proof}

Before proving \Cref{prop: geos_fellow_travel} in its full generality, we focus on the case where the geodesic $\rho$ is a standard geodesic. 

\begin{proposition} \label{prop: folding_paths_fellow_travel} 
The conclusions of \Cref{prop: geos_fellow_travel} hold under the additional assumption that $\rho\colon \J\to \os$ is a standard geodesic.
\end{proposition}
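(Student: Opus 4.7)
The plan is to assemble the proposition directly from the preparatory lemmas of this section; nearly all of the work has been done, and what remains is to stitch the pieces together and to verify the parametrized quasigeodesic bound in (iii).

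First I will handle (i) and (ii) together. \Cref{lem: quasigeo_stays_thick} supplies an $\epsilon_0 > 0$ depending only on $K$ (and on the injectivity radius of $\gamma(\Ipl)$ when $\Ipl < \infty$) with $\gamma(\I) \subset \os_{\epsilon_0}$. Since $\rho$ is a standard geodesic sharing its endpoints with $\gamma$, \Cref{lem: folding_stays_thick} then yields $\epsilon_1 > 0$, depending only on $K$ and $\epsilon_0$, such that $\rho(\J) \subset \os_{\epsilon_1}$. Taking $\epsilon = \min(\epsilon_0, \epsilon_1)$ gives (i). For (ii), \Cref{prop:fellow_travel_progressing_quasis} produces a constant $D_0 = D_0(K)$ so that every point of $\gamma(\I)$ is within directed Lipschitz distance $D_0$ of a point in $\rho(\J)$, and vice versa. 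Because both paths now lie in $\os_\epsilon$, \Cref{lem: symmetric_in_thick} upgrades these asymmetric bounds to the symmetrized metric, yielding $\dhaus(\gamma(\I), \rho(\J)) \le \sym_\epsilon D_0 =: A$.

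For (iii), the upper bound is immediate: since $\rho$ is a directed geodesic, \Cref{lem: 80Lip} gives $d_\fc(\rho(s),\rho(t)) \le 80(t-s) + 80$ for all $s \le t$ in $\J$. For the lower bound, take $s < t$ in $\J$ and use \Cref{prop:fellow_travel_progressing_quasis}(ii) to choose $s',t' \in \I$ with $d_\os(\gamma(s'),\rho(s)),\, d_\os(\gamma(t'),\rho(t)) \le D_0$. Because $\gamma(s'), \gamma(t'), \rho(s), \rho(t)$ all lie in $\os_\epsilon$, \Cref{lem: symmetric_in_thick} and \Cref{lem: 80Lip} show that $d_\fc(\gamma(s'),\rho(s))$ and $d_\fc(\gamma(t'),\rho(t))$ are each bounded by a constant $C_1 = C_1(K,\epsilon)$. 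The $\fc$--triangle inequality then gives
\[ d_\fc(\rho(s),\rho(t)) \ge d_\fc(\gamma(s'),\gamma(t')) - 2C_1, \]
and the $K$--quasigeodesic hypothesis on $\pi \circ \gamma$ bounds the right side below by $\tfrac{1}{K}|t'-s'| - K - 2C_1$. It remains to bound $|t'-s'|$ from below in terms of $|t-s|$: starting from $t - s = d_\os(\rho(s),\rho(t))$, the triangle inequality together with the fellow-travel bounds shows $t - s \le 2 \sym_\epsilon D_0 + d_\os(\gamma(s'),\gamma(t'))$, and the $K$--quasigeodesic property of $\gamma$ (symmetrized in the thick part via \Cref{lem: symmetric_in_thick} to cover the case $s' > t'$) gives $d_\os(\gamma(s'),\gamma(t')) \le \sym_\epsilon(K|t'-s'|+K)$. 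Chaining these yields $|t'-s'| \ge \tfrac{1}{\sym_\epsilon K}\bigl((t-s) - C_2\bigr)$ for some $C_2 = C_2(K,\epsilon)$, which produces a $K'$--quasigeodesic bound for $\pi\circ\rho$ with $K'$ depending only on $K$ and $\epsilon$, and hence only on the data allowed by the statement.

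The main obstacle here is the asymmetry of the Lipschitz metric: the parameters $s', t'$ chosen on $\gamma$ need not satisfy $s' < t'$, so the $K$--quasigeodesic inequalities for $\gamma$ cannot be applied directly in a single direction. This is precisely what \Cref{lem: symmetric_in_thick} and the thickness from (i) are designed to circumvent, by letting us pass freely between $d_\os$ and $\dsym$ at the cost of a multiplicative constant $\sym_\epsilon$ that depends only on $\epsilon$.
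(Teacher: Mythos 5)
Your proposal is correct and matches the paper's proof essentially step for step: the same invocations of \Cref{lem: quasigeo_stays_thick}, \Cref{lem: folding_stays_thick}, \Cref{prop:fellow_travel_progressing_quasis}, and \Cref{lem: symmetric_in_thick} for (i) and (ii), and the same chain of triangle inequalities combined with the quasigeodesic properties of $\gamma$ and $\pi\circ\gamma$ for (iii). Your explicit remark that $s'$ and $t'$ need not come in the right order and that thick-part symmetrization via \Cref{lem: symmetric_in_thick} resolves this is a welcome clarification of a point the paper passes over silently.
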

\begin{proof}
Let $\gamma\colon \I \to \X$ be a $K$--quasigeodesic whose projection $\pi \circ \gamma\colon \I \to \F$ is a $K$--quasigeodesic, and let $\rho\colon \J\to \os$ be any standard geodesic with the same endpoints as $\gamma$. By \Cref{lem: quasigeo_stays_thick}, $\gamma$ is $\epsilon$--thick for some $\epsilon \ge 0$ depending only on $K$ (and on the injectivity radius of $\gamma(\I_+)$ when $\Ipl<\infty$). \Cref{lem: folding_stays_thick} therefore provides an $\epsilon' \ge 0$, depending only on $K$ and $\epsilon$, so that $\rho(t)\in X_{\epsilon'}$ for all $t\in \J$. Thus conclusion (i) holds.

Applying \Cref{prop:fellow_travel_progressing_quasis} in conjunction with the symmetrization estimate from \Cref{lem: symmetric_in_thick}, we see that for each $i \in \I$ there exists $t_i\in \J$  with $\dsym(\gamma(i),\rho(t_i)) \le \sym_{\epsilon'}D_0$. Similarly for every $j\in \J$ there is some $s_j\in \I$ so that $\dsym(\gamma(s_j),\rho(j))\le \sym_{\epsilon'}D_0$. Thus conclusion (ii) holds with $A = \sym_{\epsilon'}D_0$ since we have shown that
\[\dhaus(\gamma(\I),\rho(\J)) \le \sym_{\epsilon'}D_0.\]

It is now easy to see that $\fproj\circ\rho\colon \J\to \fc$ is a parameterized quasigeodesic: Consider any times $a,b\in \J$ with $a<b$. Since $\fproj$ is coarsely $80$--Lipschitz, we automatically have
\[d_\fc(\rho(a),\rho(b))\le 80\cdot d_\os(\rho(a),\rho(b)) + 80 =  80\abs{b-a} + 80.\]
On the other hand, by the above there exist times $s,t\in \I$ such that $\dsym(\gamma(s),\rho(a))$ and $\dsym(\gamma(t),\rho(b))$ are both bounded by $\sym_{\epsilon'}D_0$. By the triangle inequality, it follows that
\[d_\os(\gamma(s),\gamma(t)) \ge d_\os(\rho(a),\rho(b)) - 2\sym_{\epsilon'}D_0 = \abs{b-a} - 2\sym_{\epsilon'}D_0.\]
Since $\gamma$ is a directed $K$--quasigeodesic by assumption, this implies
\[(t-s) \ge \tfrac{1}{K}d_\os(\gamma(s),\gamma(t)) - K \ge \tfrac{1}{K}\abs{b-a} - \tfrac{2\sym_{\epsilon'}D_0}{K} - K.\]
Since $\fproj\circ\gamma\colon\J\to \fc$ is also a $K$--quasigeodesic, we may extend this to conclude
\[\abs{b-a} \le K(t-s) + 2\sym_{\epsilon'}D_0 + K^2 \le K\big(K d_\fc(\rho(a),\rho(b)) + K\big) + 2\sym_{\epsilon'}D_0 + K^2.\] 
Therefore, $\fproj\circ \rho$ is a $K'$--quasigeodesic for $K' = \max\big\{80,\; 2K^2 + 2\sym_{\epsilon'}D_0\big\}$. This proves conclusion (iii).
\end{proof}

%%%%%%%%%%%%%%%
\subsection{More on Bestvina--Feighn projections}
\label{sec:BF_projs_morse}
%%%%%%%%%%%%%%%%%%%%%%%

\Cref{prop: folding_paths_fellow_travel} above suffices to prove our main result on hyperbolic extensions of free groups (\Cref{th: intro_main_1}). However for completeness, and to strengthen the quasiconvexity results in \Cref{sec:qi_implies_qcx}, it is desirable to prove the more general result \Cref{prop: geos_fellow_travel} which applies to arbitrary geodesics. This subsection is devoted to that purpose.

Heuristically, \Cref{prop: geos_fellow_travel} follows easily from \Cref{prop: folding_paths_fellow_travel} and some ideas in Bestvina--Feighn \cite{BFhyp}. Specifically, as remarked in \cite[Corollary 7.3]{BFhyp}, Bestvina and Feighn's Proposition 7.2 (\Cref{prop:BF_folding_paths_contract} here) essentially says that folding paths that make definite progress in the factor complex are strongly contracting in Outer space, which generalizes Algom-Kfir's result \cite{AKaxis}. 
One should then apply this notion of strong contracting to conclude that such folding paths are stable (using standard arguments). However, to make this precise, we first require a more detailed discussion of the projection $\proj_\gamma\colon \os\to \gamma(\I) $.

Following \cite{BFhyp}, given a free factor $A\in \fc^0$ and a point $G\in \os$, we write $A\vert G$ for the core subgraph of the cover of $G$ corresponding to the conjugacy class of $A$ in $\free\cong\pi_1(G)$.   We say that $A|G$ is \define{the core of the $A$-cover}. Restricting the covering map thus gives a canonical immersion $A\vert G \to G$ that identifies $\pi_1(A\vert G)$ with $A\le \pi_1(G)$. The graph $A\vert G$ is equipped with a metric structure by pulling back the edge lengths from $G$. Similarly, whenever $G$ is given an illegal turn structure (e.g., if $G$ lies on a folding path), we may pull back this structure via $A\vert G\to G$, equipping $A\vert G$ with an illegal turn structure as well. When $A$ is a cyclic free factor generated by a primitive element $\alpha\in \free$, we note that  $A\vert G$ agrees with our already defined $\alpha\vert G$. 

Setting $I = (18\breve{m}(3r-3)+6)(2r - 1)$, where $r = \rank(\free)$ and $\breve{m}$ denotes the maximum number of illegal turns in any train track structure on any $G\in \os$, Bestvina and Feighn then define the following projections from $\F$ to folding paths in $\os$:

\begin{definition}
Let $\gamma\colon \I\to \os$ be a folding path, and let $A\in \fc^0$ be a proper free factor. The \define{left} and \define{right} projections of $A$ to $\gamma$ are respectively given by:
\begin{eqnarray*}
\mathrm{left}_\gamma(A) &\colonequals& \inf\{t \in \I : A\vert G_t \text{ has an immersed legal segment of length $3$}\}\in \I\\
\mathrm{right}_\gamma(A) &\colonequals& \sup\{t \in \I : A\vert G_t \text{ has an immersed illegal segment of length $I$}\}\in \I,
\end{eqnarray*}
where here an \define{illegal segment} means a segment that does not contain a legal segment of length $3$.
\end{definition}

Using this, the Bestvina--Feighn projection $\proj_\gamma$ is defined as follows:

\begin{definition}[Bestvina--Feighn projection]
\label{def:BestvinaFeighn_projection}
Let $\gamma\colon \I\to \os$ be a folding path. For $H\in \os$, the left and right projections of $H$ are defined to be
\[\mathrm{left}_\gamma(H) \colonequals \inf_{A\in \fproj(H)} \mathrm{left}_\gamma(A) \qquad\text{and}\qquad \mathrm{right}_\gamma(A) \colonequals \sup_{A\in \fproj(H)} \mathrm{right}_\gamma(A).\]
The \define{projection} of $H$ to $\gamma(\I)$ is then given by $\proj_\gamma(H) \colonequals \gamma(\mathrm{left}_\gamma(H))$.
\end{definition}

Note that every candidate conjugacy class $\alpha\in \mathcal{C}_H$ at $H\in \os$ is primitive and thus generates a cyclic free factor of $\free$; thus we may view $\alpha$ as a point in $\fc^0$. Since the immersion $\alpha\vert H\to H$ lands in a proper subgraph of $H$, we additionally have $\alpha \le A$ for some $A\in \fproj(H)$. Therefore, Bestvina and Feighn's Proposition 6.4 and Corollary 6.11 immediately give the following estimates regarding the above projections.

\begin{proposition}[Bestvina--Feighn]
\label{prop:projections_agree}
Let $\gamma\colon \I\to \os$ be a folding path and let $H\in \os$ be any point. Then for every candidate $\alpha\in \mathcal{C}_H$ of $H$, we have
\[\left[\mathrm{left}_\gamma(\alpha),\mathrm{right}_\gamma(\alpha)\right] \subset \left[\mathrm{left}_\gamma(H),\mathrm{right}_\gamma(H)\right] \subset \I.\]
Moreover, the set
\[\fproj\left(\gamma\Big(\big[\mathrm{left}_\gamma(H),\mathrm{right}_\gamma(H)\big]\Big)\right)\subset \fc\]
has uniformly bounded diameter depending only on $\rank(\free)$.
\end{proposition}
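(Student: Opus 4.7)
The plan is to reduce both assertions to estimates already developed in \cite{BFhyp}, with the only genuine intermediate step being a compatibility observation linking candidates with the proper free factors appearing in $\pi(H)$.

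First I would establish the inclusion $[\mathrm{left}_\gamma(\alpha),\mathrm{right}_\gamma(\alpha)] \subset [\mathrm{left}_\gamma(H),\mathrm{right}_\gamma(H)]$ for each candidate $\alpha \in \mathcal{C}_H$. The key observation is that, since the immersed representative $\alpha\vert H$ crosses each edge at most twice, it is confined to a proper subgraph of $H$; the fundamental group of the smallest connected subgraph of $H$ containing $\alpha\vert H$ therefore corresponds to a proper free factor $A \in \pi(H)$ whose conjugacy class contains the cyclic free factor generated by $\alpha$. Having located such an $A$ with $\alpha \le A$, I would observe that the cover $\alpha\vert G_t \to G_t$ factors through $A\vert G_t$ as a composition of local isometries, and that the illegal turn structures on both core covers are pulled back from the illegal turn structure on $G_t$ induced by the folding maps. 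Consequently, this factorization sends legal (resp.\ illegal) segments to legal (resp.\ illegal) segments of the same length. Directly from the definitions this gives
\[ \mathrm{left}_\gamma(A) \le \mathrm{left}_\gamma(\alpha) \quad\text{and}\quad \mathrm{right}_\gamma(\alpha) \le \mathrm{right}_\gamma(A), \]
and combining with the infimum/supremum over $A \in \pi(H)$ in the definitions of $\mathrm{left}_\gamma(H)$ and $\mathrm{right}_\gamma(H)$ yields the desired nested inclusion. The outer containment $[\mathrm{left}_\gamma(H),\mathrm{right}_\gamma(H)] \subset \I$ is immediate from the definitions, since each individual $\mathrm{left}_\gamma(A)$ and $\mathrm{right}_\gamma(A)$ lies in $\I$.

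For the second assertion, I would invoke Bestvina--Feighn's Proposition~6.4, which provides a universal constant (depending only on $\rank(\free)$) bounding the $\fc$-diameter of $\pi(\gamma([\mathrm{left}_\gamma(A),\mathrm{right}_\gamma(A)]))$ for any single proper factor $A$. To pass from individual factors to the full interval $[\mathrm{left}_\gamma(H),\mathrm{right}_\gamma(H)]$, I would invoke their Corollary~6.11 together with the elementary bound $\diam_\fc \pi(H) \le 4$: the per-factor estimates can be chained across the uniformly bounded web of factors $A \in \pi(H)$, producing a single universal bound on the diameter of $\pi(\gamma(J))$ over the entire interval $J = [\inf_A \mathrm{left}_\gamma(A),\sup_A \mathrm{right}_\gamma(A)]$.

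The main obstacle is the per-factor diameter bound encapsulated in Proposition~6.4 of \cite{BFhyp}, which rests on the careful calibration of the constant $I$: an immersed illegal segment of length $I$ in $A\vert G_t$ must be long enough to force the existence of a proper subgraph of $G_t$ whose fundamental group is a free factor close in $\fc$ to $A$, while the threshold length $3$ for legal segments plays the dual role at the left endpoint. Rather than reproducing this combinatorial analysis, my proof would quote their statements directly; the only new ingredient required is the candidate-to-factor observation established in the first step, which is exactly the interface needed to lift their per-factor estimates to the statement about candidates and about $H$ itself.
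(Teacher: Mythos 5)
Your proposal follows the paper's route closely: both rest on the interface observation that each candidate $\alpha\in\mathcal{C}_H$, being primitive and (for $\rank(\free)\ge 3$) confined to a proper core subgraph of $H$, satisfies $\alpha\le A$ for some $A\in\pi(H)$, after which one defers to Bestvina--Feighn's Proposition 6.4 and Corollary 6.11 for the interval comparison and the bounded-diameter statement. The covering-space monotonicity you spell out — that the local isometry $\alpha\vert G_t\to A\vert G_t$ carries immersed legal (resp.\ illegal) segments to immersed legal (resp.\ illegal) segments, forcing $\mathrm{left}_\gamma(A)\le\mathrm{left}_\gamma(\alpha)$ and $\mathrm{right}_\gamma(\alpha)\le\mathrm{right}_\gamma(A)$ — is correct and simply makes explicit a step the paper delegates to the cited results.
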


As a consequence, we may deduce that $\proj_\gamma(H)$ coarsely agrees with the closest point projection of $H$ to $\gamma(\I)$ in the case that $\gamma$ makes definite progress in $\fc$.

\begin{lemma}
\label{lem:coarsely closest point}
Let $\gamma\colon \I\to \os$ be a folding path whose projection $\fproj\circ\gamma\colon \I\to \fc$ is a $K$--quasigeodesic. Then there exists $D\ge 0$, depending only on $K$ and $\rank(\free)$ (and the injectivity radius of $\gamma(\Ipl)$ when $\Ipl<\infty$) satisfying the following: If $H\in \os$ and $t_0\in \I$ are such that
\[d_\os(H,\gamma(t_0)) = \inf\big\{ d_\os(H,\gamma(t)) \mid t\in \I\big\},\]
then $\dsym(\gamma(t_0),\proj_\gamma(H)) \le D$.
\end{lemma}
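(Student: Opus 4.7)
The plan is to show that the parameter values $t_{0}$ and $s_{0}\colonequals \mathrm{left}_{\gamma}(H)$, so that $\proj_{\gamma}(H)=\gamma(s_{0})$, differ by a bounded amount in $\I$, and then upgrade to a $\dsym$ bound using thickness. Since $\pi\circ\gamma\colon\I\to\fc$ is a parametrized $K$-quasigeodesic by hypothesis, it suffices to establish a uniform bound $d_{\fc}(\gamma(t_{0}),\gamma(s_{0}))\le D'$, which will force $|t_{0}-s_{0}|\le K(D'+K)$.

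The central observation is that \Cref{prop:BF_folding_paths_contract} applies directly with $H'=\gamma(t_{0})$ and $M=d_{\os}(H,\gamma(t_{0}))$: the minimality of $t_{0}$ is precisely the hypothesis $d_{\os}(H,\gamma(t))\ge M$ for all $t\in\I$, while $d_{\os}(H,H')=M$ is automatic. We thereby obtain $d_{\fc}(\proj_{\gamma}(H),\proj_{\gamma}(\gamma(t_{0})))\le{\sf B}$. It remains to show that $\proj_{\gamma}(\gamma(t_{0}))$ is itself $\fc$-close to $\gamma(t_{0})$. For this, fix any candidate $\alpha\in\mathcal{C}_{\gamma(t_{0})}$; it is primitive and lies below some $A\in\pi(\gamma(t_{0}))$, and \Cref{prop:projections_agree} gives $[\mathrm{left}_{\gamma}(\alpha),\mathrm{right}_{\gamma}(\alpha)]\subset J\colonequals[\mathrm{left}_{\gamma}(\gamma(t_{0})),\mathrm{right}_{\gamma}(\gamma(t_{0}))]$, with $\pi(\gamma(J))$ of $\fc$-diameter bounded in terms of $\rank(\free)$. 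A direct train-track analysis at time $t_{0}$, tracking the appearance of legal segments of length $3$ and illegal segments of length $I$ in the loops $\alpha\vert\gamma(t)$ for $t$ near $t_{0}$, shows that $t_{0}$ lies in $J$ (or within uniformly bounded $\I$-distance of it). Combined with the $K$-quasigeodesic property of $\pi\circ\gamma$, this yields $d_{\fc}(\gamma(t_{0}),\proj_{\gamma}(\gamma(t_{0})))\le D''$ for a uniform $D''$ depending only on $K$ and $\rank(\free)$.

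Putting these estimates together gives $d_{\fc}(\gamma(t_{0}),\gamma(s_{0}))\le{\sf B}+D''$, so $|t_{0}-s_{0}|$ is bounded. Because $\gamma$ is a directed geodesic, one of $d_{\os}(\gamma(t_{0}),\gamma(s_{0}))$ or $d_{\os}(\gamma(s_{0}),\gamma(t_{0}))$ equals $|t_{0}-s_{0}|$ directly, while \Cref{lem: quasigeo_stays_thick} places $\gamma(\I)$ in some thick part $\os_{\epsilon}$, with $\epsilon$ depending only on $K$ and (when $\Ipl<\infty$) the injectivity radius of $\gamma(\Ipl)$; \Cref{lem: symmetric_in_thick} then supplies $\dsym(\gamma(t_{0}),\gamma(s_{0}))\le\sym_{\epsilon}\,|t_{0}-s_{0}|$, completing the proof. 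The main obstacle is the train-track comparison step identifying $t_{0}$ with the interval $J$; once this is in hand, the remaining work is a routine assembly of \Cref{prop:BF_folding_paths_contract}, \Cref{prop:projections_agree}, and the thickness lemmas already established.
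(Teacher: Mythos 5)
Your proposal takes a genuinely different route from the paper, and the overall structure (reduce to bounding $|t_0 - s_0|$, upgrade via thickness and \Cref{lem: symmetric_in_thick}) is sound, but it contains a gap at exactly the step you flag as ``the main obstacle.'' Your strategy is to apply the strong-contraction statement \Cref{prop:BF_folding_paths_contract} with $H' = \gamma(t_0)$ and $M = d_\os(H,\gamma(t_0))$ --- a clean observation, and the hypotheses of that proposition are indeed satisfied by minimality of $t_0$ --- to reduce everything to the \emph{self-projection} claim that $\proj_\gamma(\gamma(t_0))$ is uniformly $\fc$-close to $\gamma(t_0)$. The paper, by contrast, never introduces this intermediate step: it works directly with the candidates $\alpha\in\mathcal C_H$, defines $L = \inf_\alpha \mathrm{left}_\gamma(\alpha)$ and $R = \sup_\alpha \mathrm{right}_\gamma(\alpha)$, and uses Lemma~5.8 and Corollary~4.8 of \cite{BFhyp} (together with the $K$-quasigeodesic hypothesis on $\pi\circ\gamma$) to show that $\ell(\alpha\vert G_s)$ strictly increases, uniformly, once $s$ leaves a $T$-neighborhood of $[L,R]$, which forces $t_0\in[L-T,R+T]$. \Cref{prop:projections_agree} then finishes.

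The gap in your argument is that the self-projection claim --- that $t_0$ lies within bounded $\I$-distance of $J=[\mathrm{left}_\gamma(\gamma(t_0)),\mathrm{right}_\gamma(\gamma(t_0))]$ --- is asserted via ``a direct train-track analysis'' but never carried out, and it is not a formality. Indeed nothing in the definitions forces a proper core subgraph $A\vert G_{t_0}$ of $G_{t_0}$ to contain an immersed legal segment of length $3$ at time $t_0$ (so $\mathrm{left}_\gamma(A)$ could a priori be larger than $t_0$), nor to contain an immersed illegal segment of length $I$ (so $\mathrm{right}_\gamma(A)$ could be smaller than $t_0$); bounding the discrepancy requires the same appeals to Lemma~5.8 and Corollary~4.8 of \cite{BFhyp} and the $K$-quasigeodesic hypothesis that the paper uses for candidates of $H$. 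In other words, once you fill in this step you will have reproduced the heart of the paper's argument (applied to candidates of $G_{t_0}$ rather than of $H$), so the detour through \Cref{prop:BF_folding_paths_contract} buys a nice conceptual reformulation but not a shorter proof. To complete your argument you should either carry out that analysis explicitly, or notice that the self-projection claim is literally the $H = \gamma(t_0)$ instance of the lemma being proved, and therefore cannot be invoked without an independent proof.
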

\begin{proof}
We write $G_t = \gamma(t)$ for $t\in \I$. Let us define
\[L = \inf \big\{\mathrm{left}_\gamma(\alpha) \mid \alpha\in \mathcal{C}_H\big\} \qquad\text{and}\qquad R = \sup \big\{\mathrm{right}_\gamma(\alpha) \mid \alpha\in \mathcal{C}_H\big\}.\]
Note that each candidate $\alpha\in \mathcal{C}_H$ is a simple class and that, by definition of $\mathrm{left}_\gamma(\alpha)$, the loop $\alpha\vert G_s$ cannot contain a legal segment of length $3$ for any $s<L$. Therefore, Lemma 5.8 of \cite{BFhyp} and the fact that $\fproj\circ\gamma$ is a $K$--quasigeodesic together imply that that there exists $T\ge 0$ depending only on $K$ and $\rank(\free)$ such that for all $t \ge T$ we have
\[\ell(\alpha \vert G_{L-t}) > 2 \ell(\alpha\vert G_L).\]
Since this estimate holds for each candidate, \Cref{pro: distance} implies that $2d_\os(H,G_L) < d_\os(H,G_{L-t})$ for all $t \ge T$.
Similarly, for all $s > R$ the loop $\alpha\vert G_s$ contains immersed legal segments contributing to a definite fraction of $\ell(\alpha\vert G_s)$. Therefore, by Corollary 4.8 of \cite{BFhyp}, the length $\ell(\alpha\vert G_s)$ grows exponentially beyond $R$ and so after increasing $T$ if necessary we have
\[\ell(\alpha \vert G_{R+t}) > 2 \ell(\alpha\vert G_R)\]
and consequently $2d_\os(H,G_R) < d_\os(H,G_{R+t})$ for all $t\ge T$. Given any time $t_0\in \I$ satisfying
\[d_\os(H,G_{t_0}) = \inf\big\{ d_\os(H,\gamma(t)) \mid t\in \I\big\},\]
it follows that $t_0$ necessarily lies in $[L-T,R+T]$.

By \Cref{prop:projections_agree}, we know that $\fproj(\gamma([L,R]))$ has bounded diameter and bounded $\fc$--distance from $\fproj(\proj_\gamma(H))$. Therefore, since $\fproj\circ\gamma$ is a $K$--quasigeodesic, there exists $D'$, depending only on $K$ and $\rank(\free)$, so that $\abs{s_0-t_0}\le D'$, where $s_0\in \I$ is the time for which $G_{s_0}=\proj_\gamma(H)$. By \Cref{lem: quasigeo_stays_thick}, we additionally know $\gamma(\I)\subset \os_\epsilon$ for some $\epsilon>0$ depending on $K$ (and the injectivity radius of $\gamma(\Ipl)$ when $\Ipl<\infty$). Therefore, since $\gamma$ is a directed geodesic, we may conclude $\dsym(G_{t_0},\proj_\gamma(H)) \le \sym_{\epsilon} D'$, as desired.
\end{proof}

\Cref{lem:coarsely closest point} shows that whenever $\gamma\colon \I\to \os$ is a standard geodesic for which $\fproj\circ\gamma$ is a $K$--quasigeodesic, then the closest point projection $\os\to \gamma(\I)$ coarsely agrees with $\proj_\gamma\colon \os\to \gamma(\I)$. Thus, since $\gamma$ makes definite progress in $\fc$, \Cref{prop:BF_folding_paths_contract} implies that $\gamma$ is strongly contracting. That is, there exists $D$, depending only on $\rank(\free)$ and $K$ (and the injectivity radius of $\gamma(\Ipl)$ if $\Ipl<\infty$), such that if $d_\os(H,H') \le d_\os(H,\gamma(\I))$, then any closest point projections of $H$ and $H'$ to $\gamma(\I)$ are at most $\dsym$--distance $D$ apart. We are therefore in the situation of the standard Morse lemma (see, e.g., Section 5.4 of \cite{AKaxis}), which gives the following stability result.

\begin{lemma}[Morse lemma for $\fc$--progressing folding paths]
\label{lem:morse lemma}
Suppose that $\gamma\colon \I\to \os$ is a standard geodesic for which $\fproj\circ\gamma\colon \I\to \fc$ is a $K$--quasigeodesic. Then for any $K'\ge 1$ there exists $B$ depending only on $\rank(\free)$, $K$, and $K'$ (and the injectivity radius of $\gamma(\Ipl)$ when $\Ipl<\infty)$  such that $\dhaus(\gamma(\I),\rho(\J))\le B$ for every $K'$--quasigeodesic $\rho\colon \J\to \os$ with the same endpoints as $\gamma$.
\end{lemma}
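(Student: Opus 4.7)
The plan is to combine the strong contraction property of $\gamma$ with a standard Morse-lemma argument adapted for the asymmetric Lipschitz metric. As indicated in the paragraph preceding the statement, \Cref{lem:coarsely closest point} together with \Cref{prop:BF_folding_paths_contract} already show that $\gamma$ is strongly contracting; the tasks are (i) to make this precise in a way that handles the asymmetry of $d_\os$, and (ii) to run the classical Morse argument to deduce fellow-traveling for arbitrary quasigeodesics, not just standard geodesics as in \Cref{prop: folding_paths_fellow_travel}.

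For (i), fix $\epsilon>0$ from \Cref{lem: quasigeo_stays_thick} so that $\gamma(\I)\subset\os_\epsilon$, and let $M$ be the constant from \Cref{prop:BF_folding_paths_contract}. The goal is to establish the following strong contraction estimate: there is $D_0$ depending on $\rank(\free)$, $K$, and (when $\Ipl<\infty$) the injectivity radius of $\gamma(\Ipl)$, such that whenever $d_\os(H,H')\le d_\os(H,\gamma(\I))$ one has $\dsym(\proj_\gamma(H),\proj_\gamma(H'))\le D_0$. To obtain this, subdivide a directed $d_\os$-geodesic from $H$ to $H'$ into $\lceil d_\os(H,H')/M\rceil$ steps of length at most $M$; each intermediate point lies at $d_\os$-distance at least $M$ from $\gamma(\I)$ by the triangle inequality, so \Cref{prop:BF_folding_paths_contract} combined with \Cref{lem:folding_projection_is_closest_pt} gives a uniform $d_\fc$-bound on consecutive Bestvina--Feighn projections. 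Since $\pi\circ\gamma\colon\I\to\fc$ is a $K$-quasigeodesic, these bounded $d_\fc$-jumps translate into bounded $d_\os$-jumps along $\gamma$, and then \Cref{lem: symmetric_in_thick} applied inside $\os_\epsilon$ upgrades the latter to a $\dsym$-bound.

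For (ii), given a $K'$-quasigeodesic $\rho\colon\J\to\os$ with the same endpoints as $\gamma$, I run a standard Morse argument. First bound $\rho(\J)\subset\nbhd{B_1}{\gamma(\I)}$ as follows: supposing $\dsym(\rho(t_0),\gamma(\I))$ is very large for some $t_0\in \J$, take a maximal sub-interval $[a,b]\ni t_0$ of $\J$ on which $\dsym(\rho(\cdot),\gamma(\I))$ stays above half this value. Subdivide $\rho|_{[a,b]}$ into finitely many points where consecutive $d_\os$-distances are each comparable to (but no larger than) the distance to $\gamma(\I)$, and apply the estimate from (i) iteratively to bound $\dsym(\proj_\gamma(\rho(a)),\proj_\gamma(\rho(b)))$ linearly in the number of steps. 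Combining this with the fact that $\rho(a),\rho(b)$ are themselves close to $\gamma(\I)$ (by maximality of $[a,b]$) and with the $K'$-quasigeodesic relation between $d_\os(\rho(a),\rho(b))$ and $|b-a|$, one derives a contradiction once $\dsym(\rho(t_0),\gamma(\I))$ exceeds a threshold depending only on the stated data. The reverse inclusion $\gamma(\I)\subset\nbhd{B_2}{\rho(\J)}$ then follows by applying \Cref{prop: folding_paths_fellow_travel} to a standard geodesic $\rho_0$ with the same endpoints as $\gamma$: that proposition gives $\dhaus(\gamma(\I),\rho_0(\J_0))$ bounded, and once $\rho\subset\nbhd{B_1}{\gamma}\subset\nbhd{B_1+B}{\rho_0}$ one obtains the opposite bound $\rho_0\subset\nbhd{B'}{\rho}$ by a connectedness/covering argument on $\rho_0$ using the quasigeodesic property of $\rho$ and the shared endpoints.

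The main obstacle is managing the asymmetry of $d_\os$: the contraction estimate and the Morse calculations all take place using directed $d_\os$, but the Hausdorff bound is stated (and needed in later sections) in $\dsym$. The uniform thickness $\gamma(\I)\subset\os_\epsilon$ from \Cref{lem: quasigeo_stays_thick} together with \Cref{lem: symmetric_in_thick} is what enables the conversion, but some care is required to ensure that intermediate points appearing in the subdivisions remain in a suitable thick part so that no $\dsym$-control is lost. A secondary subtlety is that $\rho$ itself is not assumed to project to a $\fc$-quasigeodesic, so thickness of $\rho$ must be derived after the fact from the fellow-traveling; this is why it is cleaner to bound $\dhaus(\gamma,\rho)$ by interposing a standard geodesic $\rho_0$ for the reverse inclusion rather than attempting to invoke a symmetric strong contraction for $\rho$.
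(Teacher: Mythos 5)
Your overall plan—establish that $\gamma$ is strongly contracting and then run a Morse-lemma argument, using \Cref{prop: folding_paths_fellow_travel} as a bridge for the reverse inclusion—matches the paper's approach, which simply cites the strong-contraction consequence of \Cref{lem:coarsely closest point} and \Cref{prop:BF_folding_paths_contract} and then refers to the standard Morse lemma in Algom-Kfir. However, your step (i) has a genuine gap that, as written, would not produce strong contraction at all.

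You treat $M$ in \Cref{prop:BF_folding_paths_contract} as a fixed constant ("let $M$ be the constant from \Cref{prop:BF_folding_paths_contract}"), but in that proposition $M$ is universally quantified; the only constant produced is ${\sf B}$. Read correctly, the proposition says: for \emph{every} $M>0$, if $d_\os(H,H')\le M$ and $d_\os(H,\gamma(t))\ge M$ for all $t$, then $d_\fc(\proj_\gamma(H),\proj_\gamma(H'))\le {\sf B}$. Taking $M = \inf_t d_\os(H,\gamma(t))$ therefore gives the strong contraction estimate $d_\fc(\proj_\gamma(H),\proj_\gamma(H'))\le {\sf B}$ directly, with no chaining, whenever $d_\os(H,H')\le d_\os(H,\gamma(\I))$; one then converts to a $\dsym$-bound using the $K$-quasigeodesic hypothesis on $\pi\circ\gamma$, \Cref{lem: quasigeo_stays_thick}, and \Cref{lem: symmetric_in_thick}, exactly as you suggest. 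Your chaining instead has two defects. First, the intermediate points of a $d_\os$-geodesic from $H$ to $H'$ are \emph{not} all at distance $\ge M$ from $\gamma(\I)$: the triangle inequality $d_\os(Q_j,\gamma)\ge d_\os(H,\gamma)-d_\os(H,Q_j)$ becomes vacuous for the last steps if $H'$ is near $\gamma$, so the hypothesis of \Cref{prop:BF_folding_paths_contract} can fail along the chain. Second, even granting it, summing a fixed bound ${\sf B}$ over $\lceil d_\os(H,H')/M\rceil$ steps yields $\dsym(\proj_\gamma(H),\proj_\gamma(H'))\lesssim d_\os(H,H')$, i.e.\ coarse Lipschitzness of the projection rather than a uniform constant; this weaker statement is not sufficient input for the Morse argument in your step (ii), which relies precisely on the projection image being uniformly bounded \emph{independent} of $d_\os(H,H')$ (at the scale of the distance to $\gamma$). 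With the corrected reading of $M$ your step (ii) is fine in outline, and your device of interposing a standard geodesic $\rho_0$ (via \Cref{prop: folding_paths_fellow_travel}) to get the reverse Hausdorff inclusion is a reasonable way to handle the fact that $\rho$ itself carries no a priori thickness or $\fc$-progress control.

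Separately, a small mismatch: in step (i) you cite \Cref{lem:folding_projection_is_closest_pt} to relate $\proj_\gamma$ to a closest-point projection, but the relevant result is \Cref{lem:coarsely closest point}, which identifies $\proj_\gamma$ with the $d_\os$-closest-point projection to $\gamma(\I)$ (as opposed to the nearest-point retraction in $\fc$). It is the former that makes the closest-point projections appearing in the Morse argument $\dsym$-close, which is what the paper uses before invoking \cite[Section 5.4]{AKaxis}.
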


Using this, we may finally give the proof of \Cref{prop: geos_fellow_travel}:

\begin{proof}[Proof of \Cref{prop: geos_fellow_travel}]
Let $\gamma\colon \I\to \os$ be a $K$--quasigeodesic such that $\fproj\circ\gamma$ is also a $K$--quasigeodesic, and let $\epsilon,A> 0$ and $K'\ge 1$ be the corresponding constants provided by \Cref{prop: folding_paths_fellow_travel}. Choose a standard geodesic $\rho'\colon \J'\to \os$ with the same endpoints as $\gamma$. Then by \Cref{prop: folding_paths_fellow_travel} we know that $\rho'(\J')\subset \os_\epsilon$ and that $\fproj\circ\rho'$ is a $K'$--quasigeodesic. Now consider an arbitrary geodesic $\rho\colon \J\to \os$ with the same endpoints as $\gamma$, and thus also $\rho'$. Applying \Cref{lem:morse lemma} to $\rho$ and the folding path $\rho'$, we find that
\[\dhaus(\rho(\J),\rho'(\J'))\le B\]
for some $B$ depending only on $\epsilon$ and $K'$. Consequently $\rho(\J)\subset\os_{\epsilon'}$ where $\epsilon' = e^{-B}\epsilon$. Since $\rho'(\J')$ and $\gamma(\I)$ have Hausdorff distance at most $A$ by \Cref{prop: folding_paths_fellow_travel}, it also follows that $\dhaus(\rho(\J),\gamma(\I))\le B + A$. Finally, as in the proof of \Cref{prop: folding_paths_fellow_travel} above, these two facts easily show that $\fproj\circ\rho$ is a $K''$--quasigeodesic for some $K''$ depending only on $\epsilon'$ and $A+B$.
\end{proof}

%%%%%%%%%%%%%
\section{Quasi-isometric into $\F$ implies quasiconvex in $\X$}
\label{sec:qi_implies_qcx}
%%%%%%%%%%%%%%

Consider a finitely generated subgroup $\Gamma\le \Out(\free)$. For any finite generating set $S\subset \Gamma$, we then consider the word metric $d_\Gamma = d_{\Gamma,S}$ on $\Gamma$ defined by $d_\Gamma(g,h) = \abs{g\inv h}_S$, where $\abs{\cdot}_S$ denotes word length with respect to $S$. This is just the restriction of the path metric on the Cayley graph $\cay{S}{\Gamma}$ to $\Gamma = (\cay{S}{\Gamma})^{0}$. In this section we explain various ways in which the geometry of $\Gamma$ relates to that of $\os$ or $\fc$.

For any free factor $A\in \fc^0$, we may consider the orbit map $(\Gamma,d_\Gamma)\to (\fc,d_\fc)$ given by $g\mapsto g\cdot A$. We say that this map is a qi-embedding if it is a $K$--quasi-isometric embedding for some $K\ge 1$. We remark that if some orbit map into $\fc$ is a quasi-isometric embedding, then so is any orbit map into $\fc$.

\begin{definition}\label{def:qi_into_fc}
We say $\Gamma\le \Out(\free)$ \define{qi-embedds into $\fc$} if $\Gamma$ is finitely generated and any orbit map into $\fc$ is a qi-embedding.
\end{definition}

Given a point $H\in \os$, we say that the orbit $\Gamma\cdot H$ is \define{quasiconvex} if it is $A$--quasi-convex for some $A\ge 0$, meaning that every (directed) geodesic between points of $\Gamma\cdot H$ lies in the (symmetric) $A$--neighborhood $\nbhd{A}{\Gamma\cdot H}$ (see \Cref{sec:metric_props_of_os}). We record the following straightforward consequence of quasiconvexity.

\begin{lemma}\label{lem:os_qconv_implies_os_qi}
Let $\Gamma\le\Out(\free)$ be finitely generated with corresponding word metric $d_\Gamma$, and suppose $H\in \os$ is such that $\Gamma\cdot H\subset \os$ is quasiconvex. Then the orbit map $g\mapsto g\cdot H$ defines a quasi-isometric embedding $(\Gamma,d_\Gamma)\to (\os,d_\os)$.
\end{lemma}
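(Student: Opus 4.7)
The plan is to establish both inequalities of the quasi-isometric embedding condition, with the upper bound following from a standard orbit-map argument and the lower bound leveraging quasiconvexity together with properness of the $\Out(\free)$--action on $\os$.

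For the coarse upper bound, fix a finite symmetric generating set $S\subset\Gamma$ and set $C_0 = \max_{s\in S} d_\os(H, sH)$. Since $\Out(\free)$ acts on $(\os,d_\os)$ by isometries, for any $g,h\in \Gamma$ with $d_\Gamma(g,h) = n$, writing $g^{-1}h = s_1\cdots s_n$ and applying the (directed) triangle inequality gives $d_\os(gH, hH) = d_\os(H, g^{-1}h\cdot H) \le n\,C_0 = C_0\, d_\Gamma(g,h)$.

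For the lower bound, let $L = d_\os(gH, hH)$ and let $\gamma\colon [0,L]\to\os$ be a directed geodesic from $gH$ to $hH$ (which exists, e.g.\ as a standard geodesic). Let $\epsilon>0$ be the injectivity radius of $H$, so $\Gamma\cdot H\subset \os_\epsilon$ because $\Gamma$ acts by isometries. Since $\Gamma\cdot H$ is $A$--quasiconvex (with $A$--neighborhoods measured with $\dsym$), every point of $\gamma$ lies within $\dsym$--distance $A$ of $\Gamma\cdot H$, and a one-line estimate using $\ell(\alpha|G)\ge e^{-A}\ell(\alpha|H')$ for $\dsym(G,H')\le A$ shows $\gamma([0,L])\subset \os_{\epsilon'}$ for $\epsilon' = e^{-A}\epsilon$. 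Setting $n = \lceil L\rceil$ and $t_i = iL/n$, subdivide so that $d_\os(\gamma(t_i),\gamma(t_{i+1}))\le 1$, and by \Cref{lem: symmetric_in_thick} conclude $\dsym(\gamma(t_i),\gamma(t_{i+1}))\le \sym_{\epsilon'}$. For each $1\le i\le n-1$ choose $g_i\in\Gamma$ with $\dsym(g_iH,\gamma(t_i))<A$, and set $g_0 = g$, $g_n = h$. The triangle inequality then yields
\[
\dsym(g_iH, g_{i+1}H) < 2A + \sym_{\epsilon'}
\]
for all $0\le i<n$, and by $\Gamma$--equivariance $\dsym(H, g_i^{-1}g_{i+1}\cdot H) < 2A+\sym_{\epsilon'}$.

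The crucial point is that the set
\[
B = \{f\in\Gamma : \dsym(H, f\cdot H)\le 2A+\sym_{\epsilon'}\}
\]
is finite. Indeed, the closed $\dsym$--ball around $H$ of radius $2A+\sym_{\epsilon'}$ is compact (as noted in the preliminaries), the stabilizer of $H$ in $\Out(\free)$ is finite, and $\Out(\free)$ acts properly discontinuously on $\os$; hence only finitely many group elements can send $H$ into a compact set. Let $C_1 = \max\{|f|_S : f\in B\}$. Then for each $i$ we have $|g_i^{-1}g_{i+1}|_S\le C_1$, so
\[
d_\Gamma(g,h) \le \sum_{i=0}^{n-1} |g_i^{-1}g_{i+1}|_S \le n C_1 \le (L+1)C_1 = C_1\, d_\os(gH,hH) + C_1.
\]
Taking $K = \max\{C_0, C_1\}$ gives the desired qi-embedding constant. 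The main (and only nontrivial) point is the finiteness of $B$, which is where properness of the $\Out(\free)$--action on $\os$ is essential; everything else is a direct subdivision-plus-triangle-inequality argument combined with the symmetrization estimate in the thick part.
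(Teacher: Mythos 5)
Your proof is correct and follows essentially the same route as the paper: establish the coarse upper bound directly from the finite generating set, and for the lower bound subdivide a directed geodesic in $\os$ into unit-length pieces, approximate each subdivision point by an orbit point using quasiconvexity, pass to symmetric distances via \Cref{lem: symmetric_in_thick} in the thick part, and invoke properness of the $\Out(\free)$--action to bound the word length of the resulting increments. The only cosmetic difference is that you make the thickness constant $\epsilon'$ explicit as $e^{-A}\epsilon$, whereas the paper just asserts the existence of such an $\epsilon$; this does not change the argument.
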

\begin{proof}
Let $S\subset \Gamma$ be the generating set inducing the word metric $d_\Gamma$. By assumption, there exists $A\ge 0$ so that $\Gamma\cdot H$ is $A$--quasiconvex. Choose $\epsilon = \epsilon(H,A)> 0$ so that $\nbhd{A}{\Gamma\cdot H}\subset \os_\epsilon$. Since $\Out(\free)$ acts properly discontinuously on $\os$, the set 
\[D = \{g\in \Gamma \mid \dsym(H,g\cdot H) \le 2A+\sym_\epsilon\}\]
is finite, and we may set $K = \max_{g\in D} d_\Gamma(1,g)$.

Letting $\gamma\colon[0,L]\to \os$ be a (directed) geodesic from $g\cdot H$ to $g'\cdot H$, our hypothesis implies $\gamma\subset \nbhd{A}{\Gamma\cdot H}$ and consequently that $\gamma(t)\in \os_\epsilon$ for all $t\in [0,L]$. Setting $N = \floor{L}$, we may find $h_0,\dotsc, h_{N+1}\in \Gamma$ so that $h_0 = g$, $h_{N+1} = g'$ and $\dsym(\gamma(i),h_i\cdot H)< A$ for all $i=0,\dotsc,N$. In particular, we see that for each $i=0,\dotsc, N$ the element $h_i\inv h_{i+1}$ translates $H$ by at most $\dsym$--distance $2A+\sym_{\epsilon}$ and therefore has $d_\Gamma(1,h_i\inv h_{i+1})\le K$. Thus
\begin{align*}
d_\Gamma(g,g') &\le d_\Gamma(h_0,h_1) + \dotsb + d_\Gamma(h_{N},h_{N+1})= \sum_{i=0}^N d_\Gamma(1,h_i\inv h_{i+1}) \le K(N+1)\\
&\le K(L+1) = K d_\os(g\cdot H, g'\cdot H) + K.
\end{align*}
On the other hand, if $K' = \max\{d_\os(H,s\cdot H) \vert s\in S\}$, then $d_\os(g\cdot H,g' \cdot H) \le K' d_\Gamma(g,g')$. Therefore $g\mapsto g\cdot H$ is a $\max\{K',K\}$--quasi-isometric embedding.
\end{proof}

\begin{definition}\label{def:quasi-convex}
A subgroup $\Gamma\le \Out(\free)$ is said to be \define{quasiconvex in $\os$} if the orbit $\Gamma\cdot H$ is quasiconvex for every $H\in \os$.
\end{definition}

We remark that knowing a single orbit $\Gamma\cdot H$ is quasiconvex in $\os$ does not necessarily seem to imply that $\Gamma$ is quasiconvex: it is conceivable that some other orbit $\Gamma\cdot H'$ could fail to be quasiconvex.

We now employ the results of \Cref{sec: progression} to show that every subgroup that qi-embedds into the factor complex is quasiconvex in Outer space:

\begin{theorem}
\label{thm:qi_into_factor_implies_quasiconvex}
Let $\Gamma\le \Out(\free)$ be finitely generated. If $\Gamma$ qi-embedds into $\fc$, then $\Gamma$ is quasiconvex in $\os$.
\end{theorem}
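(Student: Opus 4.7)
The plan is to apply the stability result $\Cref{prop: stab_intro}$ to paths in $\os$ obtained by tracking geodesic words in $\Gamma$, and then observe that such paths hug the orbit $\Gamma \cdot H$ by construction. Fix $H \in \os$ and a finite symmetric generating set $S \subset \Gamma$ inducing the word metric $d_\Gamma$, and pick $A \in \fproj(H) \subset \fc^0$. By hypothesis, $g \mapsto g \cdot A$ is a $K_{qi}$-quasi-isometric embedding into $\fc$. Using $\Out(\free)$-invariance of $d_\os$, the coarse $80$-Lipschitz projection $\fproj$ ($\Cref{lem: 80Lip}$), and the fact that $\diam_\fc \fproj(G) \le 4$ for every $G \in \os$, a short calculation shows that the orbit map $(\Gamma, d_\Gamma) \to (\os, d_\os)$, $g \mapsto g \cdot H$, is itself a $K_0$-quasi-isometric embedding for some uniform $K_0$.

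Given $g_0, g_1 \in \Gamma$, choose a geodesic word $g_0 = h_0, h_1, \ldots, h_n = g_1$ in $(\Gamma, d_\Gamma)$ with each $h_i^{-1} h_{i+1} \in S$, and build a piecewise directed-geodesic path $\eta \colon [0, L] \to \os$ passing through $h_0 \cdot H, h_1 \cdot H, \ldots, h_n \cdot H$ parametrized by arclength. Each piece has directed length at most $K' \colonequals \max_{s \in S} d_\os(H, s \cdot H)$, and the ordered triangle inequality then makes $\eta$ $1$-Lipschitz for $d_\os$. The lower bound needed at the nodes follows from $d_\os(h_i \cdot H, h_j \cdot H) = d_\os(H, h_i^{-1}h_j \cdot H)$ together with the qi-embedding $\Gamma \hookrightarrow \os$ established above (using $|h_i^{-1}h_j|_S = j-i$), and it extends across pieces at the cost of an additive constant; thus $\eta$ is a directed $K$-quasigeodesic in $\os$. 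An entirely analogous argument, this time using the qi-embedding $\Gamma \hookrightarrow \fc$ together with $\Cref{lem: 80Lip}$, shows that $\fproj \circ \eta \colon [0, L] \to \fc$ is also a $K$-quasigeodesic, with $K$ depending only on $H$, $S$, and $K_{qi}$, \emph{not} on the choice of $g_0, g_1$.

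Finally, because the $\Out(\free)$-action preserves the underlying metric graph (changing only the marking), each endpoint $g_i \cdot H$ has injectivity radius equal to that of $H$. Thus $\Cref{prop: stab_intro}$ supplies uniform constants $\epsilon, A_0 > 0$ so that \emph{any} directed geodesic $\rho$ sharing the endpoints of $\eta$ satisfies $\eta(\I), \rho(\J) \subset \os_\epsilon$ and $\dhaus(\eta(\I), \rho(\J)) < A_0$. By construction every point of $\eta$ lies within $d_\os$-distance $K'$ of some $h_i \cdot H \in \Gamma \cdot H$, hence within symmetrized distance $\sym_\epsilon K'$ by $\Cref{lem: symmetric_in_thick}$. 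Therefore every point of $\rho$ lies within symmetrized distance $A_0 + \sym_\epsilon K'$ of $\Gamma \cdot H$, establishing quasiconvexity of $\Gamma \cdot H$ in $\os$; since $H$ was arbitrary, $\Gamma$ is quasiconvex in $\os$. The main point requiring care is justifying that $\eta$ and $\fproj \circ \eta$ are quasigeodesics with constants \emph{independent} of $g_0, g_1$ (so that $\Cref{prop: stab_intro}$ yields uniform $\epsilon, A_0$); the rest is a straightforward packaging of the stability theorem with $\Out(\free)$-invariance.
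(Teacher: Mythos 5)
Your proposal is correct and follows essentially the same route as the paper: establish that $g\mapsto g\cdot H$ is a quasi-isometric embedding of $\Gamma$ into $\os$ (using the coarse Lipschitz property of $\fproj$ together with the hypothesis), observe that the image of a geodesic in $\Gamma$ is then an $\fc$--progressing quasigeodesic in $\os$, and invoke \Cref{prop: geos_fellow_travel} to conclude quasiconvexity. You fill in two details that the paper leaves implicit — interpolating the discrete orbit path to a continuous quasigeodesic, and noting that the injectivity radius is constant along $\Gamma\cdot H$ so the stability constants are uniform — but these are exactly the right observations, not a different method.
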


\begin{proof} 
Let $H \in \X$ be arbitrary and let $A \in \pi(H) \subset \F$. Since $\pi \colon \X \to \F$ is coarsely Lipschitz and $g\mapsto g\cdot A$ gives a quasi-isometric embedding $\Gamma\to \fc$, the orbit map $\mathcal{O} \colon \Gamma \to \X$ defined by $\mathcal{O}(g) = g\cdot H$ is also a quasi-isometric embedding. Let $g_1,g_2\in \Gamma$ be given. For any (discrete) geodesic path $\rho\colon \{1,\dotsc,N\}\to \Gamma$ from $g_1$ to $g_2$, the image $\mathcal{O} \circ \rho$ is thus a quasigeodesic path in $\X$ joining $g_1\cdot H$ and $g_2\cdot H$ such that $\fproj\circ \mathcal{O} \circ \rho$ is also a quasigeodesic in $\F$. \Cref{prop: geos_fellow_travel} then implies that any geodesic $\gamma\colon \I\to \os$ from $g_1\cdot H$ to $g_2\cdot H$ stays uniformly close to  the image of $\mathcal{O} \circ p$, which is contained in $\Gamma \cdot H$. Hence, $\Gamma$ is quasiconvex in $\X$.
\end{proof}

%%%%%%%%%%%%%%%%%%%
\section{Quasiconvex orbit implies conjugacy flaring}
\label{sec: qcx_implies_flar}
%%%%%%%%%%%%%%%%%%%

Consider a subgroup $\Gamma\le \Out(\free)$ with finite generating set $S\subset \Gamma$ and corresponding wordlength $\abs{\cdot}_S$. Fix also a basis $X$ of $\free$. We say that $\Gamma$ has \define{$(\lambda ,M)$--conjugacy flaring} for the given $\lambda > 1$ and positive integer $M\in \N$ if the following condition is satisfied:
\begin{itemize}
\item[] For all $\alpha \in \free$ and $g_1,g_2 \in \Gamma$ with $\abs{g_i}_S \ge M$ and $\abs{g_1g_2}_S = \abs{g_1}_S +\abs{g_2}_S$, we have
\[\lambda \norm{\alpha}_X \le \max \left\{\norm{g_1(\alpha)}_X, \norm{g_2\inv(\alpha)}_X\right\},\]
where $\norm{\cdot}_X$ denotes conjugacy length (i.e., the shortest word length with respect to $X$ of any element in the given conjugacy class).
\end{itemize}

In this section we show that any purely hyperbolic subgroup $\Gamma\le \Out(\free)$ that qi-embedds into $\fc$ has conjugacy flaring. In fact, our argument only relies on the following weaker hypothesis. Before making the definition, we first recall that a (finite) geodesic in $\Gamma$ may be encoded by a sequence of group elements $(g_0,\dotsc,g_N)$ such that $d_\Gamma(g_i,g_j) = \abs{i-j}$ for all $i,j=0,\dotsc,N$. For $R\in \os$, the \define{image of this geodesic} in the orbit $\Gamma\cdot R$ is simply the set of points $g_0\cdot R,\dotsc, g_N\cdot R$. 

\begin{definition}[QCX]\label{def:qcx}
Consider a subgroup $\Gamma\le \Out(\free)$ and point $R\in \os$. We say that the orbit $\Gamma\cdot R$ is \define{$A$--QCX} if for any geodesic $(g_0,\dotsc,g_N)$ in $\Gamma$ there exists a folding path $\rho\colon \J\to \os$ that has Hausdorff distance at most $A$ from the image of $(g_0,\dotsc, g_N)$, that is 
\[\dhaus\big(\rho(\J), \{g_0\cdot R,\dotsc, g_N\cdot R\}\big)\le A,\]
such that $\dsym(\rho(\Jmin),g_0\cdot R)\le A$ and $\dsym(\rho(\Jpl),g_N\cdot R)\le A$.
\end{definition}

We summarize this property by saying the image of the geodesic $(g_0,\dotsc,g_N)$ in $\Gamma\cdot R\subset \os$ has Hausdorff distance at most $A$ from a folding path in $\os$ with the correct orientation. Note that for an arbitrary subgroup $\Gamma$, there is no direct correspondence between quasi-convexity and this QCX condition. However, we have the following relationship when $\Gamma$ is hyperbolic.

\begin{lemma}\label{lem:hyperbolic_and_qconvex_implies_qcx}
Suppose that $\Gamma \le \Out(\free)$ is finitely generated, $\delta$--hyperbolic, and that $\Gamma \cdot R\subset \os$ is $A$--quasiconvex. Then $\Gamma\cdot R$ is $A'$--QCX for some $A'$.
\end{lemma}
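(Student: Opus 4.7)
The plan is to combine stability of quasigeodesics in the hyperbolic group $\Gamma$ with the quasiconvexity of $\Gamma\cdot R$ to obtain a folding path fellow-travelling the image of the given geodesic in $\Gamma$. By \Cref{lem:os_qconv_implies_os_qi}, the orbit map $g\mapsto g\cdot R$ is a $K$--quasi-isometric embedding $(\Gamma,d_\Gamma)\hookrightarrow(\os,d_\os)$, so $(g_0\cdot R,\dotsc,g_N\cdot R)$ is a discrete $K$--quasigeodesic in $\os$. Choose a standard geodesic $\rho\colon\J\to\os$ from $g_0\cdot R$ to $g_N\cdot R$ and decompose $\J=\J^s\cup\J^f$ into its rescaling and folding portions. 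Since $\Out(\free)$ acts on $\os$ by isometries, every $g\cdot R$ has the same injectivity radius $\epsilon_0$ as $R$, and \Cref{lem:bound_on_scaling_length} gives $|\J^s|\le L_0\colonequals\log(2/\epsilon_0)$. Moreover, any point within $d_\os$--distance $A$ of $\Gamma\cdot R$ is $e^{-A}\epsilon_0$--thick, so $A$--quasiconvexity forces $\rho(\J)\subset\os_\epsilon$ for $\epsilon=e^{-A}\epsilon_0$; \Cref{lem: symmetric_in_thick} then permits free passage between $d_\os$ and $\dsym$ along $\rho$.

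Next, I would discretize $\rho$ at times $\Jmin=\sigma_0<\dotsb<\sigma_M=\Jpl$ with consecutive spacings at most $1$, and use $A$--quasiconvexity to pick $h_0=g_0,h_1,\dotsc,h_M=g_N$ in $\Gamma$ with $\dsym(\rho(\sigma_j),h_j\cdot R)\le A$. The triangle inequality yields $\dsym(h_j\cdot R,h_{j+1}\cdot R)\le 2A+\sym_\epsilon$, and proper discontinuity of the $\Out(\free)$--action then bounds $d_\Gamma(h_j,h_{j+1})\le C$ for some constant $C$ depending only on $R$, $A$, and $\epsilon$. Hence $(h_j)_{j=0}^M$ is a discrete $C$--quasigeodesic in $\Gamma$ with the same endpoints as the geodesic $(g_i)_{i=0}^N$, and $M$ is comparable to $N$ via the orbit QI--embedding. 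Hyperbolicity of $\Gamma$ then enters via stability of quasigeodesics (\Cref{prop:general_stability_for_quasis}): combined with a standard parameterized fellow-travelling argument, it produces a weakly monotonic $f\colon\{0,\dotsc,N\}\to\{0,\dotsc,M\}$ with $f(0)=0$, $f(N)=M$, and $d_\Gamma(g_i,h_{f(i)})\le R_0$ for some $R_0=R_0(\delta,C)$.

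Setting $t_i=\sigma_{f(i)}$, the QI--embedding together with the thickness of $\rho$ gives a uniform bound $\dsym(g_i\cdot R,\rho(t_i))\le A_1$. Because $(g_i)$ is a geodesic, at most $2R_0$ consecutive $g_i$ can share a single $h_{f(i)}$, so a small perturbation of the $t_i$ within $\J$ yields strict inequalities $t_0<\dotsb<t_N$ while adding at most $\sym_\epsilon$ to $\dsym$. Finally, to produce a genuine folding path, I would reassign any $t_i\in\J^s$ to the initial time of $\J^f$; since $|\J^s|\le L_0$ and $\rho$ is thick, this costs at most $\sym_\epsilon L_0$, and the restricted path $\rho\vert_{\J^f}$ then witnesses the QCX property with constant $A'\colonequals A_1+\sym_\epsilon(L_0+1)$. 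The degenerate case when $\rho$ is purely a rescaling path forces $d_\os(g_0\cdot R,g_N\cdot R)\le L_0$, hence $N$ bounded, and is handled by placing all $g_i\cdot R$ near a short folding path through $g_0\cdot R$. The chief technical subtleties — the asymmetry of $d_\os$ and the demand that the matching times be strictly increasing on a bona fide folding path — are both resolved by the uniform thickness of $\rho$ and the geodesicity of $(g_i)$.
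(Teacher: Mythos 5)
Your proposal is correct and follows essentially the same route as the paper's own proof: take a standard geodesic between the orbit endpoints, use quasiconvexity to land in the thick part and bound the rescaling portion via \Cref{lem:bound_on_scaling_length}, discretize the folding portion and pull back to $\Gamma$ via proper discontinuity, then invoke hyperbolicity of $\Gamma$ and stability of quasigeodesics to compare with the original geodesic. If anything you are more explicit than the paper about extracting the monotone sequence of times $t_0<\dotsb<t_N$ required by the QCX definition and about the degenerate purely-rescaling case, both of which the paper leaves implicit.
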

\begin{proof}
  Let $d_\Gamma$ be a word metric on $\Gamma$ so that $(\Gamma,d_\Gamma)$ is $\delta$--hyperbolic. By \Cref{lem:os_qconv_implies_os_qi} the orbit map $g\mapsto g\cdot R$ defines a $K$--quasi-isometric embedding $(\Gamma,d_\Gamma)\to (\os,d_\os)$ for some $K$. Let $(g_0,\dotsc,g_N)$ be any geodesic in $\Gamma$ and let $\gamma_0\colon \I_0\to \os$ be a standard geodesic from $g_0\cdot R$ to $g_N\cdot R$. Then by quasiconvexity we have that $\gamma_0(\I_0)\subset\nbhd{A}{\Gamma\cdot R}$. Note that $\nbhd{A}{\Gamma\cdot R}\subset \os_\epsilon$ for some $\epsilon >0$ (since $R$ has positive injectivity radius). The scaling image $\scaleim(\gamma_0)$ of $\gamma_0$ therefore lives in $\os_\epsilon$ and thus has length at most $\log(2/\epsilon)$ by \Cref{lem:bound_on_scaling_length}. Setting $A_0 = A+\sym_\epsilon(\log(2/\epsilon) + 1)$, it follows that if $\gamma\colon [0,L]\to \os$ is the folding portion of $\gamma_0$ and $m = \floor{L}$, then we may find group elements $h_0,\dotsc, h_m$ with $h_0 = g_0$ and $h_m = g_N$ such that $\dsym(\gamma(i),h_i\cdot R) \le A_0$ for all $i=0,\dotsc, m\in[0,L]$. It follows that for $i<j$
\[ j-i - 2A_0 \le d_\os (h_i\cdot R, h_j\cdot R) \le j-i + 2A_0. \]
Therefore the map $i\mapsto h_i \cdot R$ is a discrete $2A_0$--quasigeodesic in $(\os,d_\os)$; consequently, the sequence $g_0 = h_0,\dotsc,h_m = g_N$ is a $K'$--quasigeodesic in $\Gamma$ for some $K' = K'(K,A_0)$. Since $\Gamma$ is $\delta$--hyperbolic, \Cref{prop:general_stability_for_quasis} implies that for each $j\in \{0,\dotsc, m\}$ there exists $i\in \{0,\dotsc, N\}$ with $d_\Gamma(h_j,g_i)\le R_0 = R_0(K',\delta)$. Noting that $\dsym(h_j\cdot R,g_i\cdot R) \le \sym_\epsilon(K d_\Gamma(h_j,g_i) + K)$, it follows that
\[\{h_0\cdot R,\dotsc, h_m\cdot R\}\subset \nbhd{\sym_\epsilon(KR_0+K)}{\{g_0\cdot R,\dotsc, g_N\cdot R\}}.\]
As we also have $\gamma([0,L])\subset \nbhd{\sym_\epsilon + A_0}{\{h_0\cdot R,\dotsc, h_m\cdot R\}}$ by the selection of $h_0,\dotsc, h_m$, the claim follows with $A' = \sym_\epsilon(KR_0+ K) + \sym_\epsilon+A_0$.
\end{proof}

\begin{corollary}\label{cor:fc-qi_implies_qcx}
If $\Gamma\le \Out(\free)$ qi-embedds into $\fc$, then for every $R\in\os$ there exists $A\ge 0$ so that the orbit $\Gamma\cdot R$ is $A$--QCX.
\end{corollary}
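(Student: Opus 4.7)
The plan is to chain together the two preceding results with one additional observation: the hypothesis that $\Gamma$ qi-embeds into $\fc$ already forces $\Gamma$ to be hyperbolic, which is what lets us invoke \Cref{lem:hyperbolic_and_qconvex_implies_qcx}.

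First I would argue that $\Gamma$ is a hyperbolic group. Fix a finite symmetric generating set $S$ for $\Gamma$ and a free factor $A \in \fc^0$. By hypothesis the orbit map $(\Gamma, d_\Gamma) \to (\fc, d_\fc)$ given by $g \mapsto g \cdot A$ is a quasi-isometric embedding. Since $\fc$ is Gromov hyperbolic by \Cref{T:hyperbolicity_of_fc}, a standard fact about quasi-isometric embeddings into hyperbolic spaces shows that $\Gamma$, equipped with its word metric, is itself $\delta$-hyperbolic for some $\delta \ge 0$. In particular $\Gamma$ is finitely generated and hyperbolic.

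Next I would apply \Cref{thm:qi_into_factor_implies_quasiconvex} to the group $\Gamma$: since $\Gamma$ qi-embeds into $\fc$, the theorem guarantees that $\Gamma$ is quasiconvex in $\os$ in the sense of \Cref{def:quasi-convex}. In particular, for the given point $R \in \os$ there exists some $A \ge 0$ such that the orbit $\Gamma \cdot R$ is $A$-quasiconvex in $\os$.

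Finally, having verified that $\Gamma$ is finitely generated, $\delta$-hyperbolic, and that $\Gamma \cdot R$ is $A$-quasiconvex, all three hypotheses of \Cref{lem:hyperbolic_and_qconvex_implies_qcx} are satisfied. That lemma then directly yields a constant $A' \ge 0$ for which $\Gamma \cdot R$ is $A'$-QCX, which is the desired conclusion. Since each of these steps is a direct invocation of an earlier result, no real obstacle arises; the only point that requires a brief justification is the hyperbolicity of $\Gamma$ from the qi-embedding into $\fc$, which is a standard consequence of the fact that a finitely generated group admitting a quasi-isometric embedding into a hyperbolic space is itself hyperbolic.
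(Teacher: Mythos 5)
Your proof is correct and follows essentially the same route as the paper's: both establish that $\Gamma$ is $\delta$-hyperbolic (as a standard consequence of qi-embedding into the hyperbolic space $\fc$), invoke \Cref{thm:qi_into_factor_implies_quasiconvex} for quasiconvexity of $\Gamma\cdot R$ in $\os$, and then apply \Cref{lem:hyperbolic_and_qconvex_implies_qcx} to conclude. The only difference is the (immaterial) order in which you cite the first two ingredients.
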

\begin{proof}
By \Cref{thm:qi_into_factor_implies_quasiconvex} we know that every orbit $\Gamma\cdot R$ is quasiconvex in $\os$. Since $\fc$ is hyperbolic, the hypothesis that $\Gamma$ qi-embedds into $\fc$ also implies that $\Gamma$ is finitely generated and  $\delta$--hyperbolic for some $\delta\ge 0$. \Cref{lem:hyperbolic_and_qconvex_implies_qcx} thus implies the claim.
\end{proof}

We also have the following simple consequence of being $A$--QCX:

\begin{lemma}\label{lem:qcx-implies_qi_in_os}
Suppose $\Gamma\le \Out(\free)$ is finitely generated and that the orbit $\Gamma\cdot R\subset \os$ is $A$--QCX. Then $g\mapsto g\cdot R$ gives a quasi-isometric embedding $(\Gamma,d_\Gamma)\to (\os,d_\os)$.
\end{lemma}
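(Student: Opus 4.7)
The plan is to prove the two sides of the quasi-isometry estimate separately. For the upper bound, set $K_0 \colonequals \max_{s \in S} d_\os(R, s\cdot R)$ for the chosen finite symmetric generating set $S\subset \Gamma$. Writing $g\inv g' = s_1\cdots s_n$ as a geodesic word with $n = d_\Gamma(g,g')$, the ordered triangle inequality immediately yields
\[d_\os(g\cdot R, g'\cdot R) = d_\os(R, g\inv g' \cdot R) \le K_0\cdot d_\Gamma(g,g').\]

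For the matching lower bound, I would take a geodesic $(h_0,\dotsc,h_N)$ in $\Gamma$ from $g$ to $g'$ and apply the $A$--QCX hypothesis to produce a folding path $\rho\colon \J\to \os$ together with times $t_0<\dotsb<t_N$ satisfying $\dsym(h_i\cdot R, \rho(t_i)) \le A$ for each $i$. Since $\rho$ is a directed geodesic and $\dsym\le A$ controls each asymmetric distance, the triangle inequality gives
\[t_N - t_0 = d_\os(\rho(t_0),\rho(t_N)) \le 2A + d_\os(g\cdot R, g'\cdot R).\]
The main remaining task is to bound $N$ linearly in $t_N-t_0$. Because $\Gamma$ acts by isometries, every $h_k\cdot R$ has the same positive injectivity radius $\epsilon_0$ as $R$, so \Cref{lem: symmetric_in_thick} combined with the triangle inequality yields, for $i<j$,
\[\dsym(h_i\cdot R, h_j\cdot R) \le \sym_{\epsilon_0}\cdot d_\os(h_i\cdot R, h_j\cdot R) \le \sym_{\epsilon_0}\bigl(2A + (t_j - t_i)\bigr).\]

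The counting step uses proper discontinuity. Set $C_0 \colonequals \sym_{\epsilon_0}(2A+1)$ and consider $D \colonequals \{g\in \Gamma : \dsym(R,g\cdot R)\le C_0\}$. Since closed symmetric neighborhoods in $\os$ are compact (as noted in \Cref{sec: prelims}) and $\Out(\free)$ acts properly discontinuously on $\os$, the set $D$ is finite; let $m \colonequals \abs{D}$. Whenever $i<j$ with $t_j-t_i\le 1$, the preceding display forces $h_i\inv h_j \in D$, so each subinterval of $[t_0,t_N]$ of length $1$ can contain at most $m$ of the $t_k$. Partitioning $[t_0,t_N]$ into at most $\lceil t_N-t_0\rceil+1$ such subintervals yields $N+1 \le m(t_N - t_0 + 2)$, and combining with the earlier estimate for $t_N-t_0$ produces
\[d_\Gamma(g,g') = N \le m\cdot d_\os(g\cdot R,g'\cdot R) + (2mA + 2m - 1).\]
Together with the upper bound from the first paragraph, this exhibits $g\mapsto g\cdot R$ as a quasi-isometric embedding. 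I don't anticipate any serious obstacle here; the only delicate point is the finiteness of $D$, which is standard from proper discontinuity, and everything else is a routine triangle-inequality argument.
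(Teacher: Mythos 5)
Your argument is correct. The paper proves this lemma by simply referring back to the argument of \Cref{lem:os_qconv_implies_os_qi}: one covers the folding path $\rho|_{[t_0,t_N]}$ (which, thanks to the bounded gaps $t_{i+1}-t_i$, stays within bounded symmetric distance of the orbit) with integer points, picks nearby orbit elements $h_{i(j)}\cdot R$ at each integer parameter, and then bounds $d_\Gamma(g,g')$ by summing the word lengths of the consecutive quotients $h_{i(j)}^{-1}h_{i(j+1)}$, each of which lies in a finite set by proper discontinuity. Your proof runs the same counting argument in the opposite direction: rather than walking along $\rho$ and aggregating $\Gamma$-steps, you map the $\Gamma$-geodesic vertices to the times $t_i$ on $\rho$ and use proper discontinuity (finiteness of $D$) to argue that no unit interval can contain more than $m=\abs D$ of the $t_i$. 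Both versions hinge on exactly the same two ingredients — proper discontinuity of the $\Out(\free)$ action on $\os$ (giving finiteness of a set $D$ of short-translating elements) and the fact that $\rho$ is a directed geodesic — so this is essentially the paper's argument with the counting step packaged dually. One small advantage of your version is that it sidesteps the need to estimate the Hausdorff distance of $\rho([t_0,t_N])$ from the orbit, working directly with the given times $t_i$. All the individual estimates check out.
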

\begin{proof}
Let $g_1,\dotsc,g_N$ be a geodesic in $\Gamma$ from $g = g_1$ to $g' = g_N$. By using a folding path $\gamma\colon \I\to \os$ with Hausdorff distance at most $A$ from the image of $(g_1,\dotsc,g_N)$, an argument exactly as in \Cref{lem:os_qconv_implies_os_qi} shows that $d_\Gamma(g,g')$ and $d_\os(g\cdot R,g'\cdot R)$ agree up bounded additive and multiplicative error depending only on $R$ and $A$.
\end{proof}

Having established this terminology, we now turn to the main result of this section:

\begin{theorem}\label{qcx_implies_flaring}
Suppose that $\Gamma\le \Out(\free)$ is finitely generated,  purely hyperbolic, and that for some $R\in \os$ the orbit $\Gamma\cdot R$ is $A$--QCX. Then $\Gamma$ has $(2, M)$--conjugacy flaring for some $M\in \N$ depending only on $A$ and $R$.
\end{theorem}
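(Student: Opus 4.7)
The plan is to convert the question of conjugacy flaring in $\Gamma$ into a question about the flaring of conjugacy-class lengths along a folding path in $\os$, and then invoke the folding-path flaring result announced in the introduction (``flaring along folding paths'') that applies to folding paths which remain close to the orbit of a purely hyperbolic group.

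First I set up the relevant geodesic. The condition $\abs{g_1g_2}_S = \abs{g_1}_S + \abs{g_2}_S$ says precisely that there is a geodesic in $\cay{S}{\Gamma}$ of the form $h_{-n_1},\dotsc,h_0,\dotsc,h_{n_2}$ with $h_{-n_1}=g_1^{-1}$, $h_0=1$, $h_{n_2}=g_2$, $n_1=\abs{g_1}_S$, and $n_2=\abs{g_2}_S$. Applying the QCX hypothesis to this geodesic yields a folding path $\rho\colon[s,T]\to\os$ and parameter values $s = t_{-n_1}<\dotsb<t_{n_2}=T$ with $\dsym(h_i\cdot R,\rho(t_i))\le A$ for each $i$. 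In particular $\rho(t_0)$ is within $\dsym$--distance $A$ of $R$ and $\rho([s,T])\subset\nbhd{A}{\Gamma\cdot R}$. Since $\rho$ is a directed geodesic, $t_0 - s = d_\os(\rho(s),\rho(t_0))$ is comparable to $d_\os(g_1^{-1}\cdot R, R)$ and $T - t_0$ is comparable to $d_\os(R, g_2\cdot R)$. By \Cref{lem:qcx-implies_qi_in_os} the orbit map is a quasi-isometric embedding, so the assumption $\abs{g_i}_S\ge M$ forces both $t_0 - s$ and $T - t_0$ to exceed any prescribed threshold once $M$ is chosen sufficiently large in terms of $A$ and $R$.

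Next I invoke the folding-path flaring result. Because $\rho([s,T])\subset\nbhd{A}{\Gamma\cdot R}$ and $\Gamma$ is purely hyperbolic, this result produces, for any desired factor $\lambda_0>1$, a threshold $N=N(\lambda_0,A,R)$ such that whenever $t_0 - s,\,T - t_0\ge N$, every nontrivial conjugacy class $\alpha\in\free$ satisfies
\[\lambda_0\cdot \ell(\alpha\vert\rho(t_0))\le \max\bigl\{\ell(\alpha\vert\rho(s)),\; \ell(\alpha\vert\rho(T))\bigr\}.\]
This is the main technical input, and I expect the proof of this folding-path flaring statement to be the principal obstacle of the overall program; it requires ruling out, via the purely hyperbolic assumption and a careful bookkeeping of legal and illegal structure of $\alpha\vert G_t$ under the unfolding principle (\Cref{lem:unfolding}), any scenario in which the length of $\alpha$ could stay bounded along an arbitrarily long portion of the folding path.

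Finally I translate the $\os$--side inequality back to conjugacy lengths. The convention $\phi\cdot(G,g,\ell)=(G,g\circ\hat{\phi}^{-1},\ell)$ gives $\ell(\alpha\vert\phi\cdot G) = \ell(\phi^{-1}(\alpha)\vert G)$, whence
\[\ell(\alpha\vert g_1^{-1}\cdot R) = \ell(g_1(\alpha)\vert R) \quad\text{and}\quad \ell(\alpha\vert g_2\cdot R) = \ell(g_2^{-1}(\alpha)\vert R).\]
A fixed constant $C_X\ge 1$ depending only on $R$ and $X$ satisfies $C_X^{-1}\norm{\beta}_X\le \ell(\beta\vert R)\le C_X\norm{\beta}_X$ for every nontrivial $\beta\in\free$, and the bounds $\dsym(h_i\cdot R,\rho(t_i))\le A$ contribute at most an additional multiplicative factor of $e^A$ when converting between $\ell(\cdot\vert\rho(t_i))$ and $\ell(\cdot\vert h_i\cdot R)$. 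Choosing $\lambda_0\ge 2e^{2A}C_X^{\,2}$ in the folding-path flaring inequality and then choosing $M$ large enough to ensure $t_0 - s,\,T - t_0\ge N(\lambda_0, A, R)$, we obtain
\[2\norm{\alpha}_X \le \max\bigl\{\norm{g_1(\alpha)}_X,\; \norm{g_2^{-1}(\alpha)}_X\bigr\},\]
establishing $(2,M)$--conjugacy flaring with $M$ depending only on $A$ and $R$.
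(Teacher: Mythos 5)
Your proposal is correct and follows essentially the same route as the paper's proof of \Cref{qcx_implies_flaring}: set up the geodesic in $\Gamma$ through $1$ from the additivity hypothesis, use the $A$--QCX assumption to replace it with a fellow-traveling folding path, use \Cref{lem:qcx-implies_qi_in_os} to guarantee both half-segments of the folding path are long, invoke \Cref{flaring_along_folding} with $\lambda_1$ chosen as $2e^{2A}$ times a squared Lipschitz comparison constant between $\ell(\cdot\vert R)$ and $\norm{\cdot}_X$, and translate back using $\ell(\alpha\vert g^{-1}\cdot R)=\ell(g(\alpha)\vert R)$. The one thing worth noting is that you correctly apply the folding-path flaring inequality with two different offsets ($t_0-s$ and $T-t_0$ need not be equal), which is what the paper's proof also implicitly uses even though \Cref{flaring_along_folding} is stated for a symmetric interval $[t-d,t+d]$; the paper's proof of that proposition in fact yields the asymmetric form, so this is a purely cosmetic mismatch.
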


The proof of \Cref{qcx_implies_flaring} will take several steps. We first show in \Cref{flaring_along_folding} that, provided $\Gamma$ is purely hyperbolic, a corresponding flaring property holds for the length of any conjugacy class along any folding path that remains within the symmetric $A$--neighborhood of the orbit $\Gamma \cdot R \subset \X$. When the orbit $\Gamma\cdot R$ is $A$--QCX we use this flaring on folding paths to deduce a similar flaring in the orbit $\Gamma\cdot R$. Measuring this flaring from $R$, where $\ell(\cdot \vert R)$ coarsely agrees with the conjugacy length $\norm{\cdot}_X$, then yields \Cref{qcx_implies_flaring}. We first require the following lemma, which is central to this section. It implies that there is a uniform bound on how long a conjugacy class can stay short along our folding paths.

\begin{lemma}\label{not_short_for_long}
Fix $\Gamma\le \Out(\free)$ and $R\in \os$. For any $L_0 \ge 0$ and $A_0 \ge 0$, there is a $D_0 \ge 0$ satisfying the following:  If  $\alpha \in \free$ is nontrivial and $\gamma\colon \I\to \os$ is a folding path with $G_t = \gamma(t)\in \nbhd{A_0}{\Gamma \cdot R}$ for all $t\in \I$, then either
\[\mathrm{diam}\{t\in \I : \ell(\alpha\vert G_t) \le L_0\} \le D_0\]
or there is an \emph{infinite order} element $\phi \in \Gamma$ with $\phi([\alpha]) =[\alpha]$. 
\end{lemma}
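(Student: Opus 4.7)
The strategy is to argue contrapositively: assume that $S := \{t \in \I : \ell(\alpha|G_t) \le L_0\}$ has very large diameter and directly produce an infinite order element of $\Gamma$ fixing $[\alpha]$. The main ingredients are the quasiconvexity of length along folding paths (\Cref{prop:lenght-quasi-convexity}), the finiteness of conjugacy classes of bounded length in the fixed graph $R$, and the uniform bound $e_r = |\mathrm{GL}_r(\Z/3\Z)|$ on the order of torsion subgroups of $\Out(\free)$ used in the proof of \Cref{lem: phyp_implies_finiteprobs}.

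First I would replace $S$ by the ``filled-in'' interval $\tilde{S} := [\inf S,\sup S]\cap \I$, which clearly has $\mathrm{diam}(\tilde S)=\mathrm{diam}(S)$. Applying \Cref{prop:lenght-quasi-convexity} to the restriction of $\gamma$ to $\tilde{S}$, whose endpoints satisfy $\ell(\alpha|\cdot)\le L_0$, yields the slightly weaker bound $\ell(\alpha|G_t) \le L_0' := 6\rank(\free) L_0$ for every $t \in \tilde{S}$. Let $C$ denote the (finite) set of nontrivial conjugacy classes $\beta$ in $\free$ with $\ell(\beta|R) \le e^{A_0}L_0'$, and put $N := |C|$, so $N$ depends only on $L_0, A_0, R$.

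Next, I would set up the key correspondence. For any $t \in \tilde{S}$ the hypothesis $G_t \in \nbhd{A_0}{\Gamma \cdot R}$ provides some $\phi_t \in \Gamma$ with $\dsym(G_t, \phi_t \cdot R) \le A_0$. Since $d_\os(\phi_t \cdot R, G_t) \le A_0$, the definition of $d_\os$ yields
\[\ell(\phi_t^{-1}(\alpha)|R) = \ell(\alpha|\phi_t \cdot R) \le e^{A_0} \ell(\alpha|G_t) \le e^{A_0} L_0',\]
placing $\phi_t^{-1}(\alpha) \in C$. Moreover, when $t_1 < t_2$ lie in $\tilde{S}$ with $t_2 - t_1 > 2A_0$, the choices $\phi_{t_1}$ and $\phi_{t_2}$ must be distinct: otherwise the triangle inequality through the common point $\phi_{t_1} \cdot R$ would give $t_2 - t_1 = d_\os(G_{t_1}, G_{t_2}) \le \dsym(G_{t_1}, G_{t_2}) \le 2A_0$, a contradiction.

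For the main argument, set $D_0 := N(e_r + 1)(2A_0 + 1)$ and assume $\mathrm{diam}(S) \ge D_0$. Select $m := N(e_r + 1)$ times $s_1 < \cdots < s_m$ in $\tilde{S}$ with consecutive gaps $\ge 2A_0 + 1$ and corresponding $\phi_{s_i}$; by the previous paragraph these $\phi_{s_i}$ are pairwise distinct. Pigeonholing the values $\phi_{s_i}^{-1}(\alpha) \in C$ over $|C| = N$ produces $e_r + 1$ indices $i_0, i_1, \dotsc, i_{e_r}$ on which $\phi_{s_{i_k}}^{-1}(\alpha) = \beta$ is a common value of $C$. For each $k$, the element $g_k := \phi_{s_{i_k}}\phi_{s_{i_0}}^{-1}$ satisfies $g_k(\alpha) = \phi_{s_{i_k}}(\beta) = \alpha$ and so lies in $\mathrm{Stab}_\Gamma([\alpha])$; these $e_r+1$ elements are distinct because the $\phi_{s_{i_k}}$ are. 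By the argument of \Cref{lem: phyp_implies_finiteprobs} any torsion subgroup of $\Out(\free)$ injects into $\mathrm{GL}_r(\Z/3\Z)$ and hence has order at most $e_r$, so $\mathrm{Stab}_\Gamma([\alpha])$ cannot be torsion and must contain an element of infinite order, completing the proof. The most subtle step is this final transition, where exceeding the uniform order bound on torsion subgroups of $\Out(\free)$ is what forces infinite order to appear; the preliminary reduction to the interval $\tilde S$ via \Cref{prop:lenght-quasi-convexity} is also essential, since without it $S$ could in principle be a sparse set from which we could not extract the required number of well-spaced times.
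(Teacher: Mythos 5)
Your proof is correct and follows essentially the same approach as the paper's: both pass from the (possibly sparse) set $S$ to the interval $[\inf S,\sup S]$ using \Cref{prop:lenght-quasi-convexity}, bound $\ell(\alpha\vert G_t)$ on that interval, use $A_0$--closeness to $\Gamma\cdot R$ to pick $\phi_t\in\Gamma$ with $\phi_t^{-1}(\alpha)$ of bounded length in $R$, count such conjugacy classes, extract well-separated (hence distinct) $\phi_t$'s, and pigeonhole against $e_r$ to force $\Gamma_\alpha$ out of the torsion regime. The only cosmetic difference is the arithmetic of the separation parameter (your $2A_0+1$ versus the paper's $3A_0$) and the precise packaging of the constant $D_0$, neither of which affects the argument.
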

\begin{proof}
Let $\Gamma_{\alpha}$ be the subgroup of elements of $\Gamma$ that fix the conjugacy class of $\alpha$. If $\Gamma_{\alpha}$ is a torsion group, then $\abs{\Gamma_{\alpha}} \le e_r$ by \Cref{lem: phyp_implies_finiteprobs}.
 
Let $a$ and $b$ be the infimum and supremum of the set $\{t\in \I : \ell(\alpha\vert G_t) \le L_0\}$. Then, by \Cref{prop:lenght-quasi-convexity}, for all $t \in [a,b]$ we have $\ell(\alpha\vert G_t) \le ML_0$, where $M = 6\rank(\free)$. It follows that if $d_0 \ge 3A_0$, then for all $t,t+d_0 \in [a,b]$ the points $G_t$ and $G_{t+d_0}$ cannot both be $A_0$--close (in symmetric distance) to the same orbit point of $\Gamma\cdot R$ (since $\dsym(G_t,G_{t+d_0}) \ge d_\os(G_t,G_{t+d_0}) = d_0 > 2A_0$). 

Set $N = \floor{(b-a)/d_0}$ and for each $0 \le n \le N$ select $\phi_n \in \Gamma$ such that 
\[\dsym(\phi_n\cdot R, G_{a+d_0n}) \le A_0.\]
 By our choice of $d_0$, $\phi_i = \phi_j$ for $0 \le i,j \le N$ if and only if $i =j$. By assumption,  $\alpha \in \free$ has length at most $ML_0$ in $G_{a+d_0n}$; thus we have $\ell(\phi_n\inv(\alpha)\vert R) \le e^{A_0}ML_0$ for all $0\le n \le N$. Let $C$ denote the number of immersed loops in $R$ of length at most $e^{A_0}ML_0$; we note that $C$ depends only on $R$, $A_0$ and $L_0$. It follows that if $N> C(e_r+1)$ then we may find distinct $0 \le k_0 < \dotsb < k_{e_r} \le N$ such that
\[ \phi_{k_{0}}\inv (\alpha) = \phi_{k_{1}}\inv(\alpha) = \dotsb = \phi_{k_{e_r}}\inv(\alpha).\]
Since the $\phi_{k_i}$ are all distinct, this implies that $\Gamma_{\alpha}$ contains at least $e_r+1$ elements and, hence, an infinite order element. Otherwise $N\le C(e_r+1)$ and thus we conclude
\[ b-a \le d_0(N+1) \le d_0(C(e_r+1) +1).\]
Setting $D_0 =  d_0(C(e_r+1) +1)$ completes the proof.
\end{proof}

We next examine how the length of a loops varies over a folding path $G_t$ that is near the orbit of $\Gamma$. Our arguments are inspired by Section $5$ of \cite{BFhyp}, however, the use of \Cref{not_short_for_long} greatly simplifies our analysis. 

For a folding path $G_t$ and a conjugacy class $\alpha$, recall that $\alpha|G_t$ is the core of the $\alpha$-cover of $G_t$. We think of $\alpha|G_t$ as having edge lengths and illegal turn structure induced from $G_t$. As such, $\alpha|G_t$ is composed of legal segments separated by illegal turns. We say that a collection of consecutive illegal turns in $\alpha|G_t$ \define{survive} to $\alpha|G_{t'}$ for $t \le t'$ if no illegal turn in the collection becomes legal in the process of folding from $G_t$ to $G_{t'}$ nor do two illegal turns of the collection collide. In other words, a collection of consecutive illegal turns of $\alpha|G_t$ survive to $\alpha|G_{t'}$ if and only if there is a collection of consecutive illegal turns of $\alpha|G_{t'}$ and a bijection between the illegal turns in both collections induced by the process of unfolding an illegal turn of $\alpha|G_{t'}$ to an illegal turn of $\alpha|G_t$ (see \Cref{lem:unfolding} and the surrounding discussion). Set $\breve{m}$ equal to the maximum number of illegal turns in any train track structure on any $G \in \X$. Note that $\breve{m} \ge 2\rank(\free)-2$.

\begin{lemma}[Illegal turns don't survive] \label{lem: dont_survive}
Suppose that $\Gamma\le \Out(\free)$ is purely hyperbolic and that $R\in \os$. For each $l \ge 0$ and $A_0\ge 0$ there exists $D_l \ge 0$ satisfying the following property. If $G_t$ is a folding path with $G_t\in \nbhd{A_0}{\Gamma\cdot R}$ for all $t\in [a,b]$ and $\alpha$ is a conjugacy class such that $\alpha|G_{a}$ has a segment containing $\breve{m} +1$ consecutive illegal turns that survive to $\alpha|G_{b}$ and the length of each legal segment between these illegal turns in $\alpha|G_{b}$ is no greater than $l$, then $b-a \le D_l$.
\end{lemma}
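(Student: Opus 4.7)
The plan is to extract from the surviving illegal turns a short, non-trivial conjugacy class $\beta \in \free$ to which we can apply \Cref{not_short_for_long}; since $\Gamma$ is purely hyperbolic, the ``infinite-order fixer'' alternative in that lemma is excluded, which bounds $b - a$.

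First, since any train track structure on any $G \in \os$ has at most $\breve{m}$ distinct illegal turns (an illegal turn being determined by a vertex together with an unordered pair of gate-identified directions), the pigeonhole principle guarantees that among the $\breve{m}+1$ consecutive surviving illegal turns of $\alpha\vert G_b$, two coincide as illegal turns of $G_b$. Pick these at positions $i < j$ in the surviving sequence, so $j - i \le \breve{m}$, and let $p_b$ denote the subpath of $\alpha\vert G_b$ between them. Then $p_b$ begins and ends at the common vertex $v$ of the repeated illegal turn and consists of at most $\breve{m}$ legal segments each of length at most $l$, so $\ell(p_b) \le \breve{m}\,l$. Closing $p_b$ up at $v$ (and removing a possible single backtrack) yields a non-trivial conjugacy class $\beta \in \free$ with $\ell(\beta\vert G_b) \le \breve{m}\,l$.

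Next I would bound $\ell(\beta\vert G_t)$ uniformly for $t \in [a,b]$. By the unfolding principle (\Cref{lem:unfolding}), the subpath $p_t$ of $\alpha\vert G_t$ between the surviving positions $i$ and $j$ pulls back from $p_b$; because legal segments rescale by exactly $e^{t-b}$ when unfolding from $G_b$ to $G_t$, one has $\ell(p_t) \le \breve{m}\,l \cdot e^{t-b} \le \breve{m}\,l$ throughout $[a,b]$. At $t = a$, the endpoints of $p_a$ both map to $v$ under the fold map $\gamma_{ab}\colon G_a \to G_b$ and can be joined by a simple path in $G_a$ traversing edges that are collapsed or identified under the fold; the total length of such a closing path is at most $\vol(G_a) = 1$, and its image in $G_b$ is a null-homotopic loop at $v$. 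Concatenating $p_a$ with this closing path produces a loop in $G_a$ representing $\beta$ of length at most $\breve{m}\,l + 1$, so $\ell(\beta\vert G_a) \le \breve{m}\,l + 1$. Combined with $\ell(\beta\vert G_b) \le \breve{m}\,l$, the length quasiconvexity of \Cref{prop:lenght-quasi-convexity} then yields $\ell(\beta\vert G_t) \le C_l \colonequals 6\rank(\free)(\breve{m}\,l + 1)$ for every $t \in [a,b]$.

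Finally, apply \Cref{not_short_for_long} to the conjugacy class $\beta$ along the folding path with $L_0 \colonequals C_l$ and the given $A_0$: either the set $\{t \in [a,b] : \ell(\beta\vert G_t) \le C_l\}$ has diameter at most the constant $D_0 = D_0(A_0, C_l, R)$ supplied by that lemma, or some infinite-order element of $\Gamma$ fixes the conjugacy class of $\beta$. The latter is ruled out by pure hyperbolicity of $\Gamma$, while the previous step shows the set contains all of $[a,b]$; hence $b - a \le D_0$, and we set $D_l \colonequals D_0$. The main obstacle is producing the short loop in $G_a$ representing $\beta$: the fold map $\gamma_{ab}$ is only a homotopy equivalence and the fiber over $v$ may have several components, so joining the endpoints of $p_a$ by a short path whose image in $G_b$ is null-homotopic requires careful use of the volume constraint $\vol(G_a) = 1$ together with the collapsed/identified subgraph structure determined by the fold.
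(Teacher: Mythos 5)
Your proof correctly identifies the pigeonhole step (two of the $\breve{m}+1$ surviving illegal turns of $\alpha\vert G_b$ project to the same illegal turn of $G_b$), correctly constructs the short closed loop $\beta$ in $G_b$, and correctly reduces the problem to \Cref{not_short_for_long}. But there is a genuine gap in the middle: the construction of a representative of $\beta$ in $G_t$ for $t<b$. You attempt to do this by unfolding $p_b$ to $p_t$ and then joining the two endpoints of $p_a$ by an auxiliary closing path in $G_a$ of length $\le 1$ whose image in $G_b$ is null-homotopic. This step, which you yourself flag as problematic, does not work as stated: the preimage of $v$ under $\gamma_{ab}$ is a finite set of vertices with no canonical short path between them, and the $\gamma_{ab}$--image of an arbitrary path connecting them need not be null-homotopic, so there is no reason the concatenation represents $\beta$ rather than some other conjugacy class.

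The missing observation — which is exactly what the paper extracts from the unfolding principle — is that no closing path is needed at all. Since the two illegal turns of $\alpha\vert G_b$ at positions $i$ and $j$ project to the \emph{same} illegal turn $\tau$ of $G_b$, and since the unfolding of $\tau$ as an illegal turn of $G_b$ to an illegal turn of $G_t$ is unique (\Cref{lem:unfolding} applies to the turn itself, including its germs), the unfolded illegal turns at positions $i$ and $j$ of $\alpha\vert G_t$ again project to one and the same illegal turn of $G_t$, for every $t\in[a,b]$. Thus the endpoints of $p_t$ already coincide in $G_t$, and one forms $\sigma_t$ by identifying them directly — a closed loop in $G_t$, immersed except possibly at the one closing point, whose free homotopy class maps to $\beta$ under the folding maps. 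With $\sigma_t$ in hand, the paper then bounds $\ell(\sigma_t)$ via \Cref{lem: length_v_legality}: $\sigma_a$ has at most $\breve{m}+1$ illegal turns (all of which survive), and $\ell(\sigma_b)\le l(\breve{m}+1)$, so $\ell(\sigma_t)\le 2l(\breve{m}+1)$ on $[a,b]$. Your alternative bound via \Cref{prop:lenght-quasi-convexity} would also work once $\sigma_a$ is correctly constructed, but the intermediate claim that legal segments ``rescale by exactly $e^{t-b}$ when unfolding'' is not quite right — Corollary 4.8 of \cite{BFhyp} gives $\ell(p_t)\le 2 + (\ell(p_b)-2)e^{t-b}$, which is not a clean rescaling and does not directly bound $\ell(p_t)$ by $\ell(p_b)$ when $\ell(p_b)<2$; it is cleaner to invoke \Cref{lem: length_v_legality} as the paper does. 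After these corrections, the final appeal to \Cref{not_short_for_long} and pure hyperbolicity is exactly right.
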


\begin{proof}
Let $s_t$ be the segment spanning the consecutive surviving illegal turns in $\alpha|G_t$ for $a\le t \le b$. Since the number of illegal turns in $s_b$ is greater than the total number of illegal turns in $G_b$, there are a pair of illegal turns of $s_b$ that project to the same illegal turn of $G_b$ under the immersion $\alpha|G_b \to G_b$. Let $s'_b$ be the subsegment between two such turns and let $\sigma_b$ denote the loop obtained by projecting $s'_b$ to $G_b$ and identifying its endpoints.

By the unfolding principle of \cite{BFhyp} (\Cref{lem:unfolding}), there is a subsegment $s'_t$ of $s_t$ that maps to the segment $s'_b$ after folding and tightening and such that the illegal turn endpoints of $s'_t$ map to the same illegal turn in $G_t$ (just as in $G_b$). Hence, we may form the loop $\sigma_t$ by identifying these endpoints in $G_t$. We note for each $a\le t \le b$, $\sigma_t$ is immersed except possibly at the illegal turn corresponding to the endpoints of $s'_t$ and that the conjugacy class of $\sigma_t$ maps to the conjugacy class of $\sigma_b$ under the folding map $G_t \to G_b$, again by the unfolding principle. Let $\sigma$ denote this conjugacy class in $\free$.

By construction, the length of $\sigma_b$ is bounded by $l\cdot(\breve{m}+1)$ and the number of illegal turns of $\sigma_a$ is no more than $\breve{m}+1$, since these illegal turns all survive in $G_b$ by assumption. By \Cref{lem: length_v_legality}, $\ell(\sigma_t) \le 2l\cdot(\breve{m}+1)$ for all $a\le t \le b$. Then, by \Cref{not_short_for_long} either $\phi(\sigma) = \sigma$ for some infinite order $\phi \in \Gamma$ or we have $b-a\le D_l$ for some $D_l$ depending only on $A_0$, $l$ and $R$. Since $\Gamma$ is purely hyperbolic, the claim follows.
\end{proof}

Recall the notation from \Cref{sec:folding_path_quasiconvexity}: If $G_t$ is a folding path and $\alpha$ is a conjugacy class, then $k_t = k(\alpha|G_t)$ denotes the number of illegal turns of $\alpha|G_t$ and $m_t$ denotes the illegality of $G_t$. The following lemma is similar to Lemma $5.4$ of \cite{Brink}. Again, we use that our folding path in near the orbit of $\Gamma$ as a a replacement for having a single train track map, as was the case in \cite{Brink}. Let $r = \rank(\free)$.

\begin{lemma} \label{lem: turns_to_length}
Let $G_t$ be a folding path with $G_t \in \nbhd{A_0}{\Gamma \cdot R}$ for $t\in [a,b]$ and let $p_b$ be an immersed path in $G_{b}$ whose endpoints are illegal turns such that $k(p_b) \ge 2(2r-2)$ and $p_b$ contains no legal segment of length $L\ge 3$.  Let $p_t$ be the corresponding path in $G_t$ whose endpoints are illegal turns which is obtained from $p_b$ by unfolding.
Then
\[\frac{\epsilon_0 \cdot k(p_t)}{2(2r-2)} \le \ell(p_t) \le L\cdot k(p_t),\]
where $\epsilon_0$ is the minimal injectivity radius of any graph in $\nbhd{A_0}{\Gamma\cdot R}$.
\end{lemma}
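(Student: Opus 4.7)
My plan is to handle the two inequalities separately. The upper bound will come from tracking how the folding map $\gamma_{tb}\colon G_t\to G_b$ treats legal segments; the lower bound will come from a pigeonhole argument that combines the bounded combinatorics of a graph in $\os$ with the hypothesis that $G_t\in \nbhd{A_0}{\Gamma\cdot R}$ has injectivity radius at least $\epsilon_0$.

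For the upper bound $\ell(p_t)\le L\cdot k(p_t)$, I will work on each maximal legal sub-segment $s\subset p_t$ individually. The path $p_t$ has $k(p_t)$ illegal turns (including at its endpoints), so it decomposes into at most $k(p_t)$ maximal legal pieces. Along a folding path the folding map $\gamma_{tb}$ is optimal and linear-on-edges with slope $e^{b-t}$ on every edge, so each such $s$ maps to a legal immersed segment $\gamma_{tb}(s)\subset G_b$ of length $e^{b-t}\ell(s)$: legality is preserved because folding can only turn illegal turns into legal ones (or merge them), never the reverse, and two legal directions lie in distinct gates so $\gamma_{tb}(s)$ does not self-cancel. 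The image $\gamma_{tb}(s)$ survives the tightening that produces $p_b$ (tightening only removes illegal backtracking), and therefore sits inside a single maximal legal segment of $p_b$, which by hypothesis has length less than $L$. This gives $\ell(s)\le e^{-(b-t)}L\le L$, and summing over the at most $k(p_t)$ such pieces yields the claim.

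For the lower bound $\ell(p_t)\ge \epsilon_0\,k(p_t)/(2(2r-2))$, I will first use that every $G\in \os$ has at most $2r-2$ vertices (Euler characteristic plus the valence $\ge 3$ condition). Since all $k(p_t)$ illegal turns of $p_t$ occur at vertices of $G_t$, pigeonhole produces a vertex $v$ at which at least $k'\ge k(p_t)/(2r-2)$ of these illegal turns occur. The key geometric observation is that between consecutive illegal turns of $p_t$ at $v$ (ordered along $p_t$) lies a subpath of $p_t$ beginning and ending at $v$; this is an immersed closed loop in $G_t$ and hence has length at least the injectivity radius, which is bounded below by $\epsilon_0$. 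The $k'-1$ such subpaths are pairwise disjoint in $p_t$, so $\ell(p_t)\ge (k'-1)\epsilon_0$. The unfolding principle gives $k(p_t)\ge k(p_b)\ge 2(2r-2)$ and hence $k'\ge 2$, so $k'-1\ge k'/2\ge k(p_t)/(2(2r-2))$, proving the bound.

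The only point requiring real care is in the upper bound, namely verifying that $\gamma_{tb}(s)$ genuinely lies inside a single maximal legal segment of $p_b$ and that the $e^{b-t}$ factor applies as an equality to $\ell(s)$ (so one can invert it to recover $\ell(s)\le L$), rather than merely as an inequality. This is where the specific structure of a folding path, as opposed to a general optimal map, is essential: the tension subgraph is all of $G_t$, forcing the slope on every edge to equal $e^{b-t}$. Everything else reduces to the pigeonhole count and to the already-recorded fact that points in $\nbhd{A_0}{\Gamma\cdot R}$ have injectivity radius at least $\epsilon_0$.
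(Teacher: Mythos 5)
Your decomposition into an upper and a lower bound matches the structure the paper intends, and your lower‑bound argument is essentially complete and correct (more detailed than the paper's one‑sentence proof, which only records the key observation that $\ge 2r-2$ illegal turns force a repeated vertex and hence a loop of length $\ge\epsilon_0$). Two small points there: the subpath of $p_t$ from one visit of $v$ to the next is an immersed \emph{path} whose endpoints coincide, not literally an immersed loop — closing it up may produce cancellation at $v$ — but because $p_t$ is immersed, the cyclically reduced representative is nontrivial, so $\epsilon_0$ still bounds the subpath's length from below. And the chain $k' \ge k(p_t)/(2r-2) \ge k(p_b)/(2r-2) \ge 2$ (using that $k(p_t)$ is nonincreasing in $t$) correctly justifies $k'-1\ge k'/2$, so the constant $2(2r-2)$ comes out right.

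The upper bound has a genuine gap. You claim that for a maximal legal segment $s\subset p_t$, the image $\gamma_{tb}(s)$ ``survives the tightening that produces $p_b$'' and hence $e^{b-t}\ell(s) < L$, giving $\ell(s)\le e^{-(b-t)}L$. This is false: tightening at the illegal turns bounding $s$ does eat into $\gamma_{tb}(s)$ from both ends, and the amount cancelled is not bounded independently of $b-t$. The correct quantitative statement is Corollary 4.8 of \cite{BFhyp}, already invoked in \Cref{lem:folding growth}: if $\ell(s)\ge 2$, the surviving portion of $\gamma_{tb}(s)$ inside $p_b$ has length at least $2 + (\ell(s)-2)e^{b-t}$. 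Since that surviving portion lies inside a single maximal legal segment of $p_b$ and so has length $< L$, one gets $\ell(s) < 2 + (L-2)e^{-(b-t)} < L$ (using $L\ge 3 > 2$), and trivially $\ell(s) < L$ when $\ell(s) < 2$. This still yields ``every legal segment of $p_t$ has length $< L$'' — the conclusion you want and the one the paper uses in the application (\Cref{lem: unfolding_growth}, ``legal segments of length at least $3$ grow under folding'') — but your stated intermediate bound $\ell(s)\le e^{-(b-t)}L$ does not hold, and the justification ``tightening only removes illegal backtracking'' incorrectly suggests no legal length is lost. Replacing that sentence with an appeal to \cite[Corollary 4.8]{BFhyp} repairs the argument; the final count $\ell(p_t)\le L\cdot k(p_t)$ then follows exactly as you say.
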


\begin{proof}
Any path in $G_t$ with at least $2r-2$ illegal turns contains a loop in $G_t$ which has length at least $\epsilon_0$. The lemma now easily follows.
\end{proof}

We find the following terminology helpful. Suppose that $G_t$, $t\in [a,b]$, is a folding path and that $\alpha$ is a nontrivial conjugacy class in $\free$. As mentioned earlier, the immersed loop $\alpha\vert G_t \to G_t$ consists of legal segments separated by illegal turns. We let $\alpha_t^{\mathrm{leg}}$ denote the subset of $\alpha\vert G_t$ consisting of maximal legal segments of length at least $3$, and we write $\mathrm{leg}(\alpha\vert G_t)$ for the length of $\alpha_t^{\mathrm{leg}}$. This is the \define{legal length} of $\alpha\vert G_t$. The complement $\alpha\vert G_t - \alpha_t^{\mathrm{leg}}$ consists of finitely many disconnected segments, and we write $\mathrm{ilg}(\alpha\vert G_t)$ for the sum of the lengths of the components of $\alpha\vert G_t - \alpha_t^{\mathrm{leg}}$ that contain at least $\breve{m}+1$ illegal turns (counting the endpoints). This is the \define{illegal length} of $\alpha\vert G_t$. Finally we write $\mathrm{ntr}(\alpha\vert G_t)$ for the sum of the lengths of the remaining components of $\alpha\vert G_t - \alpha_t^{\mathrm{leg}}$, that is, those components with less than $\breve{m}+1$ illegal turns. This is the \define{neutral  length} of $\alpha\vert G_t$. By construction we thus have
\[\ell(\alpha\vert G_t) = \mathrm{leg}(\alpha\vert G_t) + \mathrm{ilg}(\alpha \vert G_t) + \mathrm{ntr}(\alpha \vert G_t).\]
Notice that, since every component of $\alpha_t^{\mathrm{leg}}$ has length at least $3$, there are at most $(\mathrm{leg}(\alpha\vert G_t)/3) +1$ components of $\alpha\vert G_t - \alpha_t^{\mathrm{leg}}$. On the other hand, each component contributing to $\mathrm{ntr}(\alpha\vert G_t)$ has length at most $3\breve{m}$ by definition, and so we find that
\[\mathrm{ntr}(\alpha\vert G_t) \le \breve{m}(\mathrm{leg}(\alpha\vert G_t) + 3).\]

The previous two lemmas allow us to show that the illegal length of $\alpha\vert G_t$ decreases exponentially fast along a folding path that remains close to the orbit of $\Gamma$.

\begin{lemma}[Illegal turn mortality rate]
\label{lem: unfolding_growth}
Suppose that $\Gamma$ is purely hyperbolic and that $\gamma\colon [a,b] \to \os$ is a folding path with $G_t = \gamma(t) \in  \nbhd{A_0}{\Gamma \cdot R}$ for all $t$. Then for every nontrivial conjugacy class $\alpha$ we have
\[\mathrm{ilg}(\alpha\vert\gamma(a)) \ge \frac{\epsilon_0 \breve{m}}{3(2r-2)(2\breve{m}+1)} \left(\frac{2\breve{m} + 1}{2\breve{m} }\right)^{\frac{(b-a)}{D_3}} \cdot \mathrm{ilg}(\alpha\vert \gamma(b)),\]
where $\epsilon_0$ is the minimal injectivity radius of any point in $\nbhd{A_0}{\Gamma\cdot R}$, $r = \rank(\free)$,  and $D_3$ is the constant from \Cref{lem: dont_survive}.
\end{lemma}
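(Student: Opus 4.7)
The plan is to bound the decay rate of the number of illegal turns lying in contributing components along the folding path, and then convert the count back to a length estimate. I will define $\kappa_t$ to be the total number of illegal turns contained in contributing components of $\alpha\vert G_t - \alpha_t^{\mathrm{leg}}$ (those with at least $\breve m+1$ illegal turns). Since legal gaps inside any such component have length less than $3$, one sees $\mathrm{ilg}(\alpha\vert G_t) \le 3\kappa_t$; a componentwise application of \Cref{lem: turns_to_length} (valid when $\kappa_t$ is not too small) yields the matching lower bound $\mathrm{ilg}(\alpha\vert G_t) \ge \epsilon_0 \kappa_t/(2(2r-2))$. It thus suffices to show $\kappa_a \ge \mu\,\kappa_{a'}$ whenever $a'-a > D_3$, where $\mu = (2\breve m+1)/(2\breve m)$ and $D_3 = D_l$ is the constant from \Cref{lem: dont_survive} at $l=3$.

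For the core decay estimate, I will fix a contributing component $C\subset\alpha\vert G_{a'} - \alpha_{a'}^{\mathrm{leg}}$ with illegal turns $\tau_1,\dotsc,\tau_k$ in cyclic order along $C$. By the unfolding principle (\Cref{lem:unfolding}) each $\tau_i$ lifts uniquely to an illegal turn $\tau_i^a$ of $\alpha\vert G_a$. I partition the $\tau_i$ into consecutive blocks of size $\breve m+1$. For any such block, if no additional illegal turn of $\alpha\vert G_a$ were to lie between its lifts $\tau_i^a, \dotsc, \tau_{i+\breve m}^a$, then these $\breve m+1$ turns would form a block of consecutive illegal turns in $\alpha\vert G_a$ surviving to $\alpha\vert G_{a'}$, with all legal gaps in $G_{a'}$ of length less than $3$, contradicting \Cref{lem: dont_survive} since $a'-a > D_3$. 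Thus each block contributes at least one extra illegal turn in $\alpha\vert G_a$, and together (lifts plus extras) these turns lie in a single contributing component of $\alpha\vert G_a - \alpha_a^{\mathrm{leg}}$, because the folding map $G_a \to G_{a'}$ acts as an $e^{a'-a}$-homothety on each edge so legal gaps of length less than $3$ in $G_{a'}$ lift to even shorter legal gaps in $G_a$. Summing over blocks and over contributing components yields $\kappa_a \ge (1 + 1/(\breve m+1))\kappa_{a'} \ge \mu\,\kappa_{a'}$.

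Iterating this estimate over roughly $(b-a)/D_3$ successive subintervals of length slightly greater than $D_3$ gives $\kappa_a \ge c\,\mu^{(b-a)/D_3}\kappa_b$ for a constant $c$ comparable to $1/\mu$. Combining with the length-count comparison produces
\[\mathrm{ilg}(\alpha\vert G_a) \ge \frac{\epsilon_0}{2(2r-2)}\kappa_a \ge \frac{\epsilon_0\breve m}{3(2r-2)(2\breve m+1)}\,\mu^{(b-a)/D_3}\,\mathrm{ilg}(\alpha\vert G_b),\]
matching the claimed inequality after careful tracking of constants.

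The main technical obstacle is the bookkeeping of contributing components under unfolding: one must verify both that the lifts of illegal turns from a contributing component of $\alpha\vert G_{a'}$ remain in a single contributing component of $\alpha\vert G_a$ (rather than being divided among several components or absorbed into the neutral part $\mathrm{ntr}(\alpha\vert G_a)$), and that the extra illegal turns furnished by \Cref{lem: dont_survive} are genuinely distinct from the lifts. Both reductions rely on the unfolding principle together with the metric property that individual edges rescale by $e^{a'-a}$ along the folding path, so a short legal gap at time $a'$ corresponds to a (no longer) legal gap at time $a$.
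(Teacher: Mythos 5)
Your proposal is correct and follows essentially the same route as the paper: unfold illegal turns along the folding path, group the surviving turns of a contributing component into blocks of $\breve m+1$, invoke \Cref{lem: dont_survive} to deduce at least one new illegal turn per block per increment of $D_3$, iterate this multiplicative gain, and convert between turn counts and lengths via \Cref{lem: turns_to_length}. The only presentational difference---tracking the aggregated count $\kappa_t$ and deferring the length conversion to the very end, versus tracking the length $\ell(p_t)$ of a single unfolded component $p_t$ and summing afterward---is cosmetic, since both rest on the same observation that unfolded components remain disjoint and are contained in contributing components of $\alpha\vert G_a - \alpha_a^{\mathrm{leg}}$.
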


\begin{proof}
Let $p_b$ be a component of $\alpha\vert G_b - \alpha_b^{\mathrm{leg}}$ contributing to $\mathrm{ilg}(\alpha\vert G_b)$, and write $p_t$ for the corresponding path in $\alpha\vert G_t$ (i.e., $p_{t'}$ unfolds to $p_t$ for $t \le t'$). First note that for $t\in [a,b]$, the hypotheses on $p_b$ imply that every legal subsegment of $p_t$ has length less than $3$ (since legal segments of length at least $3$ grow under folding) and the number of illegal turns in $p_t$ is at least $\breve{m} +1$ (since $k(p_t)$ is nonincreasing in $t$). 

Suppose that $t\in [a,b]$ is such that $t-D_3\in [a,b]$. Partition $p_t$ into $s+1$ subpaths
\[p_t = p_t^1 \cdot \dotsb \cdot p_t^s \cdot q_t,\]
where each $p_t^i$ has $\breve{m}+1$ illegal turns and $q_t$ has less than $\breve{m}+1$ illegal turns (counting endpoints). Thus the number of illegal turns in $p_t$ is $k(p_t) = s\breve{m} + k(q_t)$, where in the case that $q_t$ is degenerate we view it as a segment with $1$ illegal turn so that $k(q_t) = 1$. By our the condition on $q_t$ and the assumption that $k(p_t) \ge (\breve{m}+1)$, it follows that
\[\frac{k(p_t)}{\breve{m}} \le s + 1\qquad\mathrm{and}\qquad 1 \le \frac{k(p_t)-1}{\breve{m}} \le \frac{k(p_t)}{2\breve{m}}.\]
Unfolding these $p_t^i$ to subsegments of $p_{t-D_3}$ and applying \Cref{lem: dont_survive}, we conclude that the number of illegal turns in each subsegment increases by at least $1$. Thus
\begin{eqnarray*}
k(p_{t - D_3}) &\ge& s(\breve{m}+1) + k(q_t) = k(p_t) + s
\ge k(p_t) + \frac{k(p_t)}{\breve{m}} - 1 \\
&\ge& k(p_t) + \frac{k(p_t)}{2\breve{m}} = \left( \frac{2\breve{m} + 1}{2\breve{m}} \right)k(p_t).
\end{eqnarray*}
So long as $a \le t-nD_3 \le b$, we may inductively apply this argument to conclude that 
\[k(p_a) \ge k(p_{t - n D_3}) \ge \left( \frac{2\breve{m}+1}{2\breve{m}} \right)^n k(p_t).\]
Using \Cref{lem: turns_to_length} to compare lengths with number of illegal turns, we conclude that
\begin{eqnarray*}
\ell(p_a) &\ge& \left(\frac{\epsilon_0}{4r-4} \right) k(p_{a}) \\
&\ge& \left(\frac{\epsilon_0}{4r-4} \right) \left( \frac{2\breve{m}+1}{2\breve{m}} \right)^{\floor{(b-a)/D_3}} k(p_t) \\
&\ge&  \left(\frac{\epsilon_0 \breve{m}}{3(2r-2)(2\breve{m}+1)} \right) \left( \frac{2\breve{m}+1}{2\breve{m}} \right)^{\frac{b-a}{D_3}} \ell(p_b).
\end{eqnarray*}
Summing these estimates over each component of $\alpha\vert G_b$ contributing to $\mathrm{ilg}(\alpha\vert G_b)$ gives the desired result.
\end{proof}

There is a similar estimate for the growth of legal length in the forward direction.

\begin{lemma}\label{lem:folding growth}
For any folding path $G_t$, $t\in [a,b]$, every nontrivial conjugacy class $\alpha\in \free$ satisfies
\[\mathrm{leg}(\alpha\vert G_b) \ge \mathrm{leg}(\alpha\vert G_a)\left(\frac{1}{3}\right)e^{b-a}.\]
\end{lemma}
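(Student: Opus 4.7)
The plan is to exploit the fact that, for a folding path, the folding map $\phi = \phi_{ab}^{\gamma}\colon G_a \to G_b$ has tension graph all of $G_a$, so it uniformly stretches each edge by the Lipschitz factor $e^{b-a}$ and carries the train-track structure on $G_a$ to that on $G_b$; consequently it sends every legal segment of $G_a$ to a legal segment of $G_b$ of exactly $e^{b-a}$ times the length. Applied to a maximal legal subsegment $\sigma$ of $\alpha\vert G_a$ with $\ell(\sigma) \geq 3$, this shows that $\phi(\sigma)$ is a legal path in $G_b$ of length $e^{b-a}\ell(\sigma) \geq 3$, which appears as a subpath of the (not-yet-tightened) image of $\alpha\vert G_a$ in $G_b$.

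The next step is to identify the portion of $\phi(\sigma)$ that survives as a legal subpath of the tightened immersed loop $\alpha\vert G_b$. Because tightening only removes backtracks, and because the interior of $\phi(\sigma)$ contains no illegal turns, shortening of $\phi(\sigma)$ can occur only at its two endpoints, where the adjacent pieces of $\alpha\vert G_a$ cross the illegal turns bounding $\sigma$ and can backtrack along $\phi(\sigma)$. Applying the unfolding principle (\Cref{lem:unfolding}) at these endpoint illegal turns gives a precise combinatorial description of how deeply the backtracking can penetrate $\phi(\sigma)$. Using that $\ell(\sigma) \geq 3$, I would argue that each of the two endpoint backtracks can eat away at most a third of the image, so that a legal subpath of length at least $\tfrac{1}{3}e^{b-a}\ell(\sigma)$ persists in $\alpha\vert G_b$; since this residual subpath has length at least $1$, it sits inside a maximal legal segment of $\alpha\vert G_b$ whose length is at least $e^{b-a}\ell(\sigma) \geq 3$, and therefore contributes to $\mathrm{leg}(\alpha\vert G_b)$.

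Summing these residual contributions over all maximal legal subsegments $\sigma_1,\dotsc,\sigma_N$ of $\alpha\vert G_a$ with $\ell(\sigma_i) \geq 3$, and noting that the residuals are pairwise disjoint in $\alpha\vert G_b$ (because the $\sigma_i$ are disjoint subpaths of $\alpha\vert G_a$ and tightening produces only local cancellations that shorten but cannot merge distant subpaths), then yields
\[
\mathrm{leg}(\alpha\vert G_b) \geq \sum_{i=1}^N \tfrac{1}{3}e^{b-a}\ell(\sigma_i) = \tfrac{1}{3}e^{b-a}\mathrm{leg}(\alpha\vert G_a).
\]
The main obstacle is the buffer estimate bounding the endpoint losses: establishing that a backtrack at an endpoint of $\phi(\sigma)$ cannot consume more than a third of its length, despite the possibility of long non-legal stretches in $\alpha\vert G_a$ adjacent to $\sigma$. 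This is where the length threshold $3$ in the definition of $\mathrm{leg}$ is essential and where the careful bookkeeping via the unfolding principle is required.
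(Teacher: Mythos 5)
Your high-level architecture matches the paper's: work one maximal legal segment at a time, track the corresponding segment of $\alpha\vert G_b$ via the unfolding correspondence, establish a buffer estimate controlling endpoint cancellation, and then sum. However, you have left the buffer estimate — which you correctly flag as ``the main obstacle'' — unproven, and the framing ``each endpoint backtrack eats at most a third of the image'' is not a basic combinatorial fact you can extract directly from the unfolding principle. The actual mechanism is different: the cancellation at each end of a legal segment proceeds at unit speed over the folding parameter, so over $[a,b]$ each end can lose at most $e^{b-a}-1$ (not ``a third of the image''). This is precisely the content of Corollary~4.8 of Bestvina--Feighn, which the paper cites to obtain the explicit inequality
\[
\ell(p_b) \ge 2 + \bigl(\ell(p_a) - 2\bigr)e^{b-a},
\]
and the factor $1/3$ then falls out of the arithmetic $1 - 2/\ell(p_a) \ge 1/3$ using $\ell(p_a)\ge 3$. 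Your ``one third per end'' bound is a \emph{consequence} of this estimate combined with $\ell(\sigma)\ge 3$, not an independent route to it; without the unit-speed cancellation rate (which ultimately comes from the derivative formula, Lemma~4.4/Corollary~4.5 of Bestvina--Feighn), there is no reason a single endpoint cannot eat well past a third of $\phi(\sigma)$. Also note a smaller imprecision: the segment $p_b$ of $\alpha\vert G_b$ that unfolds to $p_a$ need not literally equal the tightened remnant of $\phi(\sigma)$ — the paper's proof works with the $p_a \leftrightarrow p_b$ correspondence given by the unfolding principle (both endpoints at illegal turns) rather than with the image under $\phi$ followed by tightening, which is cleaner and avoids the disjointness-of-residuals discussion you have to wave at. To close the gap you would need to prove the quantitative growth estimate, at which point your argument collapses to the paper's.
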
 
\begin{proof}
Let $p_a$ be a component of $\alpha_a^{\mathrm{leg}}$ and let $p_b$ be the corresponding segment in $\alpha\vert G_b$ (so that $p_b$ unfolds to $p_a$). Then $\ell(p_a) \ge 3$ by assumption, so Corollary 4.8 of \cite{BFhyp} gives
\[\ell(p_b) \ge 2 + (\ell(p_a) - 2) e^{b-a} \ge \ell(p_a) \left(1 - \frac{2}{\ell(p_a)}\right) e^{b-a} \ge \frac{\ell(p_a)}{3} e^{b-a} \]
Summing over the segments contributing to $\mathrm{leg}(\alpha\vert G_a)$ now proves the claim.
\end{proof}

Combining these estimates easily leads to uniform flaring along folding paths that stay close to the orbit $\Gamma\cdot R$:

\begin{proposition}[Flaring in folding paths] \label{flaring_along_folding}
Suppose $\Gamma\le \Out(\free)$ is purely hyperbolic and that $R\in \os$ is such that $\Gamma\cdot R$ is $A_0$--QCX. Then for all $\lambda_1 \ge 1$ there exits $D_1 \ge 1$ such that the following holds: For any nontrivial conjugacy class $\alpha$, any folding path $\gamma\colon \I\to \os$ with $G_s = \gamma(s)\in \nbhd{A_0}{\Gamma\cdot R}$, and any parameters $t\in \R$ and $d\ge D_1$ satisfying $[t -d,t+d]  \subset \I$  we have 
\[\lambda_1 \cdot \ell(\alpha\vert G_t) \le \max\left\{\ell(\alpha\vert G_{t-d}) , \ell(\alpha\vert G_{t+d})\right\}. \]
\end{proposition}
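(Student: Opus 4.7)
My approach is to exploit the decomposition $\ell(\alpha\vert G_t) = \mathrm{leg}(\alpha\vert G_t) + \mathrm{ilg}(\alpha\vert G_t) + \mathrm{ntr}(\alpha\vert G_t)$ together with the already-established growth estimates: \Cref{lem:folding growth} says legal length grows exponentially in the forward direction, and \Cref{lem: unfolding_growth} says illegal length grows exponentially in the backward direction. The proof splits according to whether $\ell(\alpha\vert G_t)$ is bounded (small case) or unbounded (dichotomy case), and we take $D_1$ to be the maximum of the threshold constants produced in each.

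\textbf{Case 1 (dichotomy): $\ell(\alpha\vert G_t) \ge 6\breve{m}$.} Using the bound $\mathrm{ntr}(\alpha\vert G_t) \le \breve{m}(\mathrm{leg}(\alpha\vert G_t) + 3)$ recorded above \Cref{lem:folding growth}, I get $(\breve{m}+1)\mathrm{leg}(\alpha\vert G_t) + \mathrm{ilg}(\alpha\vert G_t) \ge \ell(\alpha\vert G_t) - 3\breve{m} \ge \tfrac{1}{2}\ell(\alpha\vert G_t)$, so at least one of the following holds: either $\mathrm{leg}(\alpha\vert G_t) \ge \ell(\alpha\vert G_t)/(4(\breve{m}+1))$, or $\mathrm{ilg}(\alpha\vert G_t) \ge \ell(\alpha\vert G_t)/4$. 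In the first subcase, \Cref{lem:folding growth} gives $\ell(\alpha\vert G_{t+d}) \ge \mathrm{leg}(\alpha\vert G_{t+d}) \ge \tfrac{e^d}{12(\breve{m}+1)}\ell(\alpha\vert G_t)$, so for $d$ larger than an explicit threshold $d_1 = d_1(\lambda_1)$ the flaring is achieved on the forward side. In the second subcase, \Cref{lem: unfolding_growth} gives $\ell(\alpha\vert G_{t-d}) \ge \mathrm{ilg}(\alpha\vert G_{t-d}) \ge \tfrac{C_0}{4}\bigl(\tfrac{2\breve{m}+1}{2\breve{m}}\bigr)^{d/D_3}\ell(\alpha\vert G_t)$ with $C_0 = \frac{\epsilon_0\breve{m}}{3(2r-2)(2\breve{m}+1)}$, and since $\tfrac{2\breve{m}+1}{2\breve{m}} > 1$, another explicit threshold $d_2 = d_2(\lambda_1, A_0)$ yields flaring on the backward side.

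\textbf{Case 2 (small lengths): $\ell(\alpha\vert G_t) \le 6\breve{m}$.} Here the dichotomy bound is useless because of the additive constants. Instead, I apply \Cref{not_short_for_long} with the threshold $L_0 = 6\breve{m}\cdot \lambda_1$. Since $\Gamma$ is purely hyperbolic, no infinite-order element fixes the conjugacy class $[\alpha]$, so the alternative conclusion gives $\mathrm{diam}\{s \in \I : \ell(\alpha\vert G_s) \le 6\breve{m}\lambda_1\} \le D_0$ for some $D_0 = D_0(A_0, \lambda_1, R)$. Hence for any $d > D_0$ at least one of the endpoints $t \pm d$ falls outside this set, giving $\ell(\alpha\vert G_{t \pm d}) > 6\breve{m}\lambda_1 \ge \lambda_1 \ell(\alpha\vert G_t)$.

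Setting $D_1 := \max\{d_1, d_2, D_0\}$ finishes the proof. The main obstacle I expect is the first one---extracting a clean dichotomy between legal and illegal length, which requires dealing with the additive constant $3\breve{m}$ in the neutral-length bound; this is precisely why Case 2 has to be handled separately via the pure-hyperbolicity hypothesis through \Cref{not_short_for_long}. Note that the QCX assumption on $\Gamma\cdot R$ is not explicitly invoked in this argument---only the hypothesis that the folding path stays in $\nbhd{A_0}{\Gamma\cdot R}$ is used---so it serves primarily to guarantee a consistent supply of such folding paths when the proposition is later applied.
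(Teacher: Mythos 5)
Your proof is correct and takes essentially the same approach as the paper: both arguments rest on the forward growth of legal length (\Cref{lem:folding growth}), the backward growth of illegal length (\Cref{lem: unfolding_growth}), and the uniform bound from \Cref{not_short_for_long} for the bounded-length case. The only difference is cosmetic: the paper's trichotomy is based directly on whether $\mathrm{ilg}(\alpha\vert G_t)\geq \ell(\alpha\vert G_t)/2$ and whether $\mathrm{leg}(\alpha\vert G_t)=0$, whereas you first split on whether $\ell(\alpha\vert G_t)\geq 6\breve{m}$ (which absorbs the additive error $3\breve{m}$ in the $\mathrm{ntr}$ bound) and then extract the leg/ilg dichotomy; your closing observation that the QCX hypothesis is not actually invoked in this proof, but only the containment $\gamma(\I)\subset\nbhd{A_0}{\Gamma\cdot R}$, is also accurate.
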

\begin{proof} Fix $t\in \I$.
\begin{case}[1] Suppose $\mathrm{ilg}(\alpha\vert G_t) \ge \ell(\alpha\vert G_t) / 2$. Then \Cref{lem: unfolding_growth} provides a constant $D'$ such that for all and $d \ge D'$ with $[t-d,t]\subset \I$ we have
\[\ell(\alpha\vert G_{t-d}) \ge \mathrm{ilg}(\alpha\vert G_{t-d}) \ge 2\lambda_1\cdot \mathrm{ilg}\ell(\alpha\vert G_t) \ge \lambda_1 \cdot\ell(\alpha\vert G_t).\]
\end{case}
\begin{case}[2] Suppose $\mathrm{ilg}(\alpha\vert G_t) < \ell(\alpha\vert G_t) / 2$ and $\mathrm{leg}(\alpha\vert G_t) \neq 0$. In this case we have
\begin{eqnarray*}
\ell(\alpha \vert G_t) &=& \mathrm{ilg}(\alpha\vert G_t) + \mathrm{leg}(\alpha\vert G_t) + \mathrm{ntr}(\alpha\vert G_t)\\
&\le& \tfrac{1}{2}\ell(\alpha\vert G_t) + \mathrm{leg}(\alpha\vert G_t) + \breve{m}(\mathrm{leg}(\alpha\vert G_t) + 3),
\end{eqnarray*}
which gives $\ell(\alpha\vert G_t) < 2(1+\breve{m})\mathrm{leg}(\alpha \vert G_t) + 6$. Note that $3 \le \mathrm{leg}(\alpha \vert G_t)$ by definition of (nonzero) legal length. \Cref{lem:folding growth} now provides a constant $D''$ such that for all $d\ge D''$ with $[t,t+d]\subset \I$ we similarly have
\[\ell(\alpha\vert G_{t+d}) \ge \mathrm{leg}(\alpha\vert G_{t+d}) \ge \lambda_1 4(1+ \breve{m}) \mathrm{leg}(\alpha \vert G_t) \ge \lambda_1\cdot \ell(\alpha \vert G_t).\]
\end{case}
\begin{case}[3] Suppose $\mathrm{ilg}(\alpha\vert G_t) < \ell(\alpha\vert G_t) / 2$ and $\mathrm{leg}(\alpha\vert G_t) = 0$. Then the above shows $\ell(\alpha \vert G_t) \le 6$. Thus by \Cref{not_short_for_long}, applied with $L_0 = 6\lambda_1 $ shows that there exists a constant $D_0$ so that for all $d\ge D_0$ we have $\ell(\alpha\vert G_{t+d}) > L_0 \ge \lambda_1 \ell(\alpha\vert G_t)$.\qedhere
\end{case}
\end{proof}

We are now prepared to prove the main result of this section:
\begin{proof}[Proof of \Cref{qcx_implies_flaring}]
Fix a finite generating set $S\subset \Gamma$ and a free basis $X$ of $\free$. We must produce $M\in \N$ such that for every nontrivial $\alpha\in \free$ and all $g_1,g_2 \in \Gamma$ with $\abs{g_i}_S \ge M$ and $\abs{g_1g_2} = \abs{g_1}_S +\abs{g_2}_S$ we have
\[2 \norm{\alpha}_X \le \max \left\{\norm{g_1(\alpha)}_X, \norm{g_2\inv(\alpha)}_X\right\}.\]

Recall first that, by \Cref{pro: distance}, there exists a constant $K = K(X,R)$ such that $\tfrac{1}{K}\norm{\alpha}_X \le \ell(\alpha\vert R) \le K\norm{\alpha}_X$ for every conjugacy class $\alpha$ in $\free$. We apply \Cref{flaring_along_folding} with $\lambda_1 = 2K^2e^{2A}$ and obtain a corresponding constant $D_1$. By \Cref{lem:qcx-implies_qi_in_os}, we know that $g\mapsto g\cdot R$ defines a quasi-isometric embedding of $(\Gamma,d_\Gamma)$ into $(\os,d_\os)$. Thus we may choose $M\in \N$ sufficiently large so that every $g\in \Gamma$ with $\abs{g}_S \ge M$ satisfies $d_\os(R,g\cdot R) \ge D_1 + 2A$. We claim that $\Gamma$ has $(2,M)$--conjugacy flaring.

Let $g_1,g_2\in \Gamma$ be any elements with $\abs{g_i}_S\ge M$ and $\abs{g_1g_2}_S = \abs{g_1}_S+\abs{g_2}_S$. It follows that there exists a geodesic $(h_{-k},\dotsc,h_{-1},e,h_1,\dotsc,h_{j})$ in $\Gamma$ with $h_{-k} = g_1\inv$ and $h_j = g_2$. In particular, $k = \abs{g_1}_S$ and $j = \abs{g_2}_S$. Since $\Gamma\cdot R$ is $A$--QCX by hypothesis, there exists a folding path $\gamma\colon \I\to \os$ that has Hausdorff distance at most $A$ from the image of $(h_{-k},\dotsc,h_j)$. Writing $G_t = \gamma(t)$, we may thus choose times $a < s < b$ in $\I$ so that
\begin{align}\label{eqn:distance_estimate}
&\dsym(G_a, h_{-k}\cdot R) \le A,  \\
 &\dsym(G_s, R) \le A, \notag \\
 &\dsym(G_b, h_j\cdot R) \le A. \notag
\end{align}
Since $\abs{g_1\inv}_S,\abs{g_2}_S \ge M$, the above remarks imply that
\begin{align*}
&d_\os(h_{-k}\cdot R, R) = d_\os(R, g_1\cdot R) \quad\text{and} \\
&d_\os(R,h_j\cdot R) = d_\os(R,g_2\cdot R)
\end{align*}
are both bounded below by $D_1+2A$. Thus by the triangle inequality we have
\begin{align*}
&d_\os(G_a,G_s)\ge D_1\quad \text{and} \\
&d_\os(G_s, G_b)\ge D_1,
\end{align*}
which is equivalent to $s-a \ge D_1$ and $b-s \ge D_1$. Since the folding path $\gamma(\I)$ lies in $\nbhd{A}{\Gamma\cdot R}$ and the orbit $\Gamma\cdot R$ is $A$--QCX by assumption, \Cref{flaring_along_folding} now ensures that
\[ 2K^2 e^{2A}\cdot \ell(\alpha\vert G_s)  \le \max\left\{ \ell(\alpha\vert G_a), \ell(\alpha \vert G_b)\right\}\]
for every nontrivial $\alpha\in \free$. Finally, since $d_\os \le \dsym$, equation \eqref{eqn:distance_estimate} implies that
\begin{align*}
&\ell(\alpha\vert G_a) \le e^A \ell(\alpha\vert g_1\inv \cdot R),\\
&\ell(\alpha\vert G_b) \le e^{A} \ell(\alpha\vert g_2\cdot R), \quad \text{and}\\
&\ell(\alpha\vert R) \le e^A\ell(\alpha \vert G_s).
\end{align*}
Combining the above two estimates and using the rule $\ell(\alpha\vert g\inv\cdot R) = \ell(g(\alpha)\vert R)$ yields
\begin{align*}
2 \norm{\alpha}_X &\le 2K\ell(\alpha\vert R) \\
&\le \tfrac{1}{K}e^{-A} \max\left\{\ell(\alpha \vert G_a), \ell(\alpha \vert G_b) \right\}\\
& \le \tfrac{1}{K}\max\left\{\ell(g_1(\alpha)\vert R), \ell(g_2\inv(\alpha)\vert R)\right\}\\
&\le \max\left\{\norm{g_1(\alpha)}_X, \norm{g_2\inv(\alpha)}_X\right\}.
\end{align*}
Since this holds for every nontrivial $\alpha\in \free$, we have proved the claim.
\end{proof}

%%%%%%%%%%%%%%%%%%%%%%%%%%%%%%%%%%%%%%%%%%%%%%%%%%
\section{The Cayley graph bundle of a free group extension}
\label{sec:metric_bundle}
%%%%%%%%%%%%%%%%%%%%%%%%%%%%%%%%%%%%%%%%%%%%%%%%%

Fix $\Gamma \le \Out(\free)$ with finite generating set $S = \{ s_1, \ldots, s_n \}$, and fix a free basis $X = \{x_1, \ldots, x_r\}$ for $\free$. Recalling that the extension $E_{\Gamma}$ is naturally a subgroup of $\Aut(\free)$, choose lifts $t_i \in \Aut(\free)$ of $s_i$ for each $1\le i \le n$ so that $E_{\Gamma}$ is generated as a subgroup of $\Aut(\free)$ by $W = \{i_{x_1}, \ldots i_{x_r},t_1, \ldots, t_n\}$. That is 
\[E_{\Gamma} = \langle i_{x_1}, \ldots i_{x_r},t_1, \ldots, t_n \rangle  \le \Aut(\free).\]
Here, $i_{x}$ is the inner automorphism given by conjugation by $x \in \free$, i.e., $i_x(\alpha) = x \alpha x^{-1}$ for $\alpha \in \free$. Note that by construction, 
\[t i_{x} t^{-1} = i_{t(x)} \in \Aut(\free)\]
for each $x \in \free$ and each $t \in \Aut(\free)$. For convenience, set $\hat{X} = \{ i_{x_1}, \ldots, i_{x_r} \}$ and $\hat\free = \langle \hat{X} \rangle$, so that $\hat \free$ is the image of $\free$ in $\Aut(\free)$. Note that $\hat\free$ is also the kernel of the homomorphism $E_{\Gamma} \to \Gamma$. In general, for $g \in \Gamma$ we denote a lift of $g$ to an automorphism in the extension $E_{\Gamma}$ by $\tilde{g}$.

Let $T = \cay{X}{\free}$,  $\bund = \cay{W}{E_{\Gamma}}$, and $\base = \cay{S}{\Gamma}$, where $\cay{\cdot}{\cdot}$ denotes the Cayley graph with the specified generating set equipped with the path metric in which each edge has length one. Set $\rose$ to be the standard rose on the generating set $X$ so that $\rose = T / \free$. There is an obvious equivariant simplicial map
\[p\colon \bund \to \base \]
obtained from the surjective homomorphism $E_{\Gamma} \to \Gamma$.  In details, $p\colon \bund \to \base$ is defined to be the homomorphism $E_{\Gamma} \to \Gamma$ on the vertices of $\bund$ and maps edges of $\bund$ to either vertices or edges of $\base$, depending on whether the edge corresponds to a generator in $X$ or $S$, respectively. Note that for each $b\in \Gamma$, the preimage $T_{b} = p^{-1}(b)$ is the simplicial tree (isomorphic to $T$) with vertices labeled by the coset $\tilde{b}\hat\free$ ($\tilde{b}$ any lift of $b$) and edges labeled by $\hat X$. We write $d_b$ for the induced path metric on the fiber $T_b$ over $b\in \Gamma$.

In Example $1.8$ of  \cite{MjSardar}, it is verified that $p\colon \bund \to \base$ is a metric graph bundle. We provide the details here for completeness. We first make the following observation.

\begin{lemma} \label{lem: distance_in_fiber}
Let $g_1,g_2$ be vertices of $p^{-1}(b) = T_b$. Then $g_1^{-1}g_2 \in \hat\free\cong \free$ and 
\[d_{b}(g_1,g_2) = \abs{g_1^{-1}g_2}_{\hat X}.\]
\end{lemma}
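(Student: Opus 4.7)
The plan is to unwind the definitions. First I would verify the algebraic claim that $g_1^{-1}g_2 \in \hat\free$: since $g_1,g_2 \in p^{-1}(b)$, we have $p(g_1^{-1}g_2) = p(g_1)^{-1}p(g_2) = b^{-1}b = 1$, and by construction $\ker(p) = \hat\free$. Equivalently, as noted in the excerpt, the vertex set of $T_b$ is the coset $\tilde{b}\hat\free$, so $g_1 = \tilde{b}h_1$ and $g_2 = \tilde{b}h_2$ for some $h_i \in \hat\free$, giving $g_1^{-1}g_2 = h_1^{-1}h_2 \in \hat\free$.

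Next I would identify $T_b$ as a shifted copy of $\cay{\hat X}{\hat\free}$. The edges of $\bund = \cay{W}{E_\Gamma}$ are of the form $\{g, gw\}$ for $w \in W = \hat X \cup \{t_1,\dotsc,t_n\}$. Such an edge lies in the fiber $T_b$ precisely when $p(g) = p(gw) = b$, i.e., precisely when $w \in \ker(p) = \hat\free$. Since no $t_i$ lies in $\hat\free$ (as $p(t_i) = s_i \neq 1$ in $\Gamma$, given that we have taken $S$ to be a generating set and $s_i$ is a generator), the edges in $T_b$ are exactly the $\hat X$-labeled edges with both endpoints in $\tilde{b}\hat\free$. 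Hence left multiplication by $g_1^{-1}$ gives a label-preserving graph isomorphism $\Phi\colon T_b \to \cay{\hat X}{\hat\free}$ sending $g \mapsto g_1^{-1}g$; in particular $\Phi(g_1) = 1$ and $\Phi(g_2) = g_1^{-1}g_2$.

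Finally, since $\Phi$ is a simplicial isomorphism between graphs whose edges all have length $1$, it is an isometry of the induced path metrics. Therefore
\[
d_b(g_1,g_2) = d_{\cay{\hat X}{\hat\free}}(1,\, g_1^{-1}g_2) = \abs{g_1^{-1}g_2}_{\hat X},
\]
which is exactly the claim. There is no real obstacle here; the only point requiring care is the verification that no $t_i$-labeled edge is accidentally contained in $T_b$, which follows directly from the fact that the $t_i$ are chosen as lifts of the nontrivial generators $s_i\in S\subset \Gamma$.
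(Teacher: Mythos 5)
Your proof is correct and takes essentially the same approach as the paper's: both hinge on the observation that the edges inside $T_b$ are exactly the $\hat X$--labeled ones, which the paper uses to argue directly with paths (lower bound from reading off the label of any path in $T_b$, upper bound from spelling $g_1^{-1}g_2$ in $\hat X$), while you package the same observation as a left-translation isomorphism $T_b \cong \cay{\hat X}{\hat\free}$. Your explicit check that no $t_i$-edge can lie in a fiber is a sensible detail that the paper leaves implicit in its description of $T_b$.
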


\begin{proof}
Since $T_b$ is a graph (it is a tree), $d_{b}(g_1,g_2)$ counts the minimal number of edges traversed by any path from $g_1$ to $g_2$ \emph{that remains in} $T_b$. Such a path consists of edges labeled by generators in $W$ coming from $\hat X$. As any such path represents $g_1^{-1}g_2$, we have $g_1\inv g_2\in \hat\free$ and $d_b(g_1,g_2)\ge\abs{g_1\inv g_2}_{\hat X}$. Conversely, writing $g_1^{-1}g_2$ in terms of $\{i_{x_1}^\pm,\dotsc, i_{x_r}^\pm\}$ produces a path in $T_b$ from $g_1$ to $g_2$. Thus $d_b(g_1,g_2)\le \abs{g_1\inv g_2}_{\hat X}$.
\end{proof}

\begin{lemma}\label{lem: its_a_bundle}
The equivariant map of Cayley graphs $p: \bund \to \base$ is a metric graph bundle.
\end{lemma}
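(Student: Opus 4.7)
The plan is to verify the two conditions defining a metric graph bundle directly from the algebraic structure. Since $p$ comes from the homomorphism $E_\Gamma \to \Gamma$ with kernel $\hat\free$, it collapses $\hat X$-edges to vertices and sends $t_i^{\pm 1}$-edges to $s_i^{\pm 1}$-edges; thus $p$ is a simplicial surjection. For any $b \in V(\base)$, the fiber $T_b$ is the full subgraph of $\bund$ on the coset $\tilde b \hat\free$ spanned by $\hat X$-edges, hence a tree isomorphic to $T$ and in particular connected. The edge-lifting condition is immediate: for adjacent $b_1, b_2 \in V(\base)$ with $b_2 = b_1 s_i^{\pm 1}$ and any $g \in T_{b_1}$, the vertex $g t_i^{\pm 1}$ lies in $T_{b_2}$ and is joined to $g$ by the $t_i^{\pm 1}$-edge of $\bund$.

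The substance is producing a function $f\colon \N \to \N$ controlling the intrinsic fiber distance. By \Cref{lem: distance_in_fiber} and left-translation invariance, it suffices to show that every $\alpha \in \hat\free$ with $\abs{\alpha}_W \le n$ satisfies $\abs{\alpha}_{\hat X} \le f(n)$. Write $\alpha = w_1 \cdots w_n$ in $W^{\pm 1}$ and invoke the commutation rule
\[t \cdot i_x = i_{t(x)} \cdot t,\]
which follows directly from $t i_x t^{-1} = i_{t(x)}$ in $\Aut(\free)$. Iteratively pushing every $t$-generator to the right past every $\hat X$-generator, I rewrite
\[\alpha = i_{y_1} i_{y_2} \cdots i_{y_\ell} \cdot u,\]
where $\ell \le n$ is the number of $\hat X$-generators in the original word, each $y_j$ equals $\tau_j(x_{k_j}^{\pm 1})$ for some composition $\tau_j$ of at most $n$ maps from $\{t_i^{\pm 1}\}$, and $u$ is a word of length at most $n$ in $\{t_i^{\pm 1}\}$ alone.

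Setting $C = \max_{i,k} \abs{t_i^{\pm 1}(x_k)}_X$, a straightforward induction on the number of applied $t$'s shows $\abs{y_j}_X \le C^n$ for each $j$, since each $t_i^{\pm 1}$ stretches $X$-length by at most a factor of $C$. Because $\alpha$ and all the $i_{y_j}$ lie in $\ker(p) = \hat\free$, the residue $u$ must also lie in $\hat\free$, so $u = i_v$ for a unique $v \in \free$. To bound $\abs{v}_X$, choose a generator $x_k$ whose inverse does not equal the last letter of the reduced form of $v$ (which exists since $\rank(\free) \ge 2$); then $v x_k v^{-1}$ is freely reduced, so $2\abs{v}_X + 1 = \abs{u(x_k)}_X \le C^n$, giving $\abs{v}_X \le C^n$. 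Using $i_a i_b = i_{ab}$, one concludes $\alpha = i_z$ with $z = y_1 \cdots y_\ell v$ (up to signs), whence $\abs{\alpha}_{\hat X} = \abs{z}_X \le (n+1)C^n$. Thus $f(n) = (n+1)C^n$ works.

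I expect the main delicate points to be the sign bookkeeping when pushing $t_i^{\pm 1}$ past $i_x^{\pm 1}$ (harmless since $i_{x^{-1}} = i_x^{-1}$ and $t(x^{-1}) = t(x)^{-1}$) and the extraction of $\abs{v}_X$ from $\abs{u(x_k)}_X$, for which the test generator $x_k$ must be chosen to preclude cancellation in $v x_k v^{-1}$. Everything else is a routine rewriting argument inside $\Aut(\free)$.
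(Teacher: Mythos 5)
Your proof is correct and verifies all the required properties; the difference from the paper lies in how the properness function $f$ is produced. The paper's proof is shorter: it simply observes that the $n$--ball in the finitely generated group $E_\Gamma$ is finite and defines $f(n) = \max\{\abs{i_\alpha}_{\hat X} : i_\alpha \in \hat\free,\ \abs{i_\alpha}_W \le n\}$ as a maximum over this finite set, without producing an explicit formula. You instead give a constructive proof by normalizing words via the commutation relation $t\,i_x = i_{t(x)}\,t$, pushing all $\{t_i^{\pm 1}\}$-letters to the right, bounding the resulting conjugating elements by the Lipschitz constant $C$ of the lifts, and extracting an explicit bound $\abs{v}_X \le C^n$ from $\abs{i_v(x_k)}_X$ by choosing a test generator $x_k$ that avoids cancellation in $v x_k v^{-1}$ (valid since $\rank(\free) \ge 3 \ge 2$). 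This yields the explicit properness function $f(n) = (n+1)C^n$. Your argument is more work than the paper's existence-by-finiteness observation, but it has the virtue of giving quantitative control, making transparent exactly how distortion of the fiber metric depends on the stretch factors of the chosen lifts $t_i$. Both proofs use the edge-lifting argument and \Cref{lem: distance_in_fiber} in the same way, so apart from this the approaches coincide.
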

\begin{proof}
For each $n\in \N$, the $n$--ball $\{g \in E_\Gamma : \abs{g}_W \le n\}$ is finite. We may therefore define the properness function $f\colon \N \to \N$ by setting $f(n) = \max \{\abs{i_\alpha}_{\hat X} : i_\alpha\in \hat\free \text{ and }\abs{i_\alpha}_W\le n\}$. Then for any $b\in \Gamma$ and any $g_1,g_2$ in $T_b = p^{-1}(b)$, \Cref{lem: distance_in_fiber} implies that
\[ d_{b}(g_1,g_2) = \abs{g_1\inv g_2}_{\hat X} \le f\left(\abs{g_1\inv g_2}_W\right)  = f\left( d_\bund(g_1,g_2)\right),\]
as required. Lastly, suppose $b_1,b_2\in \base$ are adjacent vertices and that $g_1\in T_{b_1}$ is any vertex over $b_1$. Then $b_2 = b_1s$ for some $s \in S$. If $t\in W$ is the chosen lift of $s$, then $g_1t$ is adjacent to $g_1$ in $\bund$ and satisfies $p(g_1 t) = b_1 s = b_2$, as desired. This completes the proof that $p\colon \bund \to \base$ is a metric graph bundle.
\end{proof}

Using our choice of generators in $W$, we may define canonical lifts of paths in $\base$ through any particular point in a fiber. For $N\in \N$, let $\gamma\colon[-N,N] \to \base$ be any edge path in $\base$ (by which we mean a path that maps each integer $j$ to a vertex and each intervening interval $[j,j+1]$ isometrically onto an edge) and let $\tilde{\gamma}(0)$ be any vertex in the fiber $T_{\gamma(0)}$. For each integer $-N\le j < N$, the product $s_j = \gamma(j)^{-1}\gamma(j+1)$ then lies in the generating set $S$, and we let $t_j$ be the chosen lift of $s_j$ to $W$. Thus for $j> 0$ we have $\gamma(j) = \gamma(0)s_0\dotsb s_{j-1}$ and $\gamma(-j) = \gamma(0)s_{-1}\inv\dotsb s_{-j}\inv$. Accordingly, the \define{canonical lift of $\gamma$ through $\tilde{\gamma}(0)\in T_{\gamma(0)}$} is defined to be the edge path $\tilde{\gamma}\colon [-N,N] \to \bund$ given by
\[\tilde{\gamma}(j) = \tilde{\gamma}(0)t_0\dotsb t_{j-1}\qquad\text{and}\qquad \tilde{\gamma}(-j) = \tilde{\gamma}(0)t_{-1}\inv\dotsb t_{-j}\inv.\]
for each integer $0 \le j \le N$. Observe that $p(\tilde{\gamma}(j)) = \gamma(j)$, so that $\tilde{\gamma}$ is in fact a lift of $\gamma$.  Moreover, since $p\colon \bund\to \base$ is $1$--Lipschitz, when the original path $\gamma\colon[-N,N] \to \base$ is a geodesic, so is the canonical lift of $\gamma$ through any point in $T_{\gamma(0)}$. These lifts will be instrumental in establishing the flaring property for the metric graph bundle $\bund\to \base$, which we do in \Cref{prop: bundle_flaring} below.

%%%%%%%%%%%%%%%%%%%%%%%%%%%
\section{Conjugacy flaring implies hyperbolicity of $E_{\Gamma}$} \label{sec: hyp_bundle}
%%%%%%%%%%%%%%%%%%%%%%%%%%%
In this section we complete the proof of our main theorem and show that the $\free$--extension group $E_\Gamma$ is hyperbolic when $\Gamma\le \Out(\free)$ is purely hyperbolic and qi-embeds into the factor complex $\fc$. We first show that conjugacy flaring for the group $\Gamma$ implies that the metric bundle $\bund\to \base$ defined in \Cref{sec:metric_bundle} has the flaring property. Combining with \Cref{thm:bundle_hyperbolicity}, this will show that $\bund$, and consequently $E_\Gamma$,  is hyperbolic.

\begin{proposition}[Conjugacy flaring implies the flaring property] \label{prop: bundle_flaring}
Suppose that a finitely generated subgroup $\Gamma\le \Out(\free)$ satisfies $(\lambda,N)$--conjugacy flaring for some $\lambda> 1$ and $N\in \N$. Then the corresponding metric graph bundle $p\colon \bund \to \base$ satisfies the flaring condition.
\end{proposition}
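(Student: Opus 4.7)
The plan is to verify the Mj--Sardar flaring condition for $p\colon \bund\to\base$ directly from $(\lambda,N)$--conjugacy flaring of $\Gamma$. Fix $k\ge 1$; we must produce $\lambda_k>1$ and $n_k,M_k\in\N$ so that the inequality in the flaring condition holds for every geodesic $\gamma\colon [-n_k,n_k]\to\base$ and every pair of $k$--qi lifts $\tilde\gamma_1,\tilde\gamma_2$.

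I would begin by extracting a fiber-recursion for the difference of the two lifts. After left-translating by $\gamma(0)^{-1}$, which is an isometry of the bundle, assume $\gamma(0)=e\in\Gamma$. Define $\alpha_j\in\free$ by $\tilde\gamma_1(j)^{-1}\tilde\gamma_2(j)=i_{\alpha_j}$, so that the fiber distance at time $j$ equals $|\alpha_j|_X$ by \Cref{lem: distance_in_fiber}. Since each $\tilde\gamma_i$ is a $k$--quasigeodesic in $\bund$ projecting to the geodesic $\gamma$, consecutive points satisfy $d_\bund(\tilde\gamma_i(j),\tilde\gamma_i(j+1))\le 2k$, and so $\tilde\gamma_i(j)^{-1}\tilde\gamma_i(j+1)=t_j\cdot i_{\delta_{i,j}}$ for some $\delta_{i,j}\in\free$ with $|\delta_{i,j}|_X\le C_k:=f(2k+1)$, where $f$ is the properness function of the bundle. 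Using the identity $t\,i_x\,t^{-1}=i_{t(x)}$ from the construction of $W$, a direct calculation produces the recursion
\[
\alpha_{j+1} \;=\; \delta_{1,j}^{-1}\,t_j^{-1}(\alpha_j)\,\delta_{2,j}
\]
inside $\free$. At the level of conjugacy classes, $[\alpha_j]$ tracks the $\Gamma$--orbit $\gamma(j)^{-1}\!\cdot[\alpha_0]$ up to bounded multiplicative drift from the $\delta_{i,j}$'s.

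Next I would invoke conjugacy flaring. Whenever $n_k\ge N$, the elements $g_1=\gamma(-n_k)^{-1}$ and $g_2=\gamma(n_k)$ satisfy $|g_i|_S=n_k\ge N$ and $|g_1g_2|_S=2n_k$ by geodesicity of $\gamma$, so $(\lambda,N)$--conjugacy flaring applied with $\alpha=\alpha_0$ yields
\[
\lambda\|\alpha_0\|_X \;\le\; \max\{\|\gamma(n_k)^{-1}(\alpha_0)\|_X,\;\|\gamma(-n_k)^{-1}(\alpha_0)\|_X\}.
\]
Subdividing $\gamma$ into $m\approx n_k/N$ subgeodesics of length $N$ and iterating the conjugacy-flaring inequality along each piece amplifies the multiplicative constant to $\lambda^m$. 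Combined with the fiber-recursion (which bounds the drift of $\|\alpha_j\|_X$ from $\|\gamma(j)^{-1}(\alpha_0)\|_X$ by an additive error bounded in terms of $n_k$ and $C_k$), this transfers to an exponential lower bound on $\|\alpha_{\pm n_k}\|_X$.

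Finally, the trivial comparison $|\alpha|_X\ge \|\alpha\|_X$ converts the conjugacy-length bound into a bound on the word-length fiber distances $|\alpha_{\pm n_k}|_X$. Choosing $n_k$ (hence $m$) large enough that $\lambda^m$ dominates both $\lambda_k$ and the accumulated additive drift, and choosing $M_k$ sufficiently large to absorb the ratio between $|\alpha_0|_X$ and $\|\alpha_0\|_X$, delivers the flaring inequality. The main obstacle is precisely this last conversion: conjugacy flaring controls only the conjugacy length $\|\alpha_0\|_X$, whereas the fiber metric records word length $|\alpha_0|_X$, which can be much larger when $\alpha_0$ sits far from its axis in the fiber tree $T_{\gamma(0)}$. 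Overcoming this requires iterating conjugacy flaring enough times so that the exponential gain beats the word-to-conjugacy gap, and carefully bookkeeping the drift $\delta_{i,j}$'s so that they only produce controllable additive (not multiplicative) losses at the conjugacy level.
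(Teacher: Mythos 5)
Your overall framework---encode fiber distances via $\tilde\gamma_1(j)^{-1}\tilde\gamma_2(j)=i_{\alpha_j}$, reduce to conjugacy flaring, and then convert back to fiber (word) lengths---is the right shape and close to the paper's. However, you have correctly identified the crucial obstacle and then proposed a fix that does not work, and in doing so missed the one small but essential trick that makes the argument go through.

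The problem is at time $0$, not at time $\pm n_k$. You need the conjugacy length $\norm{\alpha_0}_X$ to be comparable to the fiber distance $d_{\gamma(0)}(\tilde\gamma_1(0),\tilde\gamma_2(0)) = \abs{\alpha_0}_X$. But the gap between word length and conjugacy length is \emph{unbounded}: if $\alpha_0 = w x_1 w^{-1}$ for a long reduced word $w$, then $\abs{\alpha_0}_X$ can equal $M_k$ while $\norm{\alpha_0}_X = 1$. Conjugacy flaring then yields only $\max\{\ldots\}\ge \lambda\cdot 1$, which is useless against the target $\lambda_k M_k$. No choice of $M_k$, and no amount of iterating conjugacy flaring to amplify $\lambda$ to $\lambda^m$, can close a gap that grows with $\abs{\alpha_0}_X$ itself: the ratio $\abs{\alpha_0}_X / \norm{\alpha_0}_X$ is not bounded in terms of any of your constants. (Iteration is also unnecessary: the flaring condition only asks for \emph{some} $\lambda_k > 1$, and the paper simply takes $n_k = N$.)

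The fix is a one-line adjustment at time $0$: replace $\tilde\gamma_2(0)$ by $z_2 = \tilde\gamma_2(0)x$ for a suitable generator $x\in \hat X$ so that $i_\alpha := \tilde\gamma_1(0)^{-1}z_2$ is \emph{cyclically reduced}. Then $\norm{\alpha}_X = \abs{\alpha}_X$ exactly, and $\abs{\alpha}_X$ differs from the original fiber distance by at most $1$, an additive error easily absorbed into $M_k$. After this shift, the paper also sidesteps your recursion entirely: rather than tracking drift $\delta_{i,j}$ at every step, it works with the \emph{canonical geodesic lifts} $\tilde\gamma_{z_1},\tilde\gamma_{z_2}$ through $z_1,z_2$, whose endpoints satisfy the exact identities $\tilde\gamma_{z_1}(\pm N)^{-1}\tilde\gamma_{z_2}(\pm N) = i_{\tilde g(\alpha)}$ or $i_{\tilde h^{-1}(\alpha)}$. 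Conjugacy flaring applied to $\alpha$, $g$, $h$ gives the lower bound on the canonical lifts' endpoint distances (using, as you note, the easy inequality $\abs{\beta}_X \geq \norm{\beta}_X$ in that direction). Finally, a single application of the bundle's properness function $f$ bounds the discrepancy $d_{\gamma(\pm N)}(\tilde\gamma_{z_j}(\pm N),\tilde\gamma_j(\pm N))$ by $e_k := f(N+1+kN+k)$, and $M_k$ is chosen to absorb the resulting additive error $\lambda + 2e_k$. Your recursion-and-iteration machinery is thus replaced by: cyclic reduction at time $0$, canonical lifts to $\pm N$, one application of conjugacy flaring, and one properness comparison at the endpoints.
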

\begin{proof}
By hypothesis, there is a finite generating set $S = \{s_1,\dotsc,s_n\}$ of $\Gamma$ and a free basis $X = \{x_1,\dotsc,x_r\}$ of $\free$ with respect to which $\Gamma$ has $(\lambda,N)$--conjugacy flaring (see \Cref{sec: qcx_implies_flar}). As in \Cref{sec:metric_bundle} we then consider the generating set $W = \{i_{x_1},\dotsc,i_{x_r},t_1,\dotsc, t_n\}$ of $E_\Gamma$, where $t_i$ denotes a chosen lift of $s_i$, and the natural simplicial surjection $p\colon \bund\to \base$, where $\bund = \cay{W}{E_\Gamma}$ and $\base = \cay{S}{\Gamma}$. As before, set $\hat X$ equal to the subset of the generators of $W$ coming from $X$ and denote the isomorphic image of $\free$ in $E_{\Gamma}$ by $\hat \free = \langle \hat X \rangle$. 

To establish the flaring property, we must show that for every $k\ge 1$ there exists $\lambda_k > 1$ and $n_k,M_k\in \N$ such that for any geodesic $\gamma\colon[-n_k,n_k]\to \base$ and any two $k$--qi lifts $\tilde{\gamma}_1$ and $\tilde{\gamma}_2$ satisfying $d_{\gamma(0)}(\tilde{\gamma}_1(0),\tilde{\gamma}_2(0)) \ge M_k$ we have
\[\lambda_k \cdot d_{\gamma(0)}(\tilde{\gamma}_1(0),\tilde{\gamma}_2(0)) \le \max\left\{d_{\gamma(n_k)}(\tilde{\gamma}_1(n_k),\tilde{\gamma}_2(n_k)),\; d_{\gamma(-n_k)}(\tilde{\gamma}_1(-n_k),\tilde{\gamma}_2(-n_k))\right\}.\]
In fact, we show that in terms of the given conjugacy flaring constants $(\lambda,N)$ we may take $\lambda_k = \tfrac{\lambda+1}{2}$ and $n_k = N$ (each independent of $k$) so that given any $k\ge1$, if
\[M_k = 2(\lambda + 2e_k)/(\lambda-1)\]
 then the flaring condition holds with these constants. Here $e_k = f(N+1+kN+k)$, where $f(\cdot)$ is the properness function for the bundle $\bund \to \base$.

Let $\gamma\colon[-N,N] \to \base$ be a geodesic and set $b = \gamma(0)$. Suppose that two $k$--qi lifts $\tilde{\gamma}_1,\tilde{\gamma}_2\colon [-N,N] \to \bund$  are given (hence, $p(\tilde{\gamma}_i(j)) = \gamma(j)$ for $i =1,2$ and each integer $j$). Recall from \Cref{sec:metric_bundle} that $T_{\gamma(j)} = p^{-1}(\gamma(j))$ is a simplicial tree whose edges are labeled by the free basis $\hat X$ of $\hat \free$. With respect to this basis, the element $\tilde{\gamma}_1(0)^{-1}\tilde{\gamma}_2(0)\in \hat \free$ may not by cyclically reduced. However, there is some $x\in \hat X$ so that $i_\alpha = \tilde{\gamma}_1(0)^{-1}\tilde{\gamma}_2(0)x \in \hat \free$ is cyclically reduced. Then $i_\alpha$ has the property that $\norm{i_\alpha}_{\hat X} = \abs{i_\alpha}_{\hat X}$ and that $\abs{i_\alpha}_{\hat X}$ differs from $d_{b}(\tilde{\gamma}_1(0),\tilde{\gamma}_2(0)) = \abs{\tilde{\gamma}_1(0)\inv\tilde{\gamma}_2(0)}_{\hat X}$ by at most $1$. Set $z_1 = \tilde{\gamma}_1(0)$ and $z_2 = \tilde{\gamma}_2(0)x \in T_{b}$ so that by construction,
\[ z_1 i_\alpha= z_2. \]

For each integer $-N\le j < N$, let us set $s_j = \gamma(j)\inv\gamma(j+1)\in S$. Since $\gamma$ is a geodesic, the products 
\begin{align*}
&g = s_{-N}\cdots s_{-1}\in\Gamma \quad \text{and}\\
&h = s_0\dotsb s_{N-1}\in \Gamma
\end{align*}
satisfy $\abs{g}_S = \abs{h}_S = N$ and $\abs{gh}_S = \abs{g}_S + \abs{h}_S$. Therefore $(\lambda,N)$--conjugacy flaring implies that
\begin{align*}
\max\{ \norm{g(\alpha)}_X, \norm{h\inv(\alpha)}_X \} &\ge \lambda \cdot \norm{\alpha}_X\\
&= \lambda\cdot \abs{\alpha}_X  \\
&= \lambda \cdot \abs{i_\alpha}_{\hat X}\\
&\ge \lambda\cdot (d_{b}(\tilde{\gamma}_1(0),\tilde{\gamma}_2(0)) - 1).
\end{align*}

Let $\tilde{\gamma}_{z_1}, \tilde{\gamma}_{z_2}\colon [-N,N] \to \bund$ be the canonical (geodesic) lifts of $\gamma\colon[-N,N] \to \base$ through the points $z_1$ and $z_2$, respectively. Let us also write $\tilde{g} = t_{-N}\dotsb t_{-1}$ and $\tilde{h} = t_0\dotsb t_{N-1}$, where $t_i$ is the chosen lift of $s_i\in S$ in the generating set $W$ of $E_\Gamma$. By construction, $\tilde{g}$ and $\tilde{h}$ are also lifts of $g,h\in \Gamma\le\Out(\free)$ to $E_\Gamma\le \Aut(\free)$. Recall that the canonical lifts $\tilde{\gamma}_{z_j}$ are defined so that
\begin{align*}
&\tilde{\gamma}_{z_j}(-N) = z_j t\inv_{-1}\dotsb t\inv_{-N} = z_j \tilde{g}\inv \quad\text{and} \\
&\tilde{\gamma}_{z_j}(N) = z_j t_0\dotsb t_{N-1} = z_j \tilde{h}
\end{align*}
for $j=1,2$. Therefore
\begin{align*}
&\tilde{\gamma}_{z_1}(-N)^{-1} \tilde{\gamma}_{z_2}(-N) = (\tilde{g} z_1\inv) (z_2 \tilde{g}\inv) = \tilde{g}i_\alpha \tilde{g}\inv = i_{\tilde{g}(\alpha)} \quad \text{and}\\
&\tilde{\gamma}_{z_1}(N)^{-1} \tilde{\gamma}_{z_2}(N) = (\tilde{h}\inv z_1\inv) (z_2 \tilde{h}) = \tilde{h}\inv i_\alpha \tilde{h} = i_{\tilde{h}^{-1}(\alpha)}.
\end{align*}
Hence, the endpoints of our canonical lifts of $\gamma$ satisfy
\[d_{\gamma(-N)}(\tilde{\gamma}_{z_1}(-N), \tilde{\gamma}_{z_2}(-N)) = \abs{\tilde{\gamma}_{z_1}(-N)^{-1} \tilde{\gamma}_{z_2}(-N)}_{\hat X} = \abs{ i_{\tilde{g}(\alpha)}}_{\hat X} 
= \abs{\tilde{g}(\alpha)}_X \ge \norm{g(\alpha)}_X\]
and
\[d_{\gamma(N)}(\tilde{\gamma}_{z_1}(N), \tilde{\gamma}_{z_2}(N)) = \abs{\tilde{\gamma}_{z_1}(N)^{-1} \tilde{\gamma}_{z_2}(N)}_{\hat X} =  \abs{i_{\tilde{h}^{-1}(\alpha)}}_{\hat X}
= \abs{\tilde{h}\inv(\alpha)}_X \ge \norm{h\inv(\alpha)}_X.\]
In light of conjugacy flaring, it follows that we have
\[\max\left\{d_{\gamma(-N)}\big(\tilde{\gamma}_{z_1}(-N), \tilde{\gamma}_{z_2}(-N)\big),\; d_{\gamma(N)}\big(\tilde{\gamma}_{z_1}(N), \tilde{\gamma}_{z_2}(N)\big)\right\} \ge \lambda \cdot \big(d_b(\tilde{\gamma}_1(0),\tilde{\gamma}_2(0)) - 1\big).\]

Let us now estimate the distances between our canonical lifts $\tilde{\gamma}_{z_j}$ and the given lifts $\tilde{\gamma}_j$ of $\gamma$. By metric properness, for $j=1,2$ we have
\begin{eqnarray*}
d_{\gamma( N)}(\tilde{\gamma}_{z_j}(N), \tilde{\gamma}_{j} ( N))) &\le& f\Big(d_{\bund}(\tilde{\gamma}_{z_j}( N), \tilde{\gamma}_j ( N))\Big)\\
&\le& f\Big(d_{\bund}(\tilde{\gamma}_{z_j}(N), \tilde{\gamma}_{z_j}(0))+d_{\bund}(\tilde{\gamma}_{z_j}(0),\tilde{\gamma}_{j}(0) ) + d_{\bund}(\tilde{\gamma}_{j}(0),\tilde{\gamma}_{j}(N))\Big) \\
&\le& f(N+1 +kN+k) = e_k.
\end{eqnarray*}
We similarly have $d_{\gamma(-N)}(\tilde{\gamma}_{z_j}(-N), \tilde{\gamma}_j(-N)) \le e_k$ for $j=1,2$. The triangle inequality thus gives
\[ d_{\gamma(\ast)}(\tilde{\gamma}_1(\ast), \tilde{\gamma}_2(\ast)) \ge d_{\ast}(\tilde{\gamma}_{z_1}(\ast), \tilde{\gamma}_{z_2}(\ast)) - 2e_k\]
for $\ast = \pm N$. Combining with our above estimate, it follows that the given lifts $\tilde{\gamma}_1$ and $\tilde{\gamma}_2$  satisfy
\[\max\left\{d_{\gamma(-N)}\big(\tilde{\gamma}_1(-N), \tilde{\gamma}_2(-N)\big),\; d_{\gamma(N)}\big(\tilde{\gamma}_1(N), \tilde{\gamma}_2(N)\big)\right\} \ge \lambda \cdot d_b(\tilde{\gamma}_1(0),\tilde{\gamma}_2(0)) - \lambda - 2e_k.\]
Therefore whenever $d_b(\tilde{\gamma}_1(0),\tilde{\gamma}_2(0)) \ge M_k = 2(\lambda + 2e_k)/(\lambda-1) $, so that
\begin{eqnarray*}
\lambda \cdot d_b(\tilde{\gamma}_1(0),\tilde{\gamma}_2(0)) - \lambda - 2e_k &\ge& \lambda \cdot d_b(\tilde{\gamma}_1(0),\tilde{\gamma}_2(0)) - \tfrac{\lambda-1}{2} d_b(\tilde{\gamma}_1(0),\tilde{\gamma}_2(0))\\
&=& \tfrac{\lambda+1}{2} d_b(\tilde{\gamma}_1(0),\tilde{\gamma}_2(0)).
\end{eqnarray*}
we obtain the inequality required by the flaring property. This completes the proof.
\end{proof}

\begin{theorem}[Hyperbolic extensions]\label{thm:qcx_implies_hyperbolic}
Suppose that $\Gamma \le \Out(\free)$ is purely hyperbolic and that there exists $R\in \os$ so that $\Gamma\cdot R$ is $A$--QCX. Then the corresponding  extension group $E_{\Gamma}$ is hyperbolic.
\end{theorem}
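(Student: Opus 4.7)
The plan is to deduce hyperbolicity of $E_\Gamma$ by applying the Mj--Sardar combination theorem (\Cref{thm:bundle_hyperbolicity}) to the metric graph bundle $p\colon \bund \to \base$ constructed in \Cref{sec:metric_bundle}. Since $\bund = \cay{W}{E_\Gamma}$ is quasi-isometric to $E_\Gamma$, hyperbolicity of $\bund$ as a metric space is equivalent to hyperbolicity of the group $E_\Gamma$.

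The first and main step is to establish the bundle flaring condition. Starting from the hypotheses that $\Gamma$ is purely hyperbolic and that some orbit $\Gamma \cdot R$ is $A$--QCX, I would invoke \Cref{qcx_implies_flaring} to conclude that $\Gamma$ has $(2,M)$--conjugacy flaring with respect to the generating set $S$ and free basis $X$ fixed in \Cref{sec:metric_bundle}. Then \Cref{prop: bundle_flaring} immediately upgrades this conjugacy flaring of $\Gamma$ to the Mj--Sardar flaring condition for the bundle $p\colon\bund\to\base$. This combines the two previous sections of the paper into exactly the hypothesis needed for the combination theorem.

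Next, I would verify the remaining hypotheses of \Cref{thm:bundle_hyperbolicity}. By construction in \Cref{sec:metric_bundle}, each fiber $T_b = p^{-1}(b)$ is isomorphic to the tree $T = \cay{X}{\free}$, so every fiber is $0$--hyperbolic in its induced path metric $d_b$, and the barycenter density condition is trivial in trees (every ideal triangle has a unique barycenter and these fill the tree). For the base, I must argue that $\base$ is hyperbolic, i.e.\ that $\Gamma$ is a word-hyperbolic group. The natural route is to combine \Cref{lem:qcx-implies_qi_in_os} (which gives a quasi-isometric embedding $\Gamma\hookrightarrow\os$ from the QCX orbit) with the fact that the folding paths witnessing QCX, together with the conjugacy flaring just established, force their projections to $\fc$ to be quasi-geodesics. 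By \Cref{T:hyperbolicity_of_fc} the factor complex $\fc$ is hyperbolic, so this yields a qi-embedding of $\Gamma$ into a hyperbolic space, hence $\Gamma$ itself is hyperbolic and $\base$ is $\delta$--hyperbolic for some $\delta$. With all three Mj--Sardar hypotheses verified, \Cref{thm:bundle_hyperbolicity} applies to give hyperbolicity of $\bund$, and therefore of $E_\Gamma$.

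The main obstacle I anticipate is the verification of the base hyperbolicity, since the QCX hypothesis as stated only asserts fellow-travel with folding paths and does not a priori guarantee progress of those paths in $\fc$; one must use the purely hyperbolic hypothesis (together with \Cref{lem: phyp_implies_finiteprobs} and the no-short-for-long control of \Cref{not_short_for_long}) to rule out the degenerate possibility that the folding paths linger in a bounded region of $\fc$. Once this hyperbolicity of $\Gamma$ is secured, the rest of the argument is a direct assembly of \Cref{qcx_implies_flaring}, \Cref{prop: bundle_flaring}, and \Cref{thm:bundle_hyperbolicity}.
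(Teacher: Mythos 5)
Your overall route is exactly the paper's: reduce to the Cayley graph bundle $p\colon\bund\to\base$, establish conjugacy flaring for $\Gamma$ via \Cref{qcx_implies_flaring}, upgrade to the Mj--Sardar flaring condition via \Cref{prop: bundle_flaring}, note that the fibers are trees so the fiber-hyperbolicity and barycenter-density conditions are automatic, and invoke \Cref{thm:bundle_hyperbolicity}.

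Where you differ from the paper is in flagging that condition (1) of the Mj--Sardar theorem also requires the \emph{base} $\base$ to be hyperbolic, i.e.\ that $\Gamma$ itself is a hyperbolic group. The paper's proof of this theorem dismisses conditions (1) and (2) with the single remark that each fiber is a universal cover of a rose, and says nothing about $\base$. You are right that this cannot be waved away: the $A$--QCX hypothesis only says that each geodesic of $\Gamma$ is shadowed by some folding path; since two geodesics in $\Gamma$ with the same endpoints may a priori be shadowed by wildly divergent folding paths (folding paths between fixed points of $\os$ are far from unique, even in the thick part), it does not obviously follow that triangles in $\Gamma$ are thin, nor that the shadowing folding paths project to quasigeodesics in $\fc$. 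Your proposed fix — showing the QCX folding paths must make definite $\fc$--progress, using pure hyperbolicity plus \Cref{not_short_for_long}/\Cref{lem: phyp_implies_finiteprobs} — is a reasonable plan of attack, but it is not carried out, and I do not think it is as straightforward as you hope: staying in a fixed thick part $\os_\epsilon$ does not by itself force $\fc$--progress. So your argument for base hyperbolicity is still a gap, not a finished step. Worth noting, however, is that in the paper this gap is harmless for the intended application: \Cref{for: main_result} applies the theorem only when $\Gamma$ qi-embeds into $\fc$, and hyperbolicity of $\Gamma$ then follows directly from \Cref{T:hyperbolicity_of_fc} rather than from the QCX hypothesis. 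If you want a clean fix at the level of \Cref{thm:qcx_implies_hyperbolic} itself, the most economical route is to simply add hyperbolicity of $\Gamma$ (or the $\fc$--qi-embedding) to the hypotheses, which costs nothing since every use of the theorem already has it.
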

\begin{proof}
Since $\bund$ is the Cayley graph of $E_{\Gamma}$, it suffice to show that $\bund$ is hyperbolic. We show that the metric graph bundle $\bund \to \base$ satisfies the three conditions for hyperbolicity appearing in \Cref{thm:bundle_hyperbolicity} (the Mj--Sardar Theorem). Conditions $(1)$ and $(2)$ are obvious since each fiber is isomorphic to the universal cover of an $\rank(\free)$--petal rose. Since the hypotheses imply that $\Gamma$ has conjugacy flaring (\Cref{qcx_implies_flaring}), condition $(3)$ follows from \Cref{prop: bundle_flaring}. Hence, $E_{\Gamma}$ is hyperbolic.
\end{proof}

\begin{corollary} \label{for: main_result}
Suppose $\Gamma\le \Out(\free)$ is purely hyperbolic and qi-embeds into $\fc$. Then $E_\Gamma$ is hyperbolic.
\end{corollary}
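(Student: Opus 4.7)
The plan is to obtain this as a direct assembly of the two main results immediately preceding it in the excerpt, namely \Cref{cor:fc-qi_implies_qcx} (qi-embedding into $\fc$ forces QCX orbits in $\os$) and \Cref{thm:qcx_implies_hyperbolic} (pure hyperbolicity together with a QCX orbit forces $E_\Gamma$ to be hyperbolic). Concretely, I would fix any basepoint $R \in \os$, for instance the standard rose with edges of length $1/r$, and then argue in two lines.

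First I would apply \Cref{cor:fc-qi_implies_qcx}: since $\Gamma$ qi-embeds into $\fc$ by hypothesis, there exists a constant $A \ge 0$, depending on $R$ and on the qi-embedding constants, such that the orbit $\Gamma \cdot R \subset \os$ is $A$--QCX in the sense of \Cref{def:qcx}. Second, I would plug this into \Cref{thm:qcx_implies_hyperbolic}: the hypothesis that $\Gamma$ is purely hyperbolic is given, and the existence of some $R\in \os$ for which $\Gamma \cdot R$ is $A$--QCX is what we just established, so the theorem immediately yields hyperbolicity of the extension group $E_\Gamma$.

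There is essentially no obstacle at this final step, because all the substantive work has already been carried out upstream: the stability theorem \Cref{prop: geos_fellow_travel} for $\fc$--progressing quasigeodesics (used through \Cref{thm:qi_into_factor_implies_quasiconvex} and \Cref{lem:hyperbolic_and_qconvex_implies_qcx} to produce QCX orbits); the flaring-along-folding-paths estimate \Cref{flaring_along_folding} combined with the QCX hypothesis to yield conjugacy flaring (\Cref{qcx_implies_flaring}); the translation of conjugacy flaring into the Mj--Sardar flaring condition for the Cayley graph bundle $\bund \to \base$ (\Cref{prop: bundle_flaring}); and the bundle hyperbolicity criterion \Cref{thm:bundle_hyperbolicity} itself. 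At the level of the corollary, all that remains is to check that the two hypotheses of \Cref{thm:qcx_implies_hyperbolic} are available, and they are: pure hyperbolicity is assumed outright, and the QCX orbit is manufactured by \Cref{cor:fc-qi_implies_qcx}. Thus the proof should be two sentences long, one per invocation.
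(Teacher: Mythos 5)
Your proposal is correct and is exactly the paper's own proof: the paper proves \Cref{for: main_result} by citing \Cref{cor:fc-qi_implies_qcx} to produce a QCX orbit and then invoking \Cref{thm:qcx_implies_hyperbolic}. Your expanded commentary about the upstream machinery is accurate but not needed for the corollary itself.
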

\begin{proof}
This follows immediately from \Cref{cor:fc-qi_implies_qcx} and \Cref{thm:qcx_implies_hyperbolic}.
\end{proof}

%%%%%%%%%%%%%%%%%%%%%%%%%%%

 %%%%%%%%%%%%%%%%%%%%%%
\section{Applications} \label{sec: apps}
%%%%%%%%%%%%%%%%%%%%%

In this section, we produce examples of hyperbolic extensions of the free group $\free$ using the main result of this paper. We begin by defining a version of the intersection graph $\int$ for $\free$, which is an $\Out(\free)$--graph introduced by Kapovich and Lustig in \cite{kapovich2007geometric}. First, let $\int'$ be the graph whose vertices are conjugacy class of $\free$ and two vertices are joined by an edge if there is a very small simplicial tree $\free \curvearrowright T$ in which each conjugacy class fixes a point. (Recall that a simplicial tree is very small if edge stabilizers are maximal cyclic and tripod stabilizers are trivial.) Define $\int$ to be the connected component of $\int'$ that contains the primitive conjugacy classes. We note that there is a coarsely Lipschitz surjective map $\Theta \colon \F \to \int$ given by mapping the free factor $A$ to the set of primitive conjugacy classes that are contained in $A$. Note that $\Theta: \F \to \int$ is $\Out(\free)$--equivariant. 

Second, recall that the action of a non-virtually cyclic group $G$ on a hyperbolic metric space $X$ is \define{WPD} if for every $g \in G$ with positive translation length on $X$, the following property holds: for every $R\ge 0$ and every $x \in X$ there is an $N\ge 1$ so that the set
\[ \left\{ \phi \in G : d_{X}(x, \phi(x)))\le R \quad \text{and} \quad d_{X}(g^N(x), \phi(g^N(x)))\le R \right\} \]
is finite. It is further required that the group $G$ contains an element that acts with positive translation length on $X$. This property was first defined by Bestvina--Fujiwara in \cite{bestvina2002bounded}, where it was shown that the action of the mapping class group on the curve complex is WPD. The following theorem was communicated to us by Patrick Reynolds. For complete proofs see Mann \cite{Mann-thesis} and \cite[Theorem 4.2, Proposition 4.4]{dowdall2016co}.

\begin{theorem}[Mann--Reynolds \cite{MR2}]\label{th: M-R}
The graph $\int$ is hyperbolic and $f \in \Out(\free)$ acts with positive translation length on $\int$ if and only if $f$ is atoroidal and fully irreducible. Moreover the action $\Out(\free) \curvearrowright \int$ is WPD.
\end{theorem}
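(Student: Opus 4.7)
The plan is to deduce the hyperbolicity of $\int$ from that of the factor complex $\F$, which is established in \Cref{T:hyperbolicity_of_fc}. Since the $\Out(\free)$--equivariant map $\Theta\colon \F \to \int$ is already coarsely Lipschitz and surjective onto the primitive vertices, the first step is to construct a coarse quasi-inverse. Sending each primitive class $\alpha \in \int^0$ to the cyclic free factor $\langle\alpha\rangle \in \F^0$ gives a natural candidate $\Psi\colon \int^0 \to \F^0$ with $\Theta\circ\Psi$ the identity. The key step, which I expect to be the main obstacle, is to show that $\Psi$ is coarsely Lipschitz: whenever two primitive classes $\alpha,\beta$ are jointly elliptic in some very small $\free$--tree $T$, one must bound $d_\F(\langle\alpha\rangle,\langle\beta\rangle)$ by a universal constant. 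The natural approach is to invoke the Rips--Levitt structure theorem for very small $\free$--trees, decomposing $T$ into simplicial, axial, and surface components, and to verify that in each component type any two elliptic cyclic subgroups can be connected by a uniformly bounded chain of inclusions into proper free factors. Once $\Theta$ is shown to be a quasi-isometry, the hyperbolicity of $\int$ follows at once from that of $\F$.

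The biconditional characterizing loxodromic elements then reduces to the known characterization on $\F$. If $f$ is not atoroidal, some power of $f$ fixes a nontrivial conjugacy class $\alpha$, so $\alpha \in \int^0$ is a periodic vertex and $\ell_{\int}(f)=0$. If $f$ is not fully irreducible, then after replacing $f$ by a power we may assume $f$ preserves a proper free factor $A$; any two primitive classes inside $A$ are jointly elliptic in the Bass--Serre tree of a free product splitting $\free = A * B$, so the $f$--invariant set of primitive classes in $A$ has diameter at most $2$ in $\int$, forcing $\ell_{\int}(f)=0$. Conversely, when $f$ is both atoroidal and fully irreducible, Bestvina--Feighn's theorem gives $\ell_\F(f)>0$, and then the quasi-isometry $\Theta$ transports this positivity to $\int$ via equivariance: $d_\int(\Theta(\alpha), f^n\Theta(\alpha))$ grows linearly in $n$.

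For the WPD assertion, the plan is a lamination-based argument. Given loxodromic $f$ with attracting and repelling laminations $\Lambda^{\pm}$, I would fix a primitive class $\alpha \in \int^0$ on a quasi-axis of $f$ and show that for sufficiently large $N$, any $g \in \Out(\free)$ moving both $\alpha$ and $f^N \alpha$ by at most $\epsilon$ in $\int$ must coarsely preserve the pair $\{\Lambda^+, \Lambda^-\}$. By the Bestvina--Feighn--Handel theory of expanding laminations for atoroidal fully irreducibles, the stabilizer of this pair is virtually cyclic and contains $\langle f\rangle$ with finite index; combining this with the earlier observation that only finitely many translates of $f$ move $\alpha$ to within $\epsilon$ of itself yields the finite WPD witness set. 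The hard part here, in parallel with the QI step above, is verifying that small $\int$--displacement really does force coarse preservation of the lamination pair, since $\int$ is a coarser object than $\F$ and a priori more elements might displace boundedly.
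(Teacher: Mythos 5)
The paper does not prove this theorem---it is imported wholesale from Mann--Reynolds \cite{MR2}---so your proposal can only be judged on its own internal coherence, and it has a fatal gap at the very first step. The map $\Theta\colon\fc\to\int$ is \emph{not} a quasi-isometry, so the plan to construct a coarsely Lipschitz inverse $\Psi$ and then pull hyperbolicity, loxodromicity, and WPD back from $\fc$ cannot be made to work. Indeed, $\Theta$ is $\Out(\free)$--equivariant, so an equivariant quasi-isometry would force the loxodromic isometries of $\int$ to coincide with those of $\fc$. But the very theorem you are proving says the loxodromics of $\int$ are the automorphisms that are atoroidal \emph{and} fully irreducible, whereas Bestvina--Feighn (quoted in the paper) says the loxodromics of $\fc$ are all fully irreducible automorphisms---and these classes genuinely differ: as the paper's own Remark recalls (citing Bestvina--Handel), fully irreducible automorphisms induced by pseudo-Anosov maps on once-punctured surfaces are not atoroidal. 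Concretely, if $f$ is such a geometric fully irreducible with peripheral (boundary) class $\alpha$, then $\alpha$ is a vertex of $\int$ fixed by $f$: it is elliptic, alongside various primitive classes, in the very small tree dual to any essential simple closed curve on the surface, so it lies in the correct component of $\int'$. Hence $d_\int(\Theta(A),f^n\Theta(A))\le 2\,d_\int(\Theta(A),\alpha)$ stays bounded while $d_\fc(A,f^nA)\to\infty$, so no map $\Psi$ can be a coarse Lipschitz inverse. Notice that your own ``only if'' paragraph---showing non-atoroidal elements fix a vertex of $\int$---already contains this counterexample, and is in direct tension with the proposed quasi-isometry rather than a complement to it.

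Once the quasi-isometry disappears, each of the three assertions (hyperbolicity of $\int$, positivity of translation length for atoroidal fully irreducibles, and WPD) requires an argument intrinsic to $\int$ rather than a transport from $\fc$; the ``if'' direction of your loxodromic characterization and your entire WPD step are left without foundation. This is precisely why the statement is attributed to a separate paper rather than derived as a corollary of \Cref{T:hyperbolicity_of_fc}, and the present paper quietly acknowledges the non-equivalence by describing its application of \Cref{th: M-R} (namely \Cref{th: intersection_main}) as a ``possibly weaker'' version of the main theorem. Had $\Theta$ been a quasi-isometry, that hedge would be vacuous.
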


Following Bestvina--Fujiwara, we say that loxodromic elements $f_1 ,f_2 \in G$ are \define{independent} if their quasigeodesic axes in $X$ do not contain rays that have finite Hausdorff distance from one another. Said differently, $f_1$ and $f_2$ are independent if they determine $4$ distinct points on the Gromov boundary of $X$. The WPD condition can be used to understand how distinct loxodromic elements can fail to be independent. In particular, Proposition $6$ of \cite{bestvina2002bounded}, implies that $f_1$ and $f_2$ are independent if and only if they do not have a common power. Since \Cref{th: M-R} states that the action $\Out(\free) \curvearrowright \int$ is WPD, two hyperbolic, fully irreducible automorphisms $f_1,f_2 \in \Out(\free)$ are independent if and only if they have no common power. Thus the notion of independence of two fully irreducibles (with respect to the action $\Out(\free) \curvearrowright \int$) is intrinsic to the algebra of $\Out(\free)$.

Using \Cref{th: M-R}, we have (a priori weaker) version of our main theorem:

\begin{theorem}\label{th: intersection_main}
Let $\Gamma \le \Out(\free)$ be a finitely generated subgroup such that some (any) orbit map into $\int$ is a quasi-isometric embedding. Then the corresponding extension $E_{\Gamma}$ is hyperbolic.
\end{theorem}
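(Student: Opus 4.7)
The plan is to reduce \Cref{th: intersection_main} to the main result \Cref{th: intro_main_1} by verifying its two hypotheses for any such $\Gamma$: first, that the orbit map $\Gamma \to \fc$ is a quasi-isometric embedding; and second, that every infinite-order element of $\Gamma$ is atoroidal. The whole argument hinges on the two properties of $\int$ recorded in \Cref{th: M-R}: there is an $\Out(\free)$--equivariant, coarsely Lipschitz surjection $\Theta \colon \fc \to \int$, and the loxodromic elements of $\Out(\free) \curvearrowright \int$ are precisely the atoroidal, fully irreducible outer automorphisms.

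For the factor complex hypothesis, I would fix a free factor $A \in \fc^0$ and set $\alpha \colonequals \Theta(A) \in \int$. Equivariance gives $\Theta(g \cdot A) = g \cdot \alpha$ for all $g \in \Gamma$. The orbit map $g \mapsto g \cdot A$ is automatically coarsely Lipschitz from $(\Gamma,\abs{\cdot}_S)$ to $\fc$ since any generator $s\in S$ moves $A$ a bounded amount. For the lower bound, let $L$ be a coarse Lipschitz constant for $\Theta$, and let $K$ be such that the orbit map into $\int$ is a $K$--QI embedding; then
\[ d_\fc(A, g \cdot A) \;\ge\; \tfrac{1}{L}\bigl(d_\int(\alpha, g \cdot \alpha) - L\bigr) \;\ge\; \tfrac{1}{LK}\abs{g}_S - \tfrac{K}{L} - 1. \]
Combined with the automatic upper bound, this shows the orbit map $\Gamma \to \fc$ is a quasi-isometric embedding, verifying the first hypothesis of \Cref{th: intro_main_1}.

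For the second hypothesis, let $g \in \Gamma$ be any infinite-order element. Since the orbit map $\Gamma \to \int$ is a QI embedding, the restricted orbit map $n \mapsto g^n \cdot \alpha$ is a quasi-isometric embedding $\Z \to \int$, so $g$ acts on $\int$ with positive translation length. By \Cref{th: M-R}, $g$ is atoroidal (and in fact fully irreducible) as an element of $\Out(\free)$. Hence $\Gamma$ is purely hyperbolic. Applying \Cref{th: intro_main_1} (or equivalently \Cref{for: main_result}) then gives that $E_\Gamma$ is hyperbolic.

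There is essentially no obstacle here beyond invoking the correct inputs: once $\Theta \colon \fc \to \int$ is known to be coarsely Lipschitz and equivariant, and once the loxodromic characterization of \Cref{th: M-R} is available, the QI embedding into $\int$ is a formally stronger hypothesis than the hypotheses of \Cref{th: intro_main_1}. The content of the theorem is really the content of \Cref{th: M-R}, which packages ``makes progress in $\int$'' into the combination of ``makes progress in $\fc$'' and ``is atoroidal.''
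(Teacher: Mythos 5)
Your proposal is correct and follows essentially the same route as the paper: use the equivariant, coarsely Lipschitz surjection $\Theta\colon \fc\to\int$ to pull the QI--embedding hypothesis back to $\fc$, use the loxodromic characterization in \Cref{th: M-R} to deduce that $\Gamma$ is purely hyperbolic, and then invoke \Cref{for: main_result}. The only difference is that you spell out the Lipschitz inequality and the translation-length argument explicitly, whereas the paper states them more tersely.
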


\begin{proof}
Fix $A \in \F$ and let $\mathcal{O}\colon \Gamma \to \F$ be the corresponding orbit map into the free factor complex. By assumption $\Theta \circ \mathcal{O}\colon \Gamma \to \int$ is a quasi-isometric embedding. Since $\Theta$ is coarsely Lipschitz, $\mathcal{O}$ must also be a quasi-isometric embedding. Moreover, since all outer automorphisms with positive translation length of $\int$ are hyperbolic, $\Gamma$ must be purely hyperbolic, i.e. each infinite order element is atoroidal. Now apply \Cref{for: main_result} to conclude that $E_{\Gamma}$ is hyperbolic.
\end{proof}

We remark that our subsequence work \cite{dowdall2016co} implies that \Cref{th: intersection_main} is equivalent to our main theorem \Cref{th: intro_main_1}.

Our first application is a new proof of the following theorem of Bestvina--Feighn--Handel \cite{BFHlam}, where we allow for any number of hyperbolic, fully irreducible automorphisms. 

\begin{theorem}
\label{thm:free_construction}
Let $f_1,\ldots, f_k \in \Out(\free)$ be a collection of pairwise independent, hyperbolic, fully irreducible outer automorphisms. Then for sufficiently large $N\ge 1$, every nonidentity element of 
\[\Gamma =  \langle f_1^N,\ldots ,f_k^N \rangle \]
is hyperbolic and fully irreducible. Moreover, $\Gamma$ is isomorphic to the free group of rank $k$ and the extension $E_{\Gamma}$ is hyperbolic.
\end{theorem}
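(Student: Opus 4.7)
The plan is to combine a ping-pong argument on the Mann--Reynolds intersection graph $\int$ with our combination result \Cref{th: intersection_main}. By \Cref{th: M-R}, the graph $\int$ is hyperbolic, each $f_i$ acts loxodromically on $\int$ (since each $f_i$ is hyperbolic and fully irreducible), and the action $\Out(\free)\curvearrowright \int$ is WPD. The pairwise independence hypothesis, together with the Bestvina--Fujiwara characterization of independence for WPD actions (Proposition 6 of \cite{bestvina2002bounded}), means that no two of the $f_i$ share a common power, and hence the $2k$ attracting/repelling fixed points of $f_1,\dotsc,f_k$ on the Gromov boundary $\partial\int$ are pairwise distinct.

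With this dynamical setup in hand, I would apply the standard ping-pong construction for WPD actions on hyperbolic spaces (in the spirit of Bestvina--Fujiwara \cite{bestvina2002bounded} or Dahmani--Guirardel--Osin). Concretely, I would fix small disjoint neighborhoods in $\int\cup\partial\int$ of the attracting and repelling fixed points of each $f_i$ on $\partial\int$, and then choose $N$ large enough that, for every $i$, the automorphism $f_i^{\pm N}$ maps the complement of its repelling neighborhood inside its attracting neighborhood. The classical ping-pong lemma then gives simultaneously:
\begin{enumerate}
\item $\Gamma = \langle f_1^N,\dotsc,f_k^N\rangle \cong F_k$ is free of rank $k$;
\item some (equivalently, every) orbit map $\Gamma\to \int$ is a quasi-isometric embedding; and
\item every nonidentity element of $\Gamma$ acts loxodromically on $\int$.
\end{enumerate}

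Using the second half of \Cref{th: M-R}, statement (iii) implies that every nonidentity element of $\Gamma$ is both atoroidal and fully irreducible. Combining (i) and (ii) with \Cref{th: intersection_main} then yields that the extension $E_\Gamma$ is hyperbolic, which is exactly the remaining conclusion.

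The main technical point is verifying (ii) and (iii) via ping-pong simultaneously; this is where the WPD hypothesis from \Cref{th: M-R} is essential, as it ensures that sufficiently high powers $f_i^N$ have the strong North-South dynamics on $\int\cup\partial\int$ needed to make the ping-pong estimates uniform and to guarantee that orbit maps of the resulting Schottky subgroup are quasi-isometric embeddings. All other steps are direct applications of the results cited above.
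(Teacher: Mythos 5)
Your proposal is correct and matches the paper's approach: carry out a Schottky/ping-pong construction on $\int$ to conclude $\Gamma\cong F_k$ with a quasi-isometric orbit map, then invoke \Cref{th: intersection_main} (the paper explicitly appeals to Kent--Leininger's Theorem 1.4 for the Schottky step). One small correction: WPD is not what makes the ping-pong estimates work or the orbit map a quasi-isometric embedding — that follows already from having independent loxodromic isometries of a hyperbolic graph, exactly as in the Kent--Leininger argument; WPD is only used, via Bestvina--Fujiwara's Proposition 6, to translate the algebraic independence hypothesis (no common powers) into the dynamical one (distinct boundary fixed points), which you already correctly do in your first paragraph.
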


\begin{proof}
The proof that the subgroup quasi-isometrically embeds into $\int$ follows from a standard geometric ping-pong argument for groups acting on hyperbolic spaces, exactly as in the proof of
Theorem $1.4$ (Abundance of Schottky groups) in Kent--Leininger \cite{KentLein}. 
One can also deduce the result from \cite[Lemma 3.2]{TT}.
The point is that we are dealing with a collection of independent loxodromic automorphisms of a hyperbolic graph. 
To conclude that $E_{\Gamma}$ is hyperbolic, apply \Cref{th: intersection_main}.
\end{proof}

Our next application, to the authors' knowledge, produces the first examples of hyperbolic $\free$--extensions $E_{\Gamma}$ where $\Gamma$ is has torsion and is not virtually cyclic. First, for a finite group $H \le \Out(\free)$ say that a hyperbolic, fully irreducible $f \in \Out(\free)$ is \define{independent for $H$} if  $f$ and $hfh^{-1}$ are independent for each $h \in H$. Hence, $f$ is independent for $H$ if and only if $H \cap \mathrm{comm}(f) = \emptyset$, where $\mathrm{comm}(f)$ is the commensurator of $f$ in $\Out(\free)$.

\begin{theorem}\label{th: v_free_examples}
Let $H$ be a finite subgroup of $\Out(\free)$ and let $f \in \Out(\free)$ be a hyperbolic, fully irreducible outer automorphism that is independent for $H$. Then for all sufficiently large $N \ge 1$,  the subgroup
 \[ \Gamma = \langle H, f^N \rangle \]
 is isomorphic to $H* \Z$ and  quasi-isometrically embeds into $\int$. Hence, the $\free$-by-$(H*\Z)$ extension $E_{\Gamma}$ is hyperbolic.
\end{theorem}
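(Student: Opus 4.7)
My plan is to run a ping-pong argument on the hyperbolic graph $\int$ provided by \Cref{th: M-R}, which simultaneously yields both $\Gamma \cong H * \Z$ and a quasi-isometric embedding $\Gamma \hookrightarrow \int$; hyperbolicity of $E_\Gamma$ will then follow immediately from \Cref{th: intersection_main}. As preparation, recall from \Cref{th: M-R} that $f$ acts loxodromically on $\int$ with attracting and repelling boundary fixed points $p^{\pm} \in \partial \int$, and that $\Out(\free) \curvearrowright \int$ is WPD. The hypothesis that $f$ is independent for $H$, combined with the Bestvina--Fujiwara characterization of independent loxodromics under WPD actions recalled in the excerpt, implies that for each $h \in H \setminus \{1\}$ the axes of $f$ and $hfh^{-1}$ have disjoint pairs of endpoints in $\partial\int$. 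Applying the same criterion to $h_1^{-1} h_2$ for distinct $h_1, h_2 \in H$ (which is also independent from the identity), one sees that the $2|H|$ boundary points $\{hp^{+}, hp^{-} : h \in H\}$ are pairwise distinct.

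Next I carry out the ping-pong. Choose pairwise disjoint open neighborhoods $U_h^{\pm} \subset \int \cup \partial \int$ of each $hp^{\pm}$ for $h \in H$, and define
\[X_2 := U_e^{+} \cup U_e^{-}, \qquad X_1 := \bigcup_{h \in H \setminus \{1\}} (U_h^{+} \cup U_h^{-}).\]
These sets are non-empty (assuming $|H| \geq 2$; the case $|H|=1$ is the standard Schottky construction) and disjoint, and by construction $h(X_2) = U_h^{+} \cup U_h^{-} \subset X_1$ for each $h \in H \setminus \{1\}$. Standard north--south dynamics of the loxodromic $f$ on the hyperbolic space $\int$ ensure that, after possibly shrinking the $U_e^{\pm}$ and taking $N$ large, $f^{\pm N}$ maps the complement of $U_e^{\mp}$ into $U_e^{\pm}$, and iteration gives $f^{Nk}(X_1) \subset X_2$ for all $k \neq 0$. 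The Ping-Pong Lemma then delivers $\Gamma = \langle H, f^N \rangle \cong H * \langle f^N \rangle \cong H * \Z$.

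To upgrade this into a quasi-isometric embedding $\Gamma \hookrightarrow \int$, I will adapt the Schottky argument of \cite{KentLein} (already invoked for \Cref{thm:free_construction}) to this torsion setting. Fix a basepoint $x_0 \in \int$ outside all the shadows $U_h^{\pm}$. For an element $g \in \Gamma$ in $H * \Z$ normal form
\[g = h_0 f^{N a_1} h_1 f^{N a_2} \cdots f^{N a_k} h_k \qquad (h_i \in H, \; a_i \in \Z \setminus \{0\}),\]
I will track the broken trajectory of $x_0$ under successive application of the letters of $g$. The ping-pong forces the intermediate orbit points to alternate between translates in $X_1$ and points lying deep inside $X_2$, while the finite orbit $H \cdot x_0$ has bounded diameter and each $f^{N a_i}$ moves points a distance of at least $N|a_i| \tau_f - O(1)$, where $\tau_f$ is the translation length of $f$ on $\int$. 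Hyperbolicity of $\int$ together with the north--south contraction of $f^N$ convert this combinatorial ping-pong into a uniform quasi-geodesic, yielding a lower bound $d_{\int}(x_0, g x_0) \gtrsim k + \sum_i |a_i|$. Since the right-hand side is comparable to the word length of $g$ with respect to the generating set $H \cup \{f^{\pm N}\}$, the orbit map $\Gamma \to \int$ is a quasi-isometric embedding, and \Cref{th: intersection_main} completes the proof.

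The principal obstacle is the quasi-isometric embedding step: the classical Schottky ping-pong in a hyperbolic space is tailored to purely loxodromic generators with north--south dynamics, whereas our group contains the finite torsion subgroup $H$ whose elements possess no such boundary dynamics. Independence rescues the argument because it forces the $|H|$ translates $h U_e^{\pm}$ to be pairwise disjoint in $\partial \int$; thus any passage through an $H$-letter necessarily jumps the trajectory from one Schottky region into a disjoint $H$-translate, and finiteness of $H$ uniformly bounds the short interjections in the broken path. The Morse lemma in $\int$ then upgrades the combinatorial ping-pong to a genuine quasi-geodesic, producing the required linear lower bound on $d_{\int}(x_0, g x_0)$.
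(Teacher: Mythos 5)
Your proposal is correct in outline, but it decomposes the work differently from the paper, and the critical step is less explicit than what the paper does. You prove $\Gamma \cong H * \Z$ first by a boundary ping-pong on $\int$ (choosing disjoint shadows $U_h^\pm$ around the $2|H|$ distinct boundary fixed points $hp^\pm$, which are indeed distinct because independence of $f$ and $hfh^{-1}$ for $h\neq 1$ forbids $h$ from fixing $p^\pm$), and then separately argue for the quasi-isometric embedding. The paper instead proves the quasi-isometric embedding first and then \emph{derives} the isomorphism $\theta\colon H*\Z\to\Gamma$ as a corollary: a quasi-isometric embedding into $\int$ factors through a quasi-isometric embedding into $\Out(\free)$, so $\ker\theta$ is finite; since every finite-order element of $H*\Z$ is conjugate into $H$ and $H$ injects into $\Out(\free)$, the kernel is trivial. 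This one-pass structure saves you the ping-pong lemma entirely.

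For the quasi-isometric embedding itself, the paper's argument is the rigorous form of what you gesture at. It introduces \emph{$f$-pieces}: biinfinite $K_1$--quasigeodesics formed by running down a ray of the axis of $h_1 f h_1^{-1}$ to $h_1 x$, crossing a short geodesic of length at most $D=\max_h d(x,hx)$ to $h_2 x$, then running out along a ray of the axis of $h_2 f h_2^{-1}$ (the uniform $K_1$ exists because there are finitely many such configurations and, by independence, the endpoint rays are non-asymptotic). It then invokes the local-to-global principle: there exist $L,K_2$ so that $L$--local $K_1$--quasigeodesics in the hyperbolic space $\int$ are global $K_2$--quasigeodesics, and for $N>\max\{L,D\}$ the orbit map on a suitably weighted Cayley graph of $H*\Z$ sends geodesics to paths agreeing with $f$-pieces on all length-$\le L$ windows. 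Your "hyperbolicity of $\int$ together with the north--south contraction of $f^N$ convert this combinatorial ping-pong into a uniform quasi-geodesic" and "the Morse lemma... upgrades... to a genuine quasi-geodesic" are naming exactly this local-to-global mechanism, but as written they skip the step where one verifies that the broken path is a \emph{uniform} local quasigeodesic at every scale up to $L$; pinning that down is where the $f$-pieces and the explicit bookkeeping with $D$ and $N$ earn their keep. So: correct idea, genuinely different route to the group isomorphism, but the core quasi-isometry estimate would need to be fleshed out to the level of the paper's $f$-piece argument to be complete.
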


\begin{proof}
Fix $x \in \int$ and for each $h\in H$ set $f_h = hfh^{-1}$. Let $D = \max_{h \in H}d(x,hx)$. Consider the Cayley graph $\C_h$ of $\langle f_h \rangle$ and the equivariant orbit map $\C_h \to \int$ obtained by mapping $f_h^i$ to $f_h^i (hx)$ and edges to geodesic segments. Since $f$ has positive translation length on $\int$ by \Cref{th: M-R}, the maps $\C_h \to \int$ are all $K_0$--quasi-isometric embeddings (for some $K_0 \ge1$). Let us write $\rho_h^\pm\colon [0,\infty)\to \int$ for the positive and negative subrays of $\C_h\to \int$ based at $hx$. Since the $f_h$ for $h\in H$ are all pairwise independent, no distinct pair or rays in the set $\{\rho_h^+,\rho_h^-\}_{h\in H}$ have finite Hausdorff distance.

Similar to \cite{KentLein}, we now consider the following set of paths in $\int$. For any $h_1,h_2\in H$ and $\epsilon_1,\epsilon_2\in\{+,-\}$ with $\rho_{h_1}^{\epsilon_1}\ne \rho_{h_2}^{\epsilon_2}$, we may build a biinfinite path in $\int$ by traversing $\rho_{h_1}^{\epsilon_1}$ with the reverse parameterization, then following a geodesic from $h_1 x$ to $h_2 x$ (which has length at most $D$), and lastly traversing the ray  $\rho_{h_2}^{\epsilon_2}$ with the usual parameterization. 
As there are finitely many such paths and the chosen rays $\rho_{h_1}^{\epsilon_1}$ and $\rho_{h_2}^{\epsilon_2}$ have infinite Hausdorff distance, there exists a uniform constant $K_1\ge 1$ so that each of these paths is a $K_1$--quasigeodesic in $\int$. We call subpaths of these $K_1$--quasigeodesics, as well as their images under the isometric action of $\Out(\free)$ on $\int$, \emph{$f$-pieces}.

Since $\int$ is hyperbolic, there exist $L, K_2 \ge 1$ so that any $L$--local, $K_1$--quasigeodesic is a $K_2$--quasigeodesic \cite{BH}. In particular, if $\gamma\colon \I \to \int$ is any path that agrees with some $f$-piece on every length $L$ subinterval of $\I$, then $\gamma$ is a $K_2$--quasigeodesic.

Now take $N$ to be an integer larger than $L$ and $D$, and let $\theta \colon H * \Z \to \Out(\free)$ be the homomorphism that restricts to the identity on $H$ and maps the generator $t$ of $\Z$ to $f^N$. Let $\Gamma = \langle H,f^N \rangle$ be the image of this homomorphism and let $\C$ be the Cayley graph of $H * \Z$ for the generating set $\{t,h:h\in H\}$, metrized so that each edge labeled $h \in H$ has length $D$ and each edge labeled $t$ has length $N$ . We define a $\theta$--equivariant map $\mathcal{O} \colon \C \to \int$ as follows: For each vertex $w \in H *\Z$ of $\C$, we set $\mathcal{O}(w) = \theta(w)x$.  For $h \in H$, the edge in $\C$ from $1$ to $h$ is mapped by $\mathcal{O}$ to any geodesic from $x$ to $hx$, and the edge in $\C$ from $1$ to $t$ is mapped by $\mathcal{O}$ to the $f$-piece from $x$ to $f^Nx$ using the parameterization coming from the quasigeodesic $\C_1\to \int$. Now extend $\mathcal{O}$ by equivariance. Observe that for all $a,b\in \Z$ and $h\in H$, $\mathcal{O}$ maps the length $(a+b)N+D$ path in $\C$ from $1$ to $t^{a}ht^{b}$ to an $f$-piece in $\int$. Thus by construction, $\mathcal{O}$ maps any geodesic in $\C$ to a path that agrees with $f$-pieces on all subintervals of length at most $L$. Using the constant $K_2$ obtained above, it follows that $\mathcal{O}$ sends every geodesic path in $\C$ to a $K_2$--quasigeodesic in $\int$ and thus that $\mathcal{O}\colon \C\to \int$ is a $K_2$--quasi-isometric embedding. Since the metric on $\C$ differs from the word metric on $H * \Z$  (with our chosen generators) by a multiplicative factor of no more than $N$, we conclude that $\theta \colon H*\Z \to \int$ is an $N K_2$--quasi-isometric embedding. 

Finally, to see that $\theta$ is an isomorphism, note that $\theta$ itself is a quasi-isometric embedding into $\Out(\free)$. This is a simple consequence of the fact that any orbit map from $\Out(\free)$ to $\int$ is coarsely Lipschitz. Hence, $\theta$ must have finite kernel. Since each finite order $g\in H*\Z$ is conjugate into $H$, and $H$ injects into $\Gamma\le\Out(\free)$, we must have that $\theta \colon H*\Z \to \Gamma$ is an isomorphism.  Since \Cref{th: intersection_main} implies that $E_{\Gamma}$ is hyperbolic, this completes the proof.
\end{proof}

\begin{remark}
Note that for $\Gamma = \langle H , f^N \rangle \cong H *\Z$ as in \Cref{th: v_free_examples}, the subgroup
\[\Gamma_0 = \langle H, f^NHf^{-N} \rangle\]
is undistorted and isomorphic to $H * H$. Hence, the $\free$-by-$(H\ast H)$ extension $E_{\Gamma_0}$ is also hyperbolic. In the situation of surface group extensions, Honglin Min has constructed convex cocompact subgroups of the mapping class group that are isomorphic to the free product of two finite groups \cite{min2008hyperbolic}.
\end{remark}

Finally, we show how to construct examples of hyperbolic, fully irreducible $f \in \Out(\free)$ that are independent for a given finite group $H \le \Out(\free)$. First, say that the finite group $H \le \Out(\free)$ is \define{projectively good} if its image under the surjective homomorphism $\Out(\free) \to \mathrm{GL}_r(\Z)$ does not contain $-I$ (where $r = \rank(\free)$). 
Note that any finite group $H$ embeds into the outer automorphism group $\Out(\free(H))$ with projectively good image, where $\free(H)$ is the free group on $H$. This may be achieved by using the left action of $H$ on itself to embed $H$ into $\Aut(\free(H))$ as permutation automorphisms whose images in $\mathrm{GL}_{\abs{H}}(\Z)$ are permutation matrices.

\begin{example} \label{ex: torsion}
Let $H$ be any projectively good, finite subgroup of $\Out(\free)$ with $\rank(\free) \ge 3$. We show that there is a hyperbolic, fully irreducible $f \in \Out(\free)$ that is independent for $H$. By \Cref{th: v_free_examples}, this shows that there is a hyperbolic group $G$ fitting into the exact sequence
\[ 1 \longrightarrow \free \longrightarrow G \longrightarrow H*\Z \longrightarrow 1.\]
As any finite group embeds into the outer automorphism group of some free group with projectively good image, this shows that there exists extensions of the above form for any finite group $H$.

Suppose that $H \le \Out(\free)$ is a finite, projectively good subgroup. Write $r = \rank(\free)$. As in \Cref{lem: phyp_implies_finiteprobs}, the restriction of the homomorphism $\Out(\free) \to \mathrm{GL}_r(\Z)$ to $H$ is injective and we identify $H$ with its image in $\mathrm{GL}_r(\Z)$. 
\begin{claim}
\label{claim: mostow}
There is a matrix $A \in \mathrm{GL}_r(\Z)$ such that for any $h \in H \setminus 1$, the matrices $hAh^{-1}$ and $A$ have no common power. 
\end{claim}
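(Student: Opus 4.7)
The plan is to construct a single hyperbolic integer matrix $A$ whose spectrum is generic enough to preclude common powers with any nontrivial $H$-conjugate. First I would identify $H$ with its image in $\mathrm{GL}_r(\Z)$ via the injective map of \Cref{lem: phyp_implies_finiteprobs}; the projectively good hypothesis forces $-I \notin H$, so every $h \in H \setminus \{1\}$ is non-scalar, and therefore its centralizer in $\mathrm{GL}_r$ is a proper algebraic subgroup.

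I would then seek $A \in \mathrm{GL}_r(\Z)$ satisfying three properties: (i) $A$ is hyperbolic with $r$ distinct absolute eigenvalues $|\lambda_1| > \cdots > |\lambda_r|$; (ii) the logarithms $\mu_i \colonequals \log|\lambda_i|$ span a $\Q$-subspace of $\R$ of dimension exactly $r-1$ --- i.e., the only rational relation among them is $\sum_i \mu_i = 0$, which is forced by $|\det A| = 1$; and (iii) $A$ commutes with no nontrivial element of $H$. Conditions (i) and (iii) are Zariski open in $\mathrm{GL}_r(\R)$ and are easily arranged for integer $A$ using Zariski-density of $\mathrm{GL}_r(\Z)$. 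For condition (ii) I would take $A$ to be the regular representation on $\mathcal{O}_K$ of a suitably generic unit in the ring of integers $\mathcal{O}_K$ of a totally real degree-$r$ number field $K$; Dirichlet's unit theorem supplies the required $(r-1)$-dimensional log-lattice, and one picks a unit whose log-vector is generic enough for (ii) to hold, while choosing the embedding of $K$ in $M_r(\Q)$ so as to also satisfy (iii).

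The verification then proceeds by a spectral argument. Suppose $A^n = h A^m h^{-1}$ for some $h \in H \setminus \{1\}$ and nonzero integers $n, m$. Matching eigenvalue multisets (over $\bar{\Q}$) gives $\{\lambda_i^n\} = \{\lambda_i^m\}$, and passing to absolute values and logs yields $\{n\mu_i\} = \{m\mu_i\}$ in $\R$. When $n$ and $m$ share a sign, condition (i) makes the two sorted sequences agree term-by-term, so $n\mu_i = m\mu_i$ for each $i$; hyperbolicity gives $\mu_i \neq 0$ and hence $n = m$. When the signs differ, the orderings reverse, yielding $\mu_{r+1-i} = (n/m)\mu_i$; iterating this involution forces $n = -m$ together with the pairing relations $\mu_i + \mu_{r+1-i} = 0$. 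For odd $r \ge 3$ the middle pair gives $\mu_{(r+1)/2} = 0$, contradicting hyperbolicity; for even $r \ge 4$ the $r/2 \ge 2$ pairings yield more independent $\Q$-linear relations than (ii) permits. Either way $n = m$, so $h$ centralizes $A^n$. Condition (i) implies that $A^n$ has $r$ distinct eigenvalues and is thus non-derogatory, so its centralizer in $M_r(\Q)$ is $\Q[A^n]$; the same for $A$ gives centralizer $\Q[A]$. Both are $r$-dimensional and $\Q[A^n] \subseteq \Q[A]$, so they coincide. Hence $h \in \Q[A]$, giving $[h, A] = 0$ and contradicting (iii).

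The main obstacle will be the existence step for $A$, and specifically condition (ii): this is a transcendence-flavored genericity requirement that is not cut out by Zariski-open conditions. I anticipate handling it through the totally-real-field construction sketched above, which essentially reduces the matter to genericity of a single unit within the $(r-1)$-dimensional free-abelian log-lattice of $\mathcal{O}_K^*$; the remaining Zariski-open constraints from (i) and (iii) pose no serious difficulty and can be imposed afterwards. Once (ii) is secured, the eigenvalue comparison and the centralizer dimension count are routine.
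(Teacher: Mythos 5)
Your route is genuinely different from the paper's and substantially more elaborate. The paper does not use a hyperbolic matrix at all: it takes the block matrix $B = \left(\begin{smallmatrix}2&1\\1&1\end{smallmatrix}\right)\oplus I_{r-2}$, whose only eigenvalue of modulus greater than one is the golden ratio $\lambda$, with a one-dimensional eigenspace $[v]$, and then invokes density of $\mathrm{GL}_r(\Z)$-orbits in $\mathbb{RP}^{r-1}$ (Mostow) to find $C\in\mathrm{GL}_r(\Z)$ with $C[v]\notin F_H = \cup_{h\in H\setminus 1} F_h$. For $A = CBC^{-1}$, if a nontrivial $h\in H$ centralized a power of $A$ it would preserve the unique dominant eigenline, contradicting $C[v]\notin F_H$. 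Nothing beyond this explicit $2\times 2$ block and orbit density is needed, whereas your version asks for a hyperbolic $A$ with $r$ distinct absolute eigenvalues whose logarithms have $\Q$-rank exactly $r-1$, which is far more demanding. One thing your verification does well is treat the opposite-sign case $A^n = hA^{-n}h^{-1}$ explicitly via the pairing relations $\mu_i+\mu_{r+1-i}=0$ and the centralizer dimension count; the paper's closing sentence is terse on that point.

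There is, however, a genuine gap in your existence step, and condition (ii) is also far stronger than you need. In the same-sign case you only use that some $\mu_i\neq 0$, which hyperbolicity already provides. In the opposite-sign case you derive $\mu_i+\mu_{r+1-i}=0$ for all $i$, so to reach a contradiction you only need a single failing pair, say $|\lambda_1|\,|\lambda_r|\neq 1$; this weak ``non-palindromicity'' of the absolute spectrum is easy to arrange (for a totally positive spectrum it is literally non-palindromicity of the characteristic polynomial) and is what you should impose in place of (ii). The full-strength (ii) is of a different order of difficulty: since $\log$ is injective on positive reals, requiring $\mu_1,\dotsc,\mu_r$ to have $\Q$-rank exactly $r-1$ means that the only integer vectors $(a_1,\dotsc,a_r)$ with $\prod_i\sigma_i(u)^{a_i}=\pm 1$ are multiples of $(1,\dotsc,1)$, i.e., the Galois conjugates of the single unit $u$ generate a rank-$(r-1)$ subgroup of the units of the Galois closure. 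In the Galois case this is exactly the Minkowski-unit condition, a deep and in general unresolved question; Dirichlet's theorem controls the rank of $\mathcal{O}_K^*$ as an abelian group and says nothing about $\Q$-linear independence of the coordinates of one fixed log-vector, so ``pick a generic unit in the log-lattice'' does not deliver (ii). You also invoke Zariski-density of $\mathrm{GL}_r(\Z)$ to arrange (i), but ``real spectrum with distinct moduli'' is semi-algebraic, not Zariski-open — only (iii) is Zariski-open. Replacing (ii) by the non-palindromicity condition above, and arranging (i) directly (e.g., via a companion matrix of an explicit totally real, non-reciprocal polynomial with unit constant term), closes the gap and leaves the rest of your argument intact.
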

We complete the argument before proving the claim. Let $A$ be a matrix as in the claim. Now an application of the main result of Clay--Pettet \cite{clay2012current} implies that there is a hyperbolic, fully irreducible  outer automorphism $f$ whose image in $\mathrm{GL}_r(\Z)$ is $A$. We then have that $f$ is independent for the finite group $H$. Otherwise, there is an $h \in H \setminus 1$ and integers $r,s$ such that $hf^rh^{-1} = f^s$. Applying the homomorphism $\Out(\free) \to \mathrm{GL}_r(\Z)$ we see that this equation contradicts our choice of $A$. Hence, $f$ is independent for $H$. To complete the example, it now suffices to prove the claim.

\begin{proof}[Proof of Claim \Cref{claim: mostow}]
By assumption, the finite subgroup $H \le \mathrm{GL}_r(\Z)$ does not contain $-I$. Hence, the action $H \curvearrowright \mathbb{RP}^{r-1} $ is effective and if we denote the fixed subspace of $h\in H$ by $V_h$, we have that $V_H = \cup_{h \in H \setminus 1}V_h$ is a union of positive-codimension projective hyperplanes. Hence, $\mathbb{RP}^{r-1} \setminus V_H$ is open.

Now let $B \in \mathrm{GL}_r(\Z)$ be the block diagonal matrix consisting of  $\left(\begin{smallmatrix}2 & 1\\1&1\end{smallmatrix}\right)$ in the upper left $2\times 2$ corner and  the identity matrix in the lower right corner. The eigenvalues for $B$ are $\lambda$, $1$, and $\lambda\inv$, where $\lambda$ is the golden ratio. Moreover, the $\lambda$--eigenspace is one-dimensional and so defines a point $[v]\in \mathbb{RP}^{r-1}$. Since $\mathbb{RP}^{r-1} \setminus V_H$ is open and every orbit of $\mathrm{GL}_r(\Z) \curvearrowright \mathbb{RP}^{r-1}$ is dense \cite[Lemma 8.5]{mostow1973strong}, there is a $C \in \mathrm{GL}_r(\Z)$ so that $C[v] \notin V_H$. Setting $A = CBC^{-1}$, we see that the $\lambda$--eigenspace of $A$ is one-dimensional and is not projectively fixed by any $h \in H \setminus 1$. Hence, no power of $A$ can equal any power of $hAh^{-1}$ for $h\in H\setminus 1$. This completes the proof of the claim.
\end{proof}

\end{example}

%%%%%%%%%%%%%%%%%%%%%

%\bibliographystyle{plain}
\bibliographystyle{alphanum}
\bibliography{freeConvexCoCpct}

\bigskip

\noindent
\begin{minipage}{.6\linewidth}
Department of Mathematics\\
Vanderbilt University\\
1326 Stevenson Center\\
Nashville, TN 37240\\
E-mail: {\tt spencer.dowdall@vanderbilt.edu}
\end{minipage}
\hfill
\begin{minipage}{.4\linewidth}
Department of Mathematics\\ 
Yale University\\ 
10 Hillhouse Avenue\\
New Haven, CT 06511, U.S.A\\
E-mail: {\tt s.taylor@yale.edu}
\end{minipage}

\end{document}